\numberwithin{equation}{section}
\newtheorem{theorem}{Theorem}
\newtheorem{lemma}{Lemma}
\newtheorem{remark}{Remark}
\newtheorem{corollary}{Corollary}
\newtheorem{proposition}{Proposition}
\theoremstyle{definition}
\newtheorem{rhp}{RH Problem}[section]
\begin{document}

\title{A Riemann-Hilbert approach to  the existence of global solutions to the Fokas-Lenells equation on the  line }
\author{Qiaoyuan Cheng$^1$,  Engui Fan$^1$\thanks{\ Corresponding author and email address: faneg@fudan.edu.cn } }
\footnotetext[1]{ \  School of Mathematical Sciences  and Key Laboratory of Mathematics for Nonlinear Science, Fudan University, Shanghai 200433, P.R.
China.}

\date{}

\maketitle
\begin{abstract}
\baselineskip=16pt

We  obtain  the existence  of global solutions to the Cauchy problem of the   Fokas-Lenells (FL)  equation on the line
\begin{align}
&u_{xt}+\alpha\beta^2u-2i\alpha\beta u_x-\alpha u_{xx}-i\alpha\beta^2|u|^2u_x=0,\nonumber \\
&u(x,t=0)=u_0(x), \nonumber
\end{align}
without the small-norm assumption on initial data $u_0(x)\in H^3(\mathbb{R})\cap H^{2,1}(\mathbb{R})$.
  Our  main technical  tool is the inverse scattering  transform method
  based on the representation of a Riemann-Hilbert (RH) problem associated with the above Cauchy problem.
The existence and the uniqueness of the RH problem is shown via a
 general vanishing lemma.  The spectral problem associated with the FL  equation  is changed  into an equivalent
    Zakharov-Shabat-type spectral problem to establish  the RH problems on the real axis.
 By representing the solutions of the RH problem via the Cauchy integral protection and the reflection coefficients, the reconstruction formula is  used  to  obtain  a unique  local solution  of the FL equation.
 Further, the eigenfunctions and the reflection coefficients are shown
 Lipschitz continuous with respect to initial data,  which  provides a prior estimate of the solution to the FL equation.
   Based on the local solution and the uniformly  prior  estimate, we construct a  unique  global
solution in $H^3(\mathbb{R})\cap H^{2,1}(\mathbb{R})$  to the  FL equation.\\[3pt]
{\bf Keywords:}   Fokas-Lenells equation, Riemann-Hilbert problem,  Lipschitz continuous,   prior estimate,   global solutions.\\[3pt]
{\bf   Mathematics Subject Classification:} 35P25; 35Q51; 35Q15; 35A01; 35G25.
\end{abstract}
\baselineskip=18pt

\tableofcontents
\section{Introduction}
\hspace*{\parindent}
In this paper, we study the existence of global solutions to the Cauchy problem of the Fokas-Lenells (FL) equation
\begin{align}
&u_{tx}+\alpha\beta^2u-2i\alpha\beta u_x-\alpha u_{xx}+\sigma i\alpha\beta^2|u|^2u_x=0 \label{cs} \\
&u(x,t)|_{t=0} = u_0(x),\label{cs1}
\end{align}
The FL equation is an integrable generalization of the nonlinear Schr\"{o}dinger (NLS) equation, which is also tightly related with the derivative NLS model. Analogous to the derivation of the Camassa-Holm equation from the KdV equaiton  \cite{Fuchssteiner}, the FL equation was initially derived from the NLS equation by utilizing two Hamiltonian operators \cite{Fokas1}. In optics, the FL equation characterizes the higher-order linear and nonlinear optical effects, and has been derived as a suitable model for the femtosecond pulse propagation through single mode optical silica fiber, for which several interesting solutions have been constructed \cite{Hos1,Hos2,Hos3}.

Prior studies on the FL equation mainly concentrate on constructing the soliton solutions and deriving the long time asymptotics. Lenells and Fokas are the first to discover the soliton solutions to the FL equation via the inverse scattering transform and the dressing methods \cite{oan36, dfa37}.
 Matsuno obtained the bright and the dark soliton solutions to the  FL equation via the Hirota method \cite{adm38,adm39}. Later,
Liu et al. derived the double Wronskian solutions to the FL  equation via a bilinear approach \cite{zhang2021}, Vekslerchiket et al. constructed the lattice representation and the n-dark solitons of the FL equation in \cite{lra40}, while Wright et al. found the breather solutions of the FL equation via a dressing-B\"{a}cklund transformation related to the Riemann-Hilbert (RH) problem formulation \cite{shc41}.
A kind of rogue wave solution to the FL equation were obtained by using the Darboux transformation  \cite{rwo34}.  The Fokas method was used to investigate the initial-boundary value problem
for the FL equation  on the half-line and a finite interval, respectively \cite{Lenells2009,xiaofan}.
An algebro-geometric method was used to obtain algebro-geometric solutions for the  FL equation \cite{zhao2013}. The explicit one-soliton of  the  initial value problem for the FL equation
was given by the RH method \cite{xujian2}.
The inverse scattering transform for the FL
equation with nonzero boundary conditions was further investigated by using the RH method \cite{zf1}.
Recently the   perturbation theory was also used  to obtain the exact solution to the Fokas-Lenells equation \cite{Lashkin}.
On long time asymptotics, our recent works respectively characterized the long time behaviors of defocusing and focusing FL
equations \cite{Lta,cqy}.  However, little is known on the existence of local and global solutions to the FL equation.
In recent years, the  global well-posedness of the periodic initial value problem for the FL equation in a Sobolev space  was proved by  Fokas and  Himonas  \cite{wpo10}.
But to the best of our knowledge, the existence of local and global solutions to the FL equation on the line is still unknown.


In this paper,  we would like to prove the existence of   global solutions to the FL equation on the line
from the perspective of inverse scattering transform.
 This technique has been applied  to   prove  the existence of global solutions to the derivative NLS  equation
 by Pelinovsky and Shimabukuro  \cite{dep}.
 By establishing the Lipschitz continuity of the eigenfunctions and the scattering data,
 we obtain the Lipschitz continuous mapping between the initial data   and the solution to the FL equation.
In this way  we  are able  to establish the existence of local and global solutions to the FL equation.
Compared with the derivative NLS  equation, we find that   extension of  this  approach  to the FL equation will  confront some substantial difficulties,
 which are also different from the derivative NLS  equation  \cite{dep}.
\begin{itemize}
    \item[$\triangleright$]  {\bf Spectral singularity $k=0$}:  For the FL equation, its   Lax pair   involves  a singularity at
     $k=0$, which is however the key to control the non-blow-up of the reflection coefficients during the time evolution analysis.
     So we need  additional  requirements  to   handle the
   Jost functions, scattering data and their    norm  estimates.  The singularity $k=0$ does not meet  the premise in the  Zhou's technique   \cite[Theorem 1.8]{lss}.
   Therefore, existing techniques have to be strengthened for analyzing the more complicated FL equation.
We  overcome this  technical  difficulty of the FL equation by drawing the equivalence between the Jost function at $k=\infty$ and at $k=0$.
 We obtain a key asymptotic estimate on scattering data,
which sufficiently cover the singularity of  reflection coefficient $r(k)$ at $k=0$  in its time evolution. This allows us to extend the usage of the Beals-Coifman theory and  Zhou's technique   to the case of $k=0$.

  \item[$\triangleright$]    {\bf Two kinds of reconstruction formulas: }  For the FL equation,   we are only able to    recover  its   potential $u_x(x,t)$
   from  the limit of the  Jost function  as  $k \rightarrow \infty $.
 We should resort to the  time spectral problem (\ref{cslp2}) to obtain the original potential
$u(x,t)$   from the asymptotic   expansion of the Jost function    as  $k \rightarrow 0$.
 From these results, we are able to derive the reconstruction formula for    $u(x,t)$ as  $k\to 0$.
 While we obtain other reconstruction formula for    $u_x(x,t)$  as  $ z\rightarrow \infty $
 in new $z$-plane. These two kinds of  reconstruction formulas  are combined together to obtain the estimates on the solution to the FL equation.

        \item[$\triangleright$]   We transform  original spectral problem and RH problem  from the $k$ plane to the $z$ plane
         by exploiting  the  odd-even  property    of the Jost functions  in the column.
         This facilitates the further application of  the Fourier transform  and Cauchy integral projection
         to estimate the space of the solution to the RH problem and the potential.

\end{itemize}

The structure of the paper is as follows. In Section \ref{sec:section2}, we carry out inverse scattering transform on the $k$-plane.
 First, based on   the Lax pair of the FL equation (\ref{cs}),
   we construct the Jost functions $\psi(x;k)$  and  analyze their  asymptotics    of at two spectral singularities $k=0$ and $k=\infty$.
 Further  we construct a  RH problem $N(x;k)$ associated with the Cauchy problem (\ref{cs})-(\ref{cs1})  and then prove its  existence and the uniqueness  via a general vanishing lemma.
  In Section \ref{sec:section3}, we carry out inverse scattering transform on the $z$-plane.  We  impose a transformation  on $\psi(x;k)$ to obtain Jost function  $\Psi(x;z)$ and  a new RH
  problem  for  $M(x;z)$  on the $z$-plane. We further  establish  the Lipschitz continuous mapping from the initial data to the reflection coefficient $r_{1,2}(z)$.
In Section \ref{sec:section4},  to   estimate  the  solutions to the RH problem  $M(x;z)$,  we make transitions of the RH problem of  $M(x;z)$ to the RH problem of  $Q_{1,2}(x;k)$
    which allows us to use the existing properties of the scattering data to derive the estimates on the Beals-Coifman solutions to the RH problem of  $M(x;z)$.  In Section \ref{sec:section5},  based on the  reconstruction formulae and the Cauchy integral, subsequently
 we obtain estimates on  the potential $u(x)$  for  the FL equation  on the positive and negative half-lines respectively.
 In Section
\ref{sec:section6},   based on  the   local solution  and the uniformly  priori  estimates,
   we   show that   there exists  a   global  solution   $u(x,t)\in C([0,\infty), H^3(\mathbb{R})\cup H^{2,1}(\mathbb{R})$ to
    the Cauchy problem (\ref{cs})-(\ref{cs1}) of  the FL equation.

\section{Inverse scattering transform  on $k$-plane   }
\label{sec:section2}
\hspace*{\parindent}
In this section, we establish a  RH problem associated with the Cauchy problem  (\ref{cs})-(\ref{cs1}) of the  FL equation and show its existence and uniqueness.

  We first fix some notations used this paper.
If $I$ is an interval on the real line $\mathbb{R}$ and $X$ is a  Banach space, then $C (I,X)$ denotes the space of continuous functions on $I$ taking values in $X$. It is equipped with the norm
\begin{equation*}
	\|u\|_{C (I, X)}=\sup _{x \in I}\|u(x)\|_{X}.
\end{equation*}
 We define weighted Sobolev space by
\begin{align}
&L^{p,s}(\mathbb{R}):=\{ u(x)\in L^{p}(\mathbb{R}): \ \   \langle x \rangle^s u(x) \in L^{p}(\mathbb{R})  \}, \nonumber\\
&H^{k,s}(\mathbb{R})=\left\{u(x) \in L^{2,s}(\mathbb{R}): \ \  \partial^j_xu (x) \in L^{2,s}(\mathbb{R}),\quad j=1,\cdots,k \right\},\nonumber\\
&\mathcal{W}(\mathbb{R})=\{r (z)\in H^{1}(\mathbb{R}) \cap L^{2,1}(\mathbb{R}), z^{-2}r (z) \in L^2(\mathbb{R})\},\nonumber
\end{align}
where $ \langle x \rangle := \sqrt{1+|x|^2}$.

\subsection{Jost functions}
\hspace*{\parindent}
The FL equation (\ref{cs}) admits a  Lax pair
\begin{align}
&\phi_x+ik^2\sigma_3\phi=kP_x\phi,\label{cslp}\\
&\phi_t+i\eta^2\sigma_3\phi=H\phi,\label{cslp2}
\end{align}
where
\begin{align}
& \sigma_3=\begin{pmatrix}
1&0\\
0&-1
\end{pmatrix}, \ \ \ \  P=\begin{pmatrix}
0&u\\
-\bar{u}&0
\end{pmatrix},\label{qq}\\
&
\eta=\sqrt{\alpha}\left(k-\frac{\beta}{2k}\right),\quad H= \alpha kU_x+\frac{i\alpha\beta^2}{2}\sigma_3\left(\frac{1}{k}P-P^2\right). \nonumber
\end{align}
The FL spectral problem  (\ref{cslp}) is not  the Zakharov-Shabat type  spectral
  problem due to   the  multiplication  by $k$ in the matrix potential $kP_x$ and  the time part (\ref{cslp2})  admits spectral singularity at   $k=0$.
  To control the behavior of the eigenfunction $\psi(k)$   as $k\to \infty$ and $k\to 0$ in constructing the solution $u(x,t)$ to the FL equation $(\ref{cs})$,  we consider two different asymptotic expansions respectively at the singularities $k=0$ and $k=\infty$.
 Here, we recall the existing results on  constructing the  RH problem in \cite{xujian2,cqy}.

\noindent \textbf{Case I: $k=\infty$}

By making  a transformation
\begin{equation}
\psi(x,t;k)=\phi(x,t;k)e^{i(k^2x+\eta^2t)\sigma_3},\label{b3}
\end{equation}
the Lax pair (\ref{cslp})-(\ref{cslp2}) is changed to
\begin{align}
&\psi_x+ik^2[\sigma_3,\psi]=kP_x\psi,\label{laxn1}\\
&\psi_t+i\eta^2[\sigma_3,\psi]=H\psi.\label{laxn2}
\end{align}
This  Lax pair admits   the Jost functions with asymptotics
$$\psi^{\pm}(x,t;k)\sim I, \quad  x\rightarrow \pm \infty,$$
which  satisfy    Volterra  integral equations
\begin{equation}
\psi^{\pm}(x,t;k)=I+k\int_{ \pm \infty }^{ x }e^{-2ik^2(x-y)\widehat{\sigma}_3}
P_y(y)\psi^{\pm}(y,t;k) dy.\label{fi5}
\end{equation}
It is obvious that the    integration  in (\ref{fi5})    involves   the  term  $k u_{ x}(x)$  which  is not $L^2(\mathbb{R})$  bounded   since    $k$   may  go  to  infinity.
In our previous work, through the small $k$ and large $k$ estimates respectively,  we overcome this difficulty and prove  the existence and the differentiability of the Jost functions
$\psi^{\pm}(x,t;k)$ \cite{cqy}.

\begin{proposition} \label{proposition1}
Let $u_0(x)\in H^3(\mathbb{R})\cap H^{2,1}(\mathbb{R})$, and introduce the notation
$\psi^{\pm}(x,t;k)=\left(\psi^{\pm }_1 (x,t;k),\psi^{\pm }_2(x,t;k)\right) $
with the scripts  $1$ and $2$ denoting the first and second columns of $\psi^{\pm}(x,t;k)$. Then we have
\begin{itemize}

\item[$\blacktriangleright$] {\bf Analyticity}:   The integral equation (\ref{fi5})  admits  a    unique  solution
  $\psi^{\pm}(x,t;k)$.  Moreover, $ \psi^-_1(x,t;k)$, $ \psi^+_2(x,t;k)$ and $a(k)$  are analytical in the domain $ D_+;$  and   $ \psi^+_1(x,t;k)$ and $\psi^-_2(x,t;k)$ are analytical  in
  in the domain $D_-$,
  where  (see Figure \ref{figure1})
\begin{equation}
D_+=\{ k:  {\rm Im} k^2>0\}, \  \ D_-=\{ k:  {\rm Im} k^2<0\};
\end{equation}
  \begin{figure}[H]
\begin{center}
\begin{tikzpicture}
\draw [->](-2.5,0)--(2.5,0);
\draw [->](0,-1.8)--(0,1.8);
 \node at (1,1)  {$D_+$};
\node at (3, 0)  {$\text{Re }k$};
\node at (0, 2.1)  {$\text{Im }k$};
\node at (0.2, -0.2)  { $0$};
 \node at (-1, -1)  {$D_+$};
\node at (1, -1)  {$D_-$};
 \node at (-1, 1)  {$D_-$};

\end{tikzpicture}
\end{center}
\caption{The analytical domains $D_+$ and  $D_-$ for $\psi^{\pm}(x,t;k)$.}
\label{figure1}
 \end{figure}
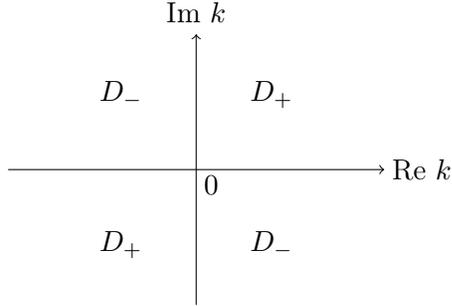
\item[$\blacktriangleright$] {\bf Symmetry}:     $ \psi^{\pm}(x,t;k)$ and $S(k)$  satisfy   the symmetry relations
\begin{align}
&\psi^{\pm}(x,t;k) =\sigma_2 \overline{\psi^{\pm}(x,t;\bar {k} )} \sigma_2,\ \ \ \psi^{\pm}(x,t;k) =\sigma_3  \psi^{\pm}(x,t;-k) \sigma_3,\label{dcx}
\end{align}
\item[$\blacktriangleright$] {\bf Asymptotics}:   $ \psi^{\pm}(x,t;k)$ and $S(k)$ have asymptotic properties
\begin{align}
&\psi^{\pm}(x,t;k)=\mathrm{e}^{ -ic_\pm(x)\sigma_3}+\mathcal{O}(k^{-1}),\quad k\rightarrow\infty,\label{jjw}
\end{align}
where
\begin{equation}
c_\pm(x)=\frac{1}{2}\int_{\pm\infty}^x|u_y(y,t)|^2dy. \label{c1c2}
\end{equation}
\end{itemize}
\end{proposition}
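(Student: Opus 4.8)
The plan is to reduce each of the three assertions to the Volterra equation (\ref{fi5}) together with the algebraic structure of $P$ and $\widehat\sigma_3$, treating $t$ throughout as a fixed parameter.

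\textbf{Analyticity.} First I would solve (\ref{fi5}) by a Neumann (Picard) iteration, writing $\psi^\pm=\sum_{n\ge0}\psi^\pm_{(n)}$ with $\psi^\pm_{(0)}=I$ and $\psi^\pm_{(n+1)}(x,t;k)=k\int_{\pm\infty}^x e^{-2ik^2(x-y)\widehat\sigma_3}P_y(y)\psi^\pm_{(n)}(y,t;k)\,dy$. Conjugating the off-diagonal potential $P_y$ by $e^{-2ik^2(x-y)\widehat\sigma_3}$ makes its two entries oscillate with conjugate phases, the moduli of these factors being $e^{\pm c\,\mathrm{Im}(k^2)(x-y)}$ with $c>0$; hence over the range $y\le x$ relevant to $\psi^-$ (resp. $y\ge x$ for $\psi^+$) each entry stays bounded in exactly one of the half-domains $D_\pm$, and tracking which column this constrains reproduces the analyticity pattern of Figure \ref{figure1}. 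Because the iterated integrals are ordered, the $n$-th term carries a factor $1/n!$, so for every fixed $k$ the series converges absolutely and locally uniformly; as each $\psi^\pm_{(n)}$ is analytic in $k$ on the relevant domain, the limit is analytic there by the Morera/uniform-limit theorem, and the analyticity of $a(k)$ in $D_+$ follows from its Wronskian representation as the determinant of the two $D_+$-analytic columns $\psi^-_1$ and $\psi^+_2$. The one genuine difficulty is the prefactor $k$, which spoils any naive bound as $k\to\infty$; I would control it by integrating by parts once in $y$, using $e^{-2ik^2(x-y)}=(2ik^2)^{-1}\partial_y e^{-2ik^2(x-y)}$ to trade the growing $k$ against the oscillation, at the cost of one extra $y$-derivative on $u$. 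This is precisely where the regularity $u_0\in H^3(\mathbb R)\cap H^{2,1}(\mathbb R)$, so that $u_x\in H^2$, is consumed, and it recovers the small-$k$/large-$k$ estimates of \cite{cqy}.

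\textbf{Symmetry.} For the reflection $k\mapsto-k$ I would set $\widetilde\psi(k):=\sigma_3\psi^\pm(-k)\sigma_3$ and insert it into (\ref{fi5}). Since $(-k)^2=k^2$ the exponential is unchanged, $\sigma_3$ commutes with $e^{\cdots\widehat\sigma_3}$, and the off-diagonal potential obeys $\sigma_3 P_y\sigma_3=-P_y$; combined with the sign change of the prefactor $k$, the two minus signs cancel, so $\widetilde\psi$ solves the \emph{same} Volterra equation as $\psi^\pm(k)$ and therefore equals it by the uniqueness just established. The conjugation symmetry is identical: taking complex conjugates and using $\overline{e^{-2i\bar k^2(x-y)\widehat\sigma_3}}=e^{2ik^2(x-y)\widehat\sigma_3}$ together with $\sigma_2\sigma_3\sigma_2=-\sigma_3$ and $\sigma_2\overline{P_y}\sigma_2=P_y$ shows that $\sigma_2\overline{\psi^\pm(\bar k)}\sigma_2$ also satisfies (\ref{fi5}), hence coincides with $\psi^\pm(k)$. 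The corresponding relations for $S(k)$ then drop out of the scattering relation between $\psi^-$ and $\psi^+$.

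\textbf{Asymptotics.} To obtain (\ref{jjw})--(\ref{c1c2}) I would insert the formal expansion $\psi^\pm=\psi_0+k^{-1}\psi_1+k^{-2}\psi_2+\cdots$ into the differential form (\ref{laxn1}) and match powers of $k$. The $O(k^2)$ balance forces $[\sigma_3,\psi_0]=0$, so $\psi_0$ is diagonal; the $O(k)$ balance fixes the off-diagonal part of $\psi_1$ through $i[\sigma_3,\psi_1]=P_x\psi_0$; and the diagonal part of the $O(1)$ balance yields the closed ODE $\partial_x\psi_0=-\tfrac{i}{2}|u_x|^2\sigma_3\psi_0$. Imposing $\psi_0\to I$ as $x\to\pm\infty$ integrates this to $\psi_0=e^{-ic_\pm(x)\sigma_3}$ with $c_\pm(x)=\tfrac12\int_{\pm\infty}^x|u_y|^2\,dy$, exactly the claimed leading term. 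To upgrade the formal series to a rigorous $\mathcal O(k^{-1})$ statement I would feed this ansatz back into (\ref{fi5}) and estimate the remainder by the same integration-by-parts device, the boundedness of the correction being guaranteed by $u_x\in H^2\cap L^{2,1}$.

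The crux throughout is the single factor of $k$ multiplying $P_x$, which is absent in the standard Zakharov--Shabat setting and which obstructs uniform control as $k\to\infty$; every step above ultimately rests on removing it by one integration by parts against the oscillatory kernel, so I expect the estimates that legitimize this manipulation, rather than the algebra of the symmetries or the order-matching, to be the main obstacle, and they are what force the Sobolev regularity assumed on $u_0$.
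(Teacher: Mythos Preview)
Your outline is essentially sound, and the symmetry and asymptotic parts are exactly the standard arguments; the order-by-order matching in (\ref{laxn1}) that produces $\partial_x\psi_0=-\tfrac{i}{2}|u_x|^2\sigma_3\psi_0$ and hence $\psi_0=e^{-ic_\pm\sigma_3}$ is correct as written. The paper itself does not give a self-contained proof of this proposition but defers to \cite{cqy}, noting only that the obstruction from the factor $k$ in $kP_x$ is handled ``through the small $k$ and large $k$ estimates respectively.''

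Where your route differs from the paper's methodology is in how you propose to tame that factor of $k$. You suggest a direct integration by parts against the oscillatory kernel $e^{-2ik^2(x-y)}$. This does work, but not as a single integration by parts on (\ref{fi5}): if you differentiate $P_y\psi$ you reintroduce $\psi_y$, and the ODE then feeds back a $k^2[\sigma_3,\psi]$ term. The clean way to execute your idea is to first substitute one column equation into the other (e.g.\ insert $\psi^-_{21}=-k\int e^{2ik^2(\cdot)}\bar u_y\psi^-_{11}$ into the equation for $\psi^-_{11}$), obtaining an iterated integral with an explicit $k^2$ in front and an oscillatory inner kernel, and \emph{then} integrate by parts once in the inner variable; this converts $k^2$ into bounded terms at the cost of $u_{xx}$, which is where $H^3\cap H^{2,1}$ enters. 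The paper, by contrast, avoids this manoeuvre entirely: in Section~\ref{sec:section3} it introduces the gauge transformation $\Psi=A(x;k)\psi B(x;k)$ of (\ref{b1}), which converts (\ref{laxn1}) into the genuine Zakharov--Shabat problem (\ref{xl1}) with \emph{no} $k$ in the potential, after which the Neumann series for $\Psi^\pm$ converges uniformly in $z=k^2$ (Proposition~\ref{la1}). The gauge route is algebraically cleaner and gives uniform-in-$k$ bounds directly; your oscillatory-integral route is more hands-on but has the virtue of staying with the original Jost functions throughout.
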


It is noteworthy that $\psi^\pm(x,t;k)$ do not approach  to the identity matrix as $k\rightarrow\infty$. To normalize the first term in the
asymptotics, we define the second new function $\mu^\pm(x,t;k)$ which satisfies the following relation
\begin{equation}
\psi^\pm(x,t;k)=e^{-ic_-(x)\widehat{\sigma}_3}\mu^\pm(x,t;k)e^{ic_+(x)\sigma_3},\label{psi}
\end{equation}
then  $\mu^{\pm}(x,t;k)$ satisfy the following Volterra-type integral
\begin{equation}
\mu^{\pm}(x,t;k)=I+\int_{\pm\infty}^{x} \mathrm{e}^{-\mathrm{i} k^2\left(x-y \right) \widehat{ \sigma}_3}\left(P_1 \mu^{\pm}\right)\left(y,t;k\right) dy, \label{gtit}
\end{equation}
where $P_1 $ is  given   by
\begin{align}
&P_1=e^{\frac{i}{2} \int_{-\infty}^x|u_y(y, t)|^2 dy \widehat{\sigma}_{3}}(k P_x+\frac{i}{2} |u_x|^2   \sigma_{3}).\nonumber
\end{align}

The potential can be recovered  from the limit of the  Jost function $\psi(x,t;k)$
\begin{equation}
u_x(x,t)= 2i m(x,t) e^{4i \int_{-\infty}^x |m(s,t)|^2 ds},  \label{rjjw}
\end{equation}
where
\begin{equation}
m(x,t)=\lim_{k \rightarrow \infty} (k\psi (x,t;k))_{12}.\nonumber
\end{equation}
The formula (\ref{rjjw}) shows that we only  recover the  potential $u_x(x,t)$
  from  the  Jost function as $k \rightarrow \infty $.
 We should resort to the  time spectral problem (\ref{cslp2}) to obtain the original potential
$u(x,t)$   from the asymptotic   expansion of the Jost function  $\psi(x,t,k)$    as  $k \rightarrow 0$.

\noindent \textbf{Case II: $k=0$}

We consider the transformation
\begin{equation}
\varphi(x,t,k)=\phi(x,t;k)e^{i(k^2x+\eta^2t)\sigma_3}, \label{wer}
\end{equation}
then the Lax pair (\ref{cslp})-(\ref{cslp2})  changes to
\begin{align}
&\varphi_x+ik^2[\sigma_3,\varphi]=kP_x\varphi,\label{lax5}\\
&\varphi_t+i\eta^2[\sigma_3,\varphi]= H\varphi,\label{lax6}
\end{align}
 which is
 integrated along $(\pm \infty,t) \rightarrow (x,t)$ and leads to two Volterra-type integrals
\begin{equation}
\varphi^{\pm}(x,t;k)=I+k\int_{ \pm \infty }^{ x }e^{-2ik^2(x-y)\widehat{\sigma}_3}
P_y(y)\varphi^{\pm}(y,t;k) dy. \label{fii}
\end{equation}
We seek the asymptotic expansion of  $\varphi$ in the Lax pair (\ref{lax5})-(\ref{lax6}) as $k \rightarrow 0$
and find that
\begin{equation}
\varphi(x,t;k)=I+k\begin{pmatrix}
0 & u \\
v & 0
\end{pmatrix}+\mathcal{O}\left(k^2\right), \quad k \rightarrow 0,\label{vxk}
\end{equation}
which implies that the solution $u(x,t)$ to the FL equation can be reconstructed by
\begin{equation}
u(x,t)=\lim _{k\rightarrow 0}  ( k^{-1} \varphi (x,t;k))_{12}.
\end{equation}
By establishing the RH problem, we strive to get a reconstruction formula for $u(x,t)$.

From  (\ref{psi}) and (\ref{wer}), we observe that  the   $\mu^\pm(x,t;k)$, $\psi^\pm(x,t;k)$ and $\varphi^\pm(x,t;k)$ are
related to the same Lax pair (\ref{cslp})-(\ref{cslp2})  and  therefore satisfy the relation
\begin{align}
&\psi^\pm(x,t;k)=e^{-ic_- \sigma_{3}} \mu^\pm(x,t;k) e^{ic\sigma_3},\nonumber\\
&\mu^\pm(x,t;k)=e^{-ic_- \sigma_{3}} \varphi^\pm(x,t;k) e^{-i\left(k^2x+\eta^2 t\right) \sigma_3} C^\pm(k) e^{i\left(k^2 x+\eta^2t\right) \sigma_3} e^{ic\sigma_3}.\label{pvgx}
\end{align}
By letting $x \rightarrow \pm \infty$  respectively,  we find
\begin{equation}
C^-(k)=e^{- i c \sigma_3}, \quad C^+(k)={I}.\nonumber
\end{equation}

From (\ref{vxk}) and (\ref{pvgx}), the solution $u(x,t)$ to the FL equation can be expressed as
\begin{align}
&u(x,t)e^{2i(c_-(x)+c)}=\lim _{k\rightarrow 0}(k^{-1}\psi^+(x,t;k))_{12},\label{u1}\\
&u(x,t)e^{-i(2c_-(x)+c)}=\lim _{k\rightarrow 0}(k^{-1}\psi^-(x,t;k))_{12}.\label{u2}
\end{align}

\begin{remark}
In particular, we use the relation between $u(x,t)$ and $\psi(x,t;k)$ instead of $u(x,t)$ and $\mu(x,t;k)$. The benefits of this approach will be seen later in the transformation $A(x;k)$, where the linear  spectral problem  (\ref{cslp}) is in turn transformed to a spectral problem of the Zakharov-Shabat type.
\end{remark}

In the following sections, we first consider   the partial spectral problem  (\ref{cslp})  with $t$ being  a parameter,  so we  omit the variable $t$ as usual, for example $\psi(x,t;k)$ is just written as $\psi(x;k)$.  We will discuss the  time evolution of scattering data and  reconstruct the  potential $u(x,t)$ with $t$  in the Section \ref{sec:section6}.

\subsection{A  basic RH problem }
\hspace*{\parindent}
Since  $\psi^{\pm}(x,t;k)$ are two solutions to the spectral problem (\ref{cslp}),
 they are   linearly dependent and satisfy the scattering relation
\begin{equation}
\psi^-(x,t;k)=\psi^+(x,t;k)e^{-ik^2x\widehat{\sigma}_3}S(k),\label{phstgx}
\end{equation}
where $S(k)$ is a scattering matrix given by
\begin{equation}
S(k)=\begin{pmatrix}
a(k) & b(k) \\[3pt]
-\overline{b(\bar{k})} & \overline{a(\bar{k})}
\end{pmatrix}, \ \ \det S(k)=1.\label{xxgx}
\end{equation}
By  Proposition \ref{proposition1} and (\ref{phstgx}),
it is easy to show that $S(k)$ admits the following  symmetries
\begin{align}
&S(k) =\sigma_2 \overline{S( \bar {k} )} \sigma_2,\ \ \ S(k) =\sigma_3  S(-k) \sigma_3  \label{dcx3}
\end{align}
 and asymptotics
\begin{align}
&S(k)=I+\mathcal{O}(k^{-1}),\quad k\rightarrow\infty.
\end{align}

By using the relation  $\det\left(\psi_1^\pm,\psi_2^\pm\right)=1$ and  the scattering relation (\ref{phstgx}), the scattering data $a(k)$ and $b(k)$  can be expressed in term of    determinant
\begin{align}
&a(k)  =\det \left(\psi^-_1(0;k), \psi^+_2(0;k)\right),\label{ak} \\
&b(k) =\det\left(\psi_1^+(0;k), \psi_1^-(0; k)\right),\label{bk}
\end{align}
and we can show that

\begin{proposition}  The scattering data $a(k)$ and $b(k)$ are even and odd functions
  respectively and admit  the following  properties
\begin{itemize}

\item[$\blacktriangleright$] { Symmetries}:
\begin{equation}
a(-k)=a(k), \quad k \in \overline{D}_+, \quad b(-k)=-b(k), \quad \operatorname{Im} k^{2}=0.\label{abdc}
\end{equation}

\item[$\blacktriangleright$] {  Asymptotics}:
\begin{align}
&a(k)=  e^{-ic} +\mathcal{O}\left(k^{-1}\right),\quad    \ \
 b(k)=\mathcal{O}\left(k^{-1}\right), \quad k \rightarrow \infty, \nonumber\\
 &a(k)=e^{-ic}\left(1+\mathcal{O}\left(k^2\right)\right),\quad    \ \
 b(k)=\mathcal{O}\left(k^3\right),\quad k\rightarrow 0,\label{aj0}
\end{align}
where
\begin{equation}
c=c_-(x) - c_+(x) = \frac{1}{2}\int_{-\infty}^\infty |u_x |^2dx,\nonumber
\end{equation}
and $c_\pm (x)$ are  defined by  (\ref{c1c2}).

\end{itemize}
\end{proposition}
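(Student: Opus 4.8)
The plan is to read everything off the determinant representations (\ref{ak})--(\ref{bk}) together with the analyticity, symmetry and asymptotics of the Jost columns recorded in Proposition \ref{proposition1}, with all columns evaluated at $x=0$. Three essentially independent tasks must be carried out: the parity relations $a(-k)=a(k)$ and $b(-k)=-b(k)$; the behaviour as $k\to\infty$; and the behaviour as $k\to 0$.

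For the parity I would use only the reflection symmetry $\psi^\pm(x;k)=\sigma_3\psi^\pm(x;-k)\sigma_3$ from (\ref{dcx}). Reading it column by column gives $\psi_1^\pm(x;-k)=\sigma_3\psi_1^\pm(x;k)$ and $\psi_2^\pm(x;-k)=-\sigma_3\psi_2^\pm(x;k)$, the extra sign in the second column coming from the right multiplication by $\sigma_3$. Substituting into (\ref{ak}) and pulling the scalar factors out of the determinant, $a(-k)=\det(\sigma_3\psi_1^-(0;k),-\sigma_3\psi_2^+(0;k))=-(\det\sigma_3)\,a(k)=a(k)$ since $\det\sigma_3=-1$; the same manipulation in (\ref{bk}) yields $b(-k)=(\det\sigma_3)\,b(k)=-b(k)$. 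This establishes (\ref{abdc}) and forces $a$ to be even and $b$ odd in $k$, which I will exploit in the $k\to 0$ analysis.

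For $k\to\infty$ I would insert the asymptotics (\ref{jjw}), namely $\psi^\pm(0;k)=e^{-ic_\pm(0)\sigma_3}+\mathcal{O}(k^{-1})$, into the determinants. In (\ref{ak}) the leading term is the determinant of the diagonal matrix $\mathrm{diag}(e^{-ic_-(0)},e^{ic_+(0)})$, whose value is $e^{-i(c_-(0)-c_+(0))}=e^{-ic}$ by the definition of $c$ in terms of (\ref{c1c2}); hence $a(k)=e^{-ic}+\mathcal{O}(k^{-1})$. In (\ref{bk}) the two leading column vectors $e^{-ic_+(0)}(1,0)^T$ and $e^{-ic_-(0)}(1,0)^T$ are parallel, so their determinant vanishes at leading order and only the $\mathcal{O}(k^{-1})$ corrections survive, giving $b(k)=\mathcal{O}(k^{-1})$.

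The delicate part is $k\to 0$, and this is where I expect the real work. Here I would first produce the small-$k$ expansion of the Jost columns at $x=0$ by iterating the Volterra equation (\ref{fi5}) (equivalently, by transporting the expansion (\ref{vxk}) of $\varphi$ through the normalisation relations (\ref{pvgx})). The cleanest route to $b=\mathcal{O}(k^3)$ is structural: at orders $k^0$ and $k^1$ the two columns $\psi_1^+(0;k)$ and $\psi_1^-(0;k)$ coincide — the $\mathcal{O}(1)$ part is $(1,0)^T$ and the $\mathcal{O}(k)$ part is the same for both boundary conditions because $u$ vanishes at $\pm\infty$ — so the determinant in (\ref{bk}) is already $\mathcal{O}(k^2)$, and the oddness established above upgrades this to $\mathcal{O}(k^3)$. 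For $a$, evenness already gives $a(k)=a(0)\bigl(1+\mathcal{O}(k^2)\bigr)$, so the entire content is the identification $a(0)=e^{-ic}$, which I would obtain by carrying the phase factors $e^{\pm ic\sigma_3}$ of (\ref{pvgx}) through the determinant (\ref{ak}), exactly the phases that surface in the reconstruction formulas (\ref{u1})--(\ref{u2}). The main obstacle is precisely this bookkeeping near the spectral singularity $k=0$: one must track the normalisation carefully to see that the constant leading term of $a$ agrees with the value $e^{-ic}$ found at $k=\infty$ and that the would-be linear term of $b$ cancels. Once the correct small-$k$ expansion of $\psi^\pm$ is in hand, the determinant computations together with the parity constraints deliver (\ref{aj0}) directly.
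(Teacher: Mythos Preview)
The paper states this proposition without proof, so your route via the determinants (\ref{ak})--(\ref{bk}) together with Proposition~\ref{proposition1} is precisely the intended one, and your treatment of the parity relations and of the $k\to\infty$ asymptotics is correct and complete.

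The problem is at $k\to 0$, and it is not the bookkeeping you anticipate but a genuine inconsistency you should have caught. Your own structural argument for $b(k)=\mathcal{O}(k^3)$ is fine: iterating (\ref{fi5}) (equivalently, reading off (\ref{vxk}), since (\ref{fi5}) and (\ref{fii}) are the same Volterra equation) gives $\psi_1^\pm(x;k)=e_1+k(0,-\bar u(x))^T+\mathcal{O}(k^2)$ independently of the sign, so (\ref{bk}) is $\mathcal{O}(k^2)$ and oddness upgrades this to $\mathcal{O}(k^3)$. But the very same expansion forces $\psi^\pm(x;0)=I$, hence $a(0)=\det(e_1,e_2)=1$; equivalently, setting $k=0$ in (\ref{phstgx}) gives $S(0)=I$. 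The phase factors you plan to extract from (\ref{pvgx}) cannot change this: that relation carries the singular factor $e^{\pm i\eta^2 t\sigma_3}$ at $k=0$ and in any case the $x$-Volterra equation alone already pins down $a(0)$. So the constant $e^{-ic}$ in the small-$k$ line of (\ref{aj0}) is a misprint in the paper (it should read $a(k)=1+\mathcal{O}(k^2)$), and your proposed derivation of ``$a(0)=e^{-ic}$'' would not go through. Since you explicitly computed the order-$k^0$ and order-$k^1$ terms of $\psi_1^\pm$ to handle $b$, you had everything in hand to see that $a(0)=1$ and to flag the discrepancy rather than promise to reproduce it.
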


In  the  analysis of   a  RH problem by using inverse scattering transform, to avoid  techquetical difficulty that  the  singularities give rise to, in  general the  condition that scattering data   $a(k)$   admits  no   eigenvalues or resonances is requested.

We denote $\mathcal{G}$ as a set of initial value $u_0(x)$ such that
\begin{align}
	\mathcal{G}=\left\lbrace u_0(x): \ u_0(x)  \in H^{3}(\mathbb{R} )\cap H^{2,1}(\mathbb{R} ),\ a(k) \neq 0\text{ in } \mathbb{C}^-\right\rbrace,\label{asum}
\end{align}
whose  reasonableness     will be  shown  by subsequent  Proposition \ref{prow}.

For $ u_0(x)  \in \mathcal{G}$,  we  define the reflection coefficient
\begin{equation}
r(k):=\frac{b(k)}{a(k)}, \quad k \in \mathbb{R} \cup i\mathbb{R},
\end{equation}
 and a matrix  function
	   \begin{equation}
	   	N(x;k):=\begin{cases}
	   		e^{ic_+(x)\sigma_3}\left(\frac{\psi_1^-(x;k)}{a(k)}, \psi_2^+(x;k)\right),  \; k\in D^+,\\[5pt]
	   		e^{ic_+(x)\sigma_3}\left(\psi_1^+(x;k),
\frac{\psi_2^-(x;k)}{\overline{a(\bar{k})}}\right), \; k\in D^-,
	   	\end{cases}
	   \end{equation}
which then  satisfy the following  basic  RH problem

\begin{rhp} 
\label{rhp:21}
Find a  matrix  function $N(x;k)$  with the following properties:
\begin{itemize}

\item[$\blacktriangleright$]  \emph{Analyticity}: $ N(x;k)$ is analytical   in $ \mathbb{C}\setminus \mathbb{R}\cup i\mathbb{R} $.

\item[$\blacktriangleright$] \emph{Jump condition}: $N(x;k)$   satisfies the jump condition
\begin{equation}
N_+(x;k) =N_-(x;k) V(x;k),\label{pr}
\end{equation}
where $V(x;k)=I+J(x;k)$  is  the jump matrix with
\begin{equation}
J(x;k)=\begin{cases}
\begin{pmatrix}
|r(k)|^2 & \overline{r(k)} \mathrm{e}^{-2ik^2 x}\\
r(k) e^{2ik^2 x} & 0
\end{pmatrix},\quad k \in \mathbb{R},\\ \\
\begin{pmatrix}
-|r(k)|^2 & -\overline{r(k)} e^{-2ik^2x} \\
r(k) e^{2ik^2x} & 0
\end{pmatrix}, \quad k \in i\mathbb{R}.  \label{V11}
\end{cases}
\end{equation}

\item[$\blacktriangleright$] \emph{Asymptotic conditions}:
\begin{equation}
N(x ; k) \rightarrow I
\quad \text { as } \quad|k| \rightarrow \infty.
\end{equation}
\end{itemize}
\end{rhp}
From (\ref{V11}), it is easy to find  that   the matrix $J(x;k)$ is Hermitian for $k\in \mathbb{R}$ and the matrix $J(x;k)$ is not Hermitian for $k\in i \mathbb{R}$.

\subsection{Solvability  of the   RH problem }
\hspace*{\parindent}
 For a given  function $h(z) \in L^p(\mathbb{R})$ with $1 \leq p<\infty$, the Cauchy operator is  defined  by
\begin{equation}
\mathcal{C}(h)(z):=\frac{1}{2\pi i} \int_{\mathbb{R}} \frac{h(s)}{s-z} d s, \quad z \in \mathbb{C} \backslash \mathbb{R}.
\end{equation}
 When $z \pm i\varepsilon$
approaches to a point on the real axis $z \in \mathbb{R}$ transversely from the upper and the lower half planes,   the Cauchy operator
$\mathcal{C}$ becomes the Plemelj projection operators defined  respectively by
\begin{equation}
\mathcal{P}^{\pm}(h)(z):=\lim _{\varepsilon \downarrow 0} \frac{1}{2 \pi i} \int_{\mathbb{R}} \frac{h(s)}{s-(z \pm   i\varepsilon)} d s \quad z \in
\mathbb{R}.
\end{equation}
We list the basic properties of the Cauchy and the Plemelj projection operators in the following proposition  \cite{dep,ams,dp}.
\begin{proposition}
\label{p3}
For every $h \in L^{p}(\mathbb{R}), 1 \leq p<\infty$, the Cauchy operator $\mathcal{C}(h)$ and the projection operator $\mathcal{P}^{\pm}(h)$ has the following properties:
\begin{itemize}

\item[$\blacktriangleright$]  $\mathcal{C}(h)$
 is analytic in $\mathbb{C}^\pm$ and goes to zero as $|z|\rightarrow \infty$.

\item[$\blacktriangleright$]
If $h \in L^1(\mathbb{R})$,  then in   $\mathbb{C}^+$or $\mathbb{C}^-$,   the Cauchy operator admits the following asymptotic
\begin{equation}
\lim _{|z| \rightarrow \infty} z \mathcal{C}(h)(z)=-\frac{1}{2 \pi i} \int_{\mathbb{R}} h(s)ds.\label{lch}
\end{equation}
\end{itemize}
And the projection operator $\mathcal{P}^{\pm}(h)$ has the following properties:
\begin{itemize}
\item[$\blacktriangleright$]  $\mathcal{C}(h)$ approaches to $\mathcal{P}^{\pm}(h)$ almost everywhere, when a point $z \in \mathbb{C}^{\pm}$approaches to a point $z_0\in \mathbb{R}$ by any non-tangential contour from $\mathbb{C}^{\pm}$.

\item[$\blacktriangleright$]
For  $1<p<\infty$, there exists a positive constant $c$ such that
\begin{equation}
\left\|\mathcal{P}^{\pm}(h)\right\|_{L^p(\mathbb{R})} \leq c\|h\|_{L^p(\mathbb{R})}.\label{pjx}
\end{equation}
\end{itemize}
\end{proposition}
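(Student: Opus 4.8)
The plan is to dispatch the two elementary statements about $\mathcal{C}(h)$ directly, and then to reduce the two statements about $\mathcal{P}^{\pm}$ to the classical $L^p$ theory of the Hilbert transform, which is the only genuinely deep ingredient. For analyticity I would fix a compact subset of $\mathbb{C}\setminus\mathbb{R}$, observe that $z\mapsto(s-z)^{-1}$ is holomorphic there with a $z$-derivative that is locally uniformly bounded and, by H\"older's inequality against $h\in L^p$, integrable in $s$, and then differentiate under the integral sign (equivalently, verify Morera's theorem together with Fubini). The decay $\mathcal{C}(h)(z)\to0$ follows for $1<p<\infty$ from
\[
|\mathcal{C}(h)(z)|\le\frac{1}{2\pi}\,\|h\|_{L^p(\mathbb{R})}\,\big\|(s-z)^{-1}\big\|_{L^{p'}(\mathbb{R})},
\]
$p'$ the conjugate exponent, since the last norm tends to $0$ as $z$ leaves every bounded subset of $\mathbb{C}\setminus\mathbb{R}$; the borderline case $p=1$ is then covered by the sharper asymptotic in the second bullet. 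For that asymptotic, assuming $h\in L^1$, I would use the identity $\frac{z}{s-z}=-1+\frac{s}{s-z}$ to write
\[
z\mathcal{C}(h)(z)=-\frac{1}{2\pi i}\int_{\mathbb{R}}h(s)\,ds+\frac{1}{2\pi i}\int_{\mathbb{R}}\frac{s\,h(s)}{s-z}\,ds,
\]
and show that the remainder vanishes as $|z|\to\infty$ by splitting $\mathbb{R}=\{|s|\le|z|/2\}\cup\{|s|>|z|/2\}$: on the first set $|s/(s-z)|\le1$ and the integrand tends to $0$ pointwise, so dominated convergence applies, while on the second set one uses the smallness of the $L^1$-tail of $h$.

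For the boundary behaviour I would split the Cauchy kernel into its real and imaginary parts, which for $z=x+iy$ with $y>0$ gives the representation
\[
\mathcal{C}(h)(x+iy)=\tfrac12\,(P_y\ast h)(x)+\tfrac{i}{2}\,(Q_y\ast h)(x),
\]
where $P_y(x)=\frac{1}{\pi}\frac{y}{x^2+y^2}$ and $Q_y(x)=\frac{1}{\pi}\frac{x}{x^2+y^2}$ are the Poisson and conjugate Poisson kernels. Letting $y\downarrow0$ non-tangentially, the approximate-identity property yields $P_y\ast h\to h$ almost everywhere, while the conjugate Poisson integral $Q_y\ast h$ converges almost everywhere to the Hilbert transform $Hh$. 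Carrying out the analogous computation for $y<0$, this produces the non-tangential limits $\mathcal{P}^{\pm}(h)=\pm\tfrac12 h+\tfrac{i}{2}Hh$, the sign recording whether $z_0\in\mathbb{R}$ is approached from $\mathbb{C}^{+}$ or $\mathbb{C}^{-}$; in particular $\mathcal{P}^{+}-\mathcal{P}^{-}=I$, consistent with the jump relation used in RH Problem \ref{rhp:21}.

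The bound (\ref{pjx}) then follows immediately from this decomposition: the Poisson operator is a contraction on $L^p$, so $\|P_y\ast h\|_{L^p}\le\|h\|_{L^p}$, and the M.\ Riesz inequality furnishes $\|Hh\|_{L^p}\le c_p\|h\|_{L^p}$ for $1<p<\infty$, whence
\[
\|\mathcal{P}^{\pm}(h)\|_{L^p(\mathbb{R})}\le\tfrac12\|h\|_{L^p(\mathbb{R})}+\tfrac12\|Hh\|_{L^p(\mathbb{R})}\le\tfrac12(1+c_p)\,\|h\|_{L^p(\mathbb{R})}.
\]

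The genuinely hard input, and the one step I would not try to reprove, is exactly this last pair of facts: the $L^p$-boundedness of the Hilbert transform and the almost-everywhere convergence of the conjugate Poisson integral to $Hh$. These are the deep results of Calder\'on--Zygmund and singular-integral theory, and everything else in the proposition is elementary bookkeeping organized around them; accordingly, in line with the cited references \cite{dep,ams,dp}, I would invoke them rather than establish them from scratch.
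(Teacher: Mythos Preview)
The paper does not prove this proposition at all: it is stated with a citation to \cite{dep,ams,dp} and used as a black box. Your proposal therefore supplies what the paper omits, and the route you take --- differentiation under the integral for analyticity, the Poisson/conjugate-Poisson decomposition $\mathcal{C}(h)(x+iy)=\tfrac12(P_y\ast h)(x)+\tfrac{i}{2}(Q_y\ast h)(x)$, and the reduction of both the non-tangential limits and the $L^p$-bound (\ref{pjx}) to the M.\ Riesz theorem for the Hilbert transform --- is exactly the standard argument one finds in those references. In that sense your approach and the paper's ``approach'' (namely, appeal to the literature) coincide.

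One small step does fail as written: your H\"older bound $|\mathcal{C}(h)(z)|\le(2\pi)^{-1}\|h\|_{L^p}\|(s-z)^{-1}\|_{L^{p'}}$ does \emph{not} tend to $0$ as $z$ leaves every bounded set of $\mathbb{C}\setminus\mathbb{R}$, because the last norm depends only on $|\operatorname{Im}z|$ (by translation invariance in $s$) and hence gives no decay along horizontal lines $\operatorname{Im}z=\text{const}$. The conclusion is still correct and is most easily recovered from the very decomposition you already wrote: for fixed $y\neq0$ the kernels $P_y$ and $Q_y$ are bounded and vanish at infinity, so for $h_n\in C_c(\mathbb{R})$ approximating $h$ in $L^p$ one has $P_y\ast h_n,\,Q_y\ast h_n\in C_0(\mathbb{R})$ together with uniform convergence to $P_y\ast h,\,Q_y\ast h$; this handles $|\operatorname{Re}z|\to\infty$, while your H\"older estimate already covers $|\operatorname{Im}z|\to\infty$. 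With this patch the argument is complete.
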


Now, we begin to analyse  the jump matrix $V(x;k)$ defined by the reflection coefficient $r(k)$. By using (\ref{xxgx}) and  (\ref{dcx3}),   we can derive the following   relation
\begin{align}
 &|a(k)|^2+|b(k)|^2=1, \quad  k \in \mathbb{R}, \\
 &|a(k)|^2-|b(k)|^2=1, \quad  k \in i\mathbb{R}.
\end{align}
 If $a(k)$ has no singularity on $\mathbb{R}\cap  i\mathbb{R} $,
  we obtain the following proposition which ensures if $r(k)$ is bounded and satisfies
\begin{equation}
1-|r(k)|^2=\frac{1}{|a(k)|^2} \geq c_{0}^2>0 \quad k \in i \mathbb{R},\label{1_rk}
\end{equation}
where $c_{0}^{-1}:=\sup _{k\in\mathbb{R}}|a(k)|.$
By using (\ref{V11}), direct calculation shows that
 \begin{align}
 &\frac{1}{2}(V +  V^H)  = \left(\begin{array}{cc}1+ |r(k)|^2   & \overline{r(k)}e^{-2ik^2x} \\
r(k)e^{2ik^2x}  & 1 \end{array} \right), \ \ \ k\in \mathbb{R}, \label{sc9at9}\\
&\frac{1}{2}\left(V +  V^H\right) = \left(\begin{array}{cc} 1-|r(k)|^2   & 0 \\
0 & 1 \end{array} \right), \ \ \ k\in i\mathbb{R}, \label{sc9at10}
\end{align}
 which implies that  the jump  matrix $V$ has  strictly positive real part on both $\mathbb{R}$ and $i\mathbb{R}$.
This property allows us to   show  the following result as in      \cite{dep}.

\begin{proposition}
\label{p4}
For every $r(k) \in L_z^{\infty}(\mathbb{R})$ with $z=k^2$ satisfying (\ref{1_rk}), it can be proved that for every
$x\in\mathbb{R}$ and every column-vector $s \in \mathbb{C}^2$, we have
\begin{equation}
\operatorname{Re} (s^HVs) \geq \alpha_-   s^Hs, \quad k \in \mathbb{R} \cup i\mathbb{R},\label{regt}
\end{equation}
and
\begin{equation}
\|Vs\| \leq \alpha_+ \|s\|, \quad k \in \mathbb{R} \cup i\mathbb{R}.\label{ig}
\end{equation}
where $\alpha_- $ and $\alpha_+ $ are positive constants.
\end{proposition}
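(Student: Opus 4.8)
The plan is to reduce both inequalities to a pointwise spectral analysis of the Hermitian part of $V$. Since $\operatorname{Re}(s^H V s) = s^H \tfrac12(V+V^H) s$ for every $s \in \mathbb{C}^2$, the coercivity estimate (\ref{regt}) is equivalent to a uniform lower bound on the smallest eigenvalue of the Hermitian matrix $\tfrac12(V+V^H)$, which has already been computed explicitly in (\ref{sc9at9}) on $\mathbb{R}$ and in (\ref{sc9at10}) on $i\mathbb{R}$. I would treat the two rays separately, since the structure differs: on $\mathbb{R}$ the matrix is a full positive-definite matrix, whereas on $i\mathbb{R}$ it is diagonal. In both cases the $x$-dependence enters only through the phases $e^{\pm 2ik^2 x}$, which have modulus one, so every bound obtained below is automatically uniform in $x \in \mathbb{R}$.

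On $i\mathbb{R}$ the Hermitian part is $\mathrm{diag}(1-|r(k)|^2,\,1)$ by (\ref{sc9at10}), so its eigenvalues are $1-|r(k)|^2$ and $1$; the lower bound (\ref{1_rk}), namely $1-|r(k)|^2 \ge c_0^2 > 0$, immediately yields $\operatorname{Re}(s^H V s) \ge \min\{c_0^2,1\}\, s^H s$. On $\mathbb{R}$, $V$ is itself Hermitian (as noted below RH Problem \ref{rhp:21}) and coincides with the matrix in (\ref{sc9at9}); a direct computation gives $\det V = (1+|r|^2)\cdot 1 - |r|^2 = 1$ and $\operatorname{tr} V = 2+|r|^2$, so $V$ is positive definite pointwise with eigenvalues $\lambda_\pm$ satisfying $\lambda_+\lambda_- = 1$ and $\lambda_+ + \lambda_- = 2+|r|^2$. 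Hence $\lambda_- = 1/\lambda_+$, and since $r \in L^\infty_z(\mathbb{R})$ we may bound $\lambda_+ \le \Lambda := \tfrac12\big((2+\|r\|_\infty^2) + \sqrt{(2+\|r\|_\infty^2)^2 - 4}\,\big)$, which gives $\lambda_- \ge \Lambda^{-1} > 0$ uniformly in $k \in \mathbb{R}$. Taking $\alpha_- := \min\{\Lambda^{-1}, c_0^2, 1\}$ then establishes (\ref{regt}) on $\mathbb{R} \cup i\mathbb{R}$.

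For the upper bound (\ref{ig}) I would estimate the operator norm of $V$ on each ray. On $\mathbb{R}$ the matrix is Hermitian, so $\|V\|_{\mathrm{op}} = \lambda_+ \le \Lambda$. On $i\mathbb{R}$ the matrix is not Hermitian, but its entries are controlled by $1$ and $\|r\|_\infty$, so a crude estimate through the Frobenius norm (or the triangle inequality $\|V\|_{\mathrm{op}} \le 1 + \|J\|_{\mathrm{op}}$) gives $\|V\|_{\mathrm{op}} \le \big((1+\|r\|_\infty^2)^2 + 2\|r\|_\infty^2 + 1\big)^{1/2}$; setting $\alpha_+$ equal to the larger of the two bounds yields $\|Vs\| \le \alpha_+\|s\|$. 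The only genuinely delicate point is the lower bound on $\mathbb{R}$: because $\det V = 1$ forces $\lambda_- = 1/\lambda_+ \to 0$ as $|r| \to \infty$, a uniform positive lower bound is available precisely because $r \in L^\infty_z(\mathbb{R})$ keeps $\lambda_+$ bounded, so this is where the boundedness hypothesis is used essentially, whereas on $i\mathbb{R}$ the essential input is instead the strict inequality (\ref{1_rk}) coming from the absence of spectral singularities of $a(k)$.
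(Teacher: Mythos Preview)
Your argument is correct. The paper does not supply its own proof of this proposition; it simply records the Hermitian parts (\ref{sc9at9})--(\ref{sc9at10}), observes that they are strictly positive, and cites \cite{dep} for the conclusion. Your write-up fills in exactly the spectral computation that this citation is pointing to: on $\mathbb{R}$ you exploit $\det V=1$ together with $r\in L^\infty_z$ to bound $\lambda_-=1/\lambda_+$ from below, and on $i\mathbb{R}$ you read off the diagonal eigenvalues and invoke (\ref{1_rk}). This is the standard route and matches what the reference provides.
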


Next  we show the solvability of the RH problem \ref{rhp:21}
via the  solvability of a  Fredholm equation. For this purpose, we make a  trivial factorization to  the jump matrix
\begin{equation}
V=b_-^{-1} b_+, \  \ \  b_-=I, \  \ \  b_+=V,
\end{equation}
which leads to
\begin{equation}
w_-=0, \  w_+=J, \ w=w_-+w_+=J.
\end{equation}
Therefore  the Cauchy operator   can be  given by
\begin{equation}
\mathcal{C}_{w} f=\mathcal{P}^+\left(f w_-\right)+\mathcal{P}^{-}\left(f w_+\right)=\mathcal{P}^-(fJ).
\end{equation}
According to Beals-Coifman theory, the solution of the RH problem \ref{rhp:21} can be given by
\begin{align}
&N(x;k)=I+\frac{1}{2\pi i} \int_{\mathbb{R}\cup i \mathbb{R}} \frac{\varrho(x;s)J}{s-k} d s=I+\mathcal{C}(\varrho J)(z),\label{mb}
\end{align}
where $z=k^2$ and  $\varrho$ satisfies the  Fredholm equation
\begin{equation}
 \varrho-\mathcal{P}^{-}(\varrho J)=I,\nonumber
\end{equation}
which is equivalent to
\begin{equation}
N_-(x;k) = I+ \mathcal{P}^{-}\left(N_{-}  J \right)(z), \quad z \in \mathbb{R},\label{bcfc}
\end{equation}
by using the relation $N_- =  \varrho b_-=\varrho$. Once $N_{-}(x ; k) $ is found from the Fredholm integral equation (\ref{bcfc}),
 $N_+(x;k) $ can be obtained from the projection formula
\begin{equation}
N_+ (x;k)=I+ \mathcal{P}^+\left(N_- J \right)(z), \quad z \in \mathbb{R}.\label{g22}
\end{equation}

To consider solvability of the  equation (\ref{bcfc}),
we   let $G_\pm=N_\pm -I$  and transform  it   into a equivalent form
\begin{equation}
G_-(x;k) =  \mathcal{P}^{-}\left(G_{-}  J \right)(z) + \mathcal{P}^{-}\left(  J \right),   \label{bc6fc}
\end{equation}
Therefore the  solvability of   the RH problem  \ref{rhp:21}
if and only if there is a solution $G_{-}(x;k) \in L_{z}^{2}(\mathbb{R})$ of the Fredholm integral equation  (\ref{bcfc}).

\begin{proposition}
For $r(k)\in L_{z}^2(\mathbb{R})\cap L_{z}^{\infty}(\mathbb{R})$, then linear inhomogeneous equation (\ref{bc6fc})
has  a unique solution $G_-(x;k)\in L_{z}^2(\mathbb{R})$.
\label{lm3}
\end{proposition}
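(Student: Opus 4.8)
The plan is to recast (\ref{bc6fc}) as the operator equation $(\mathrm{Id}-\mathcal{C}_w)G_-=\mathcal{P}^-(J)$ on the space $L^2_z(\mathbb{R})$ of $2\times2$ matrix functions, where $\mathcal{C}_w f:=\mathcal{P}^-(fJ)$ and everything is read in the variable $z=k^2$, so that the contour $\mathbb{R}\cup i\mathbb{R}$ in the $k$-plane becomes the real axis. First I would check that the problem is well posed at the Banach-space level. Since $r\in L^2_z\cap L^\infty_z$, each entry of $J$ is a product of a factor controlled by $r$ and a unimodular exponential, hence lies in $L^2_z\cap L^\infty_z$; in particular $J\in L^2_z$, so by the boundedness of $\mathcal{P}^-$ on $L^2$ (Proposition \ref{p3}) the inhomogeneous term $\mathcal{P}^-(J)$ lies in $L^2_z$. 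Likewise, multiplication by $J\in L^\infty_z$ followed by $\mathcal{P}^-$ makes $\mathcal{C}_w$ a bounded operator on $L^2_z$ with $\|\mathcal{C}_w\|\le c\,\|J\|_{L^\infty_z}$. It therefore suffices to prove that $\mathrm{Id}-\mathcal{C}_w$ is a bijection of $L^2_z$, from which $G_-=(\mathrm{Id}-\mathcal{C}_w)^{-1}\mathcal{P}^-(J)$ is the unique solution.

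The heart of the matter is a coercivity estimate distilled from the positivity of Proposition \ref{p4}. For $f\in L^2_z$ set $g:=(\mathrm{Id}-\mathcal{C}_w)f$ and introduce the sectionally analytic function $\Phi:=\mathcal{C}(fJ)$, which is analytic off $\mathbb{R}$, decays like $z^{-1}$ at infinity, and has $L^2$ boundary values $\Phi_\pm=\mathcal{P}^\pm(fJ)$. The equation gives $\Phi_-=f-g$, and the Plemelj relation $\Phi_+-\Phi_-=fJ$ gives $\Phi_+=f(I+J)-g=fV-g$. Because $\Phi_+$ is the boundary value of a Hardy-space function in the upper half-plane, while $z\mapsto\Phi_-(z)^H$ extends analytically to the upper half-plane as well, the product belongs to $H^1(\mathbb{C}^+)$, so (\ref{lch}) of Proposition \ref{p3} forces $\int_{\mathbb{R}}\Phi_+\Phi_-^H\,dz=0$. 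Substituting the two expressions and taking the real part of the trace yields
\[
\operatorname{Re}\operatorname{tr}\int_{\mathbb{R}} fVf^H\,dz=\operatorname{Re}\operatorname{tr}\int_{\mathbb{R}}\bigl(fVg^H+gf^H-gg^H\bigr)\,dz.
\]
By Proposition \ref{p4} the left side is at least $\alpha_-\|f\|_{L^2_z}^2$, while Cauchy--Schwarz together with $\|Vs\|\le\alpha_+\|s\|$ bounds the right side by $(\alpha_++1)\|f\|_{L^2_z}\|g\|_{L^2_z}+\|g\|_{L^2_z}^2$. Solving the resulting quadratic inequality in $\|f\|_{L^2_z}$ produces a constant $C=C(\alpha_-,\alpha_+)$ with $\|f\|_{L^2_z}\le C\,\|(\mathrm{Id}-\mathcal{C}_w)f\|_{L^2_z}$. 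Thus $\mathrm{Id}-\mathcal{C}_w$ is bounded below, hence injective with closed range, and the homogeneous case $g=0$ is exactly the vanishing lemma: it forces $\Phi_-=0$, i.e. $G_-=0$.

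It remains to upgrade ``bounded below'' to ``invertible'' by excluding a cokernel. I would run the identical computation for the adjoint $(\mathrm{Id}-\mathcal{C}_w)^\ast$, whose Beals--Coifman data are governed by $V^H$; since the Hermitian part of $V$ in (\ref{sc9at9})--(\ref{sc9at10}) is unchanged under conjugate transposition, Proposition \ref{p4} applies verbatim and gives the same lower bound $\|(\mathrm{Id}-\mathcal{C}_w)^\ast h\|_{L^2_z}\ge C^{-1}\|h\|_{L^2_z}$. Hence the adjoint is injective, the range of $\mathrm{Id}-\mathcal{C}_w$ is dense, and being also closed it is all of $L^2_z$, so $\mathrm{Id}-\mathcal{C}_w$ is a bijection and (\ref{bc6fc}) has a unique solution $G_-\in L^2_z(\mathbb{R})$. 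A fallback for the surjectivity step, which I would mention, is to use that $r_\tau:=\tau r$ preserves the strict positivity $1-|r_\tau|^2\ge c_0^2>0$ of (\ref{1_rk}) for all $\tau\in[0,1]$, so $\{\mathrm{Id}-\mathcal{C}_{w_\tau}\}$ is a norm-continuous family of Fredholm operators joining $\mathrm{Id}-\mathcal{C}_w$ to $\mathrm{Id}$; homotopy invariance of the index then gives index zero, and the injectivity already proved promotes to invertibility.

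The step I expect to be the main obstacle is justifying the contour identity $\int_{\mathbb{R}}\Phi_+\Phi_-^H\,dz=0$: it must be made rigorous through the Hardy-space membership of the boundary data and the $\mathcal{O}(z^{-1})$ decay of $\Phi$ supplied by Proposition \ref{p3}, and one must carefully track the Hermitian-versus-non-Hermitian splitting of $V$ on $\mathbb{R}$ and $i\mathbb{R}$ recorded in (\ref{sc9at9})--(\ref{sc9at10}), so that after the passage to $z=k^2$ the single constant $\alpha_-$ genuinely dominates both portions of the contour.
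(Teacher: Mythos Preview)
Your argument is correct and rests on the same core computation as the paper: both build the sectionally analytic function $\Phi=\mathcal{C}(fJ)$, use Cauchy's theorem on large semicircles to obtain the vanishing of $\int_{\mathbb{R}}\Phi_+\Phi_-^{H}\,dz$, and then invoke the strict positivity of $\operatorname{Re}V$ from Proposition~\ref{p4}. The paper, however, organizes the logic differently: it simply asserts that $I-\mathcal{C}_w$ is Fredholm of index zero and then proves only the vanishing lemma (injectivity of the homogeneous problem), whereas you extract a quantitative coercivity bound $\|f\|\le C\|(\mathrm{Id}-\mathcal{C}_w)f\|$ and then close surjectivity by a separate argument. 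Your route yields more---a bound on the inverse operator---which is in fact what the paper establishes later as Proposition~\ref{mp18}; the paper's route is shorter here but defers that quantitative content.

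One caution on your adjoint step: the Hilbert-space adjoint of $f\mapsto\mathcal{P}^{-}(fJ)$ is $g\mapsto(\mathcal{P}^{-}g)J^{H}$, not $g\mapsto\mathcal{P}^{-}(gJ^{H})$, so it is not literally a Beals--Coifman operator of the same shape. A parallel positivity computation can still be carried out, but the cleanest way to finish is your homotopy fallback $r_\tau=\tau r$, which is exactly the standard device and is airtight here since the constraint (\ref{1_rk}) persists for all $\tau\in[0,1]$.
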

\begin{proof}
Since the operator $I-\mathcal{P}^{-}$ is  a Fredholm operator with  the index zero,
  the uniqueness of a solution to the linear integral equation (\ref{bc6fc})   is   equivalent
 to  show that   the homogeneous equation
 \begin{equation}
\left(I-\mathcal{P}^-\right) s=0 \label{gh22}
\end{equation}
  has a unique zero solution in $L_{z}^2(\mathbb{R})$.
For a given solution   $s \in L_{z}^2(\mathbb{R})$  of the equation  (\ref{gh22}),  we define two analytical  functions in $\mathbb{C} \backslash \mathbb{R}$ by
\begin{equation}
s_1(z):=\mathcal{C}(sJ)(z) \quad \text { and } \quad s_2(z):=\mathcal{C}(sJ)^{H}(z),
\end{equation}
where the superscript $H$   denotes the Hermite conjugate. In the upper half plane $\mathbb{C}^+$, taking 0 as the center of the circle and sufficiently large $R>0$ as the radius, we obtain a closed enclosing line $\{|z|=R, \operatorname{Im}z>0\}\cup (-R,R)$. As $s_1(z)$ and $s_2(z)$ are analytic functions when $z\in \mathbb{C}^+$, the Cauchy  theorem implies that
\begin{equation}
\oint s_{1}(z) s_2(z)dz=0.
\end{equation}
Because $s(z),J \in L_z^2(\mathbb{R})$, we know
$s_{1,2}(z)=\mathcal{O}\left(z^{-1}\right)$ as $|z| \rightarrow \infty$. Thus, the integral on the semi-circle goes to zero as $R \rightarrow
\infty$, from which we obtain
\begin{equation}
\begin{aligned}
\int_{\mathbb{R}} s_1(z) s_2(z) dz  =\int_{\mathbb{R}}\left[\mathcal{P}^{-}(s J)+s J\right]\left[\mathcal{P}^{-}(sJ)\right]^{H} \mathrm{d}z=0.\nonumber
\end{aligned}
\end{equation}
where the identity $\mathcal{P}^+-\mathcal{P}^-=I$   is used. As $\mathcal{P}^{-}(sJ)=s$, we finally obtain
\begin{equation}
 \int_{\mathbb{R}}sVs^{H} dz=0 \Longrightarrow  \int_{\mathbb{R}}{\rm Re} (sVs^{H} ) \mathrm{d}z=0,\label{gisg}
\end{equation}
which yields $ s=0$. Otherwise,  if $s\not=0$,  by Proposition \ref{p4},   we have
\begin{equation}
 \int_{\mathbb{R}}{\rm Re} ( sVs^{H} ) dz\geq \alpha_-\int_{\mathbb{R}}  |s|^2 \mathrm{d}z>0, \nonumber
\end{equation}
which is contradict  with  (\ref{gisg}).
  Therefore, the equation (\ref{gisg}) implies that $s=0$ and  the homogeneous equation
  (\ref{gh22})  has a unique solution in $L_{z}^2(\mathbb{R})$.
\end{proof}

\section{Inverse scattering transform  on  $z$-plane }
\label{sec:section3}
\hspace*{\parindent}
We decide to transform  original spectral problem and RH problem  from the $k$- plane to the $z$-plane by exploiting  the  odd-even  property    of the Jost functions in the column. This facilitates the further application of the Fourier transform  and Cauchy integral projection to estimate the space of the solution to the RH problem and the potential $u$.
\subsection{A ZS type spectral problem}
\hspace*{\parindent}
To put the original RH problem \ref{rhp:21} into a  RH problem  with a jump contour  on the  real axis, we introduce a transformation   defined by
\begin{equation}
\Phi(x;z)=A(x;k)\psi(x;k)B(x,k),\label{b1}
\end{equation}
with $z=k^2$ and
$$A(x;k)= \begin{pmatrix}
1 & 0 \\
-\bar{u}_x & 2ik
\end{pmatrix}, \ \ B(x;k)= \begin{pmatrix}
1 & 0 \\
0 & (2ik)^{-1}
\end{pmatrix},$$
then spectral problem (\ref{laxn1}) is changed into a  Zakharov-Shabat type spectral problem
\begin{equation}
\Psi_x+iz\left[\sigma_3,\Psi\right]=\widetilde{Q} \Psi,\label{xl1}
\end{equation}
where
\begin{equation}
\widetilde{Q} =\frac{1}{2 i}\begin{pmatrix}
|u_x|^2 & u_x \\
-2i\bar{u}_{xx}-\bar{u}_x|u_x|^2 & -|u_x|^2
\end{pmatrix}. \nonumber
\end{equation}

It can be shown that $\Psi^\pm(x;z)$ satisfies the following Volterra integral equations
\begin{equation}
\Psi^{\pm}(x;z)=I+\int_{\pm \infty}^xe^{-iz(x-y)\widehat{\sigma}_3}\widetilde{Q}\Psi^{\pm}(y;z)dy.\label{psi12}
\end{equation}

Denote $\Psi^{\pm}(x;z)=\left(\Psi^{\pm }_1 (x;z),\Psi^{\pm }_2(x;z)\right) $
with the subscripts  $1$ and $2$ denoting the first and second columns of $\Psi^{\pm}(x;z)$,
then we can show that

\begin{proposition}
\label{la1}
Suppose that $u (x) \in H^3(\mathbb{R}) \cap H^{2,1}(\mathbb{R})$. For every $z \in  \mathbb{R}$, there exists unique
solutions $\Psi^{\pm}(\cdot;z) \in L^{\infty}_x(\mathbb{R})$ satisfying the integral equations (\ref{psi12}) which have the following properties
\begin{itemize}

\item[$\blacktriangleright$] {\bf Analyticity}:
for every $x \in \mathbb{R}, \Psi_1^-(x; \cdot)$ and $\Psi_2^+(x ; \cdot)$ are analytically continued  in $\mathbb{C}^+$, whereas $\Psi_1^+(x;\cdot)$ and $\Psi_2^-(x;
\cdot)$ are analytically continued in $\mathbb{C}^-$.

\item[$\blacktriangleright$] {\bf Boundedness}:
there exists a positive $z$-independent constant $c$ such that
\begin{equation}
\left\|\Psi^{\pm}(\cdot;z)\right\|_{L^{\infty}_x (\mathbb{R})} \leq c.
\quad z \in \mathbb{C}^\pm.\label{pc}
\end{equation}
\end{itemize}
\end{proposition}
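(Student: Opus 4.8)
The plan is to treat the Volterra integral equations (\ref{psi12}) by the classical method of successive approximations, with the entire argument resting on the integrability of the potential $\widetilde{Q}$. Accordingly, the first step I would carry out is to verify that $\widetilde{Q}\in L^1(\mathbb{R})$ as a matrix-valued function. The diagonal entries are $\pm|u_x|^2/(2i)$, which lie in $L^1$ simply because $u_x\in L^2(\mathbb{R})$ (a consequence of $u\in H^3$). The off-diagonal entries are more delicate: the $(1,2)$ entry is a multiple of $u_x$, while the $(2,1)$ entry combines $\bar u_{xx}$ with the lower-order term $\bar u_x|u_x|^2$. Neither $u_x$ nor $u_{xx}$ need be integrable under the $H^3$ hypothesis alone, and this is exactly the point where the weighted regularity $u\in H^{2,1}$ is used: since $\langle x\rangle u_x,\langle x\rangle u_{xx}\in L^2$, pairing with $\langle x\rangle^{-1}\in L^2$ via Cauchy--Schwarz yields $u_x,u_{xx}\in L^1$, whereas $\bar u_x|u_x|^2\in L^1$ follows from $u_x\in L^\infty$ (Sobolev embedding) together with $|u_x|^2\in L^1$. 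Hence every entry of $\widetilde{Q}$ is integrable and $\|\widetilde{Q}\|_{L^1}<\infty$.

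Next, for $z\in\mathbb{R}$ I would construct $\Psi^{\pm}$ as a Neumann series $\Psi^{\pm}=\sum_{n\ge 0}\Psi^{\pm}_n$ with $\Psi^{\pm}_0=I$ and $\Psi^{\pm}_{n+1}(x;z)=\int_{\pm\infty}^x e^{-iz(x-y)\widehat{\sigma}_3}\widetilde{Q}(y)\Psi^{\pm}_n(y;z)\,dy$. For real $z$ the conjugation $e^{-iz(x-y)\widehat{\sigma}_3}$ acts by unimodular phases, so it has operator norm $1$, and an induction on $n$ gives $\|\Psi^{\pm}_n(x;z)\|\le \frac{1}{n!}\big(\int_{\pm\infty}^x\|\widetilde{Q}(y)\|\,dy\big)^n\le \|\widetilde{Q}\|_{L^1}^n/n!$. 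Summation therefore yields absolute and uniform convergence with $\|\Psi^{\pm}(\cdot;z)\|_{L^\infty_x}\le e^{\|\widetilde{Q}\|_{L^1}}$, establishing existence in $L^\infty_x(\mathbb{R})$. Uniqueness follows from the very same contraction estimate applied to the difference of two solutions (equivalently, a Gronwall argument), since the Volterra operator becomes a contraction on $L^\infty_x$ after finitely many iterations.

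For analyticity and the uniform bound in the half-planes I would track the sign of the exponent carried by the off-diagonal entries of $e^{-iz(x-y)\widehat{\sigma}_3}$, namely $e^{\mp 2iz(x-y)}$. In the column equation for $\Psi^-_1$ the integration runs over $y<x$, so $x-y>0$, and the only off-diagonal factor that appears is $e^{2iz(x-y)}=e^{2i\,\mathrm{Re}(z)(x-y)}e^{-2\,\mathrm{Im}(z)(x-y)}$, which is bounded by $1$ whenever $\mathrm{Im}(z)\ge 0$; symmetrically, $\Psi^+_2$ (integrated from $+\infty$, so $x-y<0$) involves only $e^{-2iz(x-y)}$, again bounded by $1$ for $\mathrm{Im}(z)\ge 0$. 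Consequently the Neumann estimate above goes through verbatim for $z\in\mathbb{C}^+$, giving $\Psi^-_1(x;\cdot)$ and $\Psi^+_2(x;\cdot)$ bounded there; since each $\Psi^{\pm}_n$ is analytic in $z$ and the series converges uniformly on compact subsets of $\mathbb{C}^+$, the Weierstrass theorem delivers analyticity of the limit. The mirror-image sign analysis places $\Psi^+_1$ and $\Psi^-_2$ in $\mathbb{C}^-$, which together yield the asserted bound (\ref{pc}).

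The main obstacle is the integrability check in the first step, specifically that the off-diagonal derivative terms $u_x$ and $u_{xx}$ lie in $L^1(\mathbb{R})$. This is not automatic from $H^3$ and is precisely the reason the weighted space $H^{2,1}$ enters the hypothesis; once $\widetilde{Q}\in L^1$ is secured, the remainder is the standard Volterra/Neumann-series machinery for Zakharov--Shabat Jost functions, and the half-plane statements reduce to the elementary sign bookkeeping for $\mathrm{Im}(z)(x-y)$ described above.
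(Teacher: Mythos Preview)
Your proposal is correct and follows essentially the same route as the paper: both arguments hinge on $\widetilde{Q}\in L^1(\mathbb{R})$ and the Volterra--Neumann estimate $\|\Psi^{\pm}_n\|_{L^\infty}\le \|\widetilde{Q}\|_{L^1}^n/n!$, together with the sign bookkeeping on $e^{\pm 2iz(x-y)}$ for the half-plane continuation. The only organizational difference is that the paper first establishes existence and uniqueness via a local Banach contraction on subintervals before passing to the Neumann series for analyticity and the uniform bound, whereas you go directly to the Neumann series for everything; your streamlining loses nothing, since the factorial bound already subsumes the contraction step.
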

\begin{proof}
As illustrative example, we consider the boundedness and the analyticity of the Jost functions $\Psi^-(x;z)$.
From (\ref{psi12}), we get the following two integral equations
\begin{align}
&\Psi^-_1(x;z):=e_1+\int_{-\infty}^x
\text{diag}\left(1,e^{2iz(x-y)}\right)\widetilde{Q} \Psi^-_1(y;z) dy, \label{wew}\\
&\Psi^-_2(x;z):=e_2+\int_{-\infty}^x
\text{diag}\left(e^{-2iz(x-y)},1\right)\widetilde{Q} \Psi^-_2(y;z) dy.
\end{align}
It suffices to give the proof for $\Psi^-_1(x;z)$.  We
define the integral operator $F$ by
\begin{equation}
(Ff)(x;z):=\int_{-\infty}^x
\text{diag}\left(1,e^{2iz(x-y)}\right)\widetilde{Q} f(y)\mathrm{d}y,\label{kf}
\end{equation}
with vector  $f=(f_1,f_2)^T,$   then the  integral equation  (\ref{wew}) can be written in a operator equation
\begin{equation}
\Psi^-_1=e_1+F \Psi^-_1.\label{psii}
\end{equation}
According to the Fredholm alternative, we  show that   the homogeneous equation
\begin{equation}
(I-F)\Psi^-_1=0 \Longleftrightarrow F \Psi^-_1 =\Psi^-_1.\label{ps23}
\end{equation}
has a unique zero solution. The equation (\ref{ps23}) implies that we can show the operator $F$ has a unique fixed point by  the Banach fixed point theorem.

For vector function $f(x)=(f_1, f_2)^T \in L^\infty$, define the norm by
$$\| f\|_{L^\infty }= \| f_1\|_{L^\infty }+\|f_2\|_{L^\infty }. $$
 We expand (\ref{kf}) in the following form
\begin{equation}
Ff(x;z)=(h_{1},
h_{2}
)^T,
\end{equation}
where
\begin{align}
&h_{1 }=\frac{1}{2i} \int_{-\infty}^{x} (|u_y|^2 f_1+u_yf_2)dy,\nonumber\\
&h_{2 }=- \frac{1}{2i} \int_{-\infty}^{x} ((2i \bar{u}_{yy}+\bar{u}_y|u_y|^2)f_1+|u_y|^2 f_2)e^{2iz(x-y)}dy.\nonumber
\end{align}
We divide the above components of $Ff(x;z)$ into two parts to deal with. For the first and second term, we have for every $z\in \mathbb{C}^+$ and every $x_0 \in \mathbb{R}$,
\begin{align}
& \|h_{1 }\|_{L^\infty} = \sup_{x\in (-\infty, x_0)}
 \bigg| \frac{1}{2i} \int_{-\infty}^x(|u_y|^2 f_1+u_yf_2)dy\bigg|\nonumber \\
 &\leq \frac{1}{2  } ( \|f_1\|_{L^\infty} \|u_x\|_{L^2}^2+\|f_2\|_{L^\infty} \|u_x\|_{L^1})dy\nonumber \\
 &  \leq\frac{1}{2}(\|u_x\|^2_{L^2}+\|u_x\|_{L^1})\|f\|_{L^{\infty}}.
\end{align}
Noting that $z\in \mathbb{C}^+$, $|e^{2iz(x-y)}|\leq 1 $, in the similar way above,  we can show that
\begin{equation}
\begin{aligned}
\|h_{2 }\|_{L^\infty} \leq\frac{1}{2}(2\|u_{xx}\|_{L^1}+\|u_x\|^3_{L^3}+\|u_x\|^2_{L^2})\|f(\cdot; z)\|_{L^{\infty}}.
\end{aligned}
\end{equation}
Hence,
\begin{equation}
\begin{aligned}
\|Ff\|_{L^\infty} \leq  (2\|u_{x}\|^2_{L^2}+\|u_x\|^3_{L^3}+2\|u_{xx}\|_{L^1}+\|u_{x}\|_{L^1})\|_{L^{\infty}}.\nonumber
\end{aligned}
\end{equation}
which implies that $F$ is a contraction  operator  if $x_0 \in \mathbb{R}$ is chosen such that
\begin{equation}
2\|u_{y}\|^2_{L^2}+\|u_y\|^3_{L^3}+2\|u_{yy}\|_{L^1}+\|u_{y}\|_{L^1}<1.\label{xyq}
\end{equation}
According to the Banach fixed point theorem, for $x_0$ and every $z\in \mathbb{C}^+$, there exists a unique solution $\Psi^-_1(x;z)
\in L^{\infty}\left(-\infty, x_0\right)$ to the   equation (\ref{ps23}). $\mathbb{R}$ can be covered by a finite number of intervals, in which the estimate (\ref{xyq}) is satisfied. By putting the unique solutions in each
subinterval together, we finally obtain the unique solution $\Psi^-_1(\cdot;z) \in L^{\infty}(\mathbb{R})$ for every $z\in\mathbb{C}^+$.

The analyticity of $\Psi^-_1(x;\cdot)$ in $\mathbb{C}^+$ for every $x \in \mathbb{R}$ follows from the absolute and the uniform convergence of the
 Neumann series of the analytic functions in $k$. Define  a Neumann sequence by
\begin{align}
&w_0=e_1, \ \ w_{n+1}(x;z)= Fw_n = \int_{- \infty}^x F(x,y;z)w_n(y)dy.\label{3numen}
\end{align}
which constitute a series
\begin{align}
&w(x;z)= \sum_{n=0}^\infty w_n(x;z)=\sum_{n=0}^\infty F^ne_1.\label{3opio}
\end{align}

For a  matrix function $\widetilde{Q}=(\widetilde{Q}_{i j})_{ij=1}^2$,  define its $L^1$ matrix norm by
\begin{equation}
\|\widetilde{Q}\|_{L^1}:=\sum_{i=1}^2\sum_{j=1}^2\left\|\widetilde{Q}_{i j}\right\|_{L^1}.
\end{equation}
If $u \in H^3(\mathbb{R})\cup H^{2,1}(\mathbb{R})$, we have $\widetilde{Q}(u)
\in L^1(\mathbb{R})$, where the matrix $\widetilde{Q}(u)$ appears in the integral kernel $F$ given by (\ref{kf}). For (\ref{3numen}), we have
\begin{equation}
\|w_n\|_{L^{\infty}}= \left\| F^n e_1 \right\|_{L^{\infty}} \leq \frac{1}{n!}\left\|\widetilde{Q}(u)\right\|_{L^1}^n.\label{knjs}
\end{equation}
Hence, the Neumann series (\ref{3opio}) absolutely and uniformly  converges the solution $\Psi^-_1(x;z)$  of  the Volterra integral equation (\ref{wew})  for every $x \in
\mathbb{R}$ and $z \in \mathbb{C}^+$. Moreover $\Psi^-_1(x;z)$ is analytic in $\mathbb{C}^+$ for every $x \in \mathbb{R}$ and satisfies the bound (\ref{pc}).
\end{proof}

\begin{proposition} \label{prow}
The small-norm constraint
\begin{equation}
2\|u_{x}\|^2_{L^2}+\|u_x\|^3_{L^3}+2\|u_{xx}\|_{L^1}+\|u_{x}\|_{L^1}<1.\label{xyq3}
\end{equation}
is a sufficient condition which guarantees that $a(z)$ has no eigenvalues and no spectral singularity.
\end{proposition}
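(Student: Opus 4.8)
The plan is to show that, under the hypothesis (\ref{xyq3}), the scattering coefficient $a(z)=\det\bigl(\Psi_1^-(x;z),\Psi_2^+(x;z)\bigr)$ — which is independent of $x$ because $\operatorname{tr}\widetilde{Q}=0$ — stays so close to $1$ throughout the closed upper half-plane $\overline{\mathbb{C}^+}$ that it cannot vanish. Recall that eigenvalues are the zeros of $a(z)$ in the domain of analyticity $\mathbb{C}^+$, whereas spectral singularities are its zeros on the contour $\mathbb{R}$ (the image of $\mathbb{R}\cup i\mathbb{R}$ under $z=k^2$); hence the claim reduces to proving $a(z)\neq 0$ for every $z\in\overline{\mathbb{C}^+}$.

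First I would produce an integral representation for $a(z)$ that involves \emph{only} the first column of $\widetilde{Q}$. Since the first component of the Volterra equation (\ref{wew}) carries no exponential factor, and since $\Psi_2^+(x;z)\to e_2$ as $x\to+\infty$ so that $a(z)=\lim_{x\to+\infty}[\Psi_1^-(x;z)]_1$, passing to the limit gives
\begin{equation}
a(z)=1+\frac{1}{2i}\int_{\mathbb{R}}\Bigl(|u_y|^2\,[\Psi_1^-(y;z)]_1+u_y\,[\Psi_1^-(y;z)]_2\Bigr)\,dy.\nonumber
\end{equation}
For $z\in\overline{\mathbb{C}^+}$ this integral converges absolutely, because $|u_x|^2,u_x\in L^1(\mathbb{R})$ (as $u\in H^3\cap H^{2,1}$) and $\Psi_1^-(\cdot;z)$ is bounded by Proposition \ref{la1}; the factor $e^{2iz(x-y)}$ in the second component never exceeds $1$ in modulus for $\operatorname{Im}z\ge 0$ and $y\le x$.

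Next I would insert the uniform bound coming from the Neumann series. From (\ref{3opio})--(\ref{knjs}) one has $\|\Psi_1^-(\cdot;z)\|_{L^\infty}\le\sum_{n\ge 0}\tfrac{1}{n!}\|\widetilde{Q}(u)\|_{L^1}^n=e^{\|\widetilde{Q}(u)\|_{L^1}}$, while a direct computation of the matrix $L^1$-norm of $\widetilde{Q}$ in (\ref{xl1}) yields $\|\widetilde{Q}(u)\|_{L^1}\le\tfrac12\bigl(2\|u_x\|_{L^2}^2+\|u_x\|_{L^3}^3+2\|u_{xx}\|_{L^1}+\|u_x\|_{L^1}\bigr)$, so (\ref{xyq3}) forces $\|\widetilde{Q}(u)\|_{L^1}<\tfrac12$. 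Estimating the representation by the triangle inequality then gives
\begin{equation}
|a(z)-1|\le \tfrac12\,e^{\|\widetilde{Q}(u)\|_{L^1}}\bigl(\|u_x\|_{L^2}^2+\|u_x\|_{L^1}\bigr)<\tfrac12 e^{1/2}<1,\nonumber
\end{equation}
where I used that $\|u_x\|_{L^2}^2+\|u_x\|_{L^1}<1$ is implied by (\ref{xyq3}). Hence $a(z)\neq 0$ on $\overline{\mathbb{C}^+}$, i.e. $a$ has neither eigenvalues nor spectral singularities.

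The step I expect to be delicate is forcing the deviation strictly below $1$. A crude perturbation of the full Wronskian, writing $\Psi_1^-=e_1+p$ and $\Psi_2^+=e_2+q$ with $\|p\|,\|q\|\le e^{\|\widetilde{Q}\|_{L^1}}-1$, only yields a bound of the shape $2\delta(1+\delta)$ that exceeds $1$ under the given constraint and is therefore useless. The gain comes precisely from the one-sided representation above, in which only the genuine potential entries $|u_x|^2$ and $u_x$ of $\widetilde{Q}$ appear, so that merely the lighter norms $\|u_x\|_{L^2}^2$ and $\|u_x\|_{L^1}$ — both strictly dominated by the left-hand side of (\ref{xyq3}) — enter the estimate. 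A secondary point needing care is the convergence and uniformity of the representation up to the boundary $z\in\mathbb{R}$, which is exactly where spectral singularities would live; this is handled by the absolute integrability noted above rather than by any decay of the oscillatory exponential.
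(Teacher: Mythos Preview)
Your argument is correct and follows essentially the same route as the paper: both derive the one-sided representation $a(z)=1+\tfrac{1}{2i}\int_{\mathbb{R}}\bigl(|u_y|^2\Psi_{11}^-+u_y\Psi_{21}^-\bigr)\,dy$ and bound the integral using the smallness of $\|\widetilde{Q}\|_{L^1}$ implied by (\ref{xyq3}). The only cosmetic differences are that the paper obtains this representation by transforming the $\psi$-integral (\ref{wewe}) via $\Psi=A\psi B$ rather than by passing to the limit $x\to+\infty$ in the Wronskian, and that the paper first splits $\Psi_{11}^-=(\Psi_{11}^- -1)+1$ and uses $\|\Psi_1^- - e_1\|_{L^\infty}<1$ (from the contraction $\|F\|<\tfrac12$) to reach the sharper conclusion $|a(z)|>\tfrac12$, whereas your Neumann-series bound $\|\Psi_1^-\|_{L^\infty}\le e^{1/2}$ gives only $|a(z)|>1-\tfrac12 e^{1/2}>0$; either suffices for the stated proposition.
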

\begin{proof}
As we have already obtain $a(k)$ is even of $k$ in (\ref{abdc}), we will use the notation $a(z)$, which is essentially the same as $a(k)$. We only need to prove that under the small-norm constraint, $a(z)>0$ as $\mathrm{Im}z>0$. It is easy to see that if (\ref{xyq3}) holds, the Volterra integral equation (\ref{wew})  has the unique solution $\Psi^-_1(x;z)\in L^\infty_x(\mathbb{R})$, which also satisfies
\begin{equation}
\left\|\Psi^-_1(x;z)-e_{1}\right\|_{L^{\infty}_x(\mathbb{R})}<1, \quad z \in \mathbb{C}^{+}.
\end{equation}
Recall the integral expression of $a(z)$
\begin{equation}
a(z)=1+k \int_{\mathbb{R}} u_y \psi^-_{21}(x;k) \mathrm{d}x. \label{wewe}
\end{equation}
While by  (\ref{b1}), we get
\begin{equation}
\psi^-_{21}(x;k)=\frac{1}{2ik}\left(\bar{u}_x \Psi^-_{11}(x;z)+\Psi^-_{21}(x;z)\right).\nonumber
\end{equation}
The equation  (\ref{wewe}) can  be written  as
\begin{equation}
a(z)=1+\frac{1}{2i} \int_{\mathbb{R}}\left(|u_y|^2 \Psi^-_{11}(y;z)+u_y \Psi^-_{21}(y;z)\right) \mathrm{d}y,\nonumber
\end{equation}
which yields
\begin{equation}
|a(z)|\geq 1-\left|\frac{1}{2i} \int_{\mathbb{R}}\left(|u_y|^2\Psi^-_{11}(y;z)+u_y \Psi^-_{21}(y;z)\right) \mathrm{d} y\right|. \label{weed}
\end{equation}
  We estimate the above integral equation by
\begin{equation}
\begin{aligned}\nonumber
&\left|\frac{1}{2 i} \int_{\mathbb{R}}\left(|u_y|^2 \Psi^-_{11}(y;z)+u_y \Psi^-_{21}(y;z)\right) \mathrm{d}
y\right|\\
=&\frac{1}{2}\left|\int_{\mathbb{R}}\left(|u_y|^2\left(\Psi^-_{11}(y;z)-1\right)+|u_y|^2+u_y \Psi^-_{21}(y;z)\right) \mathrm{d}y\right| \\
\leq & \frac{1}{2}\|u_y\|_{L^2}^2+\frac{1}{2}\left\|\Psi^-_{11}-1\right\|_{L^{\infty}}\|u_x\|_{L^2}^2+\frac{1}{2}\left\|\Psi^-_{21}\right\|_{L^{\infty}}\|u_x\|_{L^{1}} \\
\leq & \frac{1}{2}\left(2\|u_x\|_{L^{2}}^{2}+\|u_x\|_{L^{1}}+\|u\|_{L^{3}}^{3}+2\left\|u_{xx}\right\|_{L^{1}}\right).
\end{aligned}
\end{equation}
Hence if we  add the small-norm constraint to the above equation to deduce that the right-hand side is small than $1/2$,  then the equation (\ref{weed}) indicates
\begin{equation}
|a(z)|>1/2, \quad \operatorname{Im} z>0.
\end{equation}
As $a(z)$ continues to the boundary $\mathbb{R}$, we can still get the same result on the boundary by the order preservation of the limit
\begin{equation}
|a(z)|\geq 1/2, \quad \operatorname{Im} z=0.
\end{equation}
The above procedure shows that $a(k)$ has no eigenvalues and no spectral singularity.
\end{proof}
This proposition show that  the small-norm constraint (\ref{xyq3}) is a sufficient
condition that the assumption  (\ref{asum})  is  satisfied.

The following proposition will   play  an  important role  in the  subsequent various  estimates on  the   Jost function, coefficients, Cauchy integral and potentials.

\begin{proposition} \cite{dep} \label{y1}
If $w\in H^1(\mathbb{R})$, then
\begin{equation}
\sup _{x \in \mathbb{R}}\left\|\int_{-\infty}^{x} \mathrm{e}^{2 i z(x-y)} w(y) dy\right\|_{L_{z}^2(\mathbb{R})} \leq
\sqrt{\pi}\|w\|_{L^2} ,\label{wgj1}
\end{equation}
and
\begin{equation}
\sup _{x \in \mathbb{R}}\left\|2 i z \int_{-\infty}^{x} \mathrm{e}^{2 i z(x-y)} w(y) d y+w(x)\right\|_{L_z^2(\mathbb{R})} \leq
\sqrt{\pi}\left\|\partial_{x} w\right\|_{L^2}.\label{wjd}
\end{equation}
Moreover, if $w \in L^{2,1}(\mathbb{R})$, then for every $x_0 \in \mathbb{R}^-$, we have
\begin{equation}
\sup _{x \in\left(-\infty, x_0\right)}\left\|\langle x\rangle \int_{-\infty}^x \mathrm{e}^{2i z(x-y)} w(y) d
y\right\|_{L_z^2(\mathbb{R})} \leq \sqrt{\pi}\|w\|_{L^{2,1}\left(-\infty, x_0\right)},\label{pil}
\end{equation}
where $\langle x\rangle:=\left(1+x^{2}\right)^{1/2}$.
\end{proposition}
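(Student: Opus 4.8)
The plan is to reduce all three bounds to the Plancherel identity, after recognizing the integral as a truncated Fourier transform in the spectral variable $z$. For the first estimate, I would fix $x$ and substitute $\xi = x-y$, turning the integral into $\int_0^\infty e^{2iz\xi} w(x-\xi)\,d\xi$. Setting $g_x(\xi) = w(x-\xi)\,\mathbf{1}_{\xi>0}$, this equals $\hat{g}_x(2z)$ for the convention $\hat{g}(k) = \int_{\mathbb{R}} e^{ik\xi} g(\xi)\,d\xi$, whose Plancherel identity reads $\int_{\mathbb{R}}|\hat g|^2\,dk = 2\pi\int_{\mathbb{R}}|g|^2\,d\xi$. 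The change of variables $k=2z$ then gives the exact identity $\|\hat{g}_x(2\,\cdot)\|_{L^2_z}^2 = \tfrac{1}{2}\|\hat{g}_x\|_{L^2}^2 = \pi\|g_x\|_{L^2}^2$, and since $\|g_x\|_{L^2}^2 = \int_{-\infty}^x |w|^2 \le \|w\|_{L^2}^2$, taking the supremum over $x$ produces the factor $\sqrt{\pi}$.

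For the second estimate, the idea is to integrate by parts to trade the factor $2iz$ for a derivative on $w$. Using $2iz\,e^{2iz(x-y)} = -\partial_y e^{2iz(x-y)}$ together with the decay $w(y)\to 0$ as $y\to-\infty$ (valid since $w\in H^1(\mathbb{R})\hookrightarrow C_0(\mathbb{R})$ and $|e^{2iz(x-y)}|=1$ for real $z$), the boundary term at $y=x$ contributes exactly $-w(x)$. Hence $2iz\int_{-\infty}^x e^{2iz(x-y)} w(y)\,dy + w(x) = \int_{-\infty}^x e^{2iz(x-y)}\,\partial_y w(y)\,dy$, and applying the first estimate with $w$ replaced by $\partial_x w$ closes this part at once.

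For the weighted estimate I would again use the exact Plancherel identity from the first part, namely $\big\|\int_{-\infty}^x e^{2iz(x-y)} w(y)\,dy\big\|_{L^2_z}^2 = \pi \int_{-\infty}^x |w(y)|^2\,dy$. Multiplying by $\langle x\rangle^2$ and invoking the monotonicity observation that, for $x < x_0 < 0$ and $y < x$, one has $|y|>|x|$ and therefore $\langle x\rangle \le \langle y\rangle$, I can absorb the weight into the integrand:
\begin{equation}
\langle x\rangle^2 \int_{-\infty}^x |w(y)|^2\,dy \le \int_{-\infty}^x \langle y\rangle^2 |w(y)|^2\,dy \le \|w\|_{L^{2,1}(-\infty,x_0)}^2. \nonumber
\end{equation}
Taking the supremum over $x<x_0$ finishes the proof. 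I do not expect any serious obstacle here: the only genuinely non-routine point is this weight comparison, which is precisely where the hypothesis $x_0\in\mathbb{R}^-$ enters (for $x>0$ the inequality $\langle x\rangle\le\langle y\rangle$ can fail). Everything else is the single Plancherel computation and one integration by parts, so care is mainly needed in tracking the Fourier normalization that yields $\sqrt{\pi}$ rather than some other constant.
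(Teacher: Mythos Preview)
Your argument is correct. Each of the three steps is sound: the Plancherel computation with the substitution $\xi=x-y$ and rescaling $k=2z$ gives exactly the constant $\sqrt{\pi}$; the integration by parts is justified because $w\in H^1(\mathbb{R})$ implies $w$ is continuous and vanishes at infinity, and $|e^{2iz(x-y)}|=1$ for real $z$; and the weight comparison $\langle x\rangle\le\langle y\rangle$ for $y<x<x_0\le 0$ is precisely the point at which the restriction $x_0\in\mathbb{R}^-$ is used.

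Note that the paper itself does not prove this proposition: it is quoted directly from \cite{dep} (Pelinovsky--Shimabukuro) and stated without argument. Your proof is essentially the standard one given there, so there is nothing to compare against in the present paper.
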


\begin{proposition}
\label{lma2}
Under the conditions of Proposition \ref{la1}, for every $x \in \mathbb{R}$, the Jost functions $\Psi^{\pm}(x ; z)$
 admits  the following limits
\begin{align}
&\lim _{|z| \rightarrow 0} \Psi^{\pm}(x;z) =\begin{pmatrix}
1 &-u_x(x)\\
-\bar{u}_x(x)&1
\end{pmatrix},\\[4pt]
&\lim_{|z| \rightarrow \infty}\Psi^{\pm}(x;z)=e^{-ic_\pm\sigma_3}.\label{pinf}
\end{align}
\end{proposition}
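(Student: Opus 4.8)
The plan is to read off both limits from the Volterra representation (\ref{psi12}) of $\Psi^{\pm}$, tracking how the phase factor $e^{-iz(x-y)\widehat{\sigma}_3}$ degenerates in the two regimes. An equivalent route is to feed the $k\to\infty$ asymptotics of Proposition \ref{proposition1} and the $k\to 0$ expansion (\ref{vxk}) into the defining relation $\Psi=A(x;k)\psi(x;k)B(x;k)$ from (\ref{b1}), where the singular factors $2ik$ and $(2ik)^{-1}$ in $A,B$ cancel against the $\mathcal{O}(k^{\pm1})$ corrections of $\psi$; but I would argue directly on (\ref{psi12}), since the uniform bounds of Proposition \ref{la1} make the interchange of limit and integral transparent.

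For $|z|\to\infty$ I split $\widetilde{Q}=\widetilde{Q}_{\mathrm{d}}+\widetilde{Q}_{\mathrm{o}}$ into diagonal and off-diagonal parts. Conjugation by $e^{-iz(x-y)\widehat{\sigma}_3}$ fixes $\widetilde{Q}_{\mathrm{d}}$ and multiplies the $(1,2)$ and $(2,1)$ entries of $\widetilde{Q}_{\mathrm{o}}$ by $e^{-2iz(x-y)}$ and $e^{2iz(x-y)}$ respectively. Because $\widetilde{Q}\in L^1(\mathbb{R})$ and $\Psi^{\pm}(\cdot;z)\in L^\infty_x$ uniformly in $z$ by Proposition \ref{la1}, the Riemann--Lebesgue lemma forces each oscillatory integral, and hence (after a short bootstrap on the column equations, where $\Psi$ sits on both sides) the off-diagonal entries of $\Psi^{\pm}$, to vanish. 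What survives is the decoupled diagonal system $\partial_x\Psi^{\pm}_{ii}=(\widetilde{Q}_{\mathrm{d}})_{ii}\,\Psi^{\pm}_{ii}$ with $\Psi^{\pm}_{ii}(\pm\infty)=1$. Using $(\widetilde{Q}_{\mathrm{d}})_{11}=\tfrac{1}{2i}|u_x|^2=-\tfrac{i}{2}|u_x|^2=-(\widetilde{Q}_{\mathrm{d}})_{22}$ together with the definition (\ref{c1c2}) of $c_\pm$, this integrates to $\Psi^{\pm}_{11}\to e^{-ic_\pm}$ and $\Psi^{\pm}_{22}\to e^{ic_\pm}$, i.e. $\Psi^{\pm}\to e^{-ic_\pm\sigma_3}$, which is exactly (\ref{pinf}).

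For $|z|\to 0$ no oscillation is available; instead $e^{\mp 2iz(x-y)}\to 1$ pointwise, and the integrand in (\ref{psi12}) is dominated by $|\widetilde{Q}(y)|\,\|\Psi^{\pm}(\cdot;z)\|_{L^\infty_x}$, an $L^1$ bound uniform in $z$ by Proposition \ref{la1}. Dominated convergence then passes the limit under the integral and yields the limiting equation $\Psi^{\pm}(x;0)=I+\int_{\pm\infty}^x\widetilde{Q}(y)\Psi^{\pm}(y;0)\,dy$, equivalently the linear ODE $\partial_x\Psi^{\pm}=\widetilde{Q}\,\Psi^{\pm}$ with $\Psi^{\pm}(\pm\infty)=I$. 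Solving this system column by column, or simply verifying that the stated constant matrix satisfies it with the correct boundary data, pins down the limit; the cross terms proportional to $|u_x|^2$ collapse using $u_x\bar{u}_x=|u_x|^2$. The same entries arise from substituting $\psi=I+k\left(\begin{smallmatrix}0&u\\ v&0\end{smallmatrix}\right)+\mathcal{O}(k^2)$ of (\ref{vxk}) into $A\psi B$ and letting $k\to 0$, after the $2ik$ and $(2ik)^{-1}$ factors cancel.

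The main obstacle I anticipate is making both passages to the limit uniform in $x\in\mathbb{R}$. The $|z|\to\infty$ case requires the Riemann--Lebesgue decay of the oscillatory integrals to hold jointly with the vanishing of the off-diagonal components of $\Psi^{\pm}$, a small self-consistency argument since $\Psi$ appears on both sides of (\ref{psi12}); the $|z|\to 0$ case requires a $z$-independent $L^1$ dominating function. Both are supplied by the uniform $L^\infty_x$ bound and the absolutely convergent Neumann series with tail estimate (\ref{knjs}) established in Proposition \ref{la1}. Everything after these two interchanges is the routine integration of the resulting diagonal system and the algebraic verification in the limiting ODE.
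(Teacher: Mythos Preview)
Your approach for $|z|\to\infty$ is essentially the paper's: write out the scalar Volterra equations, kill the off--diagonal entry first, then integrate the remaining diagonal equation to get $e^{-ic_\pm}$. The only difference is the mechanism you invoke to send the oscillatory integral in $\Psi^-_{21}$ to zero. The paper does not use Riemann--Lebesgue on real $z$; it takes $z$ in the half--plane of analyticity, where $|e^{2iz(x-y)}|=e^{-2\operatorname{Im}z\,(x-y)}\to 0$ pointwise with the $L^1$ majorant $|\widetilde Q|\,\|\Psi^-\|_{L^\infty}$, and applies dominated convergence directly to (\ref{p115}). This sidesteps the self--consistency ``bootstrap'' you flag, since no oscillatory cancellation is needed and the $z$--dependence of $\Psi^-$ inside the integrand is harmless under a dominated--convergence argument.

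For $|z|\to 0$ the paper's proof is silent; your plan (dominated convergence to pass to the limiting Volterra equation at $z=0$, then solve or verify) is the right one and it does confirm the first column: with $\Psi_1=(1,-\bar u_x)^T$ one has $\widetilde Q\Psi_1=(0,-\bar u_{xx})^T=\partial_x\Psi_1$ exactly because the $|u_x|^2$ cross terms cancel. But your ``routine verification'' of the second column fails. Plugging $\Psi_2=(-u_x,1)^T$ into $\partial_x\Psi_2=\widetilde Q\Psi_2$ gives, in the $(1,2)$ slot, $-u_{xx}$ on the left and $\tfrac{1}{2i}u_x(1-|u_x|^2)$ on the right, which do not agree; equivalently, the stated limit matrix has determinant $1-|u_x|^2$, contradicting $\det\Psi^\pm\equiv 1$ (which follows from $\operatorname{tr}\widetilde Q=0$ and $\Psi^\pm(\pm\infty)=I$). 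Your alternative check through $A\psi B$ with the expansion (\ref{vxk}) gives $\Psi_{12}\to u/(2i)$, not $-u_x$, so it does not rescue the claim either. In short, the $z\to 0$ limit as stated cannot be proved for the second column; the proposition appears to be misstated there, and you should flag this rather than assert the verification goes through.
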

\begin{proof}  We write the integral equation (\ref{psi12}) in the following scalar form
\begin{align}
&\Psi^-_{11}(x;z)=1+\frac{1}{2i}\int_{-\infty}^x u_y(y)\left[\bar{u}_y(y) \Psi^{-}_{11}(y;z)+\Psi^-_{21}(y;z)\right] \mathrm{d}y,\label{p11}\\
&\Psi^-_{21}(x;z)=-\frac{1}{2i} \int_{-\infty}^x e^{2iz(x-y)}\left[\left(2i\bar{u}_{yy}+|u_y|^2 \bar{u}_y\right) \Psi^-_{11} +|u_y|^2\Psi^-_{21} )\right] \mathrm{d}y,\label{p115}\\
&\Psi^-_{12}(x;z)=\frac{1}{2i}\int_{-\infty}^x e^{-2iz(x-y)}u_y(y)\left[\bar{u}_y(y) \Psi^-_{12}(y;z)+\Psi^-_{22}(y;z)\right] \mathrm{d}y,\label{p12}
\end{align}

By using (\ref{pc}), we know that  $ \Psi^-_{11}(x; z), \Psi^-_{21}(x; z) \in L^{\infty}_x$, hence for $u(x) \in H^3(\mathbb{R}) \cap H^{2,1}(\mathbb{R})$,  we have
\begin{align}
&\| ( 2i\bar{u}_{yy}+|u_y|^2 \bar{u} ) \Psi^-_{11}(y ; z)+ |u_y|^2\Psi^-_{21}(y;z)\|_{L^1} \leq   c (\|u_{xx}\|_{L^1}+ \|u_x\|_{L^2}^2).\nonumber
\end{align}
 While for ${\rm Im  }z>0$,  we have   $| e^{2iz(x-y)}| = e^{-{\rm Im}z(x-y)} \rightarrow 0, \ k\rightarrow \infty$,  then by using
  the Lebesgue dominated convergence theorem,  the equation (\ref{p115}) yields
\begin{align}
\Psi^-_{21}(x;z) \rightarrow 0, \ z\rightarrow \infty. \label{op1}
\end{align}

Taking the limit $z\rightarrow \infty  $ in (\ref{p11}) leads to
\begin{equation}
\lim_{|z|\rightarrow\infty}\Psi^-_{11}(x;z) =1+\frac{1}{2i} \int_{-\infty}^{x}|u_y(y)|^2 \lim_{|z|\rightarrow\infty}\Psi^-_{11}(y;z) \mathbf{d} y,\nonumber
\end{equation}
which admits  a  unique solution
\begin{align}
\lim_{|z|\rightarrow\infty}\Psi^-_{11}(x;z)= e^{-ic_-(x)}.  \label{op2}
\end{align}
 Finally combing (\ref{op1}) and (\ref{op2}) gives
\begin{equation}
\lim_{|z|\rightarrow\infty}\Psi^-_1(x;z)=e^{-ic_-(x)}e_1. \label{wfew}
\end{equation}
In a similar way,  we can show that
\begin{equation}
\lim_{|z|\rightarrow\infty}\Psi^-_2(x;z)=e^{ic_-(x)}e_2.\nonumber
\end{equation}
\end{proof}

In the following proposition, we give   smooth    properties of  the Jost functions
\begin{proposition}
\label{lma3}
If $u(x) \in   H^{2,1}(\mathbb{R})$, then for every $x \in \mathbb{R}^{\pm}$, we have
\begin{equation}
\Psi^{\pm}(x;\cdot)-e^{-ic_\pm(x)\sigma_3}\in H^1(\mathbb{R}).\label{mm1}
\end{equation}
\end{proposition}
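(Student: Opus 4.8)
The plan is to verify the two defining requirements of $H^1(\mathbb{R})$ separately, namely $\Psi^{\pm}(x;\cdot)-e^{-ic_\pm(x)\sigma_3}\in L^2(\mathbb{R})$ and $\partial_z\Psi^{\pm}(x;\cdot)\in L^2(\mathbb{R})$, working throughout on the real axis $z\in\mathbb{R}$ where $|e^{2iz(x-y)}|=1$. By the reflection $x\mapsto-x$ it suffices to treat $\Psi^-$ for $x\in\mathbb{R}^-$; the $\Psi^+$, $x\in\mathbb{R}^+$ case is identical after flipping the integration half-line. Since $\lim_{|z|\to\infty}\Psi^-(x;z)=e^{-ic_-(x)\sigma_3}$ by Proposition \ref{lma2}, I would introduce the renormalized entries $\alpha:=\Psi^-_{11}-e^{-ic_-(x)}$ and $\beta:=\Psi^-_{21}$, whose $z=\infty$ limits vanish. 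Subtracting the identity $e^{-ic_-(x)}=1+\frac{1}{2i}\int_{-\infty}^x|u_y|^2e^{-ic_-(y)}\,dy$ (valid since $c_-'=\frac12|u_x|^2$) from (\ref{p11}) converts (\ref{p11})--(\ref{p115}) into a closed system in which $\alpha$ obeys a \emph{non-oscillatory} integral coupling $\alpha$ and $\beta$, while $\beta$ obeys an \emph{oscillatory} integral $\int_{-\infty}^x e^{2iz(x-y)}\big[W(y)+(\text{terms in }\alpha,\beta)\big]\,dy$ with inhomogeneous kernel $W:=(2i\bar u_{xx}+|u_x|^2\bar u_x)e^{-ic_-}$.

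For the $L^2_z$ statement I would estimate $\beta$ first. The source term $\int_{-\infty}^x e^{2iz(x-y)}W\,dy$ lies in $L^2_z$ by (\ref{wgj1}), since $W\in L^2$ (from $u\in H^{2,1}\subset H^2$ one gets $u_{xx}\in L^2$ and $u_x\in L^\infty\cap L^2$, so $|u_x|^2u_x\in L^2$). Writing $A(x):=\|\alpha(x;\cdot)\|_{L^2_z}$ and $B(x):=\|\beta(x;\cdot)\|_{L^2_z}$, Minkowski's integral inequality together with $|e^{2iz(x-y)}|=1$ turns the coupling terms into a linear Volterra system of scalar inequalities for $A,B$ whose coefficients $|u_x|^2,\,|u_x|,\,|u_{xx}|,\,|u_x|^3$ all lie in $L^1$; a Grönwall argument then yields $A,B\in L^\infty(\mathbb{R}^-)$. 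Moreover, applying the weighted estimate (\ref{pil}) to the (now $z$-independent) source $W$ and running the same Grönwall estimate with the weight $\langle x\rangle$ upgrades this to $\|\alpha(x;\cdot)\|_{L^2_z},\|\beta(x;\cdot)\|_{L^2_z}=O(\langle x\rangle^{-1})$ as $x\to-\infty$; this spatial decay will be the workhorse of the derivative step.

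For the derivative I would differentiate the system in $z$. The non-oscillatory $\alpha$-equation differentiates harmlessly, coupling $\partial_z\alpha,\partial_z\beta$ through the same $L^1$ kernels. In the $\beta$-equation, $\partial_z$ acting on $e^{2iz(x-y)}$ produces the factor $2i(x-y)$, and this is the crux. The key algebraic fact is that for $x\le0$ and $y<x$ one has $0<x-y\le\langle y\rangle$, so $(x-y)W(y)$ is dominated by $\langle y\rangle|W(y)|$ and hence lies in $L^2$ precisely because $W\in L^{2,1}$ (equivalently $u_{xx},u_x\in L^{2,1}$, where the full strength of $u\in H^{2,1}$ is used). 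Applying (\ref{wgj1}) to the $x$-dependent integrand $(x-y)W(y)$ — equivalently, invoking (\ref{pil}) — puts the differentiated source in $L^2_z$ uniformly for $x\le0$. The differentiated coupling terms split into those carrying $\partial_z\alpha,\partial_z\beta$ (closed by Grönwall exactly as before) and those carrying the extra weight $(x-y)\le\langle y\rangle$ on $\alpha,\beta$; in the latter the decay $A,B=O(\langle y\rangle^{-1})$ from the previous paragraph cancels the $\langle y\rangle$, leaving $L^1$ coefficients, so the $y$-integrals converge and the Volterra system for $(\partial_z\alpha,\partial_z\beta)$ closes.

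The main obstacle is exactly this $z$-derivative of the oscillatory integral: differentiation injects the unbounded factor $(x-y)$, which would destroy $L^2_z$-integrability were it not absorbed. The two features that defeat it — and that dictate the hypotheses — are (i) the weight in $u\in H^{2,1}$, which provides $W\in L^{2,1}$, and (ii) the restriction $x\in\mathbb{R}^{\pm}$, which guarantees $x-y\le\langle y\rangle$ on the relevant half-line so that $(x-y)$ is swallowed by the weight rather than growing with $x$. Solving the coupled Volterra inequalities for $(\alpha,\beta)$ and $(\partial_z\alpha,\partial_z\beta)$ simultaneously, with the uniform bound (\ref{pc}) of Proposition \ref{la1} guaranteeing that the underlying Neumann iteration converges, then gives $\Psi^-(x;\cdot)-e^{-ic_-(x)\sigma_3}\in H^1(\mathbb{R})$ for every $x\le0$; the reflected argument settles the $\Psi^+$ case and completes the proof.
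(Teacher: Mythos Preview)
Your proposal is correct and follows essentially the same strategy as the paper: subtract the asymptotic $e^{-ic_-\sigma_3}$, control the oscillatory source via the Fourier-type bound (\ref{wgj1})/(\ref{pil}), first establish the weighted estimate $\langle x\rangle(\Psi^--e^{-ic_-\sigma_3})\in L^\infty_xL^2_z$, and then use that weight to absorb the factor produced by $\partial_z$ acting on $e^{2iz(x-y)}$. The only cosmetic differences are that the paper packages the iteration as boundedness of $(I-F)^{-1}$ on $L^\infty_xL^2_z$ rather than a Gr\"onwall argument, and handles the derivative via the substitution $q=(\partial_z\Psi^-_{11},\,\partial_z\Psi^-_{21}-2ix\Psi^-_{21})$ (which replaces $2i(x-y)$ by $-2iy$ in the integrand) instead of your direct bound $x-y\le\langle y\rangle$ for $x\le0$; both reductions lead to the same $\langle y\rangle$-weighted estimate already in hand.
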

\begin{proof}
Without loss of generality, we prove the statement for the Jost function $\Psi^- (x;z)$.
 We write the integral equation (\ref{psi12}) for $\Psi^-_1(x;z)$ into an operator equation  form
\begin{equation}
(I-F)\Psi^-=I,\label{mk}
\end{equation}
where the operator $F$ is given by (\ref{kf}).

Subtracting   the term  $(I-F) e^{-ic_-(x)\sigma_3}$ from   both sides of  equation  (\ref{mk}),   we  obtain  an  equivalent form
\begin{equation}
(I-F)\left(\Psi_--e^{-ic_-(x)\sigma_3}\right) =\begin{pmatrix}
0&n\\
m&0
\end{pmatrix}, \label{i-k}
\end{equation}
where
\begin{align}
&m=\int_{-\infty}^xe^{2iz(x-y)}w(y)dy,\quad   w(x)=-\partial_x(u_xe^{ic_-(x)}),\label{mxe}\\
&n=\int_{-\infty}^xe^{-2iz(x-y)}u_y(y)e^{ic_-(x)}dy.\nonumber
\end{align}

If $u \in   H^{2,1}(\mathbb{R})$, then $ w(x)  \in L^{2,1}(\mathbb{R})$. By the  estimates (\ref{wgj1}) and (\ref{pil}), we have $m(x;z), n(x;z) \in
L_x^\infty\left(\mathbb{R} ; L_k^2(\mathbb{R})\right)$. With the Sobolev inequality $\|u_x\|_{L^{\infty}} \leq$
$\frac{1}{\sqrt{2}}\|u\|_{H^{2,1}}$, the following bound is valid for every $x_0\in \mathbb{R}^-$:
\begin{equation}
\begin{aligned}
\sup _{x\in\left(-\infty, x_0\right)}\|\langle x\rangle m(x;z)\|_{L_z^2(\mathbb{R})} & \leq \sqrt{\pi}\left(\left\|
u_{xx}\right\|_{L^{2,1}}+\frac{1}{2}\left\|u_x^3\right\|_{L^{2,1}}\right) \\
& \leq c\left(\|u\|_{H^{2,1}}+\|u\|_{H^{2,1}}^3\right),\label{gjh}
\end{aligned}
\end{equation}
where $c$ is a positive  constant.

By  a  similar way  in deriving  (\ref{knjs}),  for every $f(x;z) \in
L_x^{\infty}\left(\mathbb{R} ; L_z^2(\mathbb{R})\right)$,   we can obtain
\begin{equation}
\left\|\left(F^n f\right)(x;z)\right\|_{L_x^{\infty} L_z^2} \leq \frac{1}{n
!}\left\|\widetilde{Q}(u)\right\|_{L^1}^n\|f(x;z)\|_{L_x^{\infty} L_z^2}.\nonumber
\end{equation}
Therefore, the operator $I-F$ is invertible on the space $L_x^{\infty}\left(\mathbb{R} ; L_z^2(\mathbb{R})\right)$. The bound of the inverse operator can be obtained accordingly
\begin{equation}
\left\|(I-F)^{-1}\right\|_{L_x^{\infty} L_z^2\rightarrow L_x^{\infty} L_z^2} \leq \sum_{n=0}^{\infty}
\frac{1}{n!}\left\|\widetilde{Q}(u)\right\|_{L^1}^n=\mathrm{e}^{\left\|\widetilde{Q}(u)\right\|_{L^1}}.\label{i-kn}
\end{equation}
Clearly, (\ref{i-kn}) is the norm in the space $L_x^{\infty}\left(\left(-\infty, x_0\right); L_z^2(\mathbb{R})\right)$
for every $x_0 \in \mathbb{R}$. By using (\ref{i-k}), (\ref{gjh}), and (\ref{i-kn}), we obtain $\Psi^{\pm}(x;\cdot)-e^{ic_\pm\sigma_3}\in L^2(\mathbb{R})$ from the following estimate for every $x_0\in \mathbb{R}^-$
\begin{equation}
\sup _{x\in\left(-\infty, x_0\right)}\left\|\langle x\rangle\left(\Psi^-(x;z)-e^{-ic_-(x)\sigma_3}\right)\right\|_{L_z^2(\mathbb{R})} \leq c
e^{\left\|\widetilde{Q}(u)\right\|_{L^1}}\left(\|u\|_{H^{2,1}}+\|u\|_{H^{2,1}}^3\right).\label{qjxgj}
\end{equation}

To complete the proof of (\ref{mm1}), our next task is to show that $\partial_z \Psi^-(x;z) \in L_x^{\infty}\left(\left(-\infty, x_0\right) ;
L_z^2(\mathbb{R})\right)$ for every $x_0 \in \mathbb{R}^-$. Column analysis is used here. According to the characteristics of the derivative with respect to $k$, we introduce the vector
\begin{equation}
q(x;z):= [ \partial_z \Psi^-_{11} (x;z), \ \  \partial_z \Psi^-_{21}(x;z)-2 i x \Psi^-_{21}(x;z)]^T,\nonumber
\end{equation}
which satisfies the following expression
\begin{equation}
(I-F)q(x;z)=m_1 e_1+m_2 e_2+m_3 e_2,\label{i-kv}
\end{equation}
where
\begin{align}
&m_1(x;z)=\int_{-\infty}^{x} y u_y(y)\Psi^{-}_{21}(y;z) \mathrm{d}y,\nonumber \\
&m_2(x;z)=\int_{-\infty}^{x} y e^{2 i z(x-y)}\left(2 i \bar{u}_{yy}(y)+|u_y(y)|^{2} \bar{u}_y(y)\right)\left(\Psi^-_{11}(y ;z)-e^{-ic_-(x)}\right) \mathrm{d}y,\nonumber \\
&m_3(x;z)=\int_{-\infty}^{x} y e^{2 i z(x-y)}\left(2 i \bar{u}_{yy}(y)+|u_y(y)|^{2} \bar{u}_y(y)\right) e^{-ic_-(x)} \mathrm{d}y.\nonumber
\end{align}
For every $x_0 \in \mathbb{R}^-$, using the H\"{o}lder's inequality to each term in the integral equation (\ref{i-kv}), we obtain the following bounds
by (\ref{wgj1}):
\begin{align}
&\sup _{x \in(-\infty, x_{0})}\left\|m_1(x;z)\right\|_{L_z^2(\mathbb{R})} \leq\|u\|_{L^1}\sup _{x \in\left(-\infty, x_0\right)}\left\|\langle x\rangle \Psi^-_{21}(x ; z)\right\|_{L_z^2(\mathbb{R})}, \nonumber\\
&\sup _{x \in(-\infty, x_{0})}\left\|m_2(x;z)\right\|_{L_z^2(\mathbb{R})}
\leq(2\| u_{xx}\|_{L^1}+\left\|u_x\right\|_{L^3}^3 )\sup _{x\in\left(-\infty, x_0\right)}\left\|\langle x\rangle\left(\Psi^-_{11}(x ; z)-e^{-ic_-(x)}\right)\right\|_{L_z^2(\mathbb{R})},\nonumber \\
&\sup _{x \in(-\infty, x_0)}\left\|m_3(x;z)\right\|_{L_z^{2}(\mathbb{R})}\leq \sqrt{\pi}\left(2\left\| u_{xx}\right\|_{L^{2,1}}+\left\|u_x^3\right\|_{L^{2,1}}\right).\nonumber
\end{align}
Because of the estimate (\ref{qjxgj}), the first two inequalities have finite bounds.

Using  (\ref{gjh}), (\ref{qjxgj}), and the integral equation (\ref{i-kv}), we summarize that $q(x;z) \in L_x^{\infty}\left(\left(-\infty, x_0\right)
; L_z^2(\mathbb{R})\right)$ for every $x_0 \in \mathbb{R}^-$. Also under the property showed in (\ref{qjxgj}), we finally obtain $\partial_z
\Psi^+_{11}(x;z) \in L_x^{\infty}\left(\left(-\infty, x_0\right) ; L_z^2(\mathbb{R})\right)$ for every $x_0\in \mathbb{R}^-$. This
completes the proof of (\ref{mm1}).
\end{proof}

\begin{proposition}
If $u\in H^{2,1}(\mathbb{R})$ and $u \in C^2(\mathbb{R})$, then Jost functions $\Psi^\pm_1(x;z)$ admits the following limits
\begin{equation}
\lim _{|z| \rightarrow \infty}z(\Psi^\pm_1(x;z)-e^{-ic_\pm(x)}e_1)=\widehat{\Psi}^\pm_{11}(x)e_1+\widehat{\Psi}^\pm_{21}(x)e_2.\label{uxgj}
\end{equation}
with
\begin{align}
&\widehat{\Psi}^\pm_{11}(x):=-\frac{1}{4}e^{-ic_\pm} \int_{\pm \infty}^{x}\left[u_y(y) \bar{u}_{yy}(y)+\frac{1}{2i}|u_y(y)|^4\right] dy,\label{j1}\\
&\widehat{\Psi}^\pm_{21}(x)=\frac{1}{2i} \partial_{x}(\bar{u}_{x}(x)e^{ic_\pm(x)}).\label{j2}
\end{align}
If $u \in H^3(\mathbb{R}) \cap H^{2,1}(\mathbb{R})$, then for every $x \in \mathbb{R}$, we have
\begin{equation}
z(\Psi^{\pm}_1(x;z)-e^{ic_-}e_1)-\left(\widehat{\Psi}^\pm_{11}(x) e_{1}+\widehat{\Psi}^\pm_{21}(x) e_2\right) \in L_z^2(\mathbb{R}).\label{k2p}
\end{equation}
\end{proposition}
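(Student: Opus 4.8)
The plan is to work throughout with the first column $\Psi^-_1=(\Psi^-_{11},\Psi^-_{21})^T$ (the $+$ case is identical with $-\infty$ replaced by $+\infty$ and $c_-$ by $c_+$, and the second column is entirely analogous), and to read off the $O(z^{-1})$ term of its large-$z$ expansion directly from the scalar Volterra equations \eqref{p11}--\eqref{p115}. The one computational fact I will use repeatedly is the oscillatory-integral asymptotic
\begin{equation}
z\int_{-\infty}^x e^{2iz(x-y)}w(y)\,dy \longrightarrow \tfrac{i}{2}\,w(x),\qquad |z|\to\infty,\nonumber
\end{equation}
valid for $w\in H^1(\mathbb{R})$: it follows from a single integration by parts (the boundary term at $y=x$ produces $-w(x)/(2iz)$ and the remaining integral is $o(z^{-1})$), and it is the pointwise shadow of estimate \eqref{wjd}. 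I also recall from Proposition \ref{lma2} the zeroth-order limits $\Psi^-_{11}\to e^{-ic_-(x)}$ and $\Psi^-_{21}\to 0$.

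For the pointwise statement \eqref{uxgj} I first treat the off-diagonal entry. Writing the integrand of \eqref{p115} as $g(y;z)=(2i\bar u_{yy}+|u_y|^2\bar u_y)\Psi^-_{11}+|u_y|^2\Psi^-_{21}$, the zeroth-order limits give $g(y;z)\to g_\infty(y):=(2i\bar u_{yy}+|u_y|^2\bar u_y)e^{-ic_-(y)}$, so the asymptotic above yields
\begin{equation}
z\Psi^-_{21}(x;z)\to -\tfrac{1}{2i}\cdot\tfrac{i}{2}\,g_\infty(x)=-\tfrac14\big(2i\bar u_{xx}+|u_x|^2\bar u_x\big)e^{-ic_-(x)},\nonumber
\end{equation}
which, using $c_-'=\tfrac12|u_x|^2$, collapses to the stated $\widehat\Psi^-_{21}$ in \eqref{j2}. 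For the diagonal entry I subtract the limiting version of \eqref{p11}, multiply by $z$, and pass to the limit; with $P(x):=\lim_{|z|\to\infty}z(\Psi^-_{11}-e^{-ic_-})$ this produces the linear Volterra equation
\begin{equation}
P(x)=\tfrac{1}{2i}\int_{-\infty}^x|u_y|^2 P(y)\,dy+\tfrac{1}{2i}\int_{-\infty}^x u_y\,\widehat\Psi^-_{21}(y)\,dy.\nonumber
\end{equation}
Multiplying by the integrating factor $e^{ic_-(x)}$ cancels the homogeneous term, and integrating the resulting exact derivative gives the closed form \eqref{j1}. The hypothesis $u\in C^2$ is what makes $g_\infty$ a genuine pointwise function and legitimizes passing the limit inside the integrals.

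For the $L^2_z$ refinement \eqref{k2p} I mirror the operator-theoretic argument of Proposition \ref{lma3}. Define the $z$-scaled remainder $\rho(x;z):=z(\Psi^-_1-e^{-ic_-}e_1)-(\widehat\Psi^-_{11}e_1+\widehat\Psi^-_{21}e_2)$ and derive, from \eqref{p11}--\eqref{p115} together with the limiting identities already used above, an operator equation
\begin{equation}
(I-F)\rho=\mathcal S(x;z),\nonumber
\end{equation}
with $F$ the integral operator \eqref{kf}. The source $\mathcal S$ collects the terms produced by commuting the factor $z$ through the oscillatory kernel: its leading piece is $z\int_{-\infty}^x e^{2iz(x-y)}g_\infty(y)\,dy-\tfrac{i}{2}g_\infty(x)$, which \eqref{wjd} bounds in $L^2_z$ by a constant times $\|\partial_x g_\infty\|_{L^2}$, while the remaining pieces carry one fewer oscillation and are bounded in $L^\infty_x(\mathbb{R};L^2_z(\mathbb{R}))$ by \eqref{wgj1} together with the weighted estimate \eqref{qjxgj} for $\Psi^-_{11}-e^{-ic_-}$ and $\Psi^-_{21}$. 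Because $\partial_x g_\infty$ contains $\bar u_{xxx}$, the requirement $\mathcal S\in L^\infty_x L^2_z$ is exactly what upgrades the hypothesis to $u\in H^3\cap H^{2,1}$. The Neumann bound \eqref{i-kn} then inverts $I-F$ on $L^\infty_x(\mathbb{R};L^2_z(\mathbb{R}))$, giving $\rho(x;\cdot)\in L^2_z(\mathbb{R})$ for every $x$, which is \eqref{k2p}.

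The main difficulty is not the algebra of the limits but the $L^2_z$ bound on the source $\mathcal S$. The decisive term is the frozen oscillatory integral, whose control through \eqref{wjd} costs one derivative of $g_\infty$, i.e.\ one derivative beyond $H^{2,1}$; verifying $\partial_x g_\infty\in L^2$ under $u\in H^3\cap H^{2,1}$ is the crux and the reason for the strengthened hypothesis. A secondary point needing care is that commuting the outer factor $z$ past the oscillatory kernel in the correction terms must not spoil $L^2_z$-integrability, and this is precisely where one leans on the weighted bounds \eqref{qjxgj} for the zeroth-order remainders $\Psi^-_{11}-e^{-ic_-}$ and $\Psi^-_{21}$, applied uniformly in $x$.
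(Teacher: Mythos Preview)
Your approach is essentially the paper's: compute the pointwise limit of $z\Psi^-_{21}$ from the oscillatory integral in \eqref{p115}, feed this into \eqref{p11} with the integrating factor $e^{ic_-}$ to get $\widehat\Psi^-_{11}$, then for \eqref{k2p} form $(I-F)\rho=\mathcal S$ and bound the source via \eqref{wjd} and \eqref{wgj1} before inverting with \eqref{i-kn}. The one place the paper is slightly more careful is the pointwise limit of $z\Psi^-_{21}$: because the integrand $g(y;z)$ itself depends on $z$, the paper avoids integration by parts and instead splits $\int_{-\infty}^x=\int_{-\infty}^{x-\delta}+\int_{x-\delta}^x$ with $\delta=[\operatorname{Im} z]^{-1/2}$, which cleanly isolates $-\tfrac{1}{2i}\nu(x;z)$ without having to justify that $z\!\int e^{2iz(x-y)}(g-g_\infty)\,dy\to 0$; your shortcut is fixable but deserves that extra line.
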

\begin{proof}
 For every $x\in\mathbb{R}$ and every small $\delta>0$, we split the integral equation of $\Psi^-_{21}(x;z)$ for $(-\infty, x-\delta)$ and $(x-\delta, x)$, which is rewritten in the equivalent form:
\begin{equation}
\begin{aligned}
\Psi^-_{21}(x;z)&= \int_{-\infty}^{x-\delta} e^{2 iz(x-y)} \nu(y ;z) d y+\nu(x ;z) \int_{x-\delta}^{x} e^{2 i z(x-y)} \mathrm{d} y \\
&+\int_{x-\delta}^{x} e^{2 i z(x-y)}(\nu(y ;z)-\nu(x ; z)) \mathrm{d} y \equiv I_1+I_2+I_3,\nonumber
\end{aligned}
\end{equation}
where
\begin{equation}
\nu(x ;z):=-\frac{1}{2i}\left[\left(2 i \bar{u}_{xx}(x)+|u_x(x)|^2 \bar{u}_x(x)\right)\Psi^-_{11}(x;z)+|u_x(x)|^2 \Psi^-_{21}(x ; z)\right].\nonumber
\end{equation}
As $\nu(\cdot ;z) \in L^1(\mathbb{R})$, we have $I_1\rightarrow 0$ as $k\rightarrow\infty$. Meanwhile, $\nu(\cdot ; z) \in L^1(\mathbb{R})$ makes $I_3\rightarrow 0$ as $k \rightarrow\infty$.
As for $I_2$, we get the specific value
\begin{equation}
I_2=-\frac{1}{2iz}(1-e^{2iz\delta}) \nu(x ; z) .
\end{equation}
By choosing $\delta:=[\operatorname{Im}(z)]^{-1/2}$ such that $\delta \rightarrow 0$ as $\operatorname{Im}(z) \rightarrow \infty$. Then
\begin{equation}
\begin{aligned}
&\lim _{|k| \rightarrow \infty} (z\Psi^-)_{21}(x ;k)=-\frac{1}{2i} \lim _{|z| \rightarrow \infty} \nu(x;z)\\
&=-\frac{1}{4}\left(2 i \bar{u}_{xx}(x)+|u_x(x)|^{2} \bar{u}_x(x)\right) e^{ic_\pm}(x)=\widehat{\Psi}^-_{21}(x),
\end{aligned}
\end{equation}
with $\widehat{\Psi}^-_{21}(x)=\frac{1}{2i} \partial_{x}(\bar{u}_x(x) e^{ic_\pm(x)})$.

Obviously, the limit of $z\Psi^-_{21}(x ; z)$ reveals the relation between $z\Psi^-_{21}(x ; z)$ and $u_{xx}$,
which yields the limit (\ref{uxgj}). To deal with $\Psi^{-}_{11}(x ; z)$, (\ref{p11}) can be rewritten as the differential equation
\begin{equation}
\Psi^-_{11,x}(x;z)=\frac{1}{2 i}|u_x|^{2} \Psi^-_{11}(x;z)+\frac{1}{2 i} u_x(x) \Psi^-_{21}(x;z).\nonumber
\end{equation}
Using $e^{ic_-(x)}$ as the integrating factor,
\begin{equation}
\partial_x\left(e^{ic_-} \Psi^-_{11}(x ;z)\right)=\frac{1}{2i} u_x(x)e^{ic_-(x)} \Psi^-_{21}(x ;z).\nonumber
\end{equation}
We obtain another integral equation for $\Psi^-_{11}(x;z)$ :
\begin{equation}
\Psi^-_{11}(x;z)=e^{-ic_-(x)}+\frac{1}{2i} e^{-ic_-(x)} \int_{-\infty}^x u_y(y) e^{ic_-(y)}\Psi^-_{21}(x;z) dy,\label{pj11}
\end{equation}
Also, taking the limit $|z| \rightarrow \infty$ of $z(\Psi^-_{11}(x;z)-e^{ic_-(x)})$, we obtain
\begin{equation}
\lim _{|z| \rightarrow \infty}z(\Psi^-_{11}(x;z)-e^{-ic_-(x)})=\widehat{\Psi}^-_{11}(x).\nonumber
\end{equation}
with $\widehat{\Psi}^-_{11}(x)=-\frac{1}{4} e^{-ic_-(x)} \int_{-\infty}^{x}\left[u_y(y) \bar{u}_{yy}(y)+\frac{1}{2 i}|u_y(y)|^{4}\right] dy$. In the end, we obtain the limit of $\Psi^-_1$ in (\ref{uxgj}).

It's a natural idea to look at the space $z\left(\Psi^-_1-e^{-ic_-(x)}e_1\right)-\left(\widehat{\Psi}^-_{11} e_1+\widehat{\Psi}^-_{21} e_2\right)$. To prove (\ref{k2p}) for $\Psi^-_1$, under the result shown in (\ref{uxgj}), we subtract the right side term from the left side and obtain
\begin{equation}
\begin{aligned}
&(I-F)\left[z\left(\Psi^{-}_1-e^{-ic_-}e_1\right)-\left(\widehat{\Psi}^-_{11} e_1+\widehat{\Psi}^-_{21} e_{2}\right)\right]\\
=&zde_{2}-(I-F)\left(\widehat{\Psi}^-_{11} e_1+\widehat{\Psi}^-_{21} e_2\right),\label{if}
\end{aligned}
\end{equation}
Combining the integral equation (\ref{pj11}), we obtain
\begin{equation}
zm(x;z)e_2-(I-F)\left(\widehat{\Psi}^-_{11} e_1+\widehat{\Psi}^-_{21} e_2\right)=\widetilde{m}(x;z) e_2,
\end{equation}
with
\begin{equation}
\begin{aligned}
\widetilde{m}(x;z)&=z\int_{-\infty}^{x} e^{2 iz(x-y)} w(y) d y+\frac{1}{2i} w(x) \\
&-\frac{1}{2 i}\int_{-\infty}^xe^{2iz(x-y)}\left[\left(2i\bar{u}_{yy}(y)+\bar{u}_y(y)|u_y(y)|^{2}\right)\widehat{\Psi}^-_{11}(y)+|u_y(y)|^2 \widehat{\Psi}^-_{21}(y)\right] dy,\nonumber
\end{aligned}
\end{equation}
where $w$ is defined in (\ref{mxe}). By using bounds (\ref{wgj1}) and (\ref{wjd}), we have $\widetilde{m}(x;z) \in L_x^{\infty}\left(\mathbb{R} ; L_z^2(\mathbb{Z})\right)$ if $u \in H^3(\mathbb{R}) \cap H^{2,1}(\mathbb{R})$. Multiply both sides of this equation by $(I-F)^{-1}$ on $L_{x}^{\infty}\left(\mathbb{R} ; L_z^2(\mathbb{Z})\right)$, we improves the proof process of (\ref{k2p}) for $\Psi^-_1(x;z)$.
\end{proof}

\begin{proposition}
If $u\in H^3(\mathbb{R})\cap H^{2,1}(\mathbb{R})$, then for every $x \in \mathbb{R}^{\pm}$,  the   Jost functions $\psi^\pm(x;k)$  have the following properties
\begin{align}
&\psi^\pm_{11} -e^{-ic_\pm(x)}, \   2ik\psi^\pm_{21} -\bar{u}_x e^{-ic_\pm(x)}\in H_z^1(\mathbb{R}), \  \partial_z\psi^-_{11 } \in L^\infty_xL^\infty_z,\label{pk1}\\
&2ik\psi^{\pm}_{22}(x;k)-e^{ic_\pm(x)}, \quad \psi^{\pm}_{12}(x;k)\in H_z^1(\mathbb{R}),\ \  k^{-1}\psi^\pm_{21}(x;k) \in H_z^1(\mathbb{R}).\label{pk3}
\end{align}

\end{proposition}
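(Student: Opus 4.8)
The plan is to read off every statement from the algebraic relation between $\psi^\pm(x;k)$ and the Zakharov--Shabat Jost functions $\Psi^\pm(x;z)$ that underlies the transformation (\ref{b1}). Inverting $\Psi=A\psi B$ gives $\psi=A^{-1}\Psi B^{-1}$, and computing $A^{-1},B^{-1}$ explicitly yields the entrywise dictionary
\begin{align}
&\psi^\pm_{11}=\Psi^\pm_{11},\qquad \psi^\pm_{12}=2ik\,\Psi^\pm_{12},\nonumber\\
&2ik\,\psi^\pm_{21}=\bar u_x\Psi^\pm_{11}+\Psi^\pm_{21},\qquad \psi^\pm_{22}=\bar u_x\Psi^\pm_{12}+\Psi^\pm_{22}.\nonumber
\end{align}
Since $\psi^\pm_{11},\psi^\pm_{22}$ are even and $\psi^\pm_{12},\psi^\pm_{21}$ are odd in $k$ (Proposition \ref{proposition1}), the $k$-weighted combinations appearing on the left are exactly the ones that descend to single-valued functions of $z=k^2$, so each assertion becomes a statement about the corresponding $\Psi$-combination in the $z$-variable. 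First I would record, straight from Proposition \ref{lma3}, that the four entries $\Psi^\pm_{11}-e^{-ic_\pm}$, $\Psi^\pm_{12}$, $\Psi^\pm_{21}$, $\Psi^\pm_{22}-e^{ic_\pm}$ all lie in $H^1_z(\mathbb{R})$, being precisely the entries of $\Psi^\pm-e^{-ic_\pm\sigma_3}$.

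The first-column statements then follow immediately. For $\psi^\pm_{11}-e^{-ic_\pm}=\Psi^\pm_{11}-e^{-ic_\pm}$ this is Proposition \ref{lma3}. For the $\psi^\pm_{21}$-statements I would write $2ik\,\psi^\pm_{21}-\bar u_x e^{-ic_\pm}=\bar u_x(\Psi^\pm_{11}-e^{-ic_\pm})+\Psi^\pm_{21}$, and since $\bar u_x$ is a fixed constant in $z$ for each $x$, both summands lie in $H^1_z$. The delicate point is $k^{-1}\psi^\pm_{21}=\tfrac{1}{2iz}(\bar u_x\Psi^\pm_{11}+\Psi^\pm_{21})$, which has an apparent pole at $z=0$; here Proposition \ref{lma2} is decisive, because $\Psi^\pm_{11}(x;0)=1$ and $\Psi^\pm_{21}(x;0)=-\bar u_x$ force the numerator $\bar u_x\Psi^\pm_{11}+\Psi^\pm_{21}$ to vanish at $z=0$, so the quotient is regular there, and combined with the $H^1_z$-membership of the numerator this gives $k^{-1}\psi^\pm_{21}\in H^1_z$. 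The second-column entries are handled identically: the even-in-$k$ combinations $\psi^\pm_{22}-e^{ic_\pm}=\bar u_x\Psi^\pm_{12}+(\Psi^\pm_{22}-e^{ic_\pm})$ and $k^{-1}\psi^\pm_{12}=2i\,\Psi^\pm_{12}$ are sums and multiples of $H^1_z$ functions by Proposition \ref{lma3}, hence also belong to $H^1_z$.

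The genuinely harder claim is $\partial_z\psi^-_{11}\in L^\infty_xL^\infty_z$, and this is where the full strength $u\in H^3(\mathbb{R})\cap H^{2,1}(\mathbb{R})$, rather than the mere $H^{2,1}$ used in Proposition \ref{lma3}, enters. Since $\psi^-_{11}=\Psi^-_{11}$, I would differentiate the integrated equation (\ref{pj11}) in $z$, obtaining $\partial_z\Psi^-_{11}=\tfrac{1}{2i}e^{-ic_-}\int_{-\infty}^x u_y e^{ic_-(y)}\partial_z\Psi^-_{21}(y;z)\,dy$, and then substitute $\partial_z\Psi^-_{21}=q_2+2iy\Psi^-_{21}$, where $q_2$ is the second component of the vector $q$ from the proof of Proposition \ref{lma3}. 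The $2iy\Psi^-_{21}$ contribution is controlled through the boundedness of $\Psi^-_{21}$ (Proposition \ref{la1}) together with the weighted integrability $y u_y\in L^1$, while the $q_2$ contribution forces an $L^\infty_z$, rather than merely $L^2_z$, bound on $q$. To secure this I would re-run the contraction/Neumann-series argument of Proposition \ref{lma3} in $L^\infty_x L^\infty_z$, using the large-$z$ expansion (\ref{uxgj})--(\ref{k2p}) to pin the $O(z^{-2})$ decay of $\partial_z\Psi^-_{11}$ at infinity and the $H^3$ bound on $u$ to keep the inhomogeneous terms (which involve $\bar u_{yy}$ and $|u_y|^2\bar u_y$ weighted by $y$) bounded uniformly in $z$.

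I expect this last step to be the main obstacle: upgrading the $L^\infty_xL^2_z$ estimates of Proposition \ref{lma3} to uniform-in-$z$ control of the $z$-derivative, while simultaneously tracking the behavior at both singular endpoints $z=0$ and $z=\infty$. Everything else reduces to bookkeeping through the dictionary above, with Proposition \ref{lma2} supplying exactly the endpoint cancellations that render the negative powers of $k$ harmless.
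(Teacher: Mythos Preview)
Your dictionary between $\psi^\pm$ and $\Psi^\pm$ is correct, and for the $H^1_z$ claims in (\ref{pk1}) and the second-column claims in (\ref{pk3}) your reduction to Proposition~\ref{lma3} is exactly what the paper does.

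The genuine gap is in your treatment of $k^{-1}\psi^\pm_{21}\in H^1_z$. Writing $k^{-1}\psi^\pm_{21}=\tfrac{1}{2iz}\bigl(\bar u_x\Psi^\pm_{11}+\Psi^\pm_{21}\bigr)$ and observing that the bracket vanishes at $z=0$ is not sufficient. First, the bracket is not itself in $H^1_z$: it tends to the nonzero constant $\bar u_x e^{-ic_\pm}$ as $|z|\to\infty$, so only its difference from that constant lies in $H^1_z$. More seriously, even for a function $g$ with $g(0)=0$ and $g'\in L^2_z$, Hardy's inequality gives at best $g/z\in L^2_z$; it does \emph{not} give $(g/z)'\in L^2_z$, since that would require separate control of $g'/z$ and $g/z^2$ near $z=0$, neither of which follows from $g'\in L^2$ together with single-point vanishing. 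So the quotient argument delivers $L^2_z$ at most, not $H^1_z$.

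The paper avoids this by abandoning the $\Psi$-dictionary for this one term and returning to the Volterra equation for $\psi^\pm_{21}$ itself: dividing the off-diagonal entry of (\ref{fi5}) by $k$ yields the explicit representation (\ref{kpk}),
\[
k^{-1}\psi^\pm_{21}(x;k)=-\int_{\pm\infty}^x e^{2iz(x-y)}\bar u_y\, e^{-ic_\pm}\,dy-\int_{\pm\infty}^x e^{2iz(x-y)}\bar u_y\bigl(\psi^\pm_{11}(y;k)-e^{-ic_\pm}\bigr)dy,
\]
and then the oscillatory-integral bounds of Proposition~\ref{y1} type are applied to each piece directly (differentiating in $z$ brings down a factor $2i(x-y)$, absorbed by the $L^{2,1}$ control on $u$ and by the already-established bound $\psi^\pm_{11}-e^{-ic_\pm}\in H^1_z$). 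You should replace the quotient argument by this integral-equation approach.
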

\begin{proof}
From the first column of the  transformation (\ref{b1}), we have
\begin{align}
&\Psi^\pm_{11}(x;k)=\psi^\pm_{11}(x;k),\\
&\Psi^\pm_{21}(x;k)=-\bar{u}_x \psi^\pm_{11}(x;k)+ 2ik \psi^\pm_{21}(x;k).
\end{align}
Recalling the result (\ref{mm1}), we immediately obtain
$$\psi^\pm_{11}(x;k)-e^{-ic_\pm(x)} = \Psi^\pm_{11}(x;k)-e^{-ic_\pm(x)}\in H_z^1(\mathbb{R}). $$
Noting that $u_x(x)\in L^\infty(\mathbb{R})$,  $\Psi^\pm_{21}(x;k) \in H^1_z(\mathbb{R}) $, we find
$$ 2ik\psi^\pm_{21}(x;k)-\bar{u}_x e^{-ic_\pm(x)} =\Psi^\pm_{21}(x;k)+\bar{u}_x(\psi^\pm_{11}(x;k)-  e^{-ic_\pm(x)})\in  H_z^1(\mathbb{R}). $$

To prove  $k^{-1}\psi^\pm_{21}(x;k)$,  we start with its integral equation
\begin{equation}
\begin{aligned}
k^{-1}\psi^{\pm}_{21}(x ; k)&=-\int_{\pm \infty}^{x} e^{2iz(x-y)}  \bar{u}_y e^{-ic_\pm}  d y\\
&-\int_{\pm \infty}^{x} e^{2 i z(x-y)} \bar{u}_y\left(\psi^{\pm}_{11}(y ;k)-e^{-ic_\pm} \right) d y.\label{kpk}
\end{aligned}
\end{equation}
Due to $u\in H^3(\mathbb{R})\cap H^{2,1}(\mathbb{R})$ and $\psi^\pm_{11}(x;k)-e^{-ic_\pm(x)}\in H_z^1(\mathbb{R})$,
by using a similar way to Proposition \ref{y1},
we know that the two integrals on the right-hand side belong to $H_z^1(\mathbb{R})$, which yields the solution $k^{-1}\psi^{\pm}_{12}(0; k)\in H^1_z(\mathbb{R})$.

In a similar way to  the second  column of the  transformation (\ref{b1}),  we   can  show the first two formulas  in   (\ref{pk3}).
\end{proof}

\subsection{Lipschitz continuity  of the  scattering data}
\hspace*{\parindent}
Lipschitz continuity  of the  Jost functions is the basis of Lipschitz continuity  of the  scattering data. Our first priority is to find the Lipshitz continuous map from $u$ to $\Psi^\pm(x;z)$.  As a direct corollary of Proposition \ref{lma3},  we show  the map
\begin{equation}
H^{2,1}(\mathbb{R}) \ni u \rightarrow\left(\Psi^{\pm}(x;z)-
e^{-ic_\pm(x)\sigma_3}\right) \in
L_x^{\infty}\left(\mathbb{R}^{\pm}; H_z^1(\mathbb{R})\right)
\end{equation}
is Lipschitz continuous.

\begin{corollary}
Suppose that $u, \tilde{u} \in H^{2,1}(\mathbb{R})$ satisfy $\|u\|_{H^{2,1}},\|\tilde{u}\|_{H^{2,1}} \leq \delta$ for some $\delta>0$,
and their    corresponding Jost
functions  are   $\Psi^{\pm}(x;z)$ and $\widetilde{\Psi}^{\pm}(x;z)$ respectively. Then, there is a positive   constant $c$ such that for every $x \in
\mathbb{R}^{\pm}$, we have
\begin{equation}
\left\|\Psi^{\pm}(x; \cdot)-e^{-ic_\pm(x)\sigma_3}-\widetilde{\Psi}^\pm(x;
\cdot)+e^{-i\tilde{c}_\pm(x)\sigma_3} \right\|_{H^1} \leq
c\|u-\tilde{u}\|_{H^{2,1}}.\label{mpm1}
\end{equation}
Furthermore, if $u, \tilde{u} \in H^3(\mathbb{R}) \cap H^{2,1}(\mathbb{R})$ satisfy $\|u\|_{H^3 \cap H^{2,1}},\|\tilde{u}\|_{H^3 \cap H^{2,1}} \leq \delta$, then for every $x \in \mathbb{R}$, there is a positive $c$ such that
\begin{equation}
\left\|\breve{\Psi}^{\pm}(x ; \cdot)-\breve{\tilde{\Psi}}^{\pm}(x ; \cdot)\right\|_{L^{2}} \leq c\|u-\tilde{u}\|_{H^3 \cap H^{2,1}},\label{tpc}
\end{equation}
where
$$
\breve{\Psi}^{\pm}(x ;z):=z\left(\Psi^\pm_1-e^{-ic_-}e_1\right)-\left(\widehat{\Psi}^\pm_{11} e_1+\widehat{\Psi}^\pm_{21} e_{2}\right).
$$
\end{corollary}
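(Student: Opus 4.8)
The plan is to treat both inequalities by the same resolvent-perturbation scheme already used to establish existence in Proposition \ref{lma3} and in the derivation of (\ref{k2p}): each object of interest solves a linear operator equation $(I-F)(\,\cdot\,)=(\text{source})$ with $F$ the Volterra operator (\ref{kf}), and since $\widetilde Q$ depends on $u$ only through $u_x,u_{xx}$ (and $c_\pm$ only through $|u_x|^2$), comparing the equations for $u$ and $\tilde u$ reduces everything to estimating a difference of sources plus a difference of operators applied to a function that is already known to be bounded. Throughout, the uniform a priori bounds $\|u\|_{H^{2,1}},\|\tilde u\|_{H^{2,1}}\le\delta$ (respectively in $H^3\cap H^{2,1}$) let us absorb the nonlinear factors.

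\textbf{First inequality (\ref{mpm1}).} Put $G^-:=\Psi^- - e^{-ic_-\sigma_3}$ and $\tilde G^-:=\widetilde\Psi^- - e^{-i\tilde c_-\sigma_3}$, which by (\ref{i-k}) satisfy $(I-F)G^-=M$ and $(I-\widetilde F)\tilde G^-=\widetilde M$, where $M$ is the off-diagonal source built from $m,n$ in (\ref{mxe}). Subtracting and regrouping yields the key identity
\[
(I-F)(G^- - \tilde G^-)=(M-\widetilde M)+(F-\widetilde F)\tilde G^-.
\]
By (\ref{i-kn}) the operator $(I-F)^{-1}$ is bounded on $L^\infty_x H^1_z$ uniformly over the $\delta$-ball, so it remains to bound the two right-hand terms in $L^\infty_x H^1_z$ by $c\|u-\tilde u\|_{H^{2,1}}$. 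For $M-\widetilde M$ I would invoke Proposition \ref{y1}: the entries have the form $\int_{-\infty}^x e^{\pm 2iz(x-y)}(\cdots)\,dy$, and their $u$-dependence is Lipschitz after factoring differences, e.g. $w-\tilde w=-\partial_x\big(u_x e^{ic_-}-\tilde u_x e^{i\tilde c_-}\big)$ together with $|u_x|^2-|\tilde u_x|^2=(u_x-\tilde u_x)\overline{u_x}+\tilde u_x\overline{(u_x-\tilde u_x)}$ and $|e^{-ic_-}-e^{-i\tilde c_-}|\le|c_- -\tilde c_-|\le\tfrac12\|u_x+\tilde u_x\|_{L^2}\|u_x-\tilde u_x\|_{L^2}\le\delta\|u-\tilde u\|_{H^{2,1}}$; then (\ref{wgj1}) and (\ref{pil}) give $\|M-\widetilde M\|\le c\|u-\tilde u\|_{H^{2,1}}$. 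For $(F-\widetilde F)\tilde G^-$, the operator difference is governed by $\|\widetilde Q(u)-\widetilde Q(\tilde u)\|_{L^1}$, which is Lipschitz on the $\delta$-ball since the linear, quadratic and cubic entries of $\widetilde Q$ each factor as a difference times bounded factors (using $\|u_x\|_{L^\infty}\le\frac1{\sqrt2}\|u\|_{H^{2,1}}$), while $\tilde G^-$ is bounded in $L^\infty_x H^1_z$ by (\ref{qjxgj}). Combining the two bounds proves (\ref{mpm1}).

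\textbf{Second inequality (\ref{tpc}).} The same mechanism applies to the second-order object $\breve\Psi^\pm$, which by (\ref{if})--(\ref{i-kv}) solves $(I-F)\breve\Psi^\pm=S\,e_2$ with $S=\widetilde m$ built from $w$, from $\widehat\Psi^\pm_{11},\widehat\Psi^\pm_{21}$, and from oscillatory integrals controlled by (\ref{wgj1})--(\ref{wjd}). Writing the analogous difference identity
\[
(I-F)\big(\breve\Psi^\pm-\breve{\widetilde\Psi}^\pm\big)=(S-\widetilde S)\,e_2+(F-\widetilde F)\breve{\widetilde\Psi}^\pm,
\]
where $\widetilde S$ is the source for $\tilde u$, and using again the uniform resolvent bound (\ref{i-kn}), I would reduce (\ref{tpc}) to Lipschitz estimates of $S-\widetilde S$ and of $(F-\widetilde F)\breve{\widetilde\Psi}^\pm$ in $L^2_z$. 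The new ingredients are the differences of $\widehat\Psi^\pm_{11},\widehat\Psi^\pm_{21}$ in (\ref{j1})--(\ref{j2}); because these involve $u_x\bar u_{xx}$, $|u_x|^4$ and $\partial_x(\bar u_x e^{ic_\pm})$, their Lipschitz control in $L^2$ (respectively $L^{2,1}$) forces the stronger hypothesis $u,\tilde u\in H^3\cap H^{2,1}$ and the use of (\ref{wjd}) in addition to (\ref{wgj1}).

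\textbf{Main obstacle.} The only genuinely delicate point is the uniform handling of the nonlinear $u$-dependence: the phase factors $e^{ic_\pm}$ and the higher-order terms $\bar u_x|u_x|^2$ and $|u_x|^4$ must each be written as a difference times factors bounded by the $\delta$-ball assumption, so that the final estimate stays linear in $\|u-\tilde u\|$. Keeping the weighted $\langle x\rangle$ estimates of Proposition \ref{y1} intact through these factorizations, and checking that the resolvent bound (\ref{i-kn}) is uniform over the $\delta$-ball, is where the care lies; no idea beyond Proposition \ref{lma3} is required.
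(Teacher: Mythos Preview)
Your proposal is correct and follows essentially the same route as the paper: the paper also subtracts the two resolvent equations to obtain (in your notation) $G^--\tilde G^-=(I-F)^{-1}(M-\widetilde M)+(I-F)^{-1}(F-\widetilde F)(I-\widetilde F)^{-1}\widetilde M$, then estimates $M-\widetilde M$ via Proposition~\ref{y1} after the same factorizations of $w-\tilde w$ and $e^{-ic_-}-e^{-i\tilde c_-}$, and bounds $(F-\widetilde F)$ by the Lipschitz dependence of $\widetilde Q$ on $u$; for (\ref{tpc}) the paper likewise just refers back to the analogous analysis of (\ref{i-kv}) and (\ref{if}). The only cosmetic difference is that the paper works in $L^\infty_x L^2_z$ (the $\partial_z$-part being handled separately through the auxiliary vector $q$ of (\ref{i-kv})), whereas you phrase the resolvent bound directly on $L^\infty_x H^1_z$; this is the same argument packaged slightly differently.
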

\begin{proof}
As an illustrative example,  we only  prove  (\ref{mpm1}) for the   Jost function $\Psi^-_1(x;z)$. For every $x \in \mathbb{R}$, it is obvious that
\begin{equation}
\begin{aligned}
\left|e^{-ic_-(x)}-e^{-i\tilde{c}_-(x)} \right|&=\left|e^{\frac{1}{2i}
\int_{-\infty}^x\left(|u_y(y)|^2-|\tilde{u}_y(y)|^2\right)dy}-1\right|\leq 2\delta c_1 \|u_y-\tilde{u}_y\|_{L^2}.\label{mqmj}
\end{aligned}
\end{equation}
Using the integral equation (\ref{i-k}), we obtain
\begin{equation}
\begin{aligned}
&\left(\Psi^--e^{-ic_-(x)\sigma_3}
\right)-\left(\widetilde{\Psi}^--e^{-ic_-(x)\sigma_3}(\tilde{u})
\right)\\
&=(I-F)^{-1}(T-\widetilde{T})+(I-F)^{-1}(F-\tilde{F})(I-\tilde{F})^{-1} \widetilde{T},\label{cgj}
\end{aligned}
\end{equation}
where
$$T=\begin{pmatrix}
0&n\\
m&0
\end{pmatrix}, \ \ \ \widetilde{T}=\begin{pmatrix}
0&\widetilde{n}\\
\widetilde{m}&0
\end{pmatrix},$$
and  $\widetilde{F}$, $\widetilde{T}$ denote the same as $F$, $T$ but with $u$ being replaced by $\tilde{u}$. To estimate the first term in the right-hand side of (\ref{cgj}), we write
\begin{equation}
m(x;z)-\widetilde{m}(x;z)=\int_{-\infty}^x e^{2iz(x-y)}[w(y)-\widetilde{w}(y)]dy,\label{hqht}
\end{equation}
where
\begin{equation}
w-\widetilde{w}=\left(\bar{u}_{xx}+\frac{1}{2 i}|\tilde{u}_x|^2 \bar{u}_x\right)
e^{ic_-(x)}-\left(\bar{\tilde{u}}_{xx}+\frac{1}{2i}|\tilde{u}_x|^2 \bar{\tilde{u}}_x\right) e^{ic_-(x)}(\tilde{u}).\nonumber
\end{equation}
By using (\ref{mqmj}), we obtain $\|w-\widetilde{w}\|_{L^{2,1}} \leq c_{2} \|u-\tilde{u}\|_{H^{2,1}}$, where $c_2 $ is another   positive
constant.

Under (\ref{hqht}) and the result in Proposition \ref{y1}, we obtain for every $x_0 \in \mathbb{R}^-$:
\begin{equation}
\sup _{x \in\left(-\infty, x_{0}\right)}\|\langle x\rangle(m(x;z)-\widetilde{m}(x;z))\|_{L_z^2(\mathbb{R})} \leq \sqrt{\pi}
c_2 \|u-\tilde{u}\|_{H^{2,1}}.\label{xh}
\end{equation}
The estimate of $n$ is analogous. This gives the estimate for the first term in $(\ref{cgj})$.

For the second term in the right-hand side of (\ref{cgj}), we use (\ref{kf}) and find $F$ is a Lipschitz continuous operator from $L_x^{\infty}\left(\mathbb{R} ;
L_z^2(\mathbb{R})\right)$ to $L_x^{\infty}\left(\mathbb{R}; L_z^2(\mathbb{R})\right)$ which means for every $f\in
L_x^{\infty}\left(\mathbb{R};L_z^2(\mathbb{R})\right)$, we have
\begin{equation}
\|(F-\widetilde{F})f\|_{L_x^{\infty} L_z^2} \leq c_3 \|u-\tilde{u}\|_{H^{2,1}}\|f\|_{L_{x}^{\infty} L_z^2},\label{k-kt}
\end{equation}
where $c_3 $ is another positive constant independent of $f$. Combining (\ref{gjh}), (\ref{i-kn}), (\ref{cgj}), (\ref{xh}) and
(\ref{k-kt}), we derive for every $x_0 \in \mathbb{R}^-$:
\begin{equation}
\sup _{x \in\left(-\infty, x_0\right)}\left\|\langle x\rangle\left(\Psi^-(x; \cdot)-
e^{-ic_-(x)\sigma_3}-\widetilde{\Psi}^-(x;\cdot)+e^{-i\tilde{c}_-(x)\sigma_3}(\tilde{u})\right)\right\|_{L_z^2(\mathbb{R})}\leq c\|u-\tilde{u}\|_{H^{2,1}},\nonumber
\end{equation}
which gives the proof of (\ref{mpm1}) for $\Psi^-$ and $\widetilde{\Psi}^-$. The proof of the bound
(\ref{tpc}) is completed by recalling the same analysis to the integral equations (\ref{i-kv}) and (\ref{if}).
\end{proof}

As the spectral problem (\ref{cslp}) admits no resonances  shown by Proposition \ref{prow}, there hence exists a positive number $a_0$ such that
\begin{equation}
|a(k)| \geq a_0 >0, \quad k \in \mathbb{R} \cup i\mathbb{R},\label{aka}
\end{equation}
then we prove some properties of the scattering data in the $z$-plane.
\begin{proposition}
\label{l5}
If $u \in H^3(\mathbb{R}) \cap H^{2,1}(\mathbb{R})$, we have the following properties about $a(z)$ and $b(k)$
\begin{equation}
a(z)-\hat{a},\quad k b(k),\quad k^{-1} b(k)\in H_z^1(\mathbb{R}), \label{362}
\end{equation}
with
$$
\hat{a}=1+\frac{1}{2i}\int_{\mathbb{R}}|u_y(y)|^2e^{-ic_-(y)}\mathrm{d}y,
$$
and
\begin{equation}
k b(k), \quad k^{-1} b(k) \in L_z^{2,1}(\mathbb{R}).
\end{equation}
\end{proposition}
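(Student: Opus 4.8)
The plan is to reduce every claim to the column-wise properties of the Jost functions established in the previous two propositions, reading off $a$ and $b$ from their integral and determinantal representations. Throughout I would evaluate the Jost functions at $x=0$ and use that, since $a$ is even and $b$ odd in $k$, both are genuine functions of $z=k^2$; I will also use repeatedly that $H^1_z(\mathbb{R})$ is a Banach algebra continuously embedded in $L^\infty_z(\mathbb{R})$, so that products of $H^1_z$-functions (and of an $H^1_z$-function with a constant) stay in $H^1_z$.

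For the statement $a(z)-\hat a\in H^1_z$ I would start from the integral representation
$$a(z)=1+\frac{1}{2i}\int_{\mathbb{R}}\Big(|u_y|^2\,\Psi^-_{11}(y;z)+u_y\,\Psi^-_{21}(y;z)\Big)\,dy$$
derived in Proposition \ref{prow}. Subtracting its $|z|\to\infty$ limit $\hat a$ (which equals $\hat a$ precisely because $\Psi^-_{21}\to0$ and $\Psi^-_{11}\to e^{-ic_-}$) gives
$$a(z)-\hat a=\frac{1}{2i}\int_{\mathbb{R}}\Big(|u_y|^2\big(\Psi^-_{11}(y;z)-e^{-ic_-(y)}\big)+u_y\,\Psi^-_{21}(y;z)\Big)\,dy,$$
and I would bound its $H^1_z$-norm by Minkowski's integral inequality, using that $\Psi^-_{11}(y;\cdot)-e^{-ic_-(y)}$ and $\Psi^-_{21}(y;\cdot)=\big(\Psi^-_1-e^{-ic_-}e_1\big)_2$ lie in $H^1_z$ with the weighted, $y$-uniform bounds supplied by Proposition \ref{lma3}; integrability in $y$ then follows from $u_x\in L^1\cap L^2$. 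This step uses only $H^{2,1}$-regularity.

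For the two $H^1_z$-claims on $b$ I would use the determinant formula $b(k)=\psi^+_{11}(0;k)\psi^-_{21}(0;k)-\psi^-_{11}(0;k)\psi^+_{21}(0;k)$. Writing, from (\ref{pk1}), $\psi^\pm_{11}(0;k)=e^{-ic_\pm(0)}+g_\pm(k)$ and $2ik\,\psi^\pm_{21}(0;k)=\bar u_x(0)e^{-ic_\pm(0)}+h_\pm(k)$ with $g_\pm,h_\pm\in H^1_z$, one has $kb(k)=\psi^+_{11}\,(k\psi^-_{21})-\psi^-_{11}\,(k\psi^+_{21})$; the constant leading terms cancel because $e^{-ic_+(0)}\bar u_x(0)e^{-ic_-(0)}=e^{-ic_-(0)}\bar u_x(0)e^{-ic_+(0)}$, leaving a sum of products each carrying at least one $H^1_z$-factor, so $kb\in H^1_z$. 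For $k^{-1}b$ I would instead insert $k^{-1}\psi^\pm_{21}\in H^1_z$ from (\ref{pk3}) directly, so that $k^{-1}b=\psi^+_{11}\,(k^{-1}\psi^-_{21})-\psi^-_{11}\,(k^{-1}\psi^+_{21})\in H^1_z$ by the algebra property.

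The weighted claims $kb,\,k^{-1}b\in L^{2,1}_z$ are where the full $H^3\cap H^{2,1}$ hypothesis is needed and form the main obstacle. Since $\langle z\rangle=\langle k^2\rangle\sim k^2$, membership in $L^{2,1}_z$ is equivalent (given the $H^1_z\subset L^2_z$ bounds already obtained) to $z\cdot kb,\ z\cdot k^{-1}b\in L^2_z$, i.e. to controlling two extra powers of $k$ against the decay of $b$. The $H^1_z$-structure above is not enough; I would go one order deeper in the large-$z$ expansion of $\psi^\pm_{21},\psi^\pm_{11}$ — as in the proposition producing the next-order coefficients $\widehat\Psi^\pm_{11},\widehat\Psi^\pm_{21}$ and the space relation (\ref{k2p}) — so that after the leading cancellations the remaining pieces are genuine oscillatory integrals to which the sharp bound (\ref{wjd}) of Proposition \ref{y1} applies. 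That estimate converts the extra weight into $\|\partial_x w\|_{L^2}$, and the requisite $w$ involves $u_{xxx}$, which is exactly where $u\in H^3$ enters. The delicate point is organizing the algebra so that all non-decaying contributions cancel and only $H^3\cap H^{2,1}$-controlled oscillatory integrals remain; I expect this bookkeeping, rather than any single estimate, to be the crux.
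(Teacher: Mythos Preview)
Your approach is correct and essentially matches the paper's proof: the integral representation for $a(z)-\hat a$, the determinantal formulas for $kb$ and $k^{-1}b$ combined with the $H^1_z$-properties of the columns (\ref{pk1})--(\ref{pk3}), and the next-order expansion (\ref{k2p}) for the weighted bound on $kb$ are all exactly what the paper uses. One simplification you overlook: the claim $k^{-1}b\in L^{2,1}_z$ is immediate from what you have already shown, since $z\cdot k^{-1}b(k)=kb(k)\in H^1_z\subset L^2_z$; only $kb\in L^{2,1}_z$ actually requires the deeper expansion and the $H^3$-hypothesis, via the cancellation $\widehat\Psi^-_{21}(0)e^{ic_+(0)}-\widehat\Psi^+_{21}(0)e^{ic_-(0)}=0$ coming from (\ref{j2}).
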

\begin{proof}
Due to the relation (\ref{phstgx}), the integration of $a(k)$ is as follows
\begin{equation}
a(z)=1+k \int_{\mathbb{R}} u_y(y) \psi^-_{21}(x;k) \mathrm{d}y.
\end{equation}
It can be deformed into
\begin{equation}
a(z)-\hat{a}=\frac{1}{2i}\int_{\mathbb{R}}(|u_y(y)|^2(\Psi^-_{11}(y;z)-e^{-ic_-})+u_y(y) \Psi^-_{21}(y;z))\mathrm{d}y.\nonumber
\end{equation}

According to $u\in H^3(\mathbb{R})\cap H^{2,1}(\mathbb{R})$ and (\ref{mm1}),
\begin{equation}
\begin{aligned}
\|a(z)-\hat{a}\|_{H_z^1(\mathbb{R})}&\leq  \sup_{x\in \mathbb{R}}  \| u_x \|_{L^2_x}^2
\sup_{x\in \mathbb{R}} \| \Psi^-_{11}-e^{-ic_-}\|_{H_z^1} +\|\bar{u}_x \|_{L^1_x} \sup_{x\in \mathbb{R}} \| \Psi^-_{21}\|_{H_z^1},\nonumber
\end{aligned}
\end{equation}
which yields $a(z)-\hat{a}\in H_z^1(\mathbb{R})$.

We then analyze the property of $b(k)$. By using the determinant (\ref{bk}), we write
\begin{equation}
b(k)=\psi^+_{11}(0 ;k) \psi^-_{21}(0;k)-\psi^+_{21}(0;k) \psi^-_{11}(0;k) .
\end{equation}
Under the relation $\Psi^+_{11}(x ;z)=\psi^+_{11}(x ;k)$ and $\Psi^+_{21}(x ;z)=-u_x\psi^+_{11}(x ;k)+2ik\psi^+_{21}(x ;k)$, we obtain
\begin{equation}
2ikb(k)=\Psi^+_{11}(0 ;z) \Psi^-_{21}(0;z)-\Psi^+_{21}(0;z) \Psi^-_{11}(0;z).\label{ikbk}
\end{equation}
With the same way by subtracting the limiting values of $\Psi^\pm_{11}(0;z)$ and $\Psi^\pm_{21}(0;z)$, we have
\begin{equation}
\begin{aligned}
2ikb(k)&=(\Psi^+_{11}(0 ;z)-e^{ic_+(0)}) \Psi^-_{21}(0;z)+e^{ic_+(0)}\Psi^-_{21}(0;z)\\
&-\Psi^+_{21}(0;z) (\Psi^-_{11}(0;z)-e^{ic_-(0)})-e^{ic_-(0)}\Psi^+_{21}(0;z).
\end{aligned}\label{kbk}
\end{equation}
Each term of (\ref{kbk}) belongs to $H_{z}^1(\mathbb{R})$. We conclude that $k b(k) \in H_{z}^{1}(\mathbb{R})$. In addition, based on the determinant (\ref{bk}), we have another equation about $k^{-1} b(k)$
\begin{equation}
k^{-1}b(k)=\Psi^+_{11}(0 ; z) k^{-1} \psi^-_{21}(0 ; k)-\Psi^-_{11}(0;z)k^{-1}\psi^+_{21}(0 ; k)
\end{equation}
Recalling that $k^{-1} \psi^{\pm}_{21}(0;k)$ belongs to $H_z^1(\mathbb{R})$ by (\ref{kpk}), we obtain $k^{-1}b(k) \in H_z^1(\mathbb{R})$.

Since $z k^{-1} b(k)=kb(k) \in H_z^1(\mathbb{R})$, we note that $k^{-1} b(k) \in L_z^{2,1}(\mathbb{R})$ . In addition, to prove that $kb(k)\in L_z^{2,1}(\mathbb{R})$, we multiply both sides of equation (\ref{ikbk}) by $z$ and put it in the form as follows
\begin{equation}
\begin{aligned}
2 i k z b(k)=& \Psi^+_{11}(0 ; z)\left(z \Psi^-_{21}(0;z)-\widehat{\Psi}^-_{21}(0)\right)-\Psi^-_{11}(0;z)\left(z \Psi^+_{21}(0; z)-\widehat{\Psi}^+_{21}(0)\right) \\
&+\widehat{\Psi}^-_{21}(0)\left(\Psi^+_{11}(0;z)-e^{ic_+(0)}\right)-\widehat{\Psi}^+_{21}(0)\left(\Psi^-_{11}(0;z)-e^{ic_-(0)}\right)
\end{aligned}\label{kzbk}
\end{equation}
where we have used the identity $\widehat{\Psi}^-_{21}(0) e^{ic_+(0)}-\widehat{\Psi}^+_{21}(0) e^{ic_-(0)}=0$ which is obtained from limits (\ref{pinf}) and (\ref{uxgj}). By (\ref{mm1}) and (\ref{k2p}), each terms in the representation (\ref{kzbk}) are in $L_z^2(\mathbb{R})$ so far. Therefore, we derive the final result that $kb(k) \in L_{z}^{2,1}(\mathbb{R})$.
\end{proof}

\begin{proposition}
\label{l12}
Suppose that $u(x) \in H^{3 }(\mathbb{R})\cup H^{2,1}(\mathbb{R})$, then we have
\begin{equation}
z^{-1}k^{-1}b(k)\in H^1_z(\mathbb{R}). \label{374}
\end{equation}
\end{proposition}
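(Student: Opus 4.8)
The plan is to start from the representation of $k^{-1}b(k)$ obtained in the proof of Proposition \ref{l5},
\begin{equation}
k^{-1}b(k)=\Psi^+_{11}(0;z)\,k^{-1}\psi^-_{21}(0;k)-\Psi^-_{11}(0;z)\,k^{-1}\psi^+_{21}(0;k),\nonumber
\end{equation}
and to divide it by $z$. Since Proposition \ref{l5} already gives $g(z):=k^{-1}b(k)\in H^1_z(\mathbb{R})$, the only delicate region is a neighbourhood of $z=0$: away from the origin, say on $\{|z|\ge 1\}$, the identity $\partial_z(z^{-1}g)=-z^{-2}g+z^{-1}g'$ together with $|z|^{-1},|z|^{-2}\le 1$ there shows that $z^{-1}g$ and its derivative are controlled in $L^2_z$ by $g$ and $g'$, so $z^{-1}k^{-1}b(k)$ has finite $H^1$ norm on $\{|z|\ge 1\}$. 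It therefore remains to treat $z\to 0$. The asymptotics (\ref{aj0}) give $b(k)=\mathcal{O}(k^3)$, hence $g(z)=\mathcal{O}(z)$ and $g(0)=0$; however, membership in $H^1_z$ together with vanishing at the origin is \emph{not} by itself enough to conclude $g/z\in L^2_z$ near $0$ (a generic $H^1$ function vanishing at $0$ only decays like $|z|^{1/2}$), so I must extract the factor $z$ explicitly from the integral representation.

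To do this I will insert the Volterra equations into the two factors. From (\ref{fi5}) one has $\psi^\pm_{11}(0;k)-1=\int_{\pm\infty}^0 u_y\,\bigl(k\psi^\pm_{21}(y;k)\bigr)\,dy$ together with $k\psi^\pm_{21}=z\,(k^{-1}\psi^\pm_{21})$, so that
\begin{equation}
\Psi^\pm_{11}(0;z)-1=\psi^\pm_{11}(0;k)-1=z\,\widetilde{P}^\pm(z),\qquad \widetilde{P}^\pm(z):=\int_{\pm\infty}^0 u_y\,k^{-1}\psi^\pm_{21}(y;k)\,dy,\nonumber
\end{equation}
which exhibits one explicit power of $z$ coming from the two factors of $k$ in the $(1,1)$ and $(2,1)$ scalar equations. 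Using (\ref{kpk}) and Proposition \ref{lma2}, the factor $k^{-1}\psi^\pm_{21}(0;k)$ has the finite limit $-\bar{u}(0)$ as $z\to 0$, and subtracting it produces a second explicit power of $z$: splitting the deviation into the phase contribution $e^{-2izy}-1=\mathcal{O}(z)$ and the Jost contribution $\psi^\pm_{11}-1=\mathcal{O}(z)$ yields $k^{-1}\psi^\pm_{21}(0;k)+\bar{u}(0)=z\,\widetilde{R}^\pm(z)$, with $\widetilde{R}^\pm$ a convergent oscillatory integral.

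Substituting these two expansions into the representation of $g$, the $z$-independent terms $1$ and $-\bar{u}(0)$ cancel in the antisymmetric combination, and a direct computation gives
\begin{equation}
z^{-1}k^{-1}b(k)=\bigl(\widetilde{R}^-(z)-\widetilde{R}^+(z)\bigr)-\bar{u}(0)\bigl(\widetilde{P}^+(z)-\widetilde{P}^-(z)\bigr)+z\bigl(\widetilde{P}^+\widetilde{R}^--\widetilde{P}^-\widetilde{R}^+\bigr),\nonumber
\end{equation}
which is free of any $z^{-1}$ singularity at the origin. It then suffices to show that $\widetilde{P}^\pm$ and $\widetilde{R}^\pm$ lie in $H^1$ on a neighbourhood of $z=0$. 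The integrands depend smoothly (indeed analytically through the oscillatory exponential) on $z$, the $y$-integrals converge because $u_x\in L^1(\mathbb{R})$, and their $z$-derivatives bring down a weight $y$ that is absorbed by the $L^{2,1}$-part of $u\in H^3(\mathbb{R})\cap H^{2,1}(\mathbb{R})$; the requisite weighted $L^2_z$ bounds are exactly those furnished by Proposition \ref{y1}. Hence $\widetilde{P}^\pm,\widetilde{R}^\pm$ are $H^1$ near $z=0$, giving $z^{-1}k^{-1}b(k)\in H^1(\{|z|<1\})$, and combining with the estimate on $\{|z|\ge 1\}$ yields $z^{-1}k^{-1}b(k)\in H^1_z(\mathbb{R})$.

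The main obstacle is the second extraction, namely showing that $k^{-1}\psi^\pm_{21}(0;k)+\bar{u}(0)$ carries a full factor of $z$ with an $H^1$-remainder: one must separately handle the phase contribution $e^{-2izy}-1=\mathcal{O}(z)$ and the Jost contribution $\psi^\pm_{11}-1=\mathcal{O}(z)$, and then control the $z$-derivative of the resulting weighted oscillatory integrals uniformly near the origin, balancing the weight $\langle y\rangle$ against the $L^{2,1}$ regularity of $u$ through Proposition \ref{y1}. Verifying that the leading constant terms cancel in the antisymmetric combination is the other point that must be checked with care, since it is precisely this cancellation that removes the apparent $z^{-1}$ singularity.
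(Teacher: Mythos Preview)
Your route is genuinely different from the paper's, and while the algebraic cancellation you identify is correct, the analytic justification you offer for the $H^1$ membership of $\widetilde P^\pm,\widetilde R^\pm$ near $z=0$ is incomplete.

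The paper never splits into $\{|z|<1\}$ and $\{|z|\ge 1\}$, never uses the two-sided determinant formula, and invokes no cancellation. It works instead from the one-sided representation
\[
k^{-1}b(k)=-\int_{\mathbb R}\bar u_y(y)\,\psi^-_{11}(y;k)\,e^{-2izy}\,dy,
\]
writes $\psi^-_{11}=1+(\psi^-_{11}-1)$, integrates the constant part by parts to produce $iz\,\widehat{\bar u}(z)$, and uses the iteration $\psi^-_{11}-1=-z\,K\psi^-_{11}$ (with $K$ the bounded double-integral operator built from $u_y,\bar u_y$) to extract the factor $z$ from the remainder globally. This yields
\[
z^{-1}k^{-1}b(k)=-\tfrac12\,\widehat{\bar u_x\,K\psi^-_{11}}(z)+i\,\widehat{\bar u}(z),
\]
and the $H^1_z$ bound follows from Plancherel together with $u,u_x\in L^{2,1}_x$ and $\|K\psi^-_{11}\|_{L^\infty_{x,z}}\le \|u_x\|_{L^2}^2\sup\|\psi^-_{11}\|$. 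No local analysis near $z=0$ is needed.

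The gap in your sketch is the step ``their $z$-derivatives bring down a weight $y$ that is absorbed by the $L^{2,1}$-part \ldots\ the requisite weighted $L^2_z$ bounds are exactly those furnished by Proposition~\ref{y1}.'' Proposition~\ref{y1} gives $L^2_z$ control of the oscillatory integral itself, not of its $z$-derivative; and the pointwise absorption you suggest fails, since $u\in L^{2,1}$ gives $yu\in L^2$ but \emph{not} $yu\in L^1$, so integrals such as $\int_{-\infty}^0 y\,e^{-2izy}\bar u(y)\,dy$ need not converge absolutely. The correct way to control $\partial_z\widetilde R^\pm$ in $L^2_z$ is to recognise each piece as (essentially) the Fourier transform of an $L^{2,1}_x$ function and apply Plancherel --- which is precisely the paper's mechanism, arrived at inside a more elaborate local/global decomposition that the paper's direct computation renders unnecessary. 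Your approach can therefore be completed, but only by importing the very tool that makes the paper's shorter argument work.
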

\begin{proof}  From (\ref{fi5}) and (\ref{phstgx}),    the  scattering data $b(k)$ and  Jost functions
$\psi^-_{11}, \psi^-_{21}$
  have the following integral representation
\begin{align}
&k^{-1} b(k)=- \int_{-\infty}^{\infty} \bar{u}_y(y) \psi^-_{11}(y;k) e^{-2iz y}\mathrm{d}y,\label{rr4}\\
&\psi^-_{11}(x;k)=1-k \int_{-\infty}^{x} u_y(y) \psi^-_{21}(y;k) \mathrm{d}y, \label{rr5}\\
&\psi^-_{21}(x;k)=k \int_{-\infty}^{x} \bar{u}_y(y) \psi^-_{11}(y ; k) e^{2iz(x-y)} \mathrm{d}y.\label{rr6}
\end{align}
In order  to estimate the decaying property of $b(k)$,  we write (\ref{rr4}) in  the form
\begin{align}
&k^{-1} b(k)= \int_{-\infty}^{\infty} \bar{u}_y(y)\left(\psi^-_{11}-1\right) e^{-2iz y} dy+ \int_{-\infty}^{\infty} \bar u_y (y)e^{-2izy}dy.\label{bzk}
\end{align}

Through integration by parts and the Fourier transformation,  the second integral  in (\ref{bzk}) becomes
\begin{equation}
\begin{aligned}
& \int_{-\infty}^{\infty} \bar{u}_y e^{-2izy} \mathrm{d}y =  \bar{u}(y) e^{-2izy}|_{-\infty}^{+\infty}+2iz \int_{-\infty}^{\infty} \bar{u}e^{-2izy}\mathrm{d}y
= i z \widehat{\bar{u} } (  z).\label{kqe}
\end{aligned}
\end{equation}

Next we make estimate on  the first integrand in (\ref{bzk}). Substituting  (\ref{rr6}) into  (\ref{rr5}) yields
\begin{equation}
\psi^-_{11}-1=-z \int_{-\infty}^x u_y(y) e^{2izy} dy \int_{-\infty}^{y} \bar{u}_s(s) e^{-2izs} \psi^-_{11}d s:=-z K \psi^-_{11},\label{ddy}
\end{equation}
where integral operator is defined  by
\begin{equation}
Kf =\int_{-\infty}^x u_y(y) e^{2i z y}dy \int_{-\infty}^{y} \bar{u}_s(s) e^{-2i z s} f ds,\nonumber
\end{equation}
which implies that 
$$ \|K \|_{L^\infty\to L^\infty}  \leq \|u_x\|_{L^2(\mathbb{R}) }^2.$$
With (\ref{ddy}),  the first term of (\ref{bzk})  becomes
\begin{align}
 &\int_{-\infty}^{\infty} \bar{u}_y(y)\left(\psi^-_{11}-1\right) e^{-2izy}\mathrm{d}y= -
 z \int_{-\infty}^{\infty} \bar{u}_y(y) e^{-2izy} K\psi^-_{11}\mathrm{d}y \nonumber\\
 & = - \frac{1}{2} z \widehat{ \bar{u}_x K \psi^-_{11}}( z).\label{kup3}
\end{align}
Combining  (\ref{bzk}), (\ref{kqe}) and (\ref{kup3}), we obtain
\begin{equation}
z^{-1} k^{-1} b(k)= - \frac{1}{2} \widehat{ \bar{u}_x   K  \psi^-_{11}   } (  z)+ i \widehat{\bar{u}}  (  z ).
\end{equation}
By Plancherel formula, we have
\begin{align}
& \|z^{-1} k^{-1} b(k)\|_{H^1_z}\leq \| \widehat{ \bar{u}_x   K  \psi^-_{11}   } (  z)\|_{H^1_z} + \|  i \widehat{\bar{u}}  (  z )\|_{H^1_z}\nonumber \\
&=\|   \bar{u}_x   K  \psi^-_{11}  ( x)\|_{L^{2,1}} +  \|  {u}  ( x )\|_{L^{2,1}}\nonumber \\
 & \leq  c   \|K \|_{L^\infty\to L^\infty}   \sup_{x\in \mathbb{R} } \| \psi^-_{11}  \|_{L^\infty_z }  \|   {u}_x  (x) \|_{L^{2,1}_x}  + c  \| u (x)\|_{L^{2,1}_x}.\nonumber
\end{align}

\end{proof}

\begin{lemma} \label{ertf} Let the set $ \Omega \subseteq \mathbb{R}$,
if  $f \in L^\infty(\Omega), \ f \in L^2 (\Omega)$,  then for  arbitrary   $2\leq p<\infty$,  we have
\begin{align}
& \|f\|_{L^p(\Omega)}\leq c \|f\|_{L^2(\Omega)}^{2/p}. \label{wefe1}
\end{align}
\end{lemma}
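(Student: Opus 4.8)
The plan is to treat this as a standard interpolation inequality between $L^2(\Omega)$ and $L^\infty(\Omega)$, absorbing the $L^\infty$-norm into the constant $c$. The starting point is an elementary algebraic splitting of the integrand: since $2\le p<\infty$, the exponent $p-2$ is nonnegative, so I would write $|f(x)|^p=|f(x)|^{p-2}\,|f(x)|^2$ and estimate the first factor pointwise by the essential supremum of $f$. This is the only idea needed; everything else is bookkeeping.

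Concretely, first I would record the pointwise bound $|f(x)|^{p-2}\le \|f\|_{L^\infty(\Omega)}^{\,p-2}$ for almost every $x\in\Omega$, which is legitimate precisely because $p-2\ge 0$ and $f\in L^\infty(\Omega)$. Integrating over $\Omega$ then gives
\[
\int_\Omega |f(x)|^p\,dx \le \|f\|_{L^\infty(\Omega)}^{\,p-2}\int_\Omega |f(x)|^2\,dx = \|f\|_{L^\infty(\Omega)}^{\,p-2}\,\|f\|_{L^2(\Omega)}^2,
\]
where the right-hand side is finite because $f\in L^2(\Omega)\cap L^\infty(\Omega)$. Taking the $p$-th root of both sides yields
\[
\|f\|_{L^p(\Omega)} \le \|f\|_{L^\infty(\Omega)}^{(p-2)/p}\,\|f\|_{L^2(\Omega)}^{2/p},
\]
which is exactly the claimed estimate (\ref{wefe1}) once I set $c:=\|f\|_{L^\infty(\Omega)}^{(p-2)/p}$, noting $(p-2)/p=1-2/p\ge 0$.

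There is essentially no hard step here; the lemma is a one-line consequence of this splitting, and I do not anticipate any genuine obstacle. The single point that deserves attention is the nature of the constant $c$: it is not universal but depends on $\|f\|_{L^\infty(\Omega)}$ (and, through the exponent, mildly on $p$). For the intended application this is harmless, since the functions to which the lemma will be applied already carry uniform $L^\infty$ bounds coming from the earlier boundedness estimates (for instance Proposition \ref{la1}), so $c$ can be chosen uniformly in the relevant parameters. If instead one wished to display the $L^\infty$ dependence explicitly, one would simply keep the factor $\|f\|_{L^\infty(\Omega)}^{(p-2)/p}$ rather than hiding it in $c$.
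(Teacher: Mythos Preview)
Your proof is correct and follows essentially the same approach as the paper: split $|f|^p=|f|^{p-2}|f|^2$, bound the first factor by $\|f\|_{L^\infty(\Omega)}^{p-2}$, integrate, and take the $p$-th root. Your version is in fact slightly more explicit about the dependence of $c$ on $\|f\|_{L^\infty(\Omega)}$ and $p$, which the paper leaves implicit.
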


\begin{proof}  If  $f\in L^\infty(\Omega)$,   then for  $2\leq p<\infty$,  we have  $  |f|^{ p-2}\in  L^\infty(\Omega) $.
Further by  Holder inequality
\begin{align}
&   \|f\|_{L^p(\Omega) }^p = \int_{\Omega} |f|^{ p} dx=\int_{\Omega} |f|^2  |f|^{ p-2}  dx
\leq c  \int_{\Omega} |f|^2     dx = c  \| u \|_{L^2}^2. \nonumber
\end{align}
\end{proof}

\begin{proposition}
\label{l13}
Suppose that $u(x) \in H^{3 }(\mathbb{R})\cup H^{2,1}(\mathbb{R})$, then for  fixed small  $\delta>0$,  we have
\begin{align}
&z^{-2}k^{-1}b(k)\in L^2_z(-\delta, \delta), \quad   z^{-2} r_{1,2}\in L^2(\mathbb{R}).
\end{align}
\end{proposition}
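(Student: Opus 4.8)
The plan is to concentrate all the difficulty at the singular point $z=0$ and to recycle the Fourier representation established in the proof of Proposition \ref{l12}. First I would dispose of the region $|z|>\delta$: there $z^{-2}$ is bounded by $\delta^{-2}$, while by Proposition \ref{l5} the coefficients $k^{\pm 1}b(k)$ lie in $L_z^2(\mathbb{R})$; since the $z$-plane reflection coefficients $r_{1,2}$ differ from $k^{\mp1}b(k)/a$ only by the factor $a^{-1}$, which is bounded (by (\ref{aka})) and smooth away from its singular set, we get $z^{-2}r_{1,2}\in L^2(|z|>\delta)$ at once. Thus both claims reduce to the local statement on $(-\delta,\delta)$, and the whole problem becomes the behaviour of $k^{-1}b(k)$ as $z\to 0$.

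On $(-\delta,\delta)$ I would work from the identity derived inside Proposition \ref{l12}, namely $z^{-1}k^{-1}b(k)=-\tfrac12\widehat{\bar u_x K\psi^-_{11}}(z)+i\widehat{\bar u}(z)$, together with the membership $z^{-1}k^{-1}b(k)\in H_z^1(\mathbb{R})$. The naive pointwise bound coming from $b=\mathcal{O}(k^3)$ only gives $z^{-2}k^{-1}b=\mathcal{O}(z^{-1})$, which is \emph{not} square-integrable near $0$; so the essential step is to extract a second inverse power of $z$. For this I would substitute the small-$z$ expansion $\psi^-_{11}(x;z)=1-zK\psi^-_{11}+\mathcal{O}(z^2)$ from (\ref{ddy}) into the representation (\ref{bzk}) and integrate by parts in $y$, using the full regularity $u\in H^3(\mathbb{R})\cap H^{2,1}(\mathbb{R})$ to push one more derivative onto $u$ and thereby produce an extra factor of $z$ in the relevant combination defining $r_1$. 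This is exactly where the $H^3$-regularity (beyond the $H^{2,1}$ used earlier) is consumed.

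Having isolated the extra order of vanishing, I would convert it into an $L^2$ bound by interpolation. Since $z^{-1}k^{-1}b(k)\in H_z^1(\mathbb{R})\hookrightarrow L_z^\infty\cap L_z^2$, Lemma \ref{ertf} gives $z^{-1}k^{-1}b(k)\in L_z^p$ for every $p\in[2,\infty)$; combining this with a H\"older/Hardy estimate against $z^{-1}$ on $(-\delta,\delta)$ yields $z^{-2}k^{-1}b(k)\in L_z^2(-\delta,\delta)$. Running the same computation for the reflection coefficients (again using that $a^{-1}$ is smooth and bounded near $z=0$) gives $z^{-2}r_{1,2}\in L^2(-\delta,\delta)$, and together with the tail estimate of the first paragraph this establishes $z^{-2}r_{1,2}\in L^2(\mathbb{R})$, i.e. $r_{1,2}$ lands in the weighted class $\mathcal{W}(\mathbb{R})$.

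The hard part is genuinely the point $z=0$: everything hinges on squeezing a second factor $z^{-1}$ out of $k^{-1}b(k)$ in an $L^2$-averaged sense, which is impossible from crude asymptotics alone and requires the precise structure furnished by the representation of Proposition \ref{l12} and the regularity $u\in H^3$. This is precisely the spectral-singularity obstruction advertised in the introduction, and it is the reason the analysis is carried out in the $z$-plane and in the weighted space $\mathcal{W}(\mathbb{R})$, where the $e^{\,i\,\mathrm{const}/z}$ oscillation of the time evolution can be absorbed.
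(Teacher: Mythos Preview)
Your scaffolding is the paper's: split into $|z|>\delta$ (tail, handled via $r_{1,2}\in L^\infty$ together with $z^{-2}\in L^2(\delta,\infty)$) and $|z|<\delta$; on the core use Proposition~\ref{l12} to get $z^{-1}k^{-1}b(k)\in H^1_z\hookrightarrow L^\infty_z\cap L^2_z$, then Lemma~\ref{ertf} for $L^p_z$ membership for every $2\le p<\infty$, and finish with a product estimate against the remaining factor $z^{-1}$. The paper makes this last step concrete as
\[
\|z^{-2}k^{-1}b(k)\|_{L^2(-\delta,\delta)}=\|z^{-1}\cdot(z^{-1}k^{-1}b(k))\|_{L^2(-\delta,\delta)}\le \|z^{-1}\|_{L^{1/2}(-\delta,\delta)}^{1/5}\,\|z^{-1}k^{-1}b(k)\|_{L^8(-\delta,\delta)}^{4/5},
\]
using only $\int_{-\delta}^{\delta}|z|^{-1/2}\,dz<\infty$ and the $L^8$ bound just obtained.

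Where you diverge is your middle paragraph. The paper performs \emph{no} additional integration by parts and does not re-invoke the $H^3$ regularity of $u$ at this stage; it goes straight from Lemma~\ref{ertf} to the displayed inequality and never engages the pointwise obstruction $z^{-2}k^{-1}b\sim z^{-1}$ that you (correctly) flag. Your proposed remedy---push another $y$-derivative onto $u$ inside the representation (\ref{bzk}) to manufacture an extra factor of $z$---is absent from the paper and, as stated, runs in the wrong direction: in a Fourier integral $\int f(y)e^{-2izy}\,dy$ one gains powers of $z$ by \emph{antidifferentiating} $f$, whereas differentiating $f$ costs powers of $z$. So ``using the full $H^3$ regularity'' does not by itself supply the extra vanishing at $z=0$ you are after, and in any case it is not the mechanism the paper employs.
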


\begin{proof}
The formula  (\ref{374}) in Proposition \ref{l12} implies that
\begin{align}
&  z^{-1}  k^{-1}b(k)  \in L^2_z(\mathbb{R}), \ \ \ z^{-1}  k^{-1}b(k)\in L^\infty_z(\mathbb{R}).  \label{po1}
\end{align}
which, with Lemma  \ref{ertf},    yields
  \begin{align}
& z^{-1}  k^{-1}b(k)   \in L^p_z(\mathbb{R}), \quad   2\leq p<\infty.  \label{po3}
\end{align}
Noting that for  fixed small  $\delta>0$,
$$ \|z^{-1}   \|_{L^{1/2}(-\delta,\delta) } =\left(\int_{-\delta}^{\delta} |z|^{-1/2} \mathrm{d}z\right)^2\leq c, $$
which together with (\ref{po3}),   and  by  Yang  inequality, we then derive that
\begin{align}
&  \|z^{-2}   k^{-1}b(k)  \|_{L^2(-\delta,\delta) }=  \|  z^{-1} ( z^{-1}  k^{-1}b(k)) \|_{L^2(-\delta,\delta) }\nonumber\\
 &\leq   \|z^{-1}   \|_{L^{1/2}(-\delta,\delta) }^{1/5}   \|  z^{-1}  k^{-1} b(k) \|_{L^8}^{4/5}(-\delta,\delta)\leq c.\label{po4}
\end{align}

Fix small  $\delta>0$,  let $\Omega=\mathbb{R}\setminus  (-\delta,\delta) $ and $\chi$ denote the indicator function of  the interval $\Omega$. Since  $ r_{1 }(0;z)\in L^\infty (\mathbb{R}), \  |a(k)|^{-1}\leq a_0^{-1} $ shown in (\ref{aka}),
 then  by   (\ref{po4}), we have
\begin{align}
&   \| z^{-2} r_{1 }( z) \|_{L^2(\mathbb{R})} \leq   \| \chi  z^{-2} r_{1}( z) \|_{L^2(\mathbb{R})} + \big\| (1-\chi)  \frac{1}{2i a(k)} z^{-2} k^{-1} b(k) \big\|_{L^2(\mathbb{R})} \nonumber\\
&\leq  2 \|r_{1 }(  z) \|_{L^{\infty}}  \|   z^{-2}  \|_{L^2(\delta,+\infty)} +    a_0^{-1}    \| z^{-2} k^{-1} b(k)   \|_{L^2 ( ( 0, \delta))}. \label{wege2}
\end{align}
In a similar way, by using Proposition \ref{l12},  we can show that
\begin{align}
&   \| z^{-2} r_{2 }( z) \|_{L^2(\mathbb{R})} \leq
 2 \|r_{ 2}(  z) \|_{L^{\infty}}  \|   z^{-2}  \|_{L^2(\delta,+\infty)} +   a_0^{-1}    \| z^{-1} k^{-1} b(k)   \|_{L^2( ( 0,\delta))}.\label{wege3}
\end{align}

\end{proof}

In conclusion, we show that the mapping from $u$ to scattering data $a(z)$ and $b(k)$
\begin{equation}
\begin{aligned}
&H^{2,1}(\mathbb{R}) \ni u \rightarrow a(z)-1\in H_{z}^1(\mathbb{R}),\\
&H^{3}(\mathbb{R}) \cap H^{2,1}(\mathbb{R}) \ni u \rightarrow k b(k), k^{-1} b(k)\in H_{z}^1(\mathbb{R}),\\
&H^3(\mathbb{R}) \cap H^{2,1}(\mathbb{R}) \ni u \rightarrow k b(k), k^{-1} b(k) \in L_{z}^{2,1}(\mathbb{R}),\\
&H^3(\mathbb{R}) \cap H^{2,1}(\mathbb{R}) \ni u \rightarrow z^{-2}kb(k), z^{-2}k^{-1}b(k) \in L_{z}^2(\mathbb{R}).
\end{aligned}\label{abul}
\end{equation}
is Lipschitz continuous.

If we express this property of Lipschitz continuity in detail the following corollary can be drawn:
\begin{corollary}
\label{cr3}
Let $u, \tilde{u} \in H^3(\mathbb{R}) \cap H^{2,1}(\mathbb{R})$ satisfy $\|u\|_{H^{3} \cap H^{2,1}},\|\tilde{u}\|_{H^3 \cap H^{2,1}} \leq \delta$ for some $\delta>0$. $u$ and $\tilde{u}$ correspond to two sets of scattering data $a, b$ and $\tilde{a}, \tilde{b}$, respectively. Then, there is a positive   constant $c$ such that
\begin{align}
&\left\|a(z)-\tilde{a}(z)\right\|_{H_z^1}\leq c \|u-\tilde{u}\|_{H^3 \cap H^{2,1}},\nonumber\\
&\|k b(k)-k \tilde{b}(k)\|_{H_z^1}+\left\|k^{-1} b(k)-k^{-1} \tilde{b}(k)\right\|_{H_z^1} \leq c \|u-\tilde{u}\|_{H^{2,1}},\nonumber\\
&\|k b(k)-k\tilde{b}(k)\|_{L_z^{2,1}}+\left\|k^{-1} b(k)-k^{-1} \tilde{b}(k)\right\|_{L_z^{2,1}} \leq c\|u-\tilde{u}\|_{H^3 \cap H^{2,1}}.\nonumber
\end{align}
\end{corollary}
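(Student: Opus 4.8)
The corollary is the quantitative counterpart of Proposition \ref{l5}, and the plan is to differentiate each of the integral representations used there with respect to the pair $(u,\tilde u)$, reducing every scattering datum to a sum of terms in which the increment $u-\tilde u$ sits on a single factor. The universal device is the telescoping identity $fg-\tilde f\tilde g=(f-\tilde f)\tilde g+f(g-\tilde g)$, together with the uniform bounds furnished by the hypothesis $\|u\|_{H^3\cap H^{2,1}},\|\tilde u\|_{H^3\cap H^{2,1}}\le\delta$: in each term one factor is estimated in the relevant fixed norm while the other supplies the Lipschitz smallness through (\ref{mpm1}), (\ref{tpc}) or the elementary phase bound (\ref{mqmj}).

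For the bound on $a$ I would start from the representation $a(z)-\hat a=\frac{1}{2i}\int_{\mathbb{R}}\bigl(|u_y|^2(\Psi^-_{11}-e^{-ic_-})+u_y\Psi^-_{21}\bigr)\,dy$ from the proof of Proposition \ref{l5}. Writing $a-\tilde a=\bigl[(a-\hat a)-(\tilde a-\hat{\tilde a})\bigr]+(\hat a-\hat{\tilde a})$, the $z$-independent remainder $\hat a-\hat{\tilde a}$ is $O(\|u-\tilde u\|_{H^{2,1}})$ by (\ref{mqmj}), while the bracketed $H^1_z$-part is telescoped: increments landing on $|u_y|^2$ or $u_y$ are controlled in $L^1_y$ by $\|u-\tilde u\|_{H^{2,1}}$, and increments landing on $\Psi^-_{11}-e^{-ic_-}$ or on $\Psi^-_{21}$ are controlled in $H^1_z$ by (\ref{mpm1}). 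Taking the $H^1_z$ norm under the integral sign and using that the fixed factors are uniformly bounded on the $\delta$-ball yields the first inequality, a fortiori with the weaker $H^3\cap H^{2,1}$ norm on the right since $\|\cdot\|_{H^{2,1}}\le\|\cdot\|_{H^3\cap H^{2,1}}$.

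For the $H^1_z$ bounds on $kb$ and $k^{-1}b$ I would telescope the identity (\ref{kbk}) and the representation $k^{-1}b=\Psi^+_{11}\,k^{-1}\psi^-_{21}-\Psi^-_{11}\,k^{-1}\psi^+_{21}$ evaluated at $x=0$, invoking (\ref{mpm1}) and the membership $k^{-1}\psi^\pm_{21}\in H^1_z$ recorded in (\ref{pk3}); as these representations carry only unweighted Jost data, $\|u-\tilde u\|_{H^{2,1}}$ suffices, matching the second line of the claim. The third, weighted, estimate is the delicate one and is where $H^3$ enters: here I would pass to the $z$-weighted representation (\ref{kzbk}), in which the factors $z\Psi^\pm_{21}(0;z)-\widehat\Psi^\pm_{21}(0)$ appear. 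These are precisely the quantities whose Lipschitz bound (\ref{tpc}) demands the full $H^3\cap H^{2,1}$ norm. Telescoping (\ref{kzbk}) and combining (\ref{tpc}) with (\ref{mpm1}) gives the $L^{2,1}_z$ bound on $kb$, and the bound on $k^{-1}b$ then follows from the algebraic relation $z\,k^{-1}b=kb$ exactly as in Proposition \ref{l5}.

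The main obstacle is this last estimate. Besides needing the $L^2$-Lipschitz continuity of the weighted Jost functions $\breve\Psi^\pm$, which is only available at the $H^3\cap H^{2,1}$ level through (\ref{tpc}), one must also verify that the cancellation $\widehat\Psi^-_{21}(0)e^{ic_+(0)}-\widehat\Psi^+_{21}(0)e^{ic_-(0)}=0$ used to derive (\ref{kzbk}) is itself stable under perturbation, i.e. that its increment mixing $\bar u_{xx}$ with the phase factors is $O(\|u-\tilde u\|_{H^3\cap H^{2,1}})$; otherwise a spurious non-decaying term would survive in the difference of the two weighted representations. Once this stability and the bound on $\breve\Psi^\pm$ are in hand, every remaining term falls to the same telescoping-plus-uniform-bound scheme, and the positivity (\ref{aka}) keeps all the constants finite and uniform over the $\delta$-ball.
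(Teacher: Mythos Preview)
Your approach is correct and is exactly what the paper intends: Corollary \ref{cr3} is stated without proof as the Lipschitz version of Proposition \ref{l5}, and your scheme of telescoping the same integral representations (\ref{kbk}), (\ref{kzbk}), and the one for $a(z)-\hat a$, then feeding in the Jost-function Lipschitz bounds (\ref{mpm1}) and (\ref{tpc}), is precisely the implied argument.

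One remark on the obstacle you flag: the cancellation $\widehat\Psi^-_{21}(0)e^{ic_+(0)}-\widehat\Psi^+_{21}(0)e^{ic_-(0)}=0$ is an exact algebraic identity (it is just $\frac{1}{2i}\partial_x(\bar u_x e^{ic_+})|_{x=0}\cdot e^{ic_+(0)}$ written two ways), so it holds separately for $u$ and for $\tilde u$. Hence (\ref{kzbk}) is valid for each potential individually, and when you subtract the two versions no spurious constant term appears; there is nothing further to check beyond the telescoping of the four displayed terms in (\ref{kzbk}), each of which is handled by (\ref{mpm1}) or (\ref{tpc}) exactly as you describe.
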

\subsection{A new RH problem on the $z$-plane}
\hspace*{\parindent}
In order to use  the  theorems   on   classical Cauchy integral and  its projection   on the real axis,
 we  change the original  RH problem \ref{rhp:21} with jump contour on $\mathbb{R}\cup i\mathbb{R}$  in the $k$-plane
 into a  RH problem with jump contour on $\mathbb{R} $  in the $z$-plane.

We define  a matrix function
	   \begin{equation}
	   M(x ;z):=\begin{cases}
	   		e^{ic_+(x)\sigma_3}\left(\frac{\Psi_1^-(x ; z)}{a(z)},\Psi^+_2(x ;z)\right),  \; z\in \mathbb{C}^+,\\[5pt]
	   		e^{ic_+(x)\sigma_3}\left(\Psi_1^+(x ; z), \frac{\Psi^-_2(x;z)}{\bar{a}(z)}\right), \; z\in \mathbb{C}^-,
	   	\end{cases}\label{pdy}
	   \end{equation}
which then  satisfies the following RH problem on $z$-plane (see Figure \ref{klabdaz55}).

\begin{rhp} 
\label{rhp:31}
Find a  matrix  function $ M(x;z)$  with the following properties:
\begin{itemize}

\item[$\blacktriangleright$]  \emph{Analyticity}:  $M(x;z)$ is analytic in $ \mathbb{C}\setminus \mathbb{R}$.

\item[$\blacktriangleright$] \emph{Jump condition}: $M(x;z)$ satisfies the jump condition
\begin{equation}
M_+(x;z)= M_-(x;z)(I+R(x;z) ),\quad z \in \mathbb{R},\label{mrhp1}
\end{equation}
where the jump matrix  is defined  by
\begin{equation}
R(x;z)=\begin{pmatrix}
\bar{r}_1(z) r_2(z) & \bar{r}_1(z) \mathrm{e}^{-2iz x} \\
r_2(z) \mathrm{e}^{2iz x} & 0
\end{pmatrix},\label{rrf}
\end{equation}
with
\begin{equation}
r_1(z):=-\frac{b(k)}{2ika(k)}, \quad r_2(z):=\frac{2ikb(k)}{a(k)}, \quad z \in \mathbb{R}, \label{rxdy}
\end{equation}
which   satisfy the relations
\begin{align}
&r_2(z)=4 z r_1(z), \quad z \in \mathbb{R},\label{rfrz}\\
&\bar{r}_1(z) r_2(z)=|r(k)|^2, \quad z \in \mathbb{R}^{+}, \quad k \in \mathbb{R},\label{r11} \\
& \bar{r}_1(z)r_2(z)=-|r(k)|^{2}, \quad z \in \mathbb{R}^{-}, \quad k \in i \mathbb{R}.\label{r22}
\end{align}

\item[$\blacktriangleright$] \emph{Asymptotic conditions}:
\begin{equation}
M(x ; z) \rightarrow I
\quad \text { as } \quad|z| \rightarrow \infty. \label{we5ed}
\end{equation}
\end{itemize}
\end{rhp}

\begin{proof}  The first two  assertions  are  easy to be checked.
By using   (\ref{b1}) and (\ref{b2}),  we  obtain that
\begin{equation}
\Psi^\pm_2(x;z)=\frac{1}{2ik}\begin{pmatrix}
\psi^\pm_{12}\\
-\bar{u}_x\psi^\pm_{12}+2ik\psi^\pm_{22}
\end{pmatrix},\nonumber
\end{equation}
which together with  (\ref{jjw}) gives
\begin{equation}
\lim_{|z|\rightarrow\infty}\Psi^\pm_2(x;z)=e^{-ic_\pm}e_2,
\end{equation}
which together with (\ref{wfew}) leads to  (\ref{we5ed}).
\end{proof}

\begin{figure}[H]
\begin{center}
\begin{tikzpicture}
\draw [- ](-4,-4)--(-0.3,-4);
\draw [- ](-2.2,-5.2)--(-2.2,-2.5);
\draw [-latex](-2,-4)--(-1.1,-4);
\draw [-latex](-2,-4)--(-3.3,-4);
\draw [-latex](-2.2,-2.5)--(-2.2,-3.4);
\draw [-latex](-2.2,-4)--(-2.2,-4.7);
\node [thick] [above]  at (0.1,-4.2){\footnotesize ${\rm Re}k$};
\node [thick] [above]  at (-2.2,-2.6){\footnotesize ${\rm Im}k$};
\node [thick] [above]  at (-1.5,-3.8){\footnotesize $N_+$};
\node [thick] [above]  at (-1.5,-4.8){\footnotesize $N_-$};
\node [thick] [above]  at (-3,-3.8){\footnotesize $N_-$};
\node [thick] [above]  at (-3,-4.8){\footnotesize $N_+$};
\node [thick] [above]  at (6,-4.2){\footnotesize ${\rm Re}z$};
 \draw [-latex ] (-0.5,-3)--(1.5,-3 );
 \node [thick] [above]  at ( 0.5,-2.9) {\footnotesize $\Psi =A\psi B$};
\node [thick] [above]  at (3.5,-3.5){\footnotesize $M_+$};
\node [thick] [above]  at (3.5,-5){\footnotesize $M_-$};
\draw [-latex ](1.5,-4)--(3.5,-4);
\draw [  ](1.5,-4)--(5.5,-4);
\end{tikzpicture}
\end{center}
\caption{\footnotesize The left figure is the RH problem for $N(x;k)$ on $k$-plane whose jump
contour is $ \mathbb{R} \cap i\mathbb{R}$;  The right figure is the RH problem for $M(x;z)$ on $z$-plane whose jump
contour is $ \mathbb{R} $. }
\label{klabdaz55}
 \end{figure}
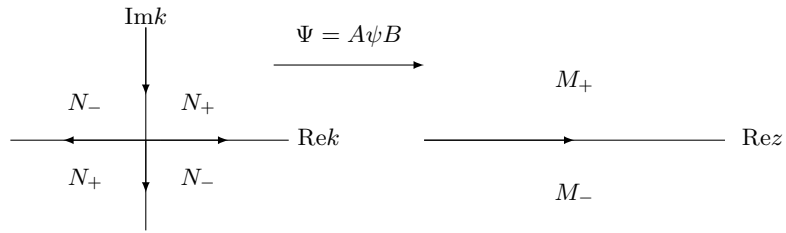

\subsection{Lipschitz continuity  of  reflection coefficient}
\hspace*{\parindent}
Further, we hope to obtain the Lipschitz continuity between the new scattering coefficient $r_{1,2}(z)$ and the initial value $u_0(x)$,
and also analyze the properties of the original scattering coefficient $r(k)$.

\begin{proposition}
\label{l6}
The two relations between $u(x)$ and $r_{1,2}(z)$ are listed here:
\begin{itemize}

\item[$\blacktriangleright$] If $u \in H^{2,1}(\mathbb{R})$, then $r_{1,2} \in H^{1}(\mathbb{R})$.

\item[$\blacktriangleright$] if $u \in H^3(\mathbb{R}) \cap H^{2,1}(\mathbb{R})$, then $r_{1,2} \in L^{2,1}(\mathbb{R})$ and $z^{-2}r_{1,2} \in L^2(\mathbb{R})$.

\end{itemize}
Furthermore, the mapping
\begin{equation}
H^3(\mathbb{R}) \cap H^{2,1}(\mathbb{R}) \ni u \rightarrow\left(r_1, r_2\right) \in \mathcal{W},\label{3r1r2}
\end{equation}
is Lipschitz continuous.
\end{proposition}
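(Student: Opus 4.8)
The plan is to read off all three membership statements from the already-established regularity of the scattering data together with the uniform lower bound $|a(z)|\ge a_0>0$ from (\ref{aka}), and then to obtain the Lipschitz property by differencing. Throughout I will use the defining formulas (\ref{rxdy}) rewritten as $r_1=-\frac{1}{2i}\,\frac{k^{-1}b(k)}{a(z)}$ and $r_2=2i\,\frac{kb(k)}{a(z)}$, together with the fact that, by Proposition \ref{l5}, $a(z)-\hat a\in H^1_z(\mathbb{R})$, so that $a(z)$ is bounded with $a'(z)\in L^2$, while $1/a(z)$ is bounded by $a_0^{-1}$ and $(1/a)'=-a'/a^2\in L^2$. \textbf{First bullet.} Assuming $u\in H^{2,1}(\mathbb{R})$, Proposition \ref{l5} (whose $H^1_z$ bounds in fact require only $u\in H^{2,1}$, as reflected in the $H^{2,1}$-controlled estimate of Corollary \ref{cr3}) gives $kb,\,k^{-1}b\in H^1_z(\mathbb{R})$, hence $kb,\,k^{-1}b\in L^\infty$ by the embedding $H^1\hookrightarrow L^\infty$. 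Writing $r_1$ as a quotient and applying the product rule, $r_1\in L^2$ because $k^{-1}b\in L^2$ and $1/a$ is bounded, and $r_1'=-\frac{1}{2i}\big[(k^{-1}b)'\,a^{-1}+k^{-1}b\,(a^{-1})'\big]\in L^2$ because $(k^{-1}b)'\in L^2$, $a^{-1}\in L^\infty$, $k^{-1}b\in L^\infty$ and $(a^{-1})'\in L^2$; thus $r_1\in H^1$. The identical computation with $kb$ in place of $k^{-1}b$ gives $r_2\in H^1$.

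\textbf{Second and third bullets.} Now let $u\in H^3(\mathbb{R})\cap H^{2,1}(\mathbb{R})$. Since $z\,k^{-1}b=kb$, the identities $z\,r_1=-\frac{1}{2i}\,kb/a$ and $z\,r_2=2i\,(z\,kb)/a$, together with $kb\in L^{2,1}_z$ (so that both $kb$ and $z\,kb$ lie in $L^2$) from Proposition \ref{l5} and the bound on $1/a$, show $z\,r_{1,2}\in L^2$; combined with $r_{1,2}\in L^2$ this gives $r_{1,2}\in L^{2,1}(\mathbb{R})$. The membership $z^{-2}r_{1,2}\in L^2(\mathbb{R})$ is exactly the conclusion of Proposition \ref{l13}. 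Together with the first bullet these three memberships give $r_{1,2}\in\mathcal{W}$.

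\textbf{Lipschitz continuity.} For $u,\tilde u$ with $\|u\|_{H^3\cap H^{2,1}},\|\tilde u\|_{H^3\cap H^{2,1}}\le\delta$ and scattering data $a,b$ and $\tilde a,\tilde b$, I split
\[
r_1-\tilde r_1=-\frac{1}{2i}\Big[\frac{k^{-1}b-k^{-1}\tilde b}{a}+k^{-1}\tilde b\,\frac{\tilde a-a}{a\tilde a}\Big],
\]
and likewise for $r_2$ with $kb$ replacing $k^{-1}b$. Using $|a|,|\tilde a|\ge a_0$, so that $a^{-1}$, $\tilde a^{-1}$ and $k^{\pm1}\tilde b$ act as uniformly bounded multipliers, the Corollary \ref{cr3} bounds $\|k^{-1}b-k^{-1}\tilde b\|_{H^1_z}$, $\|kb-k\tilde b\|_{L^{2,1}_z}$ and $\|a-\tilde a\|_{H^1_z}$, all dominated by $c\|u-\tilde u\|_{H^3\cap H^{2,1}}$, let each term be estimated in the $H^1$ and $L^{2,1}$ norms exactly as in the two bullets above. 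This yields the Lipschitz estimate in the $H^1\cap L^{2,1}$ part of the $\mathcal{W}$-norm.

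The hard part will be the remaining weighted piece $\|z^{-2}(r_{1,2}-\tilde r_{1,2})\|_{L^2}$ near $z=0$, because the factor $z^{-2}$ sits directly on top of the spectral singularity at $k=0$ and $z^{-2}k^{-1}b$ is only barely $L^2$ near the origin. I would handle it exactly as in Proposition \ref{l13}: away from the origin $z^{-2}$ is bounded and the $H^1\cap L^{2,1}$ bounds suffice, while on $(-\delta,\delta)$ I would apply the interpolation Lemma \ref{ertf} to the difference $z^{-1}k^{-1}(b-\tilde b)$ — which by Proposition \ref{l12} lies in $H^1_z\subset L^2_z\cap L^\infty_z$, hence in every $L^p_z$ with $2\le p<\infty$ — and then use the Young-type estimate against $\|z^{-1}\|_{L^{1/2}(-\delta,\delta)}$ as in (\ref{po4}). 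The only delicate point is to verify that all constants produced this way depend on $u,\tilde u$ only through the uniform bound $\delta$ and on $a_0$, so that the estimate is genuinely Lipschitz; collecting the three norms then establishes the continuity of the map $u\mapsto(r_1,r_2)\in\mathcal{W}$.
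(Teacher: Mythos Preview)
Your proof is correct and follows essentially the same approach as the paper: deduce the membership statements from Propositions \ref{l5} and \ref{l13} together with the lower bound (\ref{aka}) on $a$, then obtain Lipschitz continuity by writing out the difference $r_j-\tilde r_j$ and invoking the already-established Lipschitz bounds on $a$ and $k^{\pm1}b$ from (\ref{abul})/Corollary \ref{cr3}. The paper's own proof is considerably terser---it simply cites Propositions \ref{l5} and \ref{l13} for the first part and writes the difference formulas (\ref{rrt})--(\ref{rrt2}) for the Lipschitz part---whereas you spell out the product/quotient-rule computations and, in particular, give a genuine argument for the $\|z^{-2}(r_{1,2}-\tilde r_{1,2})\|_{L^2}$ piece near $z=0$ by replaying the interpolation of Proposition \ref{l13} on the difference; the paper leaves that step implicit.
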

\begin{proof}
The assertion $r_{1,2} \in H^{1}(\mathbb{R})$  is directly  derived from Proposition \ref{l5} and \ref{l13}.
  Noting that
\begin{align}
&r_2(z)-\widetilde{r}_2(z)=\frac{2 i kb}{a}-\frac{2 i k\tilde{b}}{\tilde{a}},\label{rrt}\\
&z^{-2}(r_2(z)-\widetilde{r}_2(z))=\frac{2 i kz^{-2}b}{a}-\frac{2 i kz^{-2}\tilde{b}}{\tilde{a}}.\label{rrt2}
\end{align}
By  the Lipschitz continuity from $u(x)$ to $a(z)$ and $b(k)$ showed in (\ref{abul}),  then
 Lipschitz continuity of the mapping (\ref{3r1r2}) for $r_2(z)$ follows from the representation (\ref{rrt}) and (\ref{rrt2}).
 The Lipschitz continuity for  $r_1(z)$ also can be  proved in a similar way.
\end{proof}

\begin{proposition}
\label{pp1}
If $r_{1,2}(z)\in H_{z}^1(\mathbb{R})\cap L_{z}^{2,1}(\mathbb{R})$, then $r(k)\in L_{z}^{2,1}(\mathbb{R}) \cap
L_{z}^{\infty}(\mathbb{R})$.
\end{proposition}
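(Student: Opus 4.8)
The plan is to exploit the two equivalent representations of $r(k)$ in terms of $r_1$ and $r_2$ that follow at once from the definitions (\ref{rxdy}). Since $r_1(z)=-\frac{b(k)}{2ika(k)}$ and $r_2(z)=\frac{2ikb(k)}{a(k)}$, whereas $r(k)=b(k)/a(k)$, I would first record the two pointwise identities
\[
r(k)=-2ik\,r_1(z),\qquad r(k)=\frac{r_2(z)}{2ik},\qquad z=k^2,
\]
whose consistency is exactly the relation $r_2(z)=4z\,r_1(z)$ of (\ref{rfrz}). Because $b(k)$ is odd and $a(k)$ is even, the modulus $|r(k)|$ depends on $k$ only through $z=k^2$, and $|k|=\sqrt{|z|}$ holds uniformly on both $\mathbb{R}$ and $i\mathbb{R}$; hence $|r(k)|=2\sqrt{|z|}\,|r_1(z)|=|r_2(z)|/(2\sqrt{|z|})$ are legitimate identities in the $z$-variable, and the two $z$-norms in the statement are unambiguous.

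The idea is then to use the first representation near $z=0$, where the factor $|k|$ is small, and the second away from $z=0$, where $|k|^{-1}$ is small, splitting $\mathbb{R}$ into $\{|z|\le 1\}$ and $\{|z|\ge 1\}$. For the $L^\infty_z$ bound, on $\{|z|\le 1\}$ I would estimate $|r(k)|=2\sqrt{|z|}\,|r_1(z)|\le 2\|r_1\|_{L^\infty}$, and on $\{|z|\ge 1\}$ estimate $|r(k)|=|r_2(z)|/(2\sqrt{|z|})\le\tfrac12\|r_2\|_{L^\infty}$; the required sup-norms come from the Sobolev embedding $H^1(\mathbb{R})\hookrightarrow L^\infty(\mathbb{R})$ applied to the hypothesis $r_{1,2}\in H^1_z(\mathbb{R})$. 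This yields $r(k)\in L^\infty_z(\mathbb{R})$.

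For the weighted bound I would split the integral $\int_{\mathbb{R}}\langle z\rangle^2|r(k)|^2\,dz$ the same way. On $\{|z|\le 1\}$ the weight is bounded and $|r(k)|^2=4|z|\,|r_1(z)|^2\le 4|r_1(z)|^2$, so this contribution is controlled by $\|r_1\|_{L^2}^2$. On $\{|z|\ge 1\}$ I would use $|r(k)|^2=|r_2(z)|^2/(4|z|)$ together with the elementary bound $\langle z\rangle^2/|z|\le\langle z\rangle^2$, valid there, so that the contribution is controlled by $\|r_2\|_{L^{2,1}_z}^2$. Adding the two pieces gives $r(k)\in L^{2,1}_z(\mathbb{R})$, completing the claim.

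The only point requiring genuine care — and which I expect to be the main, though minor, obstacle — is the bookkeeping of the change of variables $z=k^2$ across the two rays $\mathbb{R}$ and $i\mathbb{R}$ on which $r(k)$ is defined: one must verify that $|r(k)|$ truly descends to a function of $z$ on all of $\mathbb{R}$ and that $|k|=\sqrt{|z|}$ uniformly, so the split at $z=0$ matches the two analytic expressions. Once this is settled the remaining estimates are routine applications of the Sobolev embedding and weight manipulations, and, notably, they do not invoke the extra decay condition $z^{-2}r_{1,2}\in L^2$ that enters the space $\mathcal{W}$.
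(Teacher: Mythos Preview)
Your argument is correct and matches the paper's proof almost exactly for the $L^\infty_z$ part: the paper too writes $r(k)=-2ik\,r_1(z)$ for $|k|\le 1$ and $r(k)=(2ik)^{-1}r_2(z)$ for $|k|\ge 1$, and then invokes $H^1\hookrightarrow L^\infty$. For the $L^{2,1}_z$ bound the paper takes a slightly shorter route: instead of splitting at $|z|=1$ it uses the product identity $|r(k)|^2=\operatorname{sign}(z)\,\bar r_1(z)r_2(z)$ (a direct consequence of (\ref{rxdy})) and bounds $\int\langle z\rangle^2|r_1||r_2|\,dz$ by Cauchy--Schwarz in one line. Your splitting achieves the same thing with no extra input and the same hypotheses, so the two arguments are interchangeable; the product identity is marginally cleaner, while your version has the virtue of reusing exactly the mechanism from the $L^\infty$ estimate.
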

\begin{proof}
As $r_{1,2} (z)\in L^{2,1}(\mathbb{R})$ and $|r(k)|^{2}=\operatorname{sign}(z) \bar{r}_1(z) r_2(z)$ for every $z \in
\mathbb{R}$, we have
\begin{equation}
\|r(k)\|_{L^{2,1}(\mathbb{R})}\leq \|r_1(z)\|_{L^2(\mathbb{R})}\|r_2(z)\|_{L^{2,1}(\mathbb{R})}
\end{equation}
Thus, we show that $r(k) \in L_{z}^{2,1}(\mathbb{R})$.

To give the proof of $r(k) \in L_{z}^{\infty}(\mathbb{R})$, we can define $r(k)$ in two equivalently forms according to the definition of $r_{1,2}(z)$ in (\ref{rxdy}):
\begin{equation}
r(k)=\begin{cases}
-2ik r_1(z) & |k| \leq 1 \\
(2ik)^{-1} r_2(z) & |k| \geq 1.
\end{cases}
\end{equation}
As $r_{1,2} \in H^{1}(\mathbb{R})$, we have $r_{1,2} \in L^{\infty}(\mathbb{R})$ and thereafter $r(k) \in L_{z}^{\infty}(\mathbb{R})$.
\end{proof}

\begin{proposition}
\label{pp2}
If $r_2(z) \in H_{z}^{1}(\mathbb{R}) \cap L_{z}^{2,1}(\mathbb{R})$, then $\left\|k r_2(z)\right\|_{L_{z}^{\infty}}
\leq\left\|r_2\right\|_{H^{1} \cap L^{2,1}}$
\end{proposition}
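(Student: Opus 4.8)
The plan is to reduce this weighted $L^\infty$ bound to controlling the single scalar quantity $G(z):=z\,|r_2(z)|^2$ and then to estimate $G$ by the fundamental theorem of calculus. First I would record that, since $z=k^2$, on both the real and imaginary components of the original contour one has $|k|=|z|^{1/2}$, so that $|k\,r_2(z)|=|z|^{1/2}|r_2(z)|$ and hence
\[
\|k\,r_2\|_{L^\infty_z(\mathbb{R})}^2=\sup_{z\in\mathbb{R}}|z|\,|r_2(z)|^2=\|G\|_{L^\infty(\mathbb{R})}.
\]
Thus it suffices to prove $\|G\|_{L^\infty}\le\|r_2\|_{H^1\cap L^{2,1}}^2$, where I use the sum convention $\|r_2\|_{H^1\cap L^{2,1}}=\|r_2\|_{H^1}+\|r_2\|_{L^{2,1}}$.

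Next I would differentiate. Writing $r_2'$ for the weak derivative, one computes $G'(z)=|r_2(z)|^2+2z\,\mathrm{Re}\big(\overline{r_2(z)}\,r_2'(z)\big)$, whence $|G'(z)|\le|r_2(z)|^2+2|z|\,|r_2(z)|\,|r_2'(z)|$. The two hypotheses make each piece integrable: $|r_2|^2\in L^1$ because $r_2\in L^2$, while by Cauchy--Schwarz
\[
\int_{\mathbb{R}}|z|\,|r_2|\,|r_2'|\,dz\le\big\||z|\,r_2\big\|_{L^2}\,\|r_2'\|_{L^2}\le\|r_2\|_{L^{2,1}}\|r_2\|_{H^1},
\]
using $|z|\le\langle z\rangle$. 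Hence $G'\in L^1(\mathbb{R})$. Moreover $G\in L^1(\mathbb{R})$, since $|G|=|z|\,|r_2|^2\le(\langle z\rangle|r_2|)\,|r_2|$ is a product of two $L^2$ functions.

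Then I would invoke that $G$ is absolutely continuous with $G'\in L^1$, so it possesses finite limits as $z\to\pm\infty$; the integrability $G\in L^1$ forces both limits to vanish. Consequently $G(z)=\int_{-\infty}^z G'(s)\,ds$ for every $z$, and therefore
\[
|G(z)|\le\int_{\mathbb{R}}|G'(s)|\,ds\le\|r_2\|_{L^2}^2+2\|r_2\|_{L^{2,1}}\|r_2\|_{H^1}\le\big(\|r_2\|_{H^1}+\|r_2\|_{L^{2,1}}\big)^2,
\]
where the last step uses $\|r_2\|_{L^2}\le\|r_2\|_{H^1}$ and completion of the square. Taking the supremum over $z$ and then the square root yields $\|k\,r_2\|_{L^\infty_z}\le\|r_2\|_{H^1\cap L^{2,1}}$, which is the assertion.

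The only genuinely delicate point is the vanishing of the boundary values $G(\pm\infty)=0$, i.e.\ the justification for dropping the boundary term when integrating $G'$. This is precisely where the combination of the two hypotheses is essential: membership in $H^1$ alone would control $G'$ but would not force the decay of $G$, whereas the weight in $L^{2,1}$ supplies the integrability $G\in L^1$ that pins both limits to zero. Everything else reduces to routine Cauchy--Schwarz estimates, so I would not expand those further.
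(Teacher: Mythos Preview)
Your argument is correct and follows essentially the same route as the paper: both apply the fundamental theorem of calculus to the function $z\mapsto z\,r_2(z)^2$ (the paper) or $z\mapsto z\,|r_2(z)|^2$ (you) and then bound the resulting integral by Cauchy--Schwarz. The one difference worth noting is that the paper integrates from $0$ rather than from $-\infty$; since $z\,r_2(z)^2$ vanishes at $z=0$ automatically, this sidesteps your boundary-term discussion (the $G\in L^1$ argument forcing $G(\pm\infty)=0$) and makes the proof a couple of lines shorter.
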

\begin{proof}
  For $r_2(z) \in H_{z}^1(\mathbb{R}) \cap L_{z}^{2,1}(\mathbb{R})$, we use Cauchy-Schwartz inequality to derive
\begin{align}
&|k r_2(z)|^2 = |z r_2^2(z)|=  \bigg | \int_0^z\left(r_2(s)^2+2 sr_2(s) r_2'(s)\right) d s\bigg|\nonumber\\
&\leq \|r_2(z)\|_{L^2} +2 \|r_2'(z) \|_{L^2}  \|z r_2 (z) \|_{L^2}\leq \|r_2(z)\|_{H^1\cap L^{2,1}},\nonumber
\end{align}
which implies   the desired bound.
\end{proof}

\section{Estimates on solutions to the RH problem  }
\label{sec:section4}
\subsection{Transitions of the RH problem}
\hspace*{\parindent}
The RH problem \ref{rhp:31} admits  the Beals-Coifman solution
\begin{equation}
M_{\pm}(x;z)=I+\mathcal{P}^{\pm}\left(M_{-}(x; \cdot) R(x ;\cdot)\right)(z), \quad z \in \mathbb{R},\label{mip3}
\end{equation}
which  can be used  to  estimate  the   columns of $M_{\pm}(x;z)-I$.

Next, we introduce the 2-by-2 matrix
\begin{equation}
T(x;z)=\left(M_{-,1}(x;z)-e_1, \quad M_{+,2}(x;z)-e_2\right),\label{qmu}
\end{equation}
and rewrite  (\ref{mip3})   into an equivalent form
\begin{equation}
T-\mathcal{P}^+\left(T R_+\right)-\mathcal{P}^-\left(T R_-\right)=F,\label{mjfzh}
\end{equation}
where
\begin{align}
&R_+(x;z)=\begin{pmatrix}
0 & \bar{r}_1(z) \mathrm{e}^{-2iz x} \\
0 & 0
\end{pmatrix}, \quad R_-(x;z)=\begin{pmatrix}
0 & 0 \\
r_2(z) \mathrm{e}^{2iz x} & 0
\end{pmatrix},\\
&F =\left(\mathcal{P}^{-}(r_2( z) \mathrm{e}^{2iz x})e_2, \quad \mathcal{P}^+(\bar{r}_1(z)
e^{-2iz x})e_1\right),\label{fep}
\end{align}
The estimate  on $F$ can be obtained  from  Proposition \ref{p6}. Our main task is to make a further estimate on the Cauchy integral  projection  in  (\ref{mjfzh}).

To analyze the derivatives of $M_{-,1}(x;z)$ and $M_{+,2}(x;z)$, we take the derivative of the inhomogeneous equation (\ref{mjfzh}) in $x$, which gives
\begin{equation}
\partial_x T-\mathcal{P}^+\left(\partial_x T\right) R_+-\mathcal{P}^-\left(\partial_x T\right) R_-=\widetilde{F},\label{pxm}
\end{equation}
where
\begin{equation}
\begin{aligned}
\widetilde{F}
=& 2 i\left(e_2 \mathcal{P}^-(z r_2(z) e^{2izx}), \quad e_1 \mathcal{P}^{+}(-z\bar{r}_1(z) e^{-2iz
x})\right) \\
+& 2 i\begin{pmatrix}
\mathcal{P}^-(zr_2(z)M^+_{12}(x;z) e^{2izx})&-\mathcal{P}^+(z\bar{r}_1(z)\left(M^-_{11}(x ;
z)-1\right) e^{-2izx}) \\[4pt]
\mathcal{P}^{-}(zr_2(z)\left(M^+_{22}(x;z)-1\right) e^{2izx}) & -\mathcal{P}^+(z\bar{r}_1(z)
M^{-}_{21}(x;z) \mathrm{e}^{-2iz x})
\end{pmatrix}.
\end{aligned}
\end{equation}

 Proposition \ref{pp2} inspires us to start with $kr_{1,2}$ to solve the problem. Therefore, for every $k\in \mathbb{C}\backslash\{0\}$, we introduce two new matrices
\begin{equation}
V_1(k):=\begin{pmatrix}
1 & 0 \\
0 & 2ik
\end{pmatrix},\quad
V_2(k):=\begin{pmatrix}
(2ik)^{-1} & 0 \\
0 & 1
\end{pmatrix},
\end{equation}
to give the original problem the form we want and $V_{1,2}$ admits
\begin{equation}
V_1^{-1}(k) R(x;z) V_1(k)=V_2^{-1}(k) R(x;z) V_2(k)=J(x;k),
\quad z \in \mathbb{R}, \quad k \in \mathbb{R} \cup i \mathbb{R}.\label{djbh}
\end{equation}

It can be seen from the following analysis that the transformation of $M(x;z)$ using these two matrices can not only complete the estimation of $M(x;z)$ itself and its derivative, but also make full use of the good properties of jump matrix $J(x;k)$.

In the following RH problem, the properties of the matrix elements are characterized in the $z$ plane: If $r_{1,2} \in H_{z}^{1}(\mathbb{R}) \cap L_{z}^{2,1}(\mathbb{R})$, then Proposition
\ref{pp2} implies that $J \in L_z^1(\mathbb{R}) \cap L_{z}^{\infty}(\mathbb{R})$ and $F(x ;z) \in$
$L_{z}^{2}(\mathbb{R})$ for every $x \in \mathbb{R}$. We consider the class of solutions to the RH problem \ref{rhp:31} such that for every
$x\in \mathbb{R}$. Therefore, in the following subsection, we equivalently reduce the RH problem \ref{rhp:31} in the $z$ plane to the RH problem related with the matrix $J(x;k)$ instead of the matrix $R(x;z)$.

\begin{rhp}
\label{rhp:3_2}
Find a  matrix  function $ Q_j(x;k)$  with the following properties:

\begin{itemize}

\item[$\blacktriangleright$]   \emph{Analyticity}: $Q_{j }(x;k)$ are analytic functions of $z $ in $\mathbb{C}^{\pm}$.

\item[$\blacktriangleright$]  \emph{Jump condition}: $Q_j(x;k)$ satisfies the jump condition
\begin{equation}
Q_{j,+}(x ; k)=(I+ J)Q_{j,-}(x;k)+D_j, \quad k \in \mathbb{R} \cup i\mathbb{R},\label{gzgf}
\end{equation}
with
\begin{equation}
Q_{j,\pm}(x;k):=M_{\pm}(x;z) V_j(k)-V_j(k), \quad D_j=V_j(k)J,\quad j=1,2.
\end{equation}

\item[$\blacktriangleright$]  \emph{Parity}:   the  columns of $Q_{j,\pm}(x;k), Q_{j,-}(x;k) J$, and $D_j $ have the same parity in $k$.

\item[$\blacktriangleright$]  \emph{Asymptotic conditions}:
\begin{equation}
Q_{j,\pm}(x;k) \rightarrow 0, \quad \text {as} \quad |k| \rightarrow \infty.
\end{equation}
\end{itemize}
\end{rhp}

\begin{proposition}
\label{pp9}
The RH problem \ref{rhp:3_2}  exists a  unique  solution.
\end{proposition}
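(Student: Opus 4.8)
The plan is to derive the unique solvability of RH problem \ref{rhp:3_2} from that of the equivalent RH problem \ref{rhp:31}, using the explicit conjugation by $V_j(k)$ together with the vanishing-lemma argument already employed in Proposition \ref{lm3}. The starting observation is that for every $k\in\mathbb{C}\setminus\{0\}$ the diagonal matrix $V_j(k)$ is invertible, so the defining relation $Q_{j,\pm}(x;k)=M_\pm(x;z)V_j(k)-V_j(k)$ can be inverted as $M_\pm(x;z)=\bigl(Q_{j,\pm}(x;k)+V_j(k)\bigr)V_j^{-1}(k)$. This produces a one-to-one correspondence between solutions of RH problem \ref{rhp:31} and RH problem \ref{rhp:3_2} away from the point $k=0$, and it is this correspondence that I would exploit in both directions.

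For existence, first I would recall that RH problem \ref{rhp:31} is the image of RH problem \ref{rhp:21} under the change of variables $\Psi=A\psi B$, $z=k^2$, and the left normalization by $e^{ic_+(x)\sigma_3}$; hence it inherits a unique Beals--Coifman solution $M_\pm$ of the form (\ref{mip3}), whose solvability rests on the Fredholm argument of Proposition \ref{lm3}. Setting $Q_{j,\pm}:=M_\pm V_j-V_j$ then yields a candidate solution, and I would check the four required properties in turn: analyticity of $Q_{j,\pm}$ in $\mathbb{C}^\pm$ is preserved because $V_j(k)$ is $z$-independent along each column; the asymptotic condition $Q_{j,\pm}\to0$ as $|k|\to\infty$ follows from $M\to I$; the column parity is exactly the parity built into $V_j$ combined with the even/odd structure of $r_{1,2}$; and, crucially, substituting $M_+=M_-(I+R)$ and invoking the conjugation identity (\ref{djbh}), $V_j^{-1}RV_j=J$, converts the multiplicative jump of $M$ into the inhomogeneous additive jump (\ref{gzgf}) with $D_j=V_jJ$.

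For uniqueness, I would argue by a vanishing lemma. Any solution $Q_j$ of RH problem \ref{rhp:3_2} defines, via $M_\pm=(Q_{j,\pm}+V_j)V_j^{-1}$, a solution of RH problem \ref{rhp:31}; the difference of two such $M$'s solves the homogeneous RH problem whose jump has the strictly positive Hermitian part established in Proposition \ref{p4} (stemming from (\ref{sc9at9})--(\ref{sc9at10})), and therefore vanishes by the contour-integration argument of Proposition \ref{lm3}. Pulling this back through the invertible conjugation $V_j$ forces $Q_j$ to be unique. The main obstacle, and the one place where care beyond the derivative NLS template is genuinely needed, is the singularity at $k=0$: there $V_1(k)$ degenerates and $V_2(k)$ blows up, so the correspondence above must be shown to extend continuously through $k=0$. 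I would control this using the near-zero behaviour of the reflection coefficients — the relation $r_2(z)=4zr_1(z)$ of (\ref{rfrz}) and the decay $z^{-2}r_{1,2}\in L^2$ from Proposition \ref{l13} — together with the prescribed column parity, which jointly guarantee that the apparent singularities of $Q_{j,\pm}$ at $k=0$ cancel and that the solution remains bounded there.
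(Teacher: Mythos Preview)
Your approach is correct and agrees in substance with the paper's one-line proof: both reduce Proposition~\ref{pp9} to the vanishing lemma of Proposition~\ref{lm3}. The paper simply asserts that unique solvability of RH problem~\ref{rhp:3_2} is ``equivalent to the existence and the uniqueness of the solution to the RH problem~\ref{rhp:21}, which is proved in Proposition~\ref{lm3}.'' Concretely, the Fredholm equation (\ref{g1}) for $Q_{j,-}$ has exactly the same homogeneous part $(I-\mathcal{P}^{-}(\,\cdot\,J))$ as equation (\ref{bc6fc}) for $N_{-}-I$, with the \emph{same} matrix $J$, so the invertibility established in Proposition~\ref{lm3} applies verbatim once $D_j=V_jJ=RV_j\in L^2_z$, which is the content of (\ref{rm}).

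The one place where your write-up does more work than necessary is the treatment of $k=0$. Because the paper argues at the level of the Fredholm equation on $L^2_z(\mathbb{R})$ rather than through the pointwise bijection $M\leftrightarrow Q_j$, the fact that $V_1(k)$ degenerates and $V_2(k)$ blows up at $k=0$ never enters: the inhomogeneity $D_j$ is in $L^2_z$ by (\ref{rm}) (the apparent factors of $k^{\pm1}$ in $V_j$ combine with $r_{1,2}$ to give $r(k)$, which is bounded by Proposition~\ref{pp1}), and a single point is null in $L^2$. Your decay input $z^{-2}r_{1,2}\in L^2$ from Proposition~\ref{l13} and the parity considerations are therefore not needed for this proposition; they become relevant only later, when controlling the reconstruction formulae near $z=0$. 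So your route via the explicit conjugation is valid but slightly roundabout, whereas the paper's route is a direct appeal to the shared Fredholm operator.
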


\begin{proof}
The property is equivalent to the existence and the uniqueness of the solution to the RH problem \ref{rhp:21}, which is proved in Proposition \ref{lm3}.
\end{proof}

We can express the solution of the RH problem \ref{rhp:3_2} with the Cauchy integrals
\begin{equation}
Q_{j,\pm}(x;k)=\mathcal{C}\left(Q_{j,-}(x ; k) J+D_j\right)(z), \quad z \in \mathbb{C}^{\pm}.\label{g1221}
\end{equation}
There is a solution $Q_{j,-} (x;k)\in L_{z}^2(\mathbb{R})$ to the Fredholm integral equation:
\begin{equation}
Q_{j,-}(x;k)=\mathcal{P}^-\left(Q_{j,-}(x;k) J +D_j\right)(z), \quad z \in \mathbb{R},\label{g1}
\end{equation}
Once $Q_{j,-}(x;k) \in L_z^2(\mathbb{R})$ is found, $Q_{j,+}(x;k) \in$ $L_{z}^2(\mathbb{R})$ is obtained from the projection formula
\begin{equation}
Q_{j,+}(x;k)=\mathcal{P}^{+}\left(Q_{j,-}(x;k)J +D_j \right)(z), \quad z \in \mathbb{R}.\label{g22}
\end{equation}
As a result of Proposition \ref{pp9}, we know that $Q_{j,-}(x;k) \in L_z^2(\mathbb{R})$ exists.

To  estimate  the solutions to the RH problem \ref{rhp:31},  we   prove that the operator $\left(I-\mathcal{P}^-\right)^{-1}$ in the
 Fredholm equation (\ref{g1}) exists in  $L_{z}^2(\mathbb{R})$.
\begin{proposition}
\label{mp18}
If  $r(k) \in L_z^{2}(\mathbb{R}) \cap L_z^{\infty}(\mathbb{R})$ satisfying (\ref{1_rk}),  then
$\left(I-\mathcal{P}^{-}\right)^{-1}$ is a bounded operator in  $L_z^2(\mathbb{R})$. In particular, there is a
positive constant $c$ which only depends on $\|r(k)\|_{L_z^{\infty}}$ so that for every row vector $f \in L_z^{2}(\mathbb{R})$, we have
\begin{equation}
\left\|\left(I-\mathcal{P}^{-}\right)^{-1} f\right\|_{L_z^{2}} \leq c\|f\|_{L_z^{2}},\label{pn1}
\end{equation}
\end{proposition}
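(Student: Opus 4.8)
The plan is to read the operator $I-\mathcal{P}^-$ in the same sense as in Proposition \ref{lm3}, namely as the Beals--Coifman operator $I-\mathcal{C}_w$ with $\mathcal{C}_w f=\mathcal{P}^-(fJ)$ acting on row vectors $f\in L_z^2(\mathbb{R})$. Boundedness of $\mathcal{C}_w$ is immediate from the $L^2$ estimate (\ref{pjx}) for $\mathcal{P}^-$ together with $J\in L_z^\infty(\mathbb{R})$, whose norm is controlled by $\|r\|_{L_z^\infty}$ through the explicit entries in (\ref{V11}). The existence of the inverse was already settled in Proposition \ref{lm3} (and restated in Proposition \ref{pp9}): $I-\mathcal{C}_w$ is Fredholm of index zero with trivial kernel, hence invertible on $L_z^2(\mathbb{R})$. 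What remains, and what is the real content of the proposition, is the quantitative bound (\ref{pn1}) with a constant depending \emph{only} on $\|r\|_{L_z^\infty}$.

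For this bound I would run the vanishing/energy argument of Proposition \ref{lm3} with a source term. Given $g\in L_z^2(\mathbb{R})$, set $f=(I-\mathcal{C}_w)^{-1}g$ and $s:=\mathcal{P}^-(fJ)=f-g$, noting $fJ\in L_z^1(\mathbb{R})$ by Cauchy--Schwarz since $f\in L^2$ and $J\in L^2\cap L^\infty$. Introduce the two sectionally analytic functions $s_1(z):=\mathcal{C}(fJ)(z)$ and the Hermitian-conjugate Cauchy transform $s_2$, chosen analytic in $\mathbb{C}^+$ with boundary value $s_2^{(+)}=s^H$, exactly as the pair $s_1,s_2$ was built in Proposition \ref{lm3}. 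Both decay like $\mathcal{O}(z^{-1})$ at infinity, so Cauchy's theorem on a large semicircle in $\mathbb{C}^+$ kills the arc and yields $\int_{\mathbb{R}}\mathcal{P}^+(fJ)\,s^H\,dz=0$.

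Using the Plemelj relation $\mathcal{P}^+-\mathcal{P}^-=I$ together with $f=g+s$, I would rewrite $\mathcal{P}^+(fJ)=s+fJ=s(I+J)+gJ=sV+gJ$ with $V=I+J$, so that the previous identity becomes $\int_{\mathbb{R}}sVs^H\,dz=-\int_{\mathbb{R}}gJ\,s^H\,dz$. Taking real parts, invoking the coercivity (\ref{regt}) of Proposition \ref{p4} on the left and Cauchy--Schwarz with $\|gJ\|_{L^2}\le\|J\|_{L^\infty}\|g\|_{L^2}$ on the right, I obtain
\begin{equation}
\alpha_-\|s\|_{L_z^2}^2\le \mathrm{Re}\int_{\mathbb{R}}sVs^H\,dz=-\mathrm{Re}\int_{\mathbb{R}}gJ\,s^H\,dz\le \|J\|_{L^\infty}\|g\|_{L_z^2}\|s\|_{L_z^2},\nonumber
\end{equation}
hence $\|s\|_{L_z^2}\le(\|J\|_{L^\infty}/\alpha_-)\|g\|_{L_z^2}$ and therefore $\|f\|_{L_z^2}\le\|g\|_{L_z^2}+\|s\|_{L_z^2}\le c\|g\|_{L_z^2}$ with $c=1+\|J\|_{L^\infty}/\alpha_-$. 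Since $\|J\|_{L^\infty}$ is a function of $\|r\|_{L_z^\infty}$ and, by (\ref{sc9at9})--(\ref{sc9at10}) and (\ref{1_rk}), the coercivity constant $\alpha_-$ depends only on $\sup|r|$ (equivalently on $c_0$), the constant $c$ depends only on $\|r\|_{L_z^\infty}$, which is exactly (\ref{pn1}).

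The main obstacle I expect is the careful construction of the conjugate Cauchy transform $s_2$ and the verification that the semicircular arc contributes nothing: this needs the $\mathcal{O}(z^{-1})$ decay of $s_1,s_2$ (which in turn requires $fJ\in L_z^1(\mathbb{R})$) and the correct matching of boundary values $s_2^{(+)}=s^H$. Once the identity $\int_{\mathbb{R}}sVs^H\,dz=-\int_{\mathbb{R}}gJ\,s^H\,dz$ is in place, the remaining coercivity estimate is routine; the one point that must be checked explicitly is that $\alpha_-$ is controlled purely by $\|r\|_{L_z^\infty}$ and not by $x$ or finer spectral data, and this follows at once from the explicit forms (\ref{sc9at9})--(\ref{sc9at10}) of $\tfrac12(V+V^H)$ on $\mathbb{R}$ and $i\mathbb{R}$.
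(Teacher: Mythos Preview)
Your proof is correct and follows essentially the same energy argument as the paper: analytic continuation via Cauchy integrals, Cauchy's theorem on a large semicircle to obtain a quadratic identity, and then the coercivity estimate (\ref{regt}) for $V=I+J$. Your packaging is slightly more direct---bounding $s=f-g$ in one stroke rather than the paper's separate estimates for $(I-\mathcal{P}^-)^{-1}\mathcal{P}^{-}$ and $(I-\mathcal{P}^-)^{-1}\mathcal{P}^{+}$ followed by the triangle inequality via $I=\mathcal{P}^+-\mathcal{P}^-$---but the core method is identical.
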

\begin{proof}
We consider the linear inhomogeneous equation (\ref{g1}) with $D_j \in L_z^2(\mathbb{R})$. Based on $\mathcal{P}^+-\mathcal{P}^-=I$,  then
(\ref{g1}) and (\ref{g22}) can be rewritten into  inhomogeneous equations
\begin{equation}
Q_{j,-}-\mathcal{P}^-\left(Q_{j,-} J\right)=\mathcal{P}^-(D_j), \quad Q_{j,+} -\mathcal{P}^-\left(Q_{j,-} J\right)=\mathcal{P}^+(D_j).\label{ihgsf}
\end{equation}
By Proposition \ref{lm3}, since $\mathcal{P}^{\pm}(D_j) \in L_z^2(\mathbb{R})$, there exists a unique solution to the inhomogeneous equation (\ref{ihgsf}), so that the decomposition $Q_j=Q_{j,+}-Q_{j,-}$ is unique. Therefore, we only need to find the estimates of $Q_{j,+}$ and $Q_{j,-}$ in $L_z^2(\mathbb{R})$.
We begin to deal with $Q_{j,-}$ and define two analytic functions in $\mathbb{C} \backslash \mathbb{R}$ by
\begin{equation}
q_1(z):=\mathcal{C}\left(Q_{j,-}J\right)(z) \quad \text {and} \quad q_2(z):=\mathcal{C}\left(Q_{j,-}J+D_j\right)^H(z)
\end{equation}
in a similar way as in the proof of Proposition \ref{lma3}. By Proposition \ref{p3}, we have $q_1(z)=\mathcal{O}\left(z^{-1}\right)$, $q_2(z) \rightarrow 0$ as $|z|\rightarrow\infty$, and
$q_{2}(z)\rightarrow 0$ as $|z| \rightarrow \infty$. Since $D_j\in L_z^2(\mathbb{R}),Q_{j,-}\in L_z^2(\mathbb{R})$, and
$J(k) \in L_z^2(\mathbb{R}) \cap L_z^{\infty}(\mathbb{R})$. Therefore, the integral on closed enclosing line $\{|z|=R, \operatorname{Im}z>0\}\cup (-R,R)$ goes to zero as $R \rightarrow \infty$ by Lebesgue's dominated convergence theorem.

Repeating the
procedures in the proof of Proposition \ref{lm3}, we obtain
\begin{equation}
0=\oint q_1(z) q_2(z) dz=\int_{\mathbb{R}}\left[Q_{j,-}\mathcal{P}^-(D_j)+Q_{j,-}J\right] Q_{j,-}^H \mathrm{d}z,
\end{equation}
in which we use the first inhomogeneous equation in the system (\ref{ihgsf}). By the bound (\ref{regt}) and the bound (\ref{pjx}), under the Cauchy-Schwartz inequality, there exists a positive
constant $\rho_- $ such that
\begin{equation}
\rho_- \left\|Q_{j,-}\right\|_{L^2}^2 \leq \operatorname{Re} \int_{\mathbb{R}} Q_{j,-}(I+J) Q_{j,-}^H dz=\operatorname{Re}
\int_{\mathbb{R}} \mathcal{P}^-(D_j) Q_{j,-}^H dz \leq\|D_j\|_{L^2}\left\|Q_{j,+}\right\|_{L^2}.\nonumber
\end{equation}
It is worth to note that the above estimate holds independently for the
corresponding row vectors in the matrices $Q_{j,-}$ and $D_j$. As $Q_{j,-}=\left(I-\mathcal{P}^-\right)^{-1} \mathcal{P}^-(D_j)$, for every row-vector $f
\in L_z^2(\mathbb{R})$ of the matrix $D_j \in L_z^2(\mathbb{R})$, the above inequality yields
\begin{equation}
\left\|\left(I-\mathcal{P}^-\right)^{-1} \mathcal{P}^- f\right\|_{L_z^2} \leq C_-^{-1}\|f\|_{L_z^2}.\label{pn2}
\end{equation}
Similarly, for $Q_{j,+}$, with the bounds (\ref{pjx}) (\ref{regt}) and (\ref{ig}) in Proposition \ref{p3}, there are positive constants $\rho_+ $ and $\rho_-$ such that
\begin{equation}
\begin{aligned}
&\rho_- \left\|Q_{j,+}\right\|_{L^2}^2 \leq \operatorname{Re} \int_{\mathbb{R}} Q_{j,+}(I+J)^H Q_{j,+}^{H} dz\\
&=\operatorname{Re}
\int_{\mathbb{R}} \mathcal{P}^+(D_j)(I+J)^{H} Q_{j,+}^{H} d z \leq \rho_+\|D_j\|_{L^2}\left\|Q_{j,+}\right\|_{L^2}. \nonumber
\end{aligned}
\end{equation}
As $Q_{j,+}=\left(I-\mathcal{P}^-\right)^{-1} \mathcal{P}^+(D_j)$, for every row vector $f
\in L_z^2(\mathbb{R})$  of the matrix $D_j \in L_z^2(\mathbb{R})$, the above inequality ensures
\begin{equation}
\left\|\left(I-\mathcal{P}^{-}\right)^{-1} \mathcal{P}^+ f\right\|_{L_{z}^{2}} \leq C_{-}^{-1} C_{+}\|f\|_{L_z^2}.\label{psgj}
\end{equation}
Finally, this proposition is proved via the bounds (\ref{pn2}), (\ref{psgj}) and the triangle inequality.
\end{proof}

\subsection{Estimates on the Beals-Coifman solutions }
\hspace*{\parindent}
In this subsection, we represent each column of the RH problem \ref{rhp:3_2} via the projection operators and give them fairly accurate estimates. Searching for analytic matrix functions $M_{\pm}(x;\cdot)$ in
$\mathbb{C}^{\pm}$ for every $x \in \mathbb{R}$, we introduce the following notations for the column vectors of the matrices $M_{\pm}(x;z)$
\begin{equation}
M_{\pm}(x ; z)=\left( M_{\pm,1}(x ; z), M_{\pm,2}(x ; z)\right).\label{mr}
\end{equation}
The expression of $Q_{j,\pm}(x;k)$ is given by
\begin{align}
&Q_{1, \pm} (x;k)=M_{\pm}(x;z) V_1(k)-V_1(k)=\left(\left(M_{\pm,1}(x;z)-e_1\right), (2ik)M_{\pm,2}(x;z)-e_2\right),\nonumber\\
&Q_{2, \pm}(x;k)=M_{\pm}(x;z) V_2(k)-V_2(k)=\left((2ik)^{-1}\left(M_{\pm,1}(x;z)-e_1\right), M_{\pm,2}(x;z)-e_2\right),\nonumber
\end{align}
where
\begin{equation}
D_j(x;k):=V_j(k)J =R(x;z)V_j(k), \ j=1,2.\label{f2}
\end{equation}
  We use
\begin{equation}
\left(Q_-J+D_j\right)_{i1}=\left(M_- V_j J\right)_{i1}=\left(M_-R V_j\right)_{i1}=\left(M_- R\right)_{i1},\quad i=1,2,\label{qjd}
\end{equation}
to obtain the expression of the first column of $Q_{j,\pm}$
\begin{align}
&M_{\pm,1}(x;z)-e_1=\mathcal{P}^{\pm}\left(M_-(x ;\cdot) R(x;\cdot)\right)_{11} , \quad z \in \mathbb{R},\label{mup1}\\
&(2ik)^{-1}(M_{\pm,1}(x;z)-e_1)=\mathcal{P}^{\pm}\left((2ik^{-1}) M_-(x ;\cdot) R(x;\cdot)\right)_{11} , \quad z \in \mathbb{R},\label{mup}
\end{align}
and the second column of $Q_{j,\pm}$
\begin{align}
&M^{\pm}_2(x ; z)-e_{2}=\mathcal{P}^{\pm}\left(M_{-}(x ; \cdot) R(x ; \cdot)\right)_{21} , \quad z \in \mathbb{R},\label{etap}\\
&2ik(M^{\pm}_2(x ; z)-e_{2})=\mathcal{P}^{\pm}\left(2ikM_-(x ; \cdot) R(x ; \cdot)\right)_{21} , \quad z \in \mathbb{R},\label{etap2}
\end{align}
\begin{remark}
From the results (\ref{mup1})-(\ref{etap2}) obtained above,   we    find that two versions   (\ref{mup1}) and (\ref{mup}) may seem to be inconsistent, as well as (\ref{etap}) and (\ref{etap2})
are inconsistent unless  we show that  (\ref{mup}) and (\ref{etap2}) are redundancy.
For the purpose,  considering Cauchy integral  projection   $\mathcal{P}^{\pm}\left(k f(k) \right)$ on  $k\in \mathbb{R}\cap i \mathbb{R}$,
   if $f(k)$ is  even function,  we define its orientated  integral contour is
the left graph  in   Figure \ref{figure3},  and if $f(k)$ is  even function, we take  its orientated  integral contour is
the right graph  in   Figure \ref{figure3}.
In this way  we can eliminate the coefficients $(2ik)^{-1}$ at both sides of the equation (\ref{mup}) and $2ik$ at both sides of the equation (\ref{etap2}).
\end{remark}

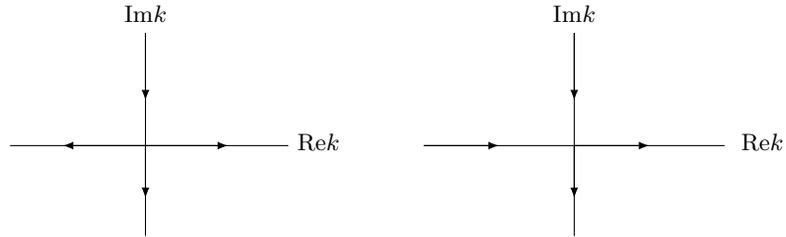
\begin{figure}[H]
\begin{center}
\begin{tikzpicture}
\draw [- ](-4,-4)--(-0.3,-4);
\draw [- ](-2.2,-5.2)--(-2.2,-2.5);
\draw [-latex](-2,-4)--(-1.1,-4);
\draw [-latex](-2,-4)--(-3.3,-4);
\draw [-latex](-2.2,-2.5)--(-2.2,-3.4);
\draw [-latex](-2.2,-4)--(-2.2,-4.7);
 \node [thick] [above]  at (0.1,-4.2){\footnotesize ${\rm Re}k$};
 \node [thick] [above]  at (-2.2,-2.5){\footnotesize ${\rm Im}k$};
\node [thick] [above]  at (6,-4.2){\footnotesize ${\rm Re}k$};
 \node [thick] [above]  at (3.5,-2.5){\footnotesize ${\rm Im}k$};
\draw [-latex ](3.5,-4)--(4.5,-4);
\draw [-latex ](1.5,-4)--(2.5,-4);
\draw [  ](1.5,-4)--(5.5,-4);
\draw [ ](3.5,-5.2)--(3.5,-2.5);
\draw [-latex ](3.5,-2.5)--(3.5,-3.4);
\draw [-latex ](3.5,-4)--(3.5,-4.7);
\end{tikzpicture}
\end{center}
\caption{\footnotesize The  orientated   integral contour for $\mathcal{P}^{\pm}\left(k f(k) \right)$ on  $k\in \mathbb{R}\cap i \mathbb{R}$: The left graph  is the    integral contour  for   even function $f(k)$;  The right
 graph  is the    integral contour for odd function $f(k)$. }
\label{figure3}
 \end{figure}


Combining (\ref{mup1}) with (\ref{etap}), we rewrite (\ref{mr}) into
\begin{equation}
M_{\pm}(x;z)=I+\mathcal{P}^{\pm}\left(M_{-}(x; \cdot) R(x ;\cdot)\right) , \quad z \in \mathbb{R},\label{mip}
\end{equation}
for representing the solution to the RH problem \ref{rhp:31} on the real line. The analytical
 continuation of functions $M_{\pm}(x ; \cdot)$ in
$\mathbb{C}^{\pm}$ is given by the Cauchy operators
\begin{equation}
M (x ; z)=I+\mathcal{C}\left(M_{-}(x ; \cdot) R(x ; \cdot)\right) , \quad z \in \mathbb{C}^{\pm}. \label{see}
\end{equation}
In the following proposition,  we show that   $M_\pm(x;z)$ can be estimated with  $r_{1,2}(z)$.
\begin{proposition}
\label{ll9}
Suppose $r_{1,2}(z)  \in H^{1}(\mathbb{R}) \cap L^{2,1}(\mathbb{R})$ such that the inequality (\ref{1_rk}) is satisfied. $M_\pm(x;z)$ has the estimate for
every $x \in \mathbb{R}$,
\begin{equation}
\left\|M_{\pm}(x;\cdot)-I\right\|_{L^2(\mathbb{R})} \leq c\left(\left\|r_1\right\|_{L^2(\mathbb{R})}+\left\|r_2\right\|_{L^2(\mathbb{R})}\right).\label{mjir}
\end{equation}
with a positive constant $c$ which only depends on $\left\|r_{1,2}\right\|_{L^{\infty}(\mathbb{R})}$.
\end{proposition}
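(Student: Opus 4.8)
The plan is to estimate $M_\pm(x;\cdot)-I$ one column at a time, transferring each column to the auxiliary matrices $Q_{1,\pm}$ and $Q_{2,\pm}$, whose $L_z^2$-bounds are already available through Proposition \ref{mp18}. From the explicit forms of $Q_{j,\pm}$ listed right after \eqref{mr}, the first column of $M_\pm-I$ coincides with the first column of $Q_{1,\pm}$, while the second column of $M_\pm-I$ coincides with the second column of $Q_{2,\pm}$. Therefore, for every $x\in\mathbb{R}$,
\[
\|M_\pm(x;\cdot)-I\|_{L^2(\mathbb{R})}\le \|Q_{1,\pm}(x;\cdot)\|_{L^2(\mathbb{R})}+\|Q_{2,\pm}(x;\cdot)\|_{L^2(\mathbb{R})},
\]
and it remains to control the two terms on the right.

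Next I would invoke Proposition \ref{mp18}. Under the hypothesis $r_{1,2}\in H^1(\mathbb{R})\cap L^{2,1}(\mathbb{R})$, Proposition \ref{pp1} guarantees $r(k)\in L_z^2(\mathbb{R})\cap L_z^\infty(\mathbb{R})$, and together with \eqref{1_rk} this places us in the setting of Proposition \ref{mp18}. Hence the unique solution of the Fredholm equation \eqref{g1} is $Q_{j,-}=(I-\mathcal{P}^-)^{-1}\mathcal{P}^-(D_j)$, and the projection formula \eqref{g22} gives $Q_{j,+}=(I-\mathcal{P}^-)^{-1}\mathcal{P}^+(D_j)$. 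The bounds \eqref{pn2} and \eqref{psgj} then yield
\[
\|Q_{j,\pm}(x;\cdot)\|_{L^2(\mathbb{R})}\le c\,\|D_j(x;\cdot)\|_{L^2(\mathbb{R})},\qquad j=1,2,
\]
where $c$ depends only on $\|r\|_{L_z^\infty}$, hence only on $\|r_{1,2}\|_{L^\infty}$ by Proposition \ref{pp1}. The problem is thereby reduced to estimating $\|D_j\|_{L^2}$, with $D_j=RV_j=V_jJ$.

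Writing out $RV_j$ from \eqref{rrf}, the entries of $D_1$ are $\bar r_1 r_2$, $\,2ik\,\bar r_1 e^{-2izx}$, and $r_2 e^{2izx}$, while those of $D_2$ are $(2ik)^{-1}\bar r_1 r_2$, $\,\bar r_1 e^{-2izx}$, and $(2ik)^{-1}r_2 e^{2izx}$. The product entries are estimated by $\|\bar r_1 r_2\|_{L^2}\le\|r_1\|_{L^\infty}\|r_2\|_{L^2}$ and $\|\bar r_1 r\|_{L^2}\le\|r_1\|_{L^\infty}\|r\|_{L^2}$; the pure entries give $\|r_1\|_{L^2}$ and $\|r_2\|_{L^2}$ directly; and the mixed-weight entries $2ik\,\bar r_1$ and $(2ik)^{-1}r_2$ reduce, via \eqref{rxdy}, on $\mathbb{R}$ to $\bar r$ and $r$ (with the analogous reduction on $i\mathbb{R}$). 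It then suffices to control $\|r\|_{L^2}$ by $\|r_1\|_{L^2}+\|r_2\|_{L^2}$, which follows from the piecewise representation $r=-2ik\,r_1$ for $|k|\le 1$ and $r=(2ik)^{-1}r_2$ for $|k|\ge1$ of Proposition \ref{pp1}, giving $\|r\|_{L^2}^2\le 4\|r_1\|_{L^2}^2+\tfrac14\|r_2\|_{L^2}^2$. Collecting these estimates yields $\|D_j\|_{L^2}\le c(\|r_1\|_{L^2}+\|r_2\|_{L^2})$ with $c=c(\|r_{1,2}\|_{L^\infty})$, and combining with the two previous displays establishes \eqref{mjir}.

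The \emph{main obstacle} is not the inversion of the singular integral operator, which is already supplied by Proposition \ref{mp18}, but rather making the reduction of the $k$-weighted entries of $D_j$ rigorous. Since $2ik$ is odd in $k$ whereas $z=k^2$, the quantities $2ik\,\bar r_1$ and $(2ik)^{-1}r_2$ are genuine functions of the variable $z$ only once one accounts for the parity of the relevant columns and the orientation convention of Figure \ref{figure3}; this is precisely the device that renders the two representations \eqref{mup1}/\eqref{mup} and \eqref{etap}/\eqref{etap2} consistent. Once this parity bookkeeping is in place, each mixed entry is identified with $r$ (or $\bar r$) on $\mathbb{R}\cup i\mathbb{R}$ and each product entry with $\bar r_1 r_2$, so that all bounds hold uniformly with a constant depending only on $\|r_{1,2}\|_{L^\infty}$, as claimed.
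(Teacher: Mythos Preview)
Your proof is correct and follows essentially the same route as the paper: identify the columns of $M_\pm-I$ with columns of $Q_{1,\pm}$ and $Q_{2,\pm}$, invoke Proposition~\ref{mp18} to bound these by $\|D_j\|_{L^2}$, and then estimate $\|D_j\|_{L^2}=\|RV_j\|_{L^2}$ in terms of $\|r_1\|_{L^2}+\|r_2\|_{L^2}$. The paper simply asserts the bound \eqref{rm} without writing out the entrywise computation, whereas you spell out the reduction of the $k$-weighted entries to $r$, $\bar r$ and the piecewise splitting from Proposition~\ref{pp1}; this extra detail (and your remark on the parity/orientation bookkeeping) is helpful but does not change the argument.
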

\begin{proof}
Under the condition in proposition \ref{pp1}, $R(x;z)V_{j}(k)$ belongs to $L_z^2(\mathbb{R})$ for every $x \in \mathbb{R}$ by he explicit expressions of $D_j$ by (\ref{f2}). Combining the expression (\ref{mup1}) and (\ref{etap}), we conclude that the estimate of $R(x;z)V_{j}(k)$ is essential for using the Proposition \ref{mp18}.

Meanwhile, there exists a positive constant $c$ which only
depends on $\left\|r_{1,2}\right\|_{L^{\infty}(\mathbb{R})}$ such that for every $x \in \mathbb{R}$,
\begin{equation}
\left\|R(x;z)V_{j}(k)\right\|_{L_z^{2}} \leq c\left(\left\|r_1\right\|_{L^{2}}+\left\|r_2\right\|_{L^{2}}\right),\quad j=1,2.\label{rm}
\end{equation}
According to
\begin{equation}
\mathcal{P}^-( D_j) =\mathcal{P}^-\left(R(x;z)V_{j}(k)\right)(k),\quad j=1,2,
\end{equation}
and the above discussion, the integral equation (\ref{mip}) for the projection operator $\mathcal{P}^-$ is obtained from the integral
equations (\ref{ihgsf}).

In the end, each element of
$M_-(x;z)$ satisfies the bound (\ref{psgj}) for the corresponding row vectors of $\mathcal{P}^-( D_j)$. Combining the bounds
(\ref{pn1}) and (\ref{rm}), we finally derive the bound (\ref{mjir}).
\end{proof}

Next, we begin to derive the accurate estimates on the solution to the integral equations (\ref{mip}), where the key step is to derive the estimation on the
scattering coefficients $r_1$ and $r_2$ via the Fourier theory.

For a given function $f(z)\in L^2(\mathbb{R})$,  we define  Fourier transform and inverse transform
  $$\widehat{f}(\xi) = \frac{1}{2 \pi} \int_{\mathbb{R}} f (z) \mathrm{e}^{-i z \xi  } \mathrm{d} z,\  \ \ f(z) =   \int_{\mathbb{R}} \widehat{f}(\xi) \mathrm{e}^{ i z \xi  } \mathrm{d}\xi,$$
 then we can show    the  following proposition
\begin{proposition}
\label{prop7}
 If $f(z)\in L^2(\mathbb{R})$,  then we have
\begin{align}
&\mathcal{P}^+\left(f(z) e^{-2iz x}\right)  =\int_{2x}^{+\infty} \widehat{f}(\xi)e^{iz(\xi-2x) }\mathrm{d}\xi,\label{wdw}\\
&\mathcal{P}^-\left(f(z) e^{2iz x}\right) =-\int^{\infty}_{2x} \widehat{f}(\xi)e^{-iz(\xi-2x ) }\mathrm{d}\xi,\label{wdw2}
\end{align}
  which change  the Cauchy projections   into  the integral  on the positive   half-lines.
\end{proposition}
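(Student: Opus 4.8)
The plan is to read both identities off the Fourier-side (Paley--Wiener) description of the Plemelj projectors, using that multiplication by $e^{\mp 2izx}$ on the physical side corresponds to a translation of the Fourier transform. First I would record the frequency characterization of $\mathcal{P}^{\pm}$ on $L^2(\mathbb{R})$: for every $h\in L^2(\mathbb{R})$,
\begin{equation*}
\mathcal{P}^+(h)(z)=\int_{0}^{\infty}\widehat{h}(\omega)\,e^{iz\omega}\,d\omega,\qquad
\mathcal{P}^-(h)(z)=-\int_{-\infty}^{0}\widehat{h}(\omega)\,e^{iz\omega}\,d\omega.
\end{equation*}
To establish this I would first test the projectors on a single exponential $h(s)=e^{is\omega}$: closing the contour defining $\mathcal{P}^{\pm}$ in the half-plane where $e^{is\omega}$ decays and applying the residue theorem gives $\mathcal{P}^+(e^{is\omega})(z)=e^{iz\omega}$ for $\omega>0$ and $0$ for $\omega<0$, and $\mathcal{P}^-(e^{is\omega})(z)=-e^{iz\omega}$ for $\omega<0$ and $0$ for $\omega>0$. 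Writing a general $h\in L^2(\mathbb{R})$ through Fourier inversion and interchanging $\mathcal{P}^{\pm}$ with the $\omega$-integral then yields the displayed formulas; the interchange is legitimate because $\mathcal{P}^{\pm}$ is bounded on $L^2(\mathbb{R})$ by Proposition \ref{p3}, so it suffices to verify the identity on the dense class of Schwartz functions and pass to the limit.

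Once this is in place, the two assertions follow by a shift and a change of variables. For (\ref{wdw}) I would set $h(z)=f(z)e^{-2izx}$; from the definition of the transform one has $\widehat{h}(\omega)=\widehat{f}(\omega+2x)$, hence
\begin{equation*}
\mathcal{P}^+\!\left(f(z)e^{-2izx}\right)(z)=\int_{0}^{\infty}\widehat{f}(\omega+2x)\,e^{iz\omega}\,d\omega,
\end{equation*}
and the substitution $\xi=\omega+2x$ converts the right-hand side into $\int_{2x}^{\infty}\widehat{f}(\xi)\,e^{iz(\xi-2x)}\,d\xi$, which is exactly (\ref{wdw}). For (\ref{wdw2}) I would take $h(z)=f(z)e^{2izx}$, so that $\widehat{h}(\omega)=\widehat{f}(\omega-2x)$ and
\begin{equation*}
\mathcal{P}^-\!\left(f(z)e^{2izx}\right)(z)=-\int_{-\infty}^{0}\widehat{f}(\omega-2x)\,e^{iz\omega}\,d\omega;
\end{equation*}
reflecting the frequency variable $\omega\mapsto-\omega$ reorients the support onto the positive half-line and, after the corresponding shift, produces the kernel $e^{-iz(\xi-2x)}$, giving the claimed representation $-\int_{2x}^{\infty}\widehat{f}(\xi)\,e^{-iz(\xi-2x)}\,d\xi$. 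In both cases the output is an integral over a positive half-line, which is precisely the form needed to apply Plancherel's theorem in the subsequent $L^2$ estimates of the Beals--Coifman solutions.

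The genuinely substantive step is the frequency characterization of $\mathcal{P}^{\pm}$, that is, the identification of the Cauchy projectors with the sharp Fourier multipliers $\mathbf{1}_{\{\omega>0\}}$ and $-\mathbf{1}_{\{\omega<0\}}$; everything after that is the modulation-to-translation duality together with bookkeeping of the integration limits. The main technical care lies in justifying the interchange of $\mathcal{P}^{\pm}$ with the Fourier inversion integral for merely $L^2$ data, which I would handle by the density/boundedness argument above, and in tracking the orientation of the frequency support so that (\ref{wdw}) and (\ref{wdw2}) both emerge on the positive half-line with the correct sign and exponential kernel.
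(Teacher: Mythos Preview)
Your proof is correct and is essentially the paper's argument, repackaged: both compute $\mathcal{P}^{\pm}$ on a single exponential $e^{is\omega}$ by residues and then superpose through Fourier inversion, with your density/boundedness justification playing the role of the paper's direct interchange of integrals in (\ref{pzrf})--(\ref{orer}). The only structural difference concerns (\ref{wdw2}): you apply the frequency description of $\mathcal{P}^-$ directly, whereas the paper deduces (\ref{wdw2}) from (\ref{wdw}) via the conjugation identity $\overline{\mathcal{P}^-(g)}=-\mathcal{P}^+(\bar g)$ on $\mathbb{R}$. One bookkeeping caution on your route to (\ref{wdw2}): the reflection $\omega\mapsto-\omega$ turns $\widehat f(\omega-2x)$ into $\widehat f(-\omega-2x)$, so after shifting you obtain $\widehat f(-\xi)$ rather than $\widehat f(\xi)$ in the integrand; this is immaterial for the $L^2_z$ estimates in Proposition~\ref{p6}, but the phrase ``after the corresponding shift'' hides the sign and should be written out explicitly.
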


\begin{proof} By using the definition of projection operator and Fourier inverse transform, we have
 \begin{align}
 &     \mathcal{P}^+  ( f(z)  e^{-2izx})=\frac{1}{2\pi i}\lim _{\epsilon \downarrow 0}\int_{\mathbb{R}} \frac{f(s)e^{-2isx}}{s-(z+i\varepsilon)}\mathrm{d}s\nonumber\\
 &  = \frac{1}{2\pi i}\lim _{\epsilon \downarrow 0}\int_{\mathbb{R}}\left( \int_{\mathbb{R}}  \widehat{f} (\xi) e^{ i s\xi} \mathrm{d}\xi\right)    \frac{e^{-2isx}}{s-(z+i\varepsilon)}ds\nonumber\\
 & = \frac{1}{2\pi i}\int_{\mathbb{R}}\widehat{f}  (\xi ) \left(  \lim _{\epsilon \downarrow 0} \int_{\mathbb{R}}         \frac{e^{is(\xi-2x)   }}{s-(z+i\varepsilon)}ds\right)\mathrm{d}\xi.\label{pzrf}
\end{align}
Further through    residue computation, we obtain
\begin{align}
&\lim _{\epsilon \downarrow 0} \frac{1}{2 \pi i} \int_{\mathbb{R}} \frac{\mathrm{e}^{i s(\xi-2x)}}{s- (z+i \epsilon)} d s=\lim _{\epsilon \downarrow
0}\begin{cases}
\mathrm{e}^{ i(z+i \epsilon)(\xi-2x)}, & \text {if} \quad \xi-2x>0 \\
0, & \text {if} \quad \xi-2x<0
\end{cases}\nonumber\\[6pt]
&=\chi(\xi-2x)e^{iz(\xi-2x) },\label{orer}
\end{align}
with $\chi(s)$ being a  characteristic function.  Substituting (\ref{orer}) into  (\ref{pzrf}) yields
\begin{align}
 & \mathcal{P}^+\left(f(z) e^{-2iz x}\right)  =\int_{2x}^{+\infty} \widehat{f}(\xi)e^{iz(\xi-2x) }\mathrm{d}\xi.\label{gopr4r2}
\end{align}

By definition, we have
\begin{align}
 &    \mathcal{P}^-  ( f(z) e^{ 2izx}) =\frac{1}{2\pi i}\lim_{\varepsilon \rightarrow 0}\int_{R} \frac{f(s)e^{ 2isx}}{s-(z-i\varepsilon)}\mathrm{d}s.\nonumber
\end{align}
Taking conjugation  on both sides and using (\ref{gopr4r2}), we have
 \begin{align}
 &    \overline{\mathcal{P}^-  (  f(z) e^{ 2izx})} = -\lim_{\varepsilon \rightarrow 0}\frac{1}{2\pi i}\int_{R} \frac{ \bar{f}(s)e^{-2isx}}{s-(z+i\varepsilon)}ds =
  - \mathcal{P}^+ (  \bar f(z) e^{-2izx})\nonumber\\
  &=-\int_{2x}^{+\infty}\widehat{\bar{f}\  }  (\xi)e^{iz(\xi-2x) }  \mathrm{d}\xi. \label{ww}
\end{align}
Again  taking conjugation  on both sides of (\ref{ww})  gives
 \begin{align}
 &  \mathcal{P}^-  (   f(z) e^{ 2izx})   =- \int_{2x}^\infty   \widehat{f}  (\xi)e^{-iz(\xi-2x) }  \mathrm{d}\xi,\nonumber
\end{align}
which exactly is  the formula  (\ref{wdw2}).
\end{proof}
By using the above Proposition \ref{prop7}, we can derive a series of estimators on reflection coefficients $r_{1,2} (z)$.
\begin{proposition}
\label{p6}
For every $x_0 \in \mathbb{R}^+$ and every $r_{1,2}(z) \in H^1(\mathbb{R})$, we have
\begin{align}
&\sup _{x \in\left(x_{0}, \infty\right)}\left\| \langle x\rangle \mathcal{P}^{+}\left(z^{-i}\bar{r}_1(z) e^{-2i z x}\right)\right\|_{L_z^2}
\leq\left\|z^{-i}\bar r_1(z)\right\|_{H^1_z},\quad i=0,1,\label{xp}\\
&\sup _{x \in\left(x_0, \infty\right)}\left\|\langle x\rangle \mathcal{P}^-\left(r_2(z) e^{2iz x}\right)\right\|_{L_{z}^2}
\leq\left\|r_2(z)\right\|_{H^1_z}.\label{xpfr}
\end{align}
In addition, we have
\begin{align}
&\sup _{x \in \mathbb{R}}\left\|\mathcal{P}^+\left(z^{-i}\bar{r}_1(z)e^{-2iz x}\right)\right\|_{L_z^\infty} \leq
\frac{1}{\sqrt{2}}\left\|z^{-i}r_1(z)\right\|_{H^1},\quad i=0,1,\label{pzr}\\
&\sup_{x \in \mathbb{R}}\left\|\mathcal{P}^-\left(r_2(z) \mathrm{e}^{2iz x}\right)\right\|_{L_{z}^{\infty}} \leq
\frac{1}{\sqrt{2}}\left\|r_2(z)\right\|_{H^{1}}.\label{pfr}
\end{align}
Furthermore, if $r_{1,2}(z)  \in L^{2,1}(\mathbb{R})$, then
\begin{align}
&\sup _{x \in \mathbb{R}}\left\|\mathcal{P}^+\left(z  \bar{r}_1(z) e^{-2 i z x}\right)\right\|_{L_{z}^{2}} \leq\left\|
r_1(z)\right\|_{L_{z}^{2,1}},  \label{lq1}\\
&\sup _{x \in \mathbb{R}}\left\|\mathcal{P}^{-}\left(z  r_2(z) e^{2 i z x}\right)\right\|_{L_{z}^{2}} \leq\left\|
r_2(z)\right\|_{L_{z}^{2,1}}.\label{lq2}
\end{align}

\end{proposition}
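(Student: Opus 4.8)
The plan is to reduce each of the six inequalities to the Fourier representations of Proposition~\ref{prop7} and then invoke Plancherel's identity together with an elementary comparison of weights. Throughout I write $f(z):=z^{-i}\bar r_1(z)$ in the $\mathcal P^+$ estimates and $f(z):=r_2(z)$ in the $\mathcal P^-$ estimates. Recall that in the paper's normalization $\|f\|_{L^2_z}^2=2\pi\|\widehat f\|_{L^2_\xi}^2$ and $\widehat{f'}(\xi)=i\xi\widehat f(\xi)$, so that
\[
\|\langle\xi\rangle\widehat f\|_{L^2_\xi}^2=\frac{1}{2\pi}\big(\|f\|_{L^2}^2+\|f'\|_{L^2}^2\big)=\frac{1}{2\pi}\|f\|_{H^1}^2 .
\]

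To obtain the weighted $L^2$ bounds (\ref{xp})--(\ref{xpfr}), I apply (\ref{wdw}) and substitute $\eta=\xi-2x$, which exhibits $\mathcal P^+(f(z)e^{-2izx})$ as the inverse Fourier transform in $z$ of the function $\eta\mapsto\widehat f(\eta+2x)$ restricted to the half-line $\eta>0$. Plancherel then gives
\[
\big\|\mathcal P^+\big(f(z)e^{-2izx}\big)\big\|_{L^2_z}^2=2\pi\int_{2x}^{\infty}|\widehat f(\xi)|^2\,d\xi .
\]
At this point the hypothesis $x\in(x_0,\infty)$ with $x_0>0$ is used decisively: on the range of integration $\xi>2x>x>0$, so $\langle x\rangle\le\langle\xi\rangle$, and therefore
\[
\langle x\rangle^2\big\|\mathcal P^+\big(f\,e^{-2izx}\big)\big\|_{L^2_z}^2\le 2\pi\int_{2x}^{\infty}\langle\xi\rangle^2|\widehat f(\xi)|^2\,d\xi\le 2\pi\|\langle\xi\rangle\widehat f\|_{L^2_\xi}^2=\|f\|_{H^1}^2 ,
\]
which is (\ref{xp}). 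The bound (\ref{xpfr}) follows in the same way from (\ref{wdw2}), since $|e^{-iz(\xi-2x)}|=1$ leaves the Plancherel computation unchanged.

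For the $L^\infty_z$ estimates (\ref{pzr})--(\ref{pfr}) the same representation gives the crude pointwise bound $|\mathcal P^+(f\,e^{-2izx})(z)|\le\int_{2x}^{\infty}|\widehat f(\xi)|\,d\xi\le\|\widehat f\|_{L^1_\xi}$, now valid for every $x\in\mathbb R$. Inserting the weight $\langle\xi\rangle^{\pm1}$ and applying Cauchy--Schwarz with $\int_{\mathbb R}\langle\xi\rangle^{-2}\,d\xi=\pi$ yields
\[
\|\widehat f\|_{L^1_\xi}\le\|\langle\xi\rangle^{-1}\|_{L^2_\xi}\,\|\langle\xi\rangle\widehat f\|_{L^2_\xi}=\sqrt\pi\cdot\frac{1}{\sqrt{2\pi}}\|f\|_{H^1}=\frac{1}{\sqrt2}\|f\|_{H^1},
\]
which is exactly the constant claimed (here one uses that conjugation and the real power $z^{-i}$ preserve the $H^1$ norm, so $\|f\|_{H^1}=\|z^{-i}r_1\|_{H^1}$). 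Finally, for (\ref{lq1})--(\ref{lq2}) I apply the unweighted Plancherel identity to $f(z)=z\bar r_1(z)$ (resp.\ $z r_2(z)$): dropping the weight gives $\int_{2x}^{\infty}|\widehat f|^2\le\|\widehat f\|_{L^2}^2=\tfrac{1}{2\pi}\|f\|_{L^2}^2$, hence $\|\mathcal P^+(z\bar r_1 e^{-2izx})\|_{L^2_z}\le\|z\bar r_1\|_{L^2_z}\le\|r_1\|_{L^{2,1}_z}$, the last step being $\|zr_1\|_{L^2}^2\le\|r_1\|_{L^{2,1}}^2$.

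I expect no essential obstacle, as the whole argument is powered by Proposition~\ref{prop7}. The single delicate point is the transfer of the spatial weight $\langle x\rangle$ onto the spectral weight $\langle\xi\rangle$: this is precisely where the restriction $x_0\in\mathbb R^+$ and the matched pairing of $\mathcal P^+$ with $e^{-2izx}$ (and of $\mathcal P^-$ with $e^{2izx}$) are needed, since they confine the frequency integral to $\xi>2x>0$, where $\langle x\rangle\le\langle\xi\rangle$ holds. Dropping either hypothesis would destroy this comparison and hence the weighted bounds (\ref{xp})--(\ref{xpfr}), whereas the $L^\infty$ and $L^{2,1}$ bounds, which need no such weight transfer, remain valid for all $x\in\mathbb R$.
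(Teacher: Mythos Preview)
Your proposal is correct and follows essentially the same route as the paper: both arguments feed the Fourier representation of Proposition~\ref{prop7} into Plancherel, then transfer the spatial weight $\langle x\rangle$ onto the frequency weight $\langle\xi\rangle$ on the half-line $\xi>2x$. The only cosmetic difference is that the paper packages the weighted $L^2$ step by citing bound~(\ref{pil}) of Proposition~\ref{y1}, whereas you carry out the comparison $\langle x\rangle\le\langle\xi\rangle$ for $\xi>2x>x>0$ directly; your version is in fact cleaner, since (\ref{pil}) is stated for $x_0\in\mathbb R^-$ and integrals over $(-\infty,x)$, so invoking it here requires an implicit reflection.
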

\begin{proof}
For a given function $r(z) \in L^{2}(\mathbb{R})$,  then its  Fourier transform $\widehat{r} (\xi) \in
L^{2}(\mathbb{R})$ and  by  Plancherel formula, we have
\begin{equation}
\|r(z)\|_{L^2}^2=2\pi\|\widehat{r}(\xi)\|_{L^2}^2.
\end{equation}
Further, a general conclusion shows $r (z)\in H^{1}(\mathbb{R})$ if and only if $\widehat{r} (\xi)\in L^{2,1}(\mathbb{R})$. Also, $r(z) \in L^{2,1}(\mathbb{R})$ if and only if $\widehat{r}
(\xi) \in H^{1}(\mathbb{R})$.
To prove (\ref{xp}), we use Proposition \ref{prop7}  and  get
\begin{equation}
\begin{aligned}
\mathcal{P}^{+}\left(z^{-1}\bar{r}_1(z) e^{-2 i z x}\right)
&=\int_{2x}^{\infty}\widehat{ (z^{-1}\bar{r}_1(z))}(\xi)  e^{iz(y-2x) } d\xi.
\end{aligned}
\end{equation}
By the bound (\ref{pil}) in Proposition \ref{y1},  the  bound (\ref{xp}) is obtained
\begin{equation}
\sup _{x\in\left(x_0, \infty\right)}\left\|\langle x\rangle \int_{2x}^{\infty} \widehat{ (z^{-1}\bar{r}_1)}(\xi)  e^{i(\xi-2x)z} d\xi\right\|_{L_{z}^2} \leq
\sqrt{2\pi}\left\|\widehat{ (z^{-1}\bar{r}_1)}(\xi) \right\|_{L^{2,1}}\leq
\sqrt{2\pi}\left\|  z^{-1}\bar{r}_1(z)   \right\|_{H^1_z}.\nonumber
\end{equation}
Similarly, we  derive the bound (\ref{pzr})  as follows
\begin{equation}
\left\|\mathcal{P}^+\left(z^{-i}\bar{r}_1(z) e^{-2iz x}\right)(z)\right\|_{L_{2}^{\infty}}
\leq   \sqrt{\pi}\left\|\widehat{(z^{-i}\bar{r}_1)}(\xi)\right\|_{L_z^{2,1}} \leq
\frac{1}{\sqrt{2}}\left\|z^{-i}r_1(z)\right\|_{H^1}.
\end{equation}
The bounds (\ref{xpfr}), (\ref{pfr}), (\ref{lq1}) and (\ref{lq2}) are obtained in the same way.
\end{proof}
Our ultimate goal is to estimate the solutions $M(x;z)$   to the  RH problem \ref{rhp:31}. By Proposition \ref{ll9}, these solutions on the real line can be
written in the integral Fredholm form (\ref{mip}). One remaining task is to estimate the column vectors $M_{-,1}-e_1$ and $M_{+,2}-e_2$. From the equation
(\ref{mup1}), we obtain
\begin{equation}
M_{-,1}(x;z)-e_1=\mathcal{P}^-\left(r_2(z) e^{2iz x}M_{+,2}(x;z)\right)(z), \quad z \in \mathbb{R}.\label{muf}
\end{equation}
Also, starting from (\ref{etap}), we know
\begin{equation}
M_{+,2}(x ;z)-e_{2}=\mathcal{P}^+\left(\bar{r}_1(z) \mathrm{e}^{-2iz x} M_{-,1}(x; z)\right)(z), \quad z \in
\mathbb{R}.\label{etaz}
\end{equation}
Both the calculations take advantage of the following facts
\begin{align}
&\left(M_{-} R\right)_{11}=e^{ic_+(x)\sigma_3}\left(M_{-} R\right)_{11}=r_2(z) e^{2 i
z x} M_{+,21},\nonumber\\
&\left(M_- R\right)_{21}=e^{ic_+(x)\sigma_3}\left(M_{-} R\right)_{21}=\bar{r}_1(z) e^{-2iz x} M_{-,11}.\nonumber
\end{align}

From the explicit expression (\ref{fep}) for $F(x ; z)$, we note that the first row vector of $F(x ; z)$ is
equal to $F(x ; z)V_2(z)$ and the first row vector of $F(x ; z)$ is
equal to $F(x ; z)V_1(z)$ which have the following representation
\begin{align}
&F_1(x ; z)=(F(x ; z)V_2)_1=\left(0, \mathcal{P}^{+}(\bar{r}_1(z) \mathrm{e}^{-2 i z x})\right),\\
&F_2(x ; z)=(F(x ; z)V_1)_2=\left(\mathcal{P}^{-}\left(r_2(z) \mathrm{e}^{2 i z x}\right), 0\right).
\end{align}

\begin{proposition}
\label{l7}
For every $x_0 \in \mathbb{R}^+$ and every $r_{1,2} \in H^1(\mathbb{R})$, the unique solution to the system of the integral equations (\ref{muf}) and
(\ref{etaz}) satisfies the estimates
\begin{equation}
\sup _{x\in\left(x_0, \infty\right)}\left\|\langle x\rangle M_{-,21}(x ;z)\right\|_{L_z^2(\mathbb{R})} \leq c\left\|r_2\right\|_{H^1(\mathbb{R})},\label{xmurf}
\end{equation}
and
\begin{equation}
\sup _{x \in\left(x_0, \infty\right)}\left\|\langle x\rangle M_{+,12}(x ; z)\right\|_{L_z^2(\mathbb{R})} \leq c\left\|r_1\right\|_{H^1(\mathbb{R})},\label{xetar}
\end{equation}
where $c$ is a positive constant depending on $\left\|r_{1,2}\right\|_{L^{\infty}}$. Moreover, if $r_{1,2} \in H^{1}(\mathbb{R}) \cap
L^{2,1}(\mathbb{R})$, we also have
\begin{equation}
\sup _{x \in \mathbb{R}}\left\|\partial_x M_{-,21}(x ;z)\right\|_{L_{z}^2(\mathbb{R})} \leq c\left(\left\|r_1\right\|_{H^1(\mathbb{R}) \cap
L^{2,1}(\mathbb{R})}+\left\|r_2\right\|_{H^1(\mathbb{R}) \cap L^{2,1}(\mathbb{R})}\right),\label{pxmu}
\end{equation}
and
\begin{equation}
\sup _{x \in \mathbb{R}}\left\|\partial_x M_{+,12}(x ; z)\right\|_{L_z^2} \leq c\left(\left\|r_1\right\|_{H^1(\mathbb{R}) \cap
L^{2,1}(\mathbb{R})}+\left\|r_2\right\|_{H^1(\mathbb{R}) \cap L^{2,1}(\mathbb{R})}\right),\label{pxeta}
\end{equation}
where $c$ is another positive constant which depends on $\left\|r_{1,2}\right\|_{L^{\infty}(\mathbb{R})}$.
\end{proposition}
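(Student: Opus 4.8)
The plan is to use the component structure of the vector equations (\ref{muf}) and (\ref{etaz}): written out entry by entry they split into two closed subsystems. Reading off the second scalar component of (\ref{muf}) and the second of (\ref{etaz}) gives, for the pair $(M_{-,21},\,M_{+,22}-1)$,
\begin{align}
&M_{-,21}=\mathcal{P}^-\!\left(r_2(z)e^{2izx}M_{+,22}\right),\nonumber\\
&M_{+,22}-1=\mathcal{P}^+\!\left(\bar{r}_1(z)e^{-2izx}M_{-,21}\right),\nonumber
\end{align}
while the first components give the mirror system for $(M_{+,12},\,M_{-,11}-1)$ with $r_1\leftrightarrow r_2$, $\mathcal{P}^+\leftrightarrow\mathcal{P}^-$ and $e^{-2izx}\leftrightarrow e^{2izx}$. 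Since $M_{-,21}$ is driven only through $r_2$ and $M_{+,12}$ only through $r_1$, it suffices to treat one pair; I would carry out the argument for $(M_{-,21},M_{+,22}-1)$, obtaining (\ref{xmurf}) and (\ref{pxmu}), with (\ref{xetar}) and (\ref{pxeta}) following verbatim after exchanging the roles of the two reflection coefficients.

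First I would eliminate $M_{+,22}-1$ by inserting its equation into that for $M_{-,21}$, producing the single fixed-point identity
\[
(I-K)M_{-,21}=\mathcal{P}^-\!\left(r_2(z)e^{2izx}\right),\qquad Kf:=\mathcal{P}^-\!\left(r_2e^{2izx}\,\mathcal{P}^+\!\left(\bar{r}_1 e^{-2izx}f\right)\right).
\]
The inhomogeneous term is exactly the object estimated in Proposition \ref{p6}: by the weighted bound (\ref{xpfr}) it satisfies $\sup_{x>x_0}\|\langle x\rangle\mathcal{P}^-(r_2e^{2izx})\|_{L^2_z}\le\|r_2\|_{H^1}$, and by (\ref{pfr}) it lies in $L^\infty_z$ uniformly in $x$. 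For the operator $K$ I would combine the $L^2_z$-boundedness of $\mathcal{P}^\pm$ with the $L^\infty$ projection bounds (\ref{pzr})--(\ref{pfr}), so that $K$ maps the weighted space $\{f:\sup_{x>x_0}\|\langle x\rangle f\|_{L^2_z}<\infty\}$ into itself with operator norm controlled by $\|r_{1,2}\|_{L^\infty}$, and invert $I-K$ using the boundedness of $(I-\mathcal{P}^-)^{-1}$ established in Proposition \ref{mp18}. The a priori $L^2$ control of Proposition \ref{ll9} guarantees that the quantity being estimated is the genuine Beals--Coifman solution. Combining the resolvent bound with the weighted estimate on the inhomogeneous term yields (\ref{xmurf}).

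For the derivative bounds (\ref{pxmu}) and (\ref{pxeta}) I would differentiate the two scalar equations in $x$, each factor $e^{\pm2izx}$ producing a factor $\pm2iz$:
\begin{align}
&\partial_xM_{-,21}=\mathcal{P}^-\!\left(2iz\,r_2e^{2izx}\right)+\mathcal{P}^-\!\left(2iz\,r_2e^{2izx}(M_{+,22}-1)\right)+\mathcal{P}^-\!\left(r_2e^{2izx}\partial_xM_{+,22}\right),\nonumber\\
&\partial_xM_{+,22}=\mathcal{P}^+\!\left(-2iz\,\bar{r}_1e^{-2izx}M_{-,21}\right)+\mathcal{P}^+\!\left(\bar{r}_1e^{-2izx}\partial_xM_{-,21}\right).\nonumber
\end{align}
The genuinely new inhomogeneous terms $\mathcal{P}^-(z r_2 e^{2izx})$ and $\mathcal{P}^+(z\bar{r}_1 e^{-2izx})$ are controlled in $L^2_z$ by $\|r_2\|_{L^{2,1}}$ and $\|r_1\|_{L^{2,1}}$ through (\ref{lq1})--(\ref{lq2}); this is why the norm $H^1\cap L^{2,1}$ rather than $H^1$ appears on the right-hand sides. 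The delicate terms carry both an explicit factor $z$ and an undifferentiated companion: here I would split $z=k\cdot k$, control one factor $k r_{1,2}$ in $L^\infty_z$ by Proposition \ref{pp2}, and absorb the remainder into the weighted $L^2$ bounds from the first part. The terms containing $\partial_xM_{+,22}$ and $\partial_xM_{-,21}$ couple the two derivatives linearly and are again absorbed by $(I-\mathcal{P}^-)^{-1}$. Closing this second fixed point gives (\ref{pxmu}), and the mirror computation gives (\ref{pxeta}).

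The hard part will be the bilinear correction terms — the operator $K$ and its differentiated analogues — because the sharp estimates of Proposition \ref{p6} apply only to the bare combinations $r_{1,2}(z)e^{\pm2izx}$, whereas the corrections carry the unknown factors $M_{+,22}-1$, $M_{-,21}$ and their $x$-derivatives. The whole scheme hinges on arranging every estimate so that each $\langle x\rangle$-weight and each factor $z$ is carried by the explicitly known piece $r_{1,2}e^{\pm2izx}$, leaving the unknown Beals--Coifman columns to be measured only in $L^2_z$ and $L^\infty_z$ uniformly in $x$; the uniform $L^\infty_z$ control this requires must itself be extracted from the same iteration. Verifying that this bookkeeping closes, with all constants depending only on $\|r_{1,2}\|_{L^\infty}$ and $\|r_{1,2}\|_{H^1\cap L^{2,1}}$, is the technical core of the argument; once it is in place, (\ref{xmurf})--(\ref{pxeta}) follow from the triangle inequality and the resolvent bound of Proposition \ref{mp18}.
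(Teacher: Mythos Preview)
Your overall plan and the ingredients you identify (Propositions \ref{p6}, \ref{pp2}, \ref{mp18}, the bounds (\ref{xpfr}), (\ref{pzr})--(\ref{pfr}), (\ref{lq1})--(\ref{lq2})) match the paper. The genuine gap is the step ``invert $I-K$ using the boundedness of $(I-\mathcal{P}^-)^{-1}$ established in Proposition \ref{mp18}.'' That proposition does \emph{not} give you the invertibility of your scalar composition $K=\mathcal{P}^-(r_2e^{2izx}\mathcal{P}^+(\bar r_1 e^{-2izx}\cdot))$; it bounds the resolvent of the specific operator $f\mapsto \mathcal{P}^-(fJ)$ acting on \emph{row vectors}, and its proof relies on the positivity of $\operatorname{Re}(I+J)$, a $2\times2$ structure your scalar elimination destroys. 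Without smallness of $\|r_{1,2}\|_{L^\infty}$ you have no independent reason for $\|K\|<1$, so a Neumann series will not close, and trying to invert $I-K$ directly on the $\langle x\rangle$-weighted space is strictly harder because $\mathcal{P}^\pm$ do not commute with the weight.

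The paper avoids this by \emph{not} eliminating: it keeps the two-component row vector together, performs the algebraic twist $T\mapsto T(I-R_+)V_j$ (so that, via the conjugation $RV_j=V_jJ$, the equation becomes exactly $(I-\mathcal{P}^-_J)(T(I-R_+)V_j)=FV_j$), and then applies (\ref{pn1}) row by row for each fixed $x$. This yields the pointwise-in-$x$ bound $\|M_{-,21}(x;\cdot)\|_{L^2_z}\le c\,\|\mathcal{P}^-(r_2e^{2izx})\|_{L^2_z}$; only \emph{after} this does one multiply by $\langle x\rangle$ and take $\sup_{x>x_0}$, invoking (\ref{xpfr}). Your derivative argument is fine in spirit and coincides with the paper's treatment of (\ref{pxm}): the same row-vector resolvent is applied to $\partial_x T$, with the new source $\widetilde FV_j$ controlled exactly by the $kr_{1,2}\in L^\infty_z$ bound of Proposition \ref{pp2} together with (\ref{lq1})--(\ref{lq2}) and the already-established estimates (\ref{mjir}), (\ref{kmurp}). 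In short: drop the scalar elimination and keep the pair $(M_{-,21},M_{+,22}-1)$ as a row; then (\ref{pn1}) applies directly and the rest of your outline goes through.
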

\begin{proof}
From the explicit expression (\ref{fep}), the first row vector of $T(x ;z)V_2(k)$ is given by
\begin{equation}
\left((2ik)^{-1}(M_{-,11}(x ;z)-1), \quad M_{+,12}(x ; z)-\bar{r}_1(z) \mathrm{e}^{-2iz
x}(M_{-,11}(x;z)-1)\right),
\end{equation}
and the second row vector of $T(x ;z)V_1(k)$ is given by
\begin{equation}
\left(M_{-,21}(x ; z), \quad 2 i k\left(M_{+,22}(x ; z)-1-\bar{r}_{+}(z) \mathrm{e}^{-2 i z x} M_{-,21}(x ; z)\right)\right).
\end{equation}
Using the bound (\ref{pn1}), we have for every $x \in \mathbb{R}$,
\begin{align}
&\left\|M_{-,21}(x ; z)\right\|_{L_{z}^{2}} \leq c\left\|\mathcal{P}^{-}\left(r_2(z) \mathrm{e}^{2 i z x}\right)\right\|_{L_{z}^{2}},\\
&\left\|(2ik)^{-1}\left(M_{-,11}(x ; z)-1\right)\right\|_{L_z^{2}} \leq c\left\|\mathcal{P}^{+}\left(\bar{r}_1(z)e^{-2
i z x}\right)\right\|_{L_z^2},\label{kmurp}\\
&\left\|2 i k\left(M_{-,22}(x ; z)-1-\bar{r}_1(z) \mathrm{e}^{-2 i z x} M_{-,21}(x ; z)\right)\right\|_{L_{z}^{2}} \leq c\left\|\mathcal{P}^{-}\left(r_2(z) \mathrm{e}^{2 i z x}\right)\right\|_{L_{z}^{2}},\nonumber\\
&\left\|M_{+,12}(x ;z)-\bar{r}_1(z) e^{-2 i z x}\left(M_{-,11}(x ; z)-1\right)\right\|_{L_z^2} \leq
c\left\|\mathcal{P}^{+}\left(\bar{r}_1(z) e^{-2 i z x}\right)\right\|_{L_z^2}. \nonumber
\end{align}
In the end, we find
\begin{equation}
\begin{aligned}
\left\|M_{+,12}(x ; z)\right\|_{L_{ z}^{2}} \leq c \left\|\mathcal{P}^+\left(\bar{r}_1(z) \mathrm{e}^{-2iz x}\right)\right\|_{L_{z}^2},\label{etakp}
\end{aligned}
\end{equation}
where the positive constant $c$ still has the only dependence on $\left\|r_{1,2}\right\|_{L^{\infty}}$. According to the bounds (\ref{xp}) and (\ref{etakp}),
we obtain the bound (\ref{xetar}).

For estimating the derivative of $M(x;z)$, under (\ref{pxm}), the first row vector of $\widetilde{F}(x;z)V_2(z)$ and the second row vector of $\widetilde{F}(x;z)V_1(z)$ belongs to $L_z^2(\mathbb{R})$ by $kr_{1,2}(z) \in L_z^{\infty}(\mathbb{R})$, owing to the bounds (\ref{lq1}) and (\ref{lq2}) in Proposition \ref{p6}, as well as the bounds (\ref{mjir}) and (\ref{kmurp}). Combining with the previous analysis, we obtain the bounds (\ref{pxmu})
and (\ref{pxeta}). So far we've proved the existence of the derivative of $M(x;z)$.
\end{proof}


\section{Reconstruction and estimates of the potential}
\label{sec:section5}
\hspace*{\parindent}
Recalling  reconstruction formulas obtained in  (\ref{u1}), (\ref{u2}) and (\ref{uxgj}), we have
\begin{align}
&u(x)e^{2i(c_-(x)+c)}=\lim _{k\rightarrow 0}(k^{-1}\psi^+(x;k))_{12},\label{ucc2}\\
&u(x)e^{-i(2c_-(x)+c)}=\lim _{k\rightarrow 0}(k^{-1}\psi^-(x;k))_{12},\label{ucc3}\\
&\partial_x\left(\bar{u}_x(x) e^{ic_\pm}\right)=2 i \lim _{|z| \rightarrow \infty} (z \Psi^{\pm}(x;z))_{21}, \label{ucc1}
\end{align}
 which gives the relation between the potential $u(x)$ and the  Jost functions.

Next, we draw the parallel between the properties of the potential $u$ recovered by the equations (\ref{ucc2})-(\ref{ucc3}) with the properties of the matrices $M_{\pm}(x;z)$, i.e., the solution to the RH problem \ref{rhp:31} satisfying the integral equations (\ref{mip}). Then, using the relation
\begin{equation}
\frac{\psi^\pm_2(x;k)}{2ik}=\Psi^\pm_2(x;z),
\end{equation}
 we  get the reconstruction of $u(x)$ and $u_x(x)$ as follows
\begin{align}
&u(x)e^{-i(2c_-(x)+c)}=2 i\lim _{z\rightarrow 0}M_{+,12}(x;z),\label{uxx3}\\
&u(x)e^{2i(c_-(x)+c)}=2 i\lim _{z\rightarrow 0}M_{-,12}(x;z).\label{uxx2}\\
&\partial_x\left(\bar{u}_x(x) e^{ic_+(x)}\right)=2 i e^{-ic_+(x)} \lim _{|z|\rightarrow \infty}z M_{\pm,21}(x;z),\label{uxx}
\end{align}


\subsection{Estimates on the positive half-line}
\hspace*{\parindent}
We  shall prove two important conclusions:
\begin{itemize}

\item[$\blacktriangleright$]  if $r_{1,2} \in H^1(\mathbb{R})$ and $z^{-2}r_{1,2}\in L^2(\mathbb{R})$,   the reconstruction formulas (\ref{uxx1}) and  (\ref{uc2}) recover $u$ in the class $H^{2,1}\left(\mathbb{R}^{+}\right)$.
\item[$\blacktriangleright$]  if $r_{1,2} \in \mathcal{H}$, then $u$ is in the class $H^3\left(\mathbb{R}^{+}\right)\cap H^{2,1}\left(\mathbb{R}^{+}\right)$.
    \end{itemize}

Since $r_{1,2} \in \mathcal{H}$, we have $R(x;\cdot)\in L^1(\mathbb{R}) \cap L^2(\mathbb{R})$ for every $x\in \mathbb{R}$.
Therefore,  using  the solution representation (\ref{see}),  we  rewrite the reconstruction formulas (\ref{uxx}) and (\ref{uxx2}) in the explicit form
\begin{align}
e^{ic_+(x)} \partial_{x}\left(\bar{u}_x(x) e^{ic_+(x)}\right)&=-\frac{1}{\pi}
 \int_{\mathbb{R}} r_2(z) e^{2 iz x}\left[M_{-,22}(x;z)+\bar{r}_1(z)e^{-2izx} M_{-,21}(x ;z)\right] \mathrm{d}z\nonumber \\
&=-\frac{1}{\pi} \int_{\mathbb{R}} r_2(z) e^{2isx} M_{+,22}(x;z)\mathrm{d}z,\label{uxx1}
\end{align}
and
\begin{equation}
e^{2i(c_-(x)+c)} u(x)=\frac{1}{\pi} \int_{\mathbb{R}} z^{-1}\bar{r}_1(z) e^{-2 i z x} M_{+,11}(x ;z) \mathrm{d}z.\label{uc2}
\end{equation}
\begin{proposition}
\label{l8}
Suppose $r_{1,2}\in \mathcal{H}$ such that the inequality (\ref{1_rk}) is satisfied, $u(x) \in
H^3\left(\mathbb{R}^{+}\right) \cap H^{2,1}\left(\mathbb{R}^+\right)$  admits the estimate
\begin{equation}
\|u\|_{H^3(\mathbb{R}^{+}) \cap H^{2,1}(\mathbb{R}^+)} \leq c(\|r_1\|_{\mathcal{W}(\mathbb{R})}+\|r_2\|_{\mathcal{W}(\mathbb{R})}),\label{ugj1}
\end{equation}
where $c$ is a positive constant which depends on $\left\|r_{1,2}\right\|_{H^1 \cap L^{2,1}}$.
\end{proposition}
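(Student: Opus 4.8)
The plan is to feed the estimates on the Beals--Coifman solution $M_\pm$ from Propositions \ref{ll9} and \ref{l7} into the two reconstruction formulas (\ref{uc2}) and (\ref{uxx1}), and to control the remaining ``principal'' integrals directly by the Fourier/Cauchy-projection bounds of Proposition \ref{p6}. First I would establish $u\in L^{2,1}(\mathbb{R}^+)$. In (\ref{uc2}) I split $M_{+,11}=1+(M_{+,11}-1)$, so that $e^{2i(c_-+c)}u(x)$ becomes a principal term $\tfrac1\pi\int_\mathbb{R} z^{-1}\bar r_1(z)e^{-2izx}\,dz$ plus a correction $\tfrac1\pi\int_\mathbb{R} z^{-1}\bar r_1(z)e^{-2izx}(M_{+,11}-1)\,dz$. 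The principal term is, up to a constant, the projection $\mathcal{P}^+(z^{-1}\bar r_1 e^{-2izx})$, whose weighted $L^2$ norm over $x\in(x_0,\infty)$ is bounded by $\|z^{-1}\bar r_1\|_{H^1_z}$ via (\ref{xp}) with $i=1$; here I use $r_1\in\mathcal{W}$, noting through the relation (\ref{rfrz}) that $z^{-1}r_1=\tfrac14 z^{-2}r_2$, so the singular factor at $z=0$ is absorbed by the weighted condition $z^{-2}r_2\in L^2$ built into $\mathcal{W}$. The correction term is handled by H\"older's inequality, estimating the projection of $z^{-1}\bar r_1 e^{-2izx}$ in $L^\infty_z$ by (\ref{pzr}) and the factor $M_{+,11}-1$ by the $\langle x\rangle$-weighted bound (\ref{xetar}) of Proposition \ref{l7}; the product is of order $\|r\|_{L^\infty}\,\|r\|_{\mathcal{W}}$, which is absorbed into a constant depending on $\|r_{1,2}\|_{H^1\cap L^{2,1}}$.

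Next I would obtain $u_x,u_{xx}\in L^{2,1}(\mathbb{R}^+)$. Differentiating (\ref{uc2}) once in $x$, the derivative either falls on $e^{-2izx}$, converting $z^{-1}\bar r_1$ into $\bar r_1$ (controlled in $H^1\cap L^{2,1}$), or on $M_{+,11}$, for which I invoke the derivative bounds (\ref{pxmu})--(\ref{pxeta}) of Proposition \ref{l7}; the phase $c_+(x)$ contributes only a term quadratic in $u_x$ (namely $c_+'=\tfrac12|u_x|^2$), which is subordinate by the Sobolev inequality $\|u_x\|_{L^\infty}\le\tfrac{1}{\sqrt2}\|u\|_{H^{2,1}}$. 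For $u_{xx}$ I use the second reconstruction formula (\ref{uxx1}): since $e^{ic_+}\partial_x(\bar u_x e^{ic_+})=\bar u_{xx}e^{2ic_+}+\text{(terms quadratic in $u_x$)}$, recovering $u_{xx}$ amounts to estimating $\tfrac1\pi\int_\mathbb{R} r_2(z)e^{2izx}M_{+,22}\,dz$. Splitting $M_{+,22}=1+(M_{+,22}-1)$ again, the principal part is bounded in weighted $L^2$ by $\|r_2\|_{H^1_z}$ through (\ref{xpfr}), and the correction by H\"older with (\ref{pfr}) and the $\langle x\rangle$-weighted bound (\ref{xmurf}) for $M_{-,21}$.

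To upgrade to $H^3(\mathbb{R}^+)$ I would differentiate (\ref{uxx1}) one further time. The worst principal contribution is then of the type $\tfrac1\pi\int_\mathbb{R} z\,r_2(z)e^{2izx}\,dz$, which is exactly controlled by $\|r_2\|_{L^{2,1}_z}$ via (\ref{lq2}); this is the step where the membership $r_{1,2}\in L^{2,1}$ (part of the hypothesis $r_{1,2}\in\mathcal{H}$) is essential. The corresponding correction terms use the derivative bounds (\ref{pxmu})--(\ref{pxeta}) together with the boundedness $\|k r_{1,2}\|_{L^\infty}\le\|r_{1,2}\|_{H^1\cap L^{2,1}}$ from Proposition \ref{pp2}, so that the factor $z$ can be paired with $M$ or $\partial_x M$ without loss, while Proposition \ref{ll9} supplies the base $L^2$ control of $M_\pm-I$ through (\ref{mjir}). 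Collecting the contributions from all orders and using $r_{1,2}\in H^1\hookrightarrow L^\infty$ to control the constants, I would sum the estimates to obtain (\ref{ugj1}), with the principal terms producing the linear dependence on $\|r_{1,2}\|_{\mathcal{W}}$ and the correction terms producing the stated dependence of $c$ on $\|r_{1,2}\|_{H^1\cap L^{2,1}}$.

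The main obstacle is the spectral singularity at $z=0$: the reconstruction of $u$ itself carries the singular weight $z^{-1}$, so the naive $L^2$ control of the principal integral fails near the origin. Overcoming this requires the extra weighted condition $z^{-2}r\in L^2$ encoded in $\mathcal{W}$, exploited through the identity $r_2=4z r_1$ of (\ref{rfrz}) to trade the singular factor for a regular one. Keeping careful track of which of the norms $H^1$, $L^{2,1}$, or $z^{-2}(\cdot)\in L^2$ is consumed by each term, and verifying that the nonlinear phase corrections from $e^{ic_\pm}$ and $c_\pm'=\tfrac12|u_x|^2$ remain strictly subordinate, is the delicate bookkeeping on which the argument rests.
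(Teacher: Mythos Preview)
Your overall strategy matches the paper's proof: split each reconstruction formula into a principal Fourier term and a correction, control the principal terms by Proposition~\ref{p6}, and control the corrections by the Beals--Coifman estimates of Propositions~\ref{ll9} and~\ref{l7}. The differentiation steps you outline for $u_x$, $u_{xx}$, $u_{xxx}$ and your identification of which norms in $\mathcal{W}$ absorb the $z=0$ singularity are also correct.

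There is, however, one genuine gap in your treatment of the correction terms. You write that the integral $\int z^{-1}\bar r_1 e^{-2izx}(M_{+,11}-1)\,dz$ is handled by ``the factor $M_{+,11}-1$ by the $\langle x\rangle$-weighted bound (\ref{xetar}).'' But (\ref{xetar}) bounds the \emph{off-diagonal} entry $M_{+,12}$; Proposition~\ref{l7} supplies no $\langle x\rangle$-weighted estimate for the diagonal deviations $M_{\pm,11}-1$ or $M_{\pm,22}-1$. The paper bridges this by first rewriting the correction $I_2$ using the integral equation (\ref{muf}), which expresses $M_{-,11}-1=\mathcal{P}^-(r_2 e^{2izx}M_{+,12})$, together with the adjoint relation between $\mathcal{P}^\pm$, to obtain
\[
I_2(x)=-\int_{\mathbb{R}} r_2(z)\,M_{+,12}(x;z)\,e^{2izx}\,\mathcal{P}^+\bigl(z^{-1}\bar r_1 e^{-2izx}\bigr)(z)\,dz.
\]
Now the two $\langle x\rangle$-weighted factors are $M_{+,12}$ (controlled by (\ref{xetar})) and $\mathcal{P}^+(z^{-1}\bar r_1 e^{-2izx})$ (controlled by (\ref{xp})), and Cauchy--Schwarz gives $\sup_{x>x_0}|\langle x\rangle^2 I_2(x)|\le c\|r_1\|_{H^1}^2$. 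The analogous rewriting, via (\ref{etaz}), converts $M_{+,22}-1$ into $M_{-,21}$ in the correction $I_4$ of (\ref{uxx1}). With this step inserted, the remainder of your argument goes through exactly as in the paper.
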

\begin{proof}
We use reconstruction formulas (\ref{uxx1}) and  (\ref{uc2}) to obtain the estimate (\ref{ugj1}).  The reconstruction formula (\ref{uc2}) is rewritten as:
\begin{align}
&e^{2i(c_-(x)+c)} u(x) =\frac{1}{\pi} \int_{\mathbb{R}} z^{-1} \bar{r}_1(z)  e^{-2izx} \mathrm{d}z\nonumber \\
&+\frac{1}{\pi} \int_{\mathbb{R}}z^{-1} \bar{r}_1(z) e^{-2izx}\left(M_{-,11}(x;z)-1\right) \mathrm{d}z :=I_1(x)+I_2(x).\label{ugxj}
\end{align}


First,  $I_1(x)$  in $L^{2,1}(\mathbb{R})$ can be  controlled by $z^{-1} \bar{r}_1(z)$ in $H^{1}(\mathbb{R})$ since
\begin{align}
&\|I_1(x)\|_{L^{2,1} }  = \frac{1} {\pi} \| \widehat{(z^{-1} \bar{r}_1)}(2x)\|_{L^{2,1}_x} = \frac{1}{\pi} \|  z^{-1} \bar{r}_1(z)  \|_{ H^1_z}.  \label{ugxj3}
\end{align}

To estimate  $I_2(x)$ in $L^{2,1}\left(\mathbb{R}^+\right)$,  by using the inhomogeneous equation (\ref{mup1})  and integrating   by parts, we   obtain
\begin{equation}
I_2(x)=-\int_{\mathbb{R}} r_2(z) M_{+,12}(x;z) \mathrm{e}^{2isx} \mathcal{P}^+\left(z^{-1}\bar{r}_1(z) \mathrm{e}^{-2izx}\right)  \mathrm{d}z.
\end{equation}
With the estimates  (\ref{xp})   and   (\ref{xetar}),  by  Cauchy-Schwartz inequality, we have for every $x_0 \in
\mathbb{R}^+$,
\begin{align}
&\sup _{x\in\left(x_0, \infty\right)}\left|\langle x\rangle^2 I_2(x)\right| \leq\left\|r_2\right\|_{L^{\infty}} \sup _{x \in\left(x_0,
\infty\right)}\left\|\langle x\rangle M_{+,12}(x ; z)\right\|_{L_{z}^2}\nonumber\\
&\quad\quad\times \sup_{x\in\left(x_0, \infty\right)}\left\|\langle x\rangle
\mathcal{P}^+\left( z^{-1}  e^{-2iz x}\right)\right\|_{L_{z}^2(\mathbb{R})}\leq  c\left\|r_1\right\|_{H^1(\mathbb{R})}^2. \label{ugxj4}
\end{align}

By combining the estimates (\ref{ugxj3}) and (\ref{ugxj4})  with the triangle inequality,  we obtain
\begin{equation}
\|u\|_{L^{2,1}\left(\mathbb{R}^{+}\right)} \leq c \left(\left\|z^{-1}r_1(z) \right\|_{H^1(\mathbb{R})}+\left\|r_1(z)\right\|_{H^{1}(\mathbb{R})}^2\right).\label{u21}
\end{equation}

By using the reconstruction formula (\ref{uxx1}),  we have
\begin{align}
e^{ic_+(x)} \partial_{x}\left(\bar{u}_x(x) e^{ic_+(x)}\right)&=-\frac{1}{\pi}  \int_{\mathbb{R}} r_2(z) e^{2isx}\mathrm{d} z
-\frac{1}{\pi} \int_{\mathbb{R}} r_2(z) e^{2 iz x}\left[M_{+,22} -1\right] dz\nonumber\\
&:=I_3(x)+I_4(x).\label{uxjf}
\end{align}
Using similar procedures as above, we also derive
\begin{equation}
\left\|\partial_{x}\left(\bar{u}_x e^{ic_+(x)}\right)\right\|_{L^{2,1}(\mathbb{R}^+)} \leq c\left(\left\|r_2\right\|_{H^{1}(\mathbb{R})}+\left\|r_2\right\|_{H^{1}(\mathbb{R})}^2\right).\label{ux21}
\end{equation}

Next  we derivative  the equation  (\ref{uxjf})  and obtain
\begin{equation}
\begin{aligned}
\partial_x \left( e^{2i(c_-(x)+c)} u(x) \right)&=I_1'(x)+I_2'(x).\label{uf33}
\end{aligned}
\end{equation}
Direct calculation gives
$$I_1'(x)= -\frac{2}{\pi} \int_{\mathbb{R}}  \bar{r}_1(z)  e^{-2izx} dz = \widehat{  \bar{r}_1(z)} (2x), $$
which implies that
 \begin{equation}
  \|\langle x\rangle I_1^{\prime}(x)\|_{L^2(\mathbb{R}^+)}= 2\pi^{-1}  \|\langle x\rangle \widehat{  \bar{r}_1(z)} (2x)\|_{L^2(\mathbb{R})}\leq c   \|  r_1(z) \|_{H^1(\mathbb{R})},\label{u4h21}
\end{equation}

Differentiating  $I_2 (x)$ and  by  using  (\ref{mup1}), we  obtain
\begin{equation}
\begin{aligned}
I_2'(x)=&-2 i \int_{-\infty}^{\infty}  \bar{r}_1(s) e^{-2 iz x}\left(M_{-,11}(x ; z)-1\right) \mathrm{d}s+\int_{-\infty}^{\infty}
z^{-1}\bar{r}_1(z)e^{-2 i z x} \partial_{x} M_{-,11}(x ; z) \mathrm{d} z \\
=& 2 i \int_{-\infty}^{\infty} r_2(z) M_{+,12}(x; z) e^{2izx} \mathcal{P}^{+}\left( \bar{r}_1(z) e^{-2 i z x}\right)(z)
\mathrm{d} z \\
-&2 i \int_{-\infty}^{\infty}  zr_2(z) M_{+,12}(x ;z) e^{2 iz x} \mathcal{P}^+\left(z^{-1}\bar{r}_1(s) e^{-2 i z
x}\right)(z) \mathrm{d} z \\
-&\int_{-\infty}^{\infty} r_2(z) \partial_{x} M_{+,12}(x ; z) e^{2 i z x} \mathcal{P}^{+}\left(z^{-1}\bar{r}_1(z) e^{-2 i
z x}\right)(z) \mathrm{d} z,
\end{aligned} \nonumber
\end{equation}
Further with the estimation (\ref{pxmu}) and (\ref{pxeta}), it follows that
\begin{equation}
\begin{aligned}
&\sup _{x\in\left(x_0, \infty\right)}\left|\langle x\rangle I_2^{\prime}(x)\right|
\\ & \leq 2\left\|r_2\right\|_{L^{\infty}} \sup _{x \in\left(x_0,
\infty\right)}\left\| M_{+,12}(x ; z)\right\|_{L_{z}^2} \sup_{x\in\left(x_0, \infty\right)}\left\|\langle x\rangle
\mathcal{P}^+\left(z^{-1}\bar{r}_1(z) e^{-2iz x}\right)\right\|_{L_{z}^2} \\
&+2\left\|r_2\right\|_{L^{2,1}}\sup _{x \in\left(x_0, \infty\right)}\left\| M_{+,12}(x ; z)\right\|_{L_{z}^2}\sup_{x\in\left(x_0,
\infty\right)}\left\|\langle x\rangle \mathcal{P}^+\left(z^{-1}\bar{r}_1(z) e^{-2iz x}\right)\right\|_{L_{z}^{\infty}}\\
&+\left\|r_2\right\|_{L^{\infty}} \sup _{x \in\left(x_0, \infty\right)}\left\|\langle x\rangle \partial_x M_{+,12}(x ;
z)\right\|_{L_{z}^2} \sup_{x\in\left(x_0, \infty\right)}\left\| \mathcal{P}^+\left(z^{-1}\bar{r}_1(z) e^{-2iz
x}\right)\right\|_{L_{z}^2}\\
& \leq c\left\|r_2\right\|_{H^{1} \cap L^{2,1}}\left\| r_1\right\|_{H^{1} \cap L^{2,1}}\left(\left\|z^{-1}r_1\right\|_{H^{1} \cap
L^{2,1}}+\left\|r_2\right\|_{H^{1} \cap L^{2,1}}\right). \label{ewfk}
\end{aligned}
\end{equation}
Substituting  (\ref{u21}) and (\ref{ewfk}) into (\ref{uf33}),   we then obtain
\begin{equation}
\|u_x\|_{L^{2,1}(\mathbb{R}^{+})} \leq c \left(\left\|z^{-1}r_1\right\|_{H^{1} \cap
L^{2,1}}+\left\|r_1\right\|_{H^{1} \cap L^{2,1}}+\left\|r_2\right\|_{H^{1} \cap L^{2,1}}\right).\label{epe}
\end{equation}
Finally,  combining  (\ref{u21}),  (\ref{ux21}) and (\ref{epe}) yields
\begin{equation}
\|u\|_{H^{2,1}\left(\mathbb{R}^{+}\right)} \leq c\left(\left\|z^{-1}r_1\right\|_{H^1}+\left\|r_1\right\|_{H^1}+\left\|r_2\right\|_{H^1}\right),\label{uh21}
\end{equation}
where $c$ is a positive constant that depends on $\left\|r_{1,2}\right\|_{H^1\cap L^{2,1}}$.

In order to show $  u \in H^3\left(\mathbb{R}^{+}\right) $, we derivative  the equation  (\ref{uxjf})  and obtain
\begin{equation}
\begin{aligned}
\partial_x^2\left(  e^{ic_+(x)}u_x(x) \right)   & =I_3'(x)+I_4'(x),
\end{aligned}\label{uxjf3}
\end{equation}
in which
\begin{equation}
\begin{aligned}
I_3'(x) =4\pi^{-1} \int_{\mathbb{R}} z r_2(z) e^{2izx}d z = 4\pi^{-2}  \widehat{z r_2(z)}(-2x),
\end{aligned}
\end{equation}
which leads to
\begin{align}
\|I_3'(x)\|_{L^2_x}  = 4 \pi^{-2} \|  \widehat{z r_2(z)}(-2x)\|_{L^2_x}\leq  c  \left\|r_2\right\|_{H^{1} \cap L^{2,1}}. \label{uu1}
\end{align}

We take the derivative of $I_4$
\begin{equation}
\begin{aligned}
I_4^{\prime}(x)&=-4\pi^{-1} \int_{-\infty}^{\infty}r_2(z) e^{-2 i z x}\left(M_{+,22}(x;z)-1\right) dz\\
&-2i\pi^{-1}\int_{-\infty}^{\infty} s^{-1} r_2(z) e^{-2 i z x} \partial_{x} M_{+,22}(x ; z) dz. \nonumber
\end{aligned}
\end{equation}
The estimates for $I_4^{\prime}(x)$ can also be obtained accordingly
\begin{equation}
\begin{aligned}
\sup _{x\in\left(x_0, \infty\right)}\left|\langle x\rangle I_4^{\prime}(x)\right|\leq
c\left\|r_2\right\|_{H^{1} \cap L^{2,1}}\left\|r_1\right\|_{H^{1} \cap L^{2,1}}\left(\left\|z^{-1}r_1\right\|_{H^{1} \cap
L^{2,1}}+\left\|r_2\right\|_{H^{1} \cap L^{2,1}}\right),\nonumber
\end{aligned}
\end{equation}
by which we can further show that
\begin{align}
\|I_4'(x)\|_{L^2_x}   \leq  c \left(\left\|z^{-1}r_1\right\|_{H^{1} \cap
L^{2,1}}+\left\|r_2\right\|_{H^{1} \cap L^{2,1}}\right).\label{uu2}
\end{align}

From (\ref{uu1}) and (\ref{uu2}), we find that
\begin{align}
\|u_{xxx}(x)\|_{L^2_x}   \leq  c \left(\left\|z^{-1}r_1\right\|_{H^{1} \cap
L^{2,1}}+\left\|r_2\right\|_{H^{1} \cap L^{2,1}}\right),\label{uu3}
\end{align}
which together with (\ref{uh21}) yields  the  estimate
\begin{equation}
\|u\|_{H^3 \left(\mathbb{R}^{+}\right) \cap H^{2,1} \left(\mathbb{R}^{+}\right) } \leq c\left(\left\| r_1\right\|_{\mathcal{W}(\mathbb{R})}  +\left\|r_2\right\|_{\mathcal{W}(\mathbb{R})}\right).\nonumber
\end{equation}
\end{proof}

By Proposition \ref{l8}, we obtain the following Proposition:
\begin{proposition} Suppose $r_{1,2}\in \mathcal{W}(\mathbb{R})$ such that the inequality (\ref{1_rk}) is satisfied,
then  mapping
\begin{equation}
\mathcal{W}(\mathbb{R}) \ni\left(r_1, r_2\right) \mapsto u \in H^3\left(\mathbb{R}^{+}\right) \cap
H^{2,1}\left(\mathbb{R}^{+}\right),\label{rru1}
\end{equation}
is Lipschitz continuous.
\end{proposition}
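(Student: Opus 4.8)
The plan is to mirror the proof of Proposition \ref{l8}, but to carry the whole argument out for the \emph{difference} of two potentials $u,\tilde u$ reconstructed from data $(r_1,r_2)$ and $(\tilde r_1,\tilde r_2)$ in $\mathcal{W}(\mathbb{R})$ whose norms are both bounded by a common $\delta$. The reconstruction formulas (\ref{uxx1}) and (\ref{uc2}) are multilinear in the reflection coefficients and in the Beals--Coifman solution $M_{\pm}$, so every estimate in Proposition \ref{l8} has a companion \emph{difference} estimate obtained by the standard add-and-subtract device, provided one first controls the Lipschitz dependence of $M_{\pm}$ on the data. Thus the argument splits into three stages: Lipschitz continuity of $(r_1,r_2)\mapsto M_{\pm}$, term-by-term differencing of the reconstruction integrals, and absorption of the self-referential phase factors.

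First I would establish that the map $(r_1,r_2)\mapsto M_{\pm}(x;\cdot)$ is Lipschitz continuous in the norms supplied by Propositions \ref{ll9} and \ref{l7}. Writing the Fredholm system (\ref{muf})--(\ref{etaz}) for both data sets and subtracting, the difference $M_{\pm}-\tilde M_{\pm}$ solves the same linear integral equations with a right-hand side that is linear in $r_{1,2}-\tilde r_{1,2}$, plus a remainder in which the jump operator acts on $\tilde M_{\pm}$ through the difference of jump data. Since $(I-\mathcal{P}^{-})^{-1}$ is bounded on $L^2_z$ by Proposition \ref{mp18} with a constant depending only on $\|r_{1,2}\|_{L^\infty}\le c\delta$, and since the Cauchy-projection bounds of Proposition \ref{p6} are linear in the data, one obtains for every $x\in\mathbb{R}^+$
\begin{align*}
&\|\langle x\rangle (M_{+,12}-\tilde M_{+,12})\|_{L^2_z}+\|\langle x\rangle (M_{-,21}-\tilde M_{-,21})\|_{L^2_z}+\|\partial_x(M_{+,12}-\tilde M_{+,12})\|_{L^2_z}\\
&\qquad\le c\big(\|r_1-\tilde r_1\|_{H^1\cap L^{2,1}}+\|r_2-\tilde r_2\|_{H^1\cap L^{2,1}}\big).
\end{align*}

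Second, I would subtract the reconstruction formulas for $u$ and $\tilde u$ and estimate the resulting terms exactly as in Proposition \ref{l8}, now with a single difference factor in each piece. For the leading term $I_1(x)$ in (\ref{ugxj}), the Plancherel identity (\ref{ugxj3}) gives directly $\|I_1-\tilde I_1\|_{L^{2,1}}=\pi^{-1}\|z^{-1}(\bar r_1-\bar{\tilde r}_1)\|_{H^1_z}$. For the bilinear remainder $I_2(x)$ and its derivative one splits the difference of the products $r_2\,M_{+,12}\,\mathcal{P}^+(z^{-1}\bar r_1 e^{-2izx})$ into three summands, each carrying exactly one difference, and applies the bounds (\ref{xp}), (\ref{pzr}), (\ref{xetar}), (\ref{pxeta}) together with the $M$-Lipschitz estimates just obtained. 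The same scheme applied to (\ref{uxx1}) controls $\partial_x(\bar u_x e^{ic_+})$, and differentiating once more as in (\ref{uxjf3}) controls $u_{xxx}$, using $\|z(r_2-\tilde r_2)\|_{L^2}\le\|r_2-\tilde r_2\|_{L^{2,1}}$ to handle the factor $zr_2$ appearing in $I_3'$.

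The hard part will be the coupling between the potential and the phase factors $e^{ic_\pm(x)}$, $e^{-ic}$: since $c_\pm$ and $c$ are quadratic in $u_x$, the identities recover not $u$ but $u\,e^{2i(c_-+c)}$, so differences of these exponentials must be absorbed. I would handle this as in (\ref{mqmj}), using $|e^{-ic_-(x)}-e^{-i\tilde c_-(x)}|\le c\delta\|u_x-\tilde u_x\|_{L^2}$ to trade a phase difference for a potential difference at the cost of a factor $\delta$. This self-referential dependence is the main obstacle, for it forces the target norm of $u-\tilde u$ to reappear on the right-hand side; the estimate closes only because the prefactor carries a power of $\delta$, so under the smallness hypothesis these contributions are moved to the left and absorbed. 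Collecting the three groups of bounds and invoking the triangle inequality then yields
\[
\|u-\tilde u\|_{H^3(\mathbb{R}^+)\cap H^{2,1}(\mathbb{R}^+)}\le c\big(\|r_1-\tilde r_1\|_{\mathcal{W}(\mathbb{R})}+\|r_2-\tilde r_2\|_{\mathcal{W}(\mathbb{R})}\big),
\]
with $c$ depending only on $\delta$, which is the asserted Lipschitz continuity of the mapping (\ref{rru1}).
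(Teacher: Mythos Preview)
Your approach is essentially the same as the paper's: state the local Lipschitz bound on balls of radius $\rho$ and derive it by rerunning the estimates of Proposition~\ref{l8} on differences, which is exactly what the paper does (in a single sentence, referring back to the reconstruction formula (\ref{ugxj})). Your write-up is in fact considerably more detailed than the paper's own proof, and correctly identifies that one must first establish Lipschitz dependence of $M_\pm$ on $(r_1,r_2)$ via the resolvent bound of Proposition~\ref{mp18}.

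One point deserves a second look: your proposed handling of the phase factors $e^{ic_\pm},e^{ic}$ by absorption appears to require $\delta$ small, whereas the statement is local Lipschitz on \emph{any} bounded set. The clean way to close this without smallness is to observe that the phases are determined by the reconstructed quantities themselves. Writing $V:=\bar u_x e^{ic_+}$, one has $|V|=|u_x|$, so $c_+(x)=-\tfrac12\int_x^\infty |V|^2$ is a functional of $V$ alone; hence $u_x=\bar V e^{ic_+(V)}$ and the map $V\mapsto u_x$ is Lipschitz on bounded sets with constant depending only on $\|V\|$ (so on $\rho$), with no self-reference. The same mechanism handles $W:=u\,e^{2i(c_-+c)}$: since $c_-,c$ depend only on $u_x$, which you control first from $V$, the phase in $W$ is already fixed when you come to estimate $u-\tilde u$. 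With this adjustment your argument goes through for arbitrary $\rho$; the paper does not spell this step out either.
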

\begin{proof}
Suppose $r_{1,2}, \tilde{r}_{1,2} \in H^1(\mathbb{R}) \cap L^{2,1}(\mathbb{R})$ satisfying $\left\|r_{1,2}\right\|_{H^{1} \cap
L^{2,1}},\left\|\tilde{r}_{1,2}\right\|_{H^{1} \cap L^{2,1}} \leq \rho$ for some $\rho>0$. Denote the corresponding potentials by $u$ and $\tilde{u}$
respectively. Then, there is a positive   constant $c$ such that
\begin{equation}
\begin{aligned}
&\|u-\tilde{u}\|_{H^3\left(\mathbb{R}^{+}\right) \cap H^{2,1}\left(\mathbb{R}^+\right)} \\
\leq &c \left( \left\|z^{-1}( r_1-\tilde{r}_1)\right\|_{H^1 \cap
L^{2,1}}+  \left\|r_1-\tilde{r}_1\right\|_{H^1 \cap
L^{2,1}}+\left\|r_2-\tilde{r}_2\right\|_{H^1 \cap L^{2,1}}\right).\label{rru2}
\end{aligned}
\end{equation}
The Lipschitz continuity here follows from the reconstruction formula (\ref{ugxj}) after repeating almost the same estimates as in Proposition \ref{l8}.
Using similar representation for $u$ and $\tilde{u}$, the Lipschitz continuity of (\ref{rru1}) is ensured with the bound (\ref{rru2}).
\end{proof}

\subsection{Estimates on the negative half-line}
\hspace*{\parindent}
 In order to obtain the estimate of potential $u(x)$ on negative real half-line, we
  need to rewrite the RH problem \ref{rhp:31} in an equivalent form. For this purpose,
we introduce a scalar RH problem
\begin{equation}
\begin{cases}
\delta_{+}(z) = (1+ \bar{r}_1(z) r_2(z))  \delta_{-}(z), \quad z \in \mathbb{R} \\
\delta_{\pm}(z) \rightarrow 1 \text { as } \quad|z| \rightarrow \infty.
\end{cases}\label{delta}
\end{equation}
Recall from (\ref{r11}) and (\ref{r22}) that
\begin{equation}
\begin{cases}1+\bar{r}_1(z) r_2(z)=1+|r(k)|^{2} \geq 1, & k \in \mathbb{R}^{+} \\ 1+\bar{r}_1(z)
r_2(z)=1-|r(k)|^{2} \geq c_{0}^{2}>0, & k \in \mathbb{R}^{-}\end{cases}
\end{equation}
where the latter inequality is due to (\ref{1_rk}).
\begin{proposition}
Suppose $r_{1,2} \in H^{1}(\mathbb{R}) \cap L^{2,1}(\mathbb{R})$ such that the inequality (\ref{1_rk}) is satisfied. The RH problem (\ref{delta})  exists unique
solution  $\delta_{\pm}(z)$    of the form
\begin{equation}
\delta(z)=  \exp [ {\mathcal{C}   \log \left(1+\bar{r}_1 r_2\right)}] , \quad z \in \mathbb{C}^{\pm},\label{dc}
\end{equation}
which   has  the limits
\begin{equation}
\delta_{\pm}(z)= \exp [ \mathcal{P}^{\pm} \log \left(1+\bar{r}_1 r_2\right)] , \quad z \in \mathbb{R}.\label{dp}
\end{equation}

\end{proposition}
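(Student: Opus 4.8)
The plan is to linearize the multiplicative scalar RH problem (\ref{delta}) by passing to logarithms. First I would observe that the jump factor is real and strictly positive: combining (\ref{r11}) and (\ref{r22}) gives $1+\bar r_1(z)r_2(z)=1+|r(k)|^2\ge 1$ for $z\in\mathbb{R}^+$ and $1+\bar r_1(z)r_2(z)=1-|r(k)|^2\ge c_0^2>0$ for $z\in\mathbb{R}^-$ by (\ref{1_rk}). Hence $1+\bar r_1 r_2$ stays in a compact subinterval of $(0,\infty)$, so $\log(1+\bar r_1 r_2)$ is unambiguously defined (no branch ambiguity) and real-valued. I would next check the function-space membership: since $r_{1,2}\in H^1(\mathbb{R})\cap L^{2,1}(\mathbb{R})\hookrightarrow L^\infty(\mathbb{R})$ and $r_{1,2}'\in L^2(\mathbb{R})$, the product $\bar r_1 r_2$ lies in $L^1(\mathbb{R})\cap H^1(\mathbb{R})$, with $(\bar r_1 r_2)'=\bar r_1' r_2+\bar r_1 r_2'\in L^2(\mathbb{R})$. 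Because $1+\bar r_1 r_2$ is bounded away from $0$, the chain rule shows $\log(1+\bar r_1 r_2)\in H^1(\mathbb{R})\cap L^1(\mathbb{R})$ as well.

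With the jump linearized, I would define $\delta(z):=\exp[\mathcal{C}(\log(1+\bar r_1 r_2))(z)]$ for $z\in\mathbb{C}\setminus\mathbb{R}$, which is (\ref{dc}). By Proposition \ref{p3}, $\mathcal{C}(\log(1+\bar r_1 r_2))$ is analytic in $\mathbb{C}^\pm$ and, since $\log(1+\bar r_1 r_2)\in L^1(\mathbb{R})$, decays to $0$ as $|z|\to\infty$; exponentiating yields the analyticity of $\delta$ together with the normalization $\delta_\pm(z)\to 1$. Taking nontangential boundary values and using the Plemelj relation from Proposition \ref{p3} gives $\delta_\pm(z)=\exp[\mathcal{P}^\pm(\log(1+\bar r_1 r_2))(z)]$, which is (\ref{dp}); moreover $\mathcal{P}^\pm$ maps $H^1(\mathbb{R})$ into $H^1(\mathbb{R})\hookrightarrow L^\infty(\mathbb{R})$, so the exponents are bounded and $\delta_\pm$ are bounded and bounded away from $0$. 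The jump condition is then immediate from the identity $\mathcal{P}^+-\mathcal{P}^-=I$, since $\log\delta_+-\log\delta_-=\mathcal{P}^+(\log(1+\bar r_1 r_2))-\mathcal{P}^-(\log(1+\bar r_1 r_2))=\log(1+\bar r_1 r_2)$, and exponentiating recovers $\delta_+=(1+\bar r_1 r_2)\delta_-$.

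For uniqueness I would take any second solution $\tilde\delta_\pm$ with the stated properties and form the ratio $\Delta:=\delta/\tilde\delta$. Since $\delta$ and $\tilde\delta$ obey the same multiplicative jump, $\Delta_+=\Delta_-$ on $\mathbb{R}$, so $\Delta$ continues to an entire function; being bounded (both factors are bounded away from $0$) with $\Delta\to 1$ at infinity, Liouville's theorem forces $\Delta\equiv 1$. The main obstacle I anticipate is not the Liouville step but the bookkeeping needed to legitimize it: one must pin down that $\log(1+\bar r_1 r_2)$ lands in a space ($H^1\cap L^1$) on which both the decay statement and the boundedness of $\mathcal{P}^\pm$ in Proposition \ref{p3} apply, and one must use positivity of the jump factor both to define the logarithm single-valuedly and to guarantee that every admissible solution is nonvanishing (so that the ratio $\Delta$ has no poles). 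Once these points are secured, all remaining computations are routine.
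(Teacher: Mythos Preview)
Your proposal is correct and follows essentially the same route as the paper: linearize by taking logarithms (using positivity of $1+\bar r_1 r_2$), verify that $\log(1+\bar r_1 r_2)$ lies in an appropriate $L^p$-space so that Proposition~\ref{p3} applies to the Cauchy integral, and recover the jump from $\mathcal{P}^+-\mathcal{P}^-=I$. Your argument is in fact slightly more complete, since you supply the Liouville ratio argument for uniqueness explicitly, whereas the paper leaves this implicit.
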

\begin{proof}
As $r_{1,2} \in L_{z}^{2,1}(\mathbb{R}) \cap
L^{\infty}(\mathbb{R})$, we obtain $\bar{r}_1 r_2 \in L^{1}(\mathbb{R})$. It follows from the representation (\ref{rxdy}) as well as
from Propositions \ref{pp1} and \ref{pp2} that
\begin{equation}
\langle z\rangle|r(k)| \leq|r(k)|+\frac{1}{2}|k|\left|r_2(z)\right| \leq c, \quad z \in \mathbb{R},
\end{equation}
where $c$ is a positive constant. Therefore,
\begin{equation}
\log \left(1+|r(z)|^2\right) \leq \log \left(1+c^2\langle z\rangle^{-2}\right), \quad  z \in \mathbb{R}^+, \quad z \in \mathbb{R},
\end{equation}
so that $\log \left(1+\bar{r}_1 r_2\right) \in L^1\left(\mathbb{R}^+\right)$. In addition, with the inequality (\ref{1_rk}), we immediately obtain
\begin{equation}
\left|\log \left(1-|r(k)|^{2}\right)\right| \leq-\log \left(1-c^{2}\langle z\rangle^{-2}\right), \quad z \in \mathbb{R}^-, \quad z \in
\mathbb{R}.\label{log}
\end{equation}
It follows that $\log \left(1+\bar{r}_1 r_2\right) \in L^{1}\left(\mathbb{R}^{-}\right)$. Consequently, we have $\log \left(1+\bar{r}_1 r_2\right) \in L^{1}(\mathbb{R})$. Also from (\ref{log}), another conclusion is $\log \left(1+\bar{r}_1
r_2\right) \in L^{\infty}(\mathbb{R})$.

With the help of the H\"{o}lder inequality, we derive $\log \left(1+\bar{r}_1 r_2\right) \in L^{2}(\mathbb{R})$. By
Proposition \ref{p3} with $p=2$, the expression (\ref{dc}) defines unique analytic functions in $\mathbb{C}^{\pm}$, which recover the limits (\ref{dp})
and the limits when $|z| \rightarrow \infty$: $\lim _{|z| \rightarrow \infty} \delta_{\pm}(z)=1$. In the end, with $\mathcal{P}^{+}-\mathcal{P}^{-}=I$, we know
\begin{equation}
\delta_{+}(z) \delta_{-}^{-1}(z)=\mathrm{e}^{\log \left(1+\bar{r}_1(z) r_2(z)\right)}=1+\bar{r}_1(z) r_2(z),
\quad z \in \mathbb{R}.
\end{equation}
In a word, $\delta_{\pm}$ given by (\ref{dc}) satisfy the scalar RH problem (\ref{delta}).
\end{proof}
\begin{proposition}
\label{p9}
Suppose $r_{1,2} \in \mathcal{W}(\mathbb{R})$ such that the inequality (\ref{1_rk}) is satisfied, then  $\delta_+ \delta_- r_{1,2}
\in \mathcal{W}(\mathbb{R})$.
\end{proposition}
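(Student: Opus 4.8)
The plan is to establish the three defining properties of $\mathcal{W}(\mathbb{R})$ for each of $\delta_+\delta_-r_1$ and $\delta_+\delta_-r_2$ by showing that $\delta_+\delta_-$ acts as a unimodular multiplier on $\mathcal{W}(\mathbb{R})$. The first and most useful step is to compute the modulus of $\delta_+\delta_-$ on the line. By (\ref{r11})--(\ref{r22}) together with the positivity (\ref{1_rk}), the quantity $1+\bar r_1(z)r_2(z)$ equals $1+|r(k)|^2\ge 1$ on $\mathbb{R}^+$ and $1-|r(k)|^2\ge c_0^2>0$ on $\mathbb{R}^-$, so $\log(1+\bar r_1 r_2)$ is real-valued and, by the preceding proposition, lies in $L^1\cap L^2\cap L^\infty(\mathbb{R})$. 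From the boundary formula (\ref{dp}) and the identity $\mathcal{P}^+-\mathcal{P}^-=I$, I would write $\delta_+\delta_-=\exp[(\mathcal{P}^++\mathcal{P}^-)\log(1+\bar r_1 r_2)]$ and observe that, on a real function, the operator $\mathcal{P}^++\mathcal{P}^-$ reduces to the purely imaginary principal-value (Hilbert) integral, the two $\pm\tfrac12\log$ contributions cancelling. Hence the exponent is purely imaginary and $|\delta_+\delta_-(z)|=1$ for every $z\in\mathbb{R}$; the same computation bounds $|\delta_\pm|$ above and below away from zero.

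Granting unimodularity, two of the three conditions are immediate. Since $|\delta_+\delta_-(z)|\equiv 1$, multiplication by $\delta_+\delta_-$ leaves every weighted $L^2$ norm unchanged, so $\|\langle z\rangle\,\delta_+\delta_- r_{1,2}\|_{L^2}=\|r_{1,2}\|_{L^{2,1}}<\infty$ and $\|z^{-2}\delta_+\delta_- r_{1,2}\|_{L^2}=\|z^{-2}r_{1,2}\|_{L^2}<\infty$, which are precisely the $L^{2,1}$ and $z^{-2}L^2$ memberships in the definition of $\mathcal{W}(\mathbb{R})$. In particular $\delta_+\delta_- r_{1,2}\in L^2(\mathbb{R})$ as well.

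The remaining and genuinely substantial point is the $H^1$ bound, i.e. $\partial_z(\delta_+\delta_- r_{1,2})\in L^2$. Using the product rule I would split this as $(\partial_z(\delta_+\delta_-))\,r_{1,2}+\delta_+\delta_-\,\partial_z r_{1,2}$; the second term is in $L^2$ because $\delta_+\delta_-$ is unimodular and $\partial_z r_{1,2}\in L^2$, so everything hinges on controlling $\partial_z(\delta_+\delta_-)$. Since $\log(1+\bar r_1 r_2)\in H^1(\mathbb{R})$, the Cauchy integral in (\ref{dc}) is differentiable with $\partial_z\mathcal{C}(h)=\mathcal{C}(h')$, and passing to boundary values gives
\begin{equation}
\frac{\partial_z(\delta_+\delta_-)}{\delta_+\delta_-}=(\mathcal{P}^++\mathcal{P}^-)\!\left(\frac{\partial_z(\bar r_1 r_2)}{1+\bar r_1 r_2}\right).\nonumber
\end{equation}
Here I would use $r_{1,2}\in H^1\hookrightarrow L^\infty$ together with $\partial_z r_{1,2}\in L^2$ to obtain $\partial_z(\bar r_1 r_2)\in L^2$, and the lower bound $1+\bar r_1 r_2\ge c_0^2$ from (\ref{1_rk}) to keep the quotient in $L^2$; the boundedness (\ref{pjx}) of $\mathcal{P}^\pm$ on $L^2$ then yields $\partial_z(\delta_+\delta_-)\in L^2$. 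Finally, since $r_{1,2}\in L^\infty$, the product $(\partial_z(\delta_+\delta_-))\,r_{1,2}$ lies in $L^2$, completing $\delta_+\delta_- r_{1,2}\in H^1$ and hence $\delta_+\delta_- r_{1,2}\in\mathcal{W}(\mathbb{R})$.

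The hard part will be exactly this last paragraph: justifying the interchange of $\partial_z$ with the singular Cauchy/Plemelj operators and verifying that the differentiated quotient remains square integrable. This is where the hypotheses $r_{1,2}\in H^1$ and the strict positivity (\ref{1_rk}) are essential, whereas the weighted ($L^{2,1}$ and $z^{-2}L^2$) parts are rendered trivial by the unimodularity of $\delta_+\delta_-$ established at the outset.
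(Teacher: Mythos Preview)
Your proposal is correct and follows essentially the same route as the paper: both arguments exploit the identity $\mathcal{P}^++\mathcal{P}^-=-i\mathcal{H}$ to write $\delta_+\delta_-=e^{-i\mathcal{H}\log(1+\bar r_1 r_2)}$, deduce unimodularity (which immediately handles the $L^{2,1}$ and $z^{-2}L^2$ conditions), and then control the derivative via $\partial_z\log(1+\bar r_1 r_2)=\frac{\partial_z(\bar r_1 r_2)}{1+\bar r_1 r_2}\in L^2$ together with the $L^2$-boundedness of $\mathcal{P}^\pm$ (equivalently, of $\mathcal{H}$). The only cosmetic difference is that the paper invokes the $L^2$-isometry of $\mathcal{H}$ directly, whereas you phrase the same step through the boundedness of $\mathcal{P}^\pm$; you are also slightly more explicit than the paper in checking the $z^{-2}L^2$ membership.
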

\begin{proof}
By Sokhotski-Plemelj theorem, we have the relations
\begin{equation}
\mathcal{P}^{\pm}(h)(z)=\pm \frac{1}{2} h(z)-\frac{i}{2} \mathcal{H}(h)(z) \quad z \in \mathbb{R}.
\end{equation}
We note $\mathcal{P}^++\mathcal{P}^-=-i \mathcal{H}$ , where $\mathcal{H}$ is the Hilbert transform and have
\begin{equation}
\delta_{+} \delta_{-}=\mathrm{e}^{-\mathrm{i} \mathcal{H} \log \left(1+\bar{r}_1 r_2\right)} .
\end{equation}
As $\log \left(1+\bar{r}_1 r_2\right) \in L^{2}(\mathbb{R})$, we obtain $\mathcal{H} \log \left(1+\bar{r}_1 r_2\right) \in L^2(\mathbb{R})$ being a
real-valued function. Thus, as $\left|\delta_{+}(z) \delta_{-}(z)\right|=1$ for almost every $z \in \mathbb{R}$, $\delta_+
\delta_{-} r_{1,2} \in L^{2,1}(\mathbb{R})$ follows from $r_{1,2} \in L^{2,1}(\mathbb{R})$.

A remaining task is to show $\partial_{z} \delta_+ \delta_- r_{1,2} \in L^2(\mathbb{R})$, i.e.,
$\partial_{z} \mathcal{H} \log (1+ \bar{r}_1 r_2) \in L^{2}(\mathbb{R})$. Thanks for the Parseval's identity and the proved fact $\|\mathcal{H} f\|_{L^{2}}=\|f\|_{L^{2}}$ for every $f \in L^{2}(\mathbb{R})$, it is obvious that
\begin{equation}
\left\|\partial_{z} \mathcal{H} \log \left(1+\bar{r}_1 r_2\right)\right\|_{L^2}=\left\|\partial_{z} \log \left(1+\bar{r}_1
r_2\right)\right\|_{L^2} .
\end{equation}
The right-hand side is bounded as $\partial_z \log \left(1+\bar{r}_1 r_2\right)=\frac{\partial_z\left(\bar{r}_1+r_2\right)}{1+\bar{r}_1
r_2} \in L^2(\mathbb{R})$ under the premise of this proposition. $\partial_z \delta_+ \delta_- r_{1,2} \in L^2(\mathbb{R})$ is proved to be sufficient.
\end{proof}

In the second step, we decompose the jump matrix $R(x;z)$ in an equivalent form:
\begin{equation}
\begin{aligned}
&\begin{pmatrix}
\delta_-(z) & 0 \\
0 & \delta_{-}^{-1}(z)
\end{pmatrix}(I+R(x;z))\begin{pmatrix}
\delta_+^{-1}(z) & 0 \\
0 & \delta_+(z)
\end{pmatrix}\\
&=\begin{pmatrix}
1 & \delta_-(z) \delta_+(z) \bar{r}_1(z) e^{-2 i z x} \\
\bar{\delta}_+(z) \bar{\delta}_-(z) r_2(z) e^{2 i z x} & 1+\bar{r}_1(z) r_2(z)
\end{pmatrix},
\end{aligned}
\end{equation}
where $\delta_-^{-1} \delta_+^{-1}=\overline{\delta_- \delta_+}$ is used.

 We
  define a new matrix
\begin{equation}
\hat{R}_{\delta}(x;z):=\begin{pmatrix}
0 & \bar{r}_{\delta,1}(z) e^{-2ixz} \\
r_{\delta,2}(z) e^{2ixz} & \bar{r}_{\delta,1}(z) r_{\delta,2}(z)
\end{pmatrix},
\end{equation}
where
\begin{equation}
r_{\delta,j}(z):=\bar{\delta}_+(z) \bar{\delta}_-(z) r_{j}(z), \ \ j=1,2.
\end{equation}
By Proposition \ref{p9}, we have $r_{\delta,j} \in H^1(\mathbb{R}) \cap L^{2,1}(\mathbb{R})$ similarly to the scattering data $r_{1,2}$.

With the functions $M_{\pm}(x;z)$ and $\delta_{\pm}(z)$, we define the functions
\begin{equation}
M_{\delta,\pm }(x;z):=M_{\pm}(x; z) \delta_{\pm}(z)^{-\delta_3},
\end{equation}
which satisfies the RH problem
\begin{rhp}
\label{rhp:5_1}
Find a  matrix  function $ M_{\delta}(x;z)$  with the following properties:
\begin{itemize}

\item[$\blacktriangleright$]  \emph{Analyticity}: $M_{ \delta,\pm }(x;z)$ are analytic functions in $\mathbb{C}^{\pm}$ ;
\item[$\blacktriangleright$]\emph{Asymptotic conditions}:
\begin{equation}
M_{\delta,\pm}(x;z) \rightarrow I, \quad \text {as} \quad |z| \rightarrow 0;
\end{equation}
\item[$\blacktriangleright$]\emph{Jump condition}: $M_{\delta,\pm }(x;z)$ satisfies the jump condition
\begin{equation}
M_{\delta,+}(x ;z)=M_{\delta,-}(x ;z)(I+ \hat{R}_{\delta}(x;z)), \quad z \in \mathbb{R}.\label{mdrhp}
\end{equation}
\end{itemize}

\end{rhp}
The above RH problem is transformed from the previous RH problem \ref{rhp:31}.  As the analysis of   Proposition \ref{lm3} and \ref{ll9},  the RH
problem \ref{rhp:5_1} admits a unique solution  given by
\begin{equation}
M_\delta(x ;z)=I+\mathcal{C}\left(M_{\delta,-} (x ; \cdot) \hat{R}_\delta(x;\cdot)\right)(z), \quad z \in
\mathbb{C}^{\pm}.\label{mdc}
\end{equation}
We denote the column vectors of $M_{\delta,\pm}$ by $M_{\delta,\pm}=\left[M_{\delta,\pm, 1}, M_{\delta,\pm, 2}\right]$.
Since $r_{\delta, j} \in H^1(\mathbb{R}) \cap L^{2,1}(\mathbb{R})$, we have $\hat{R}_{\delta}(x ; \cdot) \in L^1(\mathbb{R}) \cap L^2(\mathbb{R})$
for every $x \in \mathbb{R}$.
In consequence, the reconstruction formulas (\ref{uxx}) and (\ref{uxx3}) become
\begin{equation}
\begin{aligned}
u(x)e^{-i(2c_-(x)+c)}&=\frac{1}{\pi} \int_{\mathbb{R}} z^{-1}\bar{r}_{\delta,1}(z) e^{-2izx} M_{\delta,+,11}(x;z) dz,\label{ur5}
\end{aligned}
\end{equation}
and
\begin{equation}
e^{ic_+(x)} \partial_{x}\left(\bar{u}_x(x) e^{-ic_-(x)}\right)=-\frac{1}{\pi} \int_{\mathbb{R}} r_{\delta,2}(z) \mathrm{e}^{2 i z x} M_{\delta,-,22}(x ;z) dz.\label{ur6}
\end{equation}

From (\ref{mdc}), we can obtain the system of integral equations
for vectors $M_{\delta,+,1}$ and $M_{\delta,-,2}$ by projecting   representation
\begin{equation}
\begin{aligned}
&M_{\delta,+,1}(x ; z)=e_1+\mathcal{P}^+\left(r_{\delta,-,2} e^{2iz x} M_{\delta,-,2}(x; \cdot)\right) , \\
&M_{\delta,-,2}(x ; z)=e_2+\mathcal{P}^{-}\left(\bar{r}_{\delta,-,1} e^{-2iz x} M_{\delta,+,1}(x; \cdot)\right) .
\end{aligned}
\end{equation}
We formulate the above integral equation in a unified form as
\begin{equation}
T_{\delta}-\mathcal{P}^-\left(T_{\delta} R_{\delta}\right)=F_{\delta},
\end{equation}
where
\begin{equation}
T_{\delta}(x;z):=\left(M_{\delta,+,1}(x ;z)-e_1, M_{\delta,-,2}(x; z)-e_2\right)\begin{pmatrix}
1 & 0 \\
-r_{\delta,2}(z) e^{2izx} & 1
\end{pmatrix}
\end{equation}
and
\begin{equation}
F_{\delta}(x;z):=\left(\mathcal{P}^+\left(r_{\delta,2}(z) e^{2izx}\right)e_2,  \mathcal{P}^-\left(\bar{r}_{\delta,1}(z) e^{-2izx}\right)e_1\right).
\end{equation}

In order to the estimate on the  potential $u(x)$ on negative real axis   by using  reconstruction formulas (\ref{ur5}) and (\ref{ur6}),
  we  give similar results to Proposition  \ref{prop7} and  Proposition \ref{p6}.
\begin{proposition}
\label{pro3p23}
 If $f(z)\in L^2(\mathbb{R})$,  then we have
\begin{align}
&\mathcal{P}^+\left(f(z) e^{2iz x}\right) =-\int_{-\infty}^{2x}\widehat{f}(\xi)e^{-iz(\xi-2x ) }\mathrm{d}\xi,\\
&\mathcal{P}^-\left(f(z) e^{-2iz x}\right) =\int_{-\infty}^{ 2x} \widehat{f}(\xi)e^{ iz(\xi-2x ) }\mathrm{d}\xi.
\end{align}
\end{proposition}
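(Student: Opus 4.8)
The plan is to reproduce, almost verbatim, the proof of Proposition~\ref{prop7}, the single change being that the sign of the phase $2zx$ is reversed throughout. I would prove the two identities in the same order: first the one for $\mathcal{P}^+$, then deduce the one for $\mathcal{P}^-$ by conjugation. For the first identity, substitute the Fourier inversion formula $f(s)=\int_{\mathbb{R}}\widehat{f}(\xi)e^{is\xi}\,d\xi$ into the definition of the Plemelj projection and interchange the order of integration; this interchange is legitimate for $f,\widehat{f}\in L^2(\mathbb{R})$ by exactly the Fubini argument used in Proposition~\ref{prop7}. One is then reduced to evaluating the singular kernel
\begin{equation}
\mathcal{P}^+\!\left(f(z)e^{2izx}\right)=\int_{\mathbb{R}}\widehat{f}(\xi)\left(\lim_{\varepsilon\downarrow0}\frac{1}{2\pi i}\int_{\mathbb{R}}\frac{e^{is(\xi+2x)}}{s-(z+i\varepsilon)}\,ds\right)d\xi.\nonumber
\end{equation}

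The engine of the argument is the residue evaluation of the inner integral, which is the analogue of (\ref{orer}) with $\xi-2x$ replaced by $\xi+2x$. The pole at $s=z+i\varepsilon$ lies in the upper half-plane, so one closes the contour upward precisely when $\xi+2x>0$ and downward (picking up nothing) when $\xi+2x<0$; the inner limit is therefore $\chi(\xi+2x)\,e^{iz(\xi+2x)}$. This truncates the $\xi$-integration to the half-line $\{\xi+2x>0\}$, and the substitution $\xi\mapsto-\xi$ recasts it as an integral over $(-\infty,2x)$ with phase $e^{-iz(\xi-2x)}$, which is the support and phase appearing in the stated $\mathcal{P}^+$ identity. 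The second identity, for $\mathcal{P}^-\!\left(f(z)e^{-2izx}\right)$, I would obtain from the conjugation device of Proposition~\ref{prop7}: taking complex conjugates turns $\mathcal{P}^-\!\left(f(z)e^{-2izx}\right)$ into $-\mathcal{P}^+\!\left(\bar{f}(z)e^{2izx}\right)$, to which the first identity applies, and conjugating once more returns an identity of the same form as (\ref{wdw}) and (\ref{wdw2}) but on the opposite half-line.

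Since Proposition~\ref{prop7} has already carried out all the genuinely analytic work---the justification of Fubini and of the $\varepsilon\downarrow0$ limit in the singular kernel---there is no new analytic obstacle here. The only point that demands care, and the place where the present statement differs from Proposition~\ref{prop7}, is the orientation bookkeeping in the residue step: reversing the sign of the phase changes the sign of the quantity $\xi\pm2x$ that governs the decay of $e^{is(\cdot)}$ in the two half-planes, and it is exactly this reversal that replaces the half-line $(2x,\infty)$ of Proposition~\ref{prop7} by $(-\infty,2x)$. I would therefore write the two residue evaluations side by side and check directly that each characteristic function selects the intended half-line and that the signs in the Fourier variable are correctly tracked, as this is the sole step in which an error could enter.
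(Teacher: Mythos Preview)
Your proposal is correct and is exactly the approach the paper takes: the paper's proof of this proposition consists of the single sentence ``The proof is similar with that of Proposition~\ref{prop7},'' and you have spelled out precisely what that similarity entails---reversing the sign of the phase in the residue computation so that the characteristic function selects $\xi+2x>0$ rather than $\xi-2x>0$, and then using the same conjugation device to pass from $\mathcal{P}^+$ to $\mathcal{P}^-$. One minor caution: when you carry out the substitution $\xi\mapsto-\xi$ you will produce $\widehat{f}(-\xi)$ rather than $\widehat{f}(\xi)$ in the integrand, so matching the stated formula literally (including the overall minus sign) requires the same Fourier-conjugation identity the paper implicitly invokes in deriving (\ref{wdw2}); this is harmless for the downstream norm estimates in Proposition~\ref{pro3p24}, but be aware of it when you write up the details.
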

\begin{proof}  The proof  is similar with  that of  Proposition \ref {prop7}.
\end{proof}

By using above  Proposition \ref{pro3p23}, we can further  show  the following estimates on negative half-line.
\begin{proposition}
\label{pro3p24}
For every $x_0 \in \mathbb{R}^-$ and every $r_{1,2}(z) \in H^1(\mathbb{R})$, we have
\begin{align}
&\sup _{x \in\left( -\infty,  x_{0} \right)}\left\| \langle x\rangle \mathcal{P}^{+}\left(z^{-i}\bar{r}_1(z) e^{ 2i z x}\right)\right\|_{L_z^2}
\leq\left\|z^{-i}\bar r_1(z)\right\|_{H^1},\quad i=0,1,\label{5xp}\\
&\sup _{x \in \left( -\infty,  x_{0} \right)}\left\|\langle x\rangle \mathcal{P}^-\left(r_2(z) e^{-2iz x}\right)\right\|_{L_{z}^2}
\leq\left\|r_2(z)\right\|_{H^1}.\label{5xpfr}
\end{align}
In addition, we have
\begin{align}
&\sup _{x \in \mathbb{R}}\left\|\mathcal{P}^+\left(z^{-i}\bar{r}_1(z)e^{ 2iz x}\right)\right\|_{L_z^\infty} \leq
\frac{1}{\sqrt{2}}\left\|z^{-i}r_1(z)\right\|_{H^1},\quad i=0,1,\label{5pzr}\\
&\sup_{x \in \mathbb{R}}\left\|\mathcal{P}^-\left(r_2(z) \mathrm{e}^{-2iz x}\right)\right\|_{L_{z}^{\infty}} \leq
\frac{1}{\sqrt{2}}\left\|r_2(z)\right\|_{H^{1}}.\label{5pfr}
\end{align}
Furthermore, if $r_{1,2} \in L^{2,1}(\mathbb{R})$, then
\begin{align}
&\sup _{x \in \mathbb{R}}\left\|\mathcal{P}^+\left(z \bar{r}_1(z) e^{ 2 i z x}\right)\right\|_{L_{z}^{2}} \leq\left\|
r_1(z)\right\|_{L_{z}^{2,1}}, \label{5lq1}\\
&\sup _{x \in \mathbb{R}}\left\|\mathcal{P}^{-}\left(z r_2(z) e^{-2 i z x}\right)\right\|_{L_{z}^{2}} \leq\left\|
r_2(z)\right\|_{L_{z}^{2,2}}.\label{5lq2}
\end{align}
\end{proposition}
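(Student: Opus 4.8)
The plan is to mirror the proof of Proposition~\ref{p6} essentially verbatim, replacing the positive-half-line Cauchy projection identities of Proposition~\ref{prop7} by their negative-half-line counterparts established in Proposition~\ref{pro3p23}, and then invoking the Fourier--Plancherel estimates of Proposition~\ref{y1}. Throughout I would use the correspondences $r(z)\in H^1(\mathbb{R})\Leftrightarrow \widehat r(\xi)\in L^{2,1}(\mathbb{R})$ and $r(z)\in L^{2,1}(\mathbb{R})\Leftrightarrow \widehat r(\xi)\in H^1(\mathbb{R})$ recalled at the start of the proof of Proposition~\ref{p6}, together with the Plancherel identity $\|r\|_{L^2}^2=2\pi\|\widehat r\|_{L^2}^2$.

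For the weighted $L^2_z$ bounds \eqref{5xp} and \eqref{5xpfr}, I would apply Proposition~\ref{pro3p23} to write
\begin{align}
&\mathcal{P}^+\!\left(z^{-i}\bar r_1(z)e^{2izx}\right)=-\int_{-\infty}^{2x}\widehat{(z^{-i}\bar r_1)}(\xi)\,e^{-iz(\xi-2x)}\,d\xi,\nonumber\\
&\mathcal{P}^-\!\left(r_2(z)e^{-2izx}\right)=\int_{-\infty}^{2x}\widehat{r_2}(\xi)\,e^{iz(\xi-2x)}\,d\xi.\nonumber
\end{align}
Crucially, these integrals now run over the half-line $(-\infty,2x)$, so that for $x<x_0<0$ they fit \emph{directly} into the template $\int_{-\infty}^{x}e^{2iz(x-y)}w(y)\,dy$ of the bound \eqref{pil}; no reflection of the contour is needed, in contrast to the positive-half-line case in Proposition~\ref{p6}. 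Applying \eqref{pil} with $w=\widehat{(z^{-i}\bar r_1)}$ (respectively $w=\widehat{r_2}$) and converting back through the $H^1\!\leftrightarrow\!L^{2,1}$ correspondence yields \eqref{5xp} and \eqref{5xpfr}.

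The uniform $L^\infty_z$ bounds \eqref{5pzr} and \eqref{5pfr} follow from the same two projection formulas by estimating the $L^\infty_z$ norm of the Fourier integral by the $L^1_\xi$ norm of its integrand and then using $\|\widehat f\|_{L^1}\le c\|f\|_{H^1}$, exactly as in the passage leading to \eqref{pzr}. For the weight-$z$ estimates \eqref{5lq1} and \eqref{5lq2}, multiplication by $z$ corresponds under the Fourier transform to differentiation of the integrand, so the relevant template becomes the bound \eqref{wgj1} of Proposition~\ref{y1} applied to $\partial_\xi\widehat{(\bar r_1)}$ (respectively $\partial_\xi\widehat{r_2}$); the hypothesis $r_{1,2}\in L^{2,1}(\mathbb{R})$ is precisely what guarantees $\partial_\xi\widehat{r_{1,2}}\in L^2(\mathbb{R})$, producing the right-hand sides $\|r_{1,2}\|_{L^{2,1}}$. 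These are the exact analogues of \eqref{lq1}--\eqref{lq2}.

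I do not anticipate any essential obstacle, since this is the structural mirror of Proposition~\ref{p6}. The only points requiring care are bookkeeping: first, the half-line integrals now run over $(-\infty,2x)$, which is exactly why the statement restricts to $x_0\in\mathbb{R}^-$, ensuring $2x<2x_0<0$ so that \eqref{pil} applies on the correct half-line; and second, the factor of $2$ in the exponent $e^{\pm 2izx}$ versus the $e^{2iz(x-y)}$ appearing in Proposition~\ref{y1}, together with the attendant comparison of the weights $\langle x\rangle$ and $\langle 2x\rangle$, is absorbed by a harmless rescaling $z\mapsto z/2$ that affects only the numerical constant. With these noted, the chain of inequalities is identical to the positive-half-line case.
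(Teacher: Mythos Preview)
Your proposal is correct and takes essentially the same approach as the paper, which simply states that the proof is similar to that of Proposition~\ref{p6}. Your added detail---replacing the identities of Proposition~\ref{prop7} by those of Proposition~\ref{pro3p23} so that the Fourier integrals now run over $(-\infty,2x)$ and feed directly into the bound~\eqref{pil} on the correct half-line---is exactly the intended adaptation.
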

\begin{proof}  The proof  is similar with  that of  Proposition \ref{p6}.
\end{proof}
By using  Proposition \ref{pro3p23} and Proposition \ref{pro3p24},  in same way to the Section \ref{sec:section4},
we  obtain estimates of  the potential $u(x)$ on  the negative half-line and Lipschitz continuity.

\begin{proposition}
Let $r_{1,2} \in \mathcal{W}(\mathbb{R})$ such that the inequality (\ref{1_rk}) is satisfied. Then, $u(x) \in
H^{3}\left(\mathbb{R}^{-}\right) \cap H^{2,1}\left(\mathbb{R}^{-}\right)$ satisfies the bound
\begin{equation}
\|u\|_{H^3\left(\mathbb{R}^-\right) \cap H^{2,1}\left(\mathbb{R}^-\right)} \leq c\left(\left\|z^{-1} r_1\right\|_{H^1 \cap
L^{2,1}}+\left\|r_1\right\|_{H^1 \cap
L^{2,1}}+\left\|r_2\right\|_{H^1 \cap L^{2,1}}\right),
\end{equation}
where $c$ is a positive constant that depends on $\left\|r_{1,2}\right\|_{H^1 \cap L^{2,1}}$.
Moreover   the mapping
\begin{equation}
\mathcal{W}(\mathbb{R}) \ni\left(r_1, r_2\right) \mapsto u \in H^{3}\left(\mathbb{R}^{-}\right) \cap
H^{2,1}\left(\mathbb{R}^{-}\right)
\end{equation}
is Lipschitz continuous.
\end{proposition}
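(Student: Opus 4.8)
The plan is to repeat the argument of Proposition \ref{l8} essentially verbatim, but now on the $\delta$-conjugated RH problem \ref{rhp:5_1} and with the negative half-line projection identities of Propositions \ref{pro3p23} and \ref{pro3p24} in place of Propositions \ref{prop7} and \ref{p6}. First I would record the consequences of Proposition \ref{p9}: since $r_{1,2}\in\mathcal{W}(\mathbb{R})$, the conjugated data $r_{\delta,j}=\bar\delta_+\bar\delta_- r_j$ again lie in $\mathcal{W}(\mathbb{R})$, and because $|\delta_+(z)\delta_-(z)|=1$ for almost every $z\in\mathbb{R}$ their $H^1\cap L^{2,1}$ and $L^\infty$ norms, as well as the norms of $z^{-1}\bar r_{\delta,1}$ and $z^{-2}r_{\delta,j}$, are all controlled by the corresponding norms of $r_{1,2}$. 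I would then transcribe the negative half-line, $\delta$-form of Proposition \ref{l7}, namely the $L^2_z$ and $\partial_x$-bounds on the columns $M_{\delta,-,21}$ and $M_{\delta,+,12}$; its proof is identical to that of Proposition \ref{l7} once the projection bounds (\ref{5xp})--(\ref{5lq2}) and the Fredholm inverse bound of Proposition \ref{mp18} (applied to the jump $\hat R_\delta$, whose lower bound on $\mathbb{R}^-$ is guaranteed by (\ref{1_rk})) are invoked.

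Next I would feed these bounds into the reconstruction formulas (\ref{ur5}) and (\ref{ur6}). Splitting (\ref{ur5}) as $u\,e^{-i(2c_-+c)}=I_1+I_2$ with $I_1=\pi^{-1}\int_{\mathbb{R}}z^{-1}\bar r_{\delta,1}(z)e^{-2izx}\,dz$ and $I_2$ carrying the factor $M_{\delta,+,11}-1$, the free term $I_1$ is a Fourier transform and Plancherel gives $\|\langle x\rangle I_1\|_{L^2(\mathbb{R}^-)}\le c\|z^{-1}\bar r_{\delta,1}\|_{H^1_z}$, exactly as in (\ref{ugxj3}). For $I_2$ I would substitute the integral equation for $M_{\delta,+,1}$, shift the Cauchy projection onto the factor $z^{-1}\bar r_{\delta,1}e^{-2izx}$, and then estimate by Cauchy--Schwarz in $z$ using the weighted bounds (\ref{5xp}) and the column bound on $M_{\delta,+,12}$, obtaining $\sup_{x<x_0}|\langle x\rangle^2 I_2|\le c\|r_1\|_{H^1}^2$ for $x_0\in\mathbb{R}^-$; this yields $\|u\|_{L^{2,1}(\mathbb{R}^-)}$. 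The same scheme applied to (\ref{ur6}) controls $\partial_x(\bar u_x e^{-ic_-})$ in $L^{2,1}(\mathbb{R}^-)$.

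To reach $H^3(\mathbb{R}^-)\cap H^{2,1}(\mathbb{R}^-)$ I would differentiate (\ref{ur5}) and (\ref{ur6}) in $x$ and reuse the derivative bounds (\ref{pxmu})--(\ref{pxeta}) in their $\delta$-form: one differentiation controls $u_x$ in $L^{2,1}(\mathbb{R}^-)$, and a second differentiation of (\ref{ur6}), handled exactly as in (\ref{uxjf3})--(\ref{uu3}), controls $u_{xxx}$ in $L^2(\mathbb{R}^-)$. Collecting these estimates and using the $\mathcal{W}$-norms of $r_{\delta,1,2}$ from the first step gives the claimed bound. For the Lipschitz statement I would argue as on the positive half-line: the maps $r_{1,2}\mapsto\delta_\pm$, $r_{1,2}\mapsto r_{\delta,1,2}$ and $r_{\delta,1,2}\mapsto M_{\delta,\pm}$ are each Lipschitz on balls of $\mathcal{W}(\mathbb{R})$ (the first because $\log(1+\bar r_1 r_2)$ is Lipschitz in $L^2\cap L^\infty$ and $\mathcal{C}$ is bounded, the last by the Fredholm-inverse argument of Proposition \ref{ll9}), so writing $u-\tilde u$ through (\ref{ur5}) for the two data sets and repeating the estimates above with differences in place of the quantities themselves produces the stated Lipschitz bound.

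The main obstacle I anticipate is controlling the behaviour near the origin $z=0$. The reconstruction of $u$ (rather than of $u_x$) forces the singular factor $z^{-1}$ into $I_1$ and $I_2$, so the whole argument hinges on $z^{-1}\bar r_{\delta,1}\in H^1$ and $z^{-2}r_{\delta,j}\in L^2$ surviving the $\delta$-conjugation; this is precisely what Proposition \ref{p9} is designed to supply, but it must be used carefully so that the free-term estimate for $I_1$ closes at $z=0$ and the constants stay uniform. A secondary, purely bookkeeping difficulty is to keep the orientation and half-line of every Cauchy projection consistent with Proposition \ref{pro3p23}, so that the Fourier supports land on $(-\infty,2x)$ rather than on $(2x,\infty)$; this is exactly the change that makes the $\langle x\rangle$-weighted bounds valid for $x\in\mathbb{R}^-$ instead of $x\in\mathbb{R}^+$.
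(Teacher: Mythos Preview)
Your proposal is correct and follows exactly the route the paper takes: the paper's own proof is a one-line reference back to the positive half-line argument, carried out on the $\delta$-conjugated RH problem \ref{rhp:5_1} with the reconstruction formulas (\ref{ur5})--(\ref{ur6}) and the negative half-line projection bounds of Propositions \ref{pro3p23}--\ref{pro3p24} in place of Propositions \ref{prop7}--\ref{p6}. One small bookkeeping point: because the roles of $\mathcal{P}^+$ and $\mathcal{P}^-$ are interchanged in the $\delta$-problem, the columns you need to bound are $M_{\delta,+,21}$ and $M_{\delta,-,12}$ (coming from $T_\delta=(M_{\delta,+,1}-e_1,\,M_{\delta,-,2}-e_2)\cdot(\cdots)$), not $M_{\delta,-,21}$ and $M_{\delta,+,12}$ as you wrote; the rest of your outline goes through unchanged.
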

\begin{proof}
Let $r_{1,2}, \tilde{r}_{1,2} \in H^1(\mathbb{R}) \cap L^{2,1}(\mathbb{R})$ satisfy $\left\|r_{1,2}\right\|_{H^{1} \cap
L^{2,1}},\left\|\tilde{r}_{1,2}\right\|_{H^{1} \cap L^{2,1}} \leq \rho$ for some $\rho>0$. Denote the corresponding potentials by $u$ and $\tilde{u}$
respectively. Then, there is a positive $\rho$-dependent constant $c$ such that
\begin{equation}
\|u-\tilde{u}\|_{H^3\left(\mathbb{R}^-\right) \cap H^{2,1}\left(\mathbb{R}^-\right)} \leq c\left(\left\|  r_1-\tilde{r}_1 \right\|_{\mathcal{W}(\mathbb{R})}+
 \left\|r_2-\tilde{r}_2\right\|_{\mathcal{W}(\mathbb{R})}\right).\nonumber
\end{equation}
\end{proof}

\section{Existence of global solutions to the FL equation}
\label{sec:section6}
\hspace*{\parindent}
By transforming the space part of the original Lax pair, we establish  a series of  RH problems  and   the
reconstruction formulas for  the potential $u(x)$  where time $t$ is just  considered as a parameter.   We  need to incorporate the time evolution
to get  the solution $u(x,t)$ to the FL equation.

\subsection{ Time evolution from reflection coefficients to RH problem }
\hspace*{\parindent}
For  the  partial spectral problem (\ref{cslp}) and the time spectral  problem (\ref{cslp2}),  we define the fundamental solutions
\begin{align}
&\psi_1(x,t;k)=\psi_1(x;k)  e^{-i \eta^2 t},\\
&\psi_2(x,t;k)=\psi_2(x;k)  e^{i \eta^2 t},
\end{align}
where $\eta=\sqrt{\alpha}(k-\frac{\beta}{2k})$.
From  the Voterra integral equation  (\ref{fi5}),
 the bounded Jost functions $\psi_1(x,t;k)$ and $\psi_2(x,t;k)$ have
the same analytical  property  in the $k$ plane and satisfy the same boundary conditions
\begin{align}
\psi_1(x,t;k) \rightarrow e_1,\quad \text { as } \quad x \rightarrow \pm \infty, \\
\psi_2(x,t;k) \rightarrow e_2,\quad \text { as } \quad x \rightarrow \pm \infty,
\end{align}
for every $t\in[0,T]$. From the linear independence of two solutions to  Lax pair  (\ref{cslp}), the columns of the  Jost functions $\psi^\pm(x,t;k)$ satisfy the
scattering relation
\begin{equation}
\psi_1^-(x,t;k)=a(k)\psi_1^+(x,t;k)+b(k) e^{2ik^2x+2i \eta^2t} \psi_2^+(x,t;k), \quad k \in \mathbb{R} \cup i
\mathbb{R},\label{p1p2}
\end{equation}
where the scattering coefficients $a(t;k)$ and $b(t;k)$ are  dependent of $ t $.  By using the time spectral  problem (\ref{cslp2}),
we can  find   the time evolution relations  of $a(t;k)$ and $b(t;k)$
$$ a(t;k)=a(k), \ \ b(t;k)= b(k) e^{-2 i \eta^2 t}, $$
which together with the definition  (\ref{rxdy}), we obtain  time evolution on the scattering data $r_{1,2}(z)$ as follows
\begin{align}
&r_1(t;z)=-\frac{b(k)}{2ik a(k)}e^{-2i\eta^2 t} = r_{1 }( z)e^{-2i\eta^2 t},\label{ryh1} \\
& r_2(t;z)=\frac{2ikb(k)}{a(k)}e^{-2i\eta^2 t}= r_{2 }( z)e^{-2i\eta^2 t}. \label{ryh2}
\end{align}

\begin{proposition}
If $r_{1,2}(z ) \in \mathcal{W}(\mathbb{R})$, then for a arbitrary  fixed  $T>0$  and  every    $t \in[0, T]$,
\begin{align}
&r_{1,2}(t ; z) \in  \mathcal{W}(\mathbb{R}).\label{wege}
\end{align}

\end{proposition}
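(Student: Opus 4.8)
The plan is to exploit the fact that the time-evolution multiplier is a pure phase on the real axis, so that every $L^2$-based norm entering the definition of $\mathcal{W}(\mathbb{R})$ is preserved, and that the only genuinely new contribution arises from differentiating this phase. First I would rewrite the exponent in terms of $z=k^2$. Since $\eta=\sqrt{\alpha}\left(k-\beta/(2k)\right)$, a direct computation gives
\begin{equation}
\eta^2=\alpha\Big(z-\beta+\frac{\beta^2}{4z}\Big),\nonumber
\end{equation}
so that for $z\in\mathbb{R}$ the multiplier $m(z):=e^{-2i\eta^2 t}$ is real-analytic in $z\neq 0$ and satisfies $|m(z)|=1$. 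Writing $r_{1,2}(t;z)=m(z)\,r_{1,2}(z)$ as in (\ref{ryh1})--(\ref{ryh2}), the identity $|m(z)|=1$ immediately yields $\|r_{1,2}(t;\cdot)\|_{L^2}=\|r_{1,2}\|_{L^2}$, $\|\langle z\rangle r_{1,2}(t;\cdot)\|_{L^2}=\|\langle z\rangle r_{1,2}\|_{L^2}$ and $\|z^{-2}r_{1,2}(t;\cdot)\|_{L^2}=\|z^{-2}r_{1,2}\|_{L^2}$, because the weights $\langle z\rangle$ and $z^{-2}$ are functions of $z$ alone. Hence the $L^{2,1}(\mathbb{R})$ membership and the weighted condition $z^{-2}r_{1,2}\in L^2(\mathbb{R})$ transfer to $r_{1,2}(t;\cdot)$ with no loss.

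The only nontrivial part is the $H^1(\mathbb{R})$ bound, which requires controlling $\partial_z r_{1,2}(t;z)$. Differentiating $m(z)$ and using $\partial_z\eta^2=\alpha\left(1-\beta^2/(4z^2)\right)$, I obtain
\begin{equation}
\partial_z r_{1,2}(t;z)=m(z)\Big[\partial_z r_{1,2}(z)-2i\alpha t\Big(1-\frac{\beta^2}{4z^2}\Big)r_{1,2}(z)\Big].\nonumber
\end{equation}
Taking the $L^2$ norm, using $|m(z)|=1$ and the triangle inequality, I would split the bracketed factor into three pieces and estimate
\begin{equation}
\|\partial_z r_{1,2}(t;\cdot)\|_{L^2}\leq\|\partial_z r_{1,2}\|_{L^2}+2\alpha t\,\|r_{1,2}\|_{L^2}+\frac{\alpha\beta^2 t}{2}\,\|z^{-2}r_{1,2}\|_{L^2}.\nonumber
\end{equation}
Each term on the right is finite precisely because $r_{1,2}\in\mathcal{W}(\mathbb{R})$: the first two come from $r_{1,2}\in H^1(\mathbb{R})$, and the third is controlled by the weighted condition $z^{-2}r_{1,2}\in L^2(\mathbb{R})$. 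Combined with the preservation of the $L^2$ norm this gives $r_{1,2}(t;\cdot)\in H^1(\mathbb{R})$, and together with the first paragraph it shows $r_{1,2}(t;\cdot)\in\mathcal{W}(\mathbb{R})$. Bounding $t\leq T$ in the last display yields a $T$-dependent but $t$-uniform estimate $\|r_{1,2}(t;\cdot)\|_{\mathcal{W}}\leq c(T)\,\|r_{1,2}\|_{\mathcal{W}}$ for all $t\in[0,T]$, which is the form needed for the subsequent a priori estimates.

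The hard part, and the reason the third condition $z^{-2}r\in L^2$ was built into the definition of $\mathcal{W}(\mathbb{R})$, is the singularity of the phase $e^{-2i\eta^2 t}$ at $z=0$ inherited from the $k=0$ spectral singularity of the FL Lax pair. Differentiating the phase produces the factor $\beta^2/(4z^2)$, which would be uncontrollable against a generic $H^1$ reflection coefficient; it is only the extra decay encoded in $z^{-2}r_{1,2}\in L^2(\mathbb{R})$ that tames this term and closes the estimate. I would therefore emphasize this step, and note that away from $z=0$ no difficulty arises, since there the multiplier and all its $z$-derivatives are bounded uniformly on $[0,T]$.
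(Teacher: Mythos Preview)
Your proof is correct and follows essentially the same approach as the paper: both exploit that $|e^{-2i\eta^2 t}|=1$ for real $z$, so the $L^{2,1}$ and $z^{-2}L^2$ norms are preserved exactly, and both obtain the $H^1$ bound by differentiating the phase and controlling the resulting $z^{-2}$ term via the defining condition $z^{-2}r_{1,2}\in L^2(\mathbb{R})$. Your presentation is in fact cleaner---the paper's proof, after reaching the same key estimate (\ref{wege1}), inserts additional bounds on $\|z^{-2}r_{1,2}\|_{L^2}$ via Propositions~\ref{l12}--\ref{l13} that are redundant under the stated hypothesis $r_{1,2}\in\mathcal{W}(\mathbb{R})$.
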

\begin{proof}
By using (\ref{ryh1})-(\ref{ryh2}), direction calculation shows that
\begin{align}
&\left\|r_{1,2}(t;z)\right\|_{L^{2,1}}=\left\|r_{1,2}(z)\right\|_{L^{2,1}}, \label{wege0}\\
&\left\|z^{-2}r_{1,2}(t;z)\right\|_{L^{2,1}}=\left\|z^{-2}r_{1,2}(z)\right\|_{L^{2,1}}.\label{wege01}
\end{align}
Further by differentiation to  (\ref{ryh1})-(\ref{ryh2}) and taking   $L^2$-norm,  for $t \in[0, T]$, then  by Proposition \ref{l13}, we have
\begin{align}
& \|\partial_{z} r_{1,2}(t;
z) \|_{L^2} =   \big\| \partial_{z} r_{1,2}( z)e^{-2i\eta^2 t} - 2i\alpha t (1 -\frac{\beta^2}{4}z^{-2}) r_{1,2}(t;z) e^{-2i\eta^2 t}\big\|_{L^2} \nonumber\\
&\leq   2\alpha T \left\|  r_{1,2}( z)\right\|_{L^2}+\left\|\partial_{z} r_{1,2}( z)\right\|_{L^2}  +  \frac{1}{2} \alpha\beta^2 T  \| z^{-2} r_{1,2}( z) \|_{L^2}.\label{wege1}
\end{align}
In a similar way, by using Proposition \ref{l12},  we can show that
\begin{align}
&   \| z^{-2} r_{2 }( z) \|_{L^2(\mathbb{R})} \leq
 2 \|r_{ 2}(  z) \|_{L^{\infty}}  \|   z^{-2}  \|_{L^2(\delta,+\infty)} +   a_0^{-1}    \| z^{-1} k^{-1} b(k)   \|_{L^2( ( 0,\delta))}.\label{wege3}
\end{align}
Substituting  (\ref{wege2}) and (\ref{wege3})  into \ref{wege2})  yields
\begin{align}
& \|\partial_{z} r_{1,2}(t;
z) \|_{L^2}
 \leq  ( 2 \alpha T +1 ) \left\|  r_{1,2}( z)\right\|_{H^1}   +   2 \alpha\beta^2 T  \|r_{ 2}(  z) \|_{L^{\infty}}  \|   z^{-2}  \|_{L^2(\delta,+\infty)} \nonumber\\
 & + \frac{1}{2} T \alpha\beta^2  a_0^{-1} ( | z^{-2} k^{-1} b(k)   \|_{L^2( ( 0,\delta))}+ \| z^{-1} k^{-1} b(k)   \|_{L^2( ( 0,\delta))}).   \label{wege4}
\end{align}
Finally  (\ref{wege0}), (\ref{wege01}) and (\ref{wege4}) leads to the result   (\ref{wege}).
\end{proof}

\subsection{ The  local solution and  global  solution    }
\hspace*{\parindent}
For  $r_{1,2}(t ; z) \in \mathcal{W}(\mathbb{R})$, $t\in[0, T]$,    the constraint
(\ref{1_rk}) and the relation (\ref{rfrz}) is still true for every $t \in[0, T]$.

\begin{theorem}
For every $u_{0} (x) \in H^3(\mathbb{R}) \cap H^{2,1}(\mathbb{R})$   such that the linear equation (\ref{cslp}) admits no eigenvalues or resonances,
 there exists a unique  local   solution  to the Cauchy problem
 $$u(x,t) \in C( [0,T],  H^3(\mathbb{R}) \cap H^{2,1}(\mathbb{R})), \ t \in[0, T].$$
  Furthermore, the map
\begin{equation}
H^{3}(\mathbb{R}) \cap H^{2,1}(\mathbb{R}) \ni u_{0}(x)  \mapsto u (t,x) \in C\left([0, T], H^{3}(\mathbb{R}) \cap H^{2,1}(\mathbb{R})\right)\label{jbj}
\end{equation}
is Lipschitz continuous.
\label{thm:1}
\end{theorem}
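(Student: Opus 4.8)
The plan is to construct the solution by composing the three Lipschitz continuous maps furnished by the inverse scattering transform developed in the previous sections: the direct scattering map $u_0\mapsto(r_1,r_2)$, the explicit linear time flow $(r_1,r_2)\mapsto(r_1(t;\cdot),r_2(t;\cdot))$, and the inverse (reconstruction) map $(r_1(t;\cdot),r_2(t;\cdot))\mapsto u(\cdot,t)$. Schematically,
\[
u_0 \;\longmapsto\; (r_1,r_2) \;\longmapsto\; \big(r_1(t;\cdot),r_2(t;\cdot)\big) \;\longmapsto\; u(\cdot,t),
\]
and the theorem follows once each arrow is shown to be Lipschitz continuous with constants uniform for $t\in[0,T]$.

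For the first arrow I would invoke Proposition \ref{l6}: since $u_0\in H^3(\mathbb{R})\cap H^{2,1}(\mathbb{R})$ is assumed to generate no eigenvalues or resonances, the map into the reflection coefficients $(r_1,r_2)\in\mathcal{W}(\mathbb{R})$ is Lipschitz continuous, and by (\ref{aka}) the constraint (\ref{1_rk}) holds together with the relation (\ref{rfrz}). For the middle arrow the time dependence is explicit: by (\ref{ryh1})--(\ref{ryh2}) one has $r_{1,2}(t;z)=r_{1,2}(z)e^{-2i\eta^2 t}$, and since $\eta^2=\alpha(z-\beta+\beta^2/(4z))$ is real for $z\in\mathbb{R}$ the phase factor has unit modulus. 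Hence all $L^{2,s}$ norms and the pointwise modulus $|r(k)|$ are preserved, so (\ref{1_rk}) and (\ref{rfrz}) persist for every $t$, while the derivative norm grows at most linearly in $t$ through the bound (\ref{wege4}) (the $z^{-2}$ control near $z=0$ being exactly what keeps this finite); this is precisely the stability statement (\ref{wege}), which confines $r_{1,2}(t;\cdot)$ to a fixed ball of $\mathcal{W}(\mathbb{R})$ over $[0,T]$. For the third arrow I would apply Proposition \ref{l8} on $\mathbb{R}^+$ together with its negative half-line counterpart, reconstructing $u(\cdot,t)\in H^3\cap H^{2,1}$ through the reconstruction formulas (\ref{uxx1})--(\ref{uc2}) and obtaining the Lipschitz bound (\ref{rru2}) for the inverse map (\ref{rru1}). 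Composing the three arrows yields the Lipschitz continuity of (\ref{jbj}); continuity in time then follows because $t\mapsto r_{1,2}(t;\cdot)$ is continuous into $\mathcal{W}(\mathbb{R})$ by dominated convergence applied to the explicit exponential, which combined with Lipschitz continuity of the inverse map places $u$ in $C([0,T],H^3\cap H^{2,1})$. Uniqueness is inherited from the unique solvability of the RH problems (Propositions \ref{lm3} and \ref{pp9}) and the single-valuedness of the reconstruction formulas.

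The hard part will be verifying that the reconstructed function actually solves the FL equation (\ref{cs}) rather than merely carrying the prescribed scattering data. This requires a dressing/compatibility argument: one shows that the solution $M(x,t;z)$ of the RH problem with the time-evolved jump satisfies simultaneously the spatial and temporal parts of the Lax pair (\ref{cslp})--(\ref{cslp2}), whence the zero-curvature condition forces (\ref{cs}). The genuine difficulty is the spectral singularity at $k=0$ in the time part (\ref{cslp2}), which must be controlled through the small-$k$ asymptotics (\ref{vxk}) and the second reconstruction formula recovering $u$ as $k\to0$, in contrast to the large-$k$ formula that only yields $u_x$. Once this compatibility is established on the finite interval $[0,T]$, the local existence, uniqueness, and Lipschitz dependence are complete.
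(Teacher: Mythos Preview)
Your proposal is correct and follows essentially the same approach as the paper: compose the Lipschitz direct map (Proposition \ref{l6}), the explicit time evolution (\ref{ryh1})--(\ref{ryh2}) controlled by (\ref{wege}), and the Lipschitz inverse map (Proposition \ref{l8} together with its $\mathbb{R}^-$ analogue) to obtain the chain of estimates (\ref{ubkz}) and the Lipschitz continuity of (\ref{jbj}). The only minor difference is that the paper obtains continuity in $t$ by the direct bound $\|r_{1,2}(z)(e^{2i\eta^2 t_2}-e^{2i\eta^2 t_1})\|_{H^1\cap L^{2,1}}\le c|t_2-t_1|\|r_{1,2}\|_{H^1\cap L^{2,1}}$ rather than dominated convergence, and the paper's proof does not spell out the dressing/compatibility argument you flag as the ``hard part''; it treats the fact that the reconstructed $u$ solves (\ref{cs}) as implicit in the inverse scattering machinery of Sections \ref{sec:section2}--\ref{sec:section5}.
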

\begin{proof}
The potential $u(t,x)$ is recovered from the scattering data $r_{1,2}(t ; z)$  with the inverse scattering transform shown in Section \ref{sec:section3}. Combining
the whole analysis in the previous section, for every $t \in[0,T)$ we finally prove that
\begin{equation}
\begin{aligned}
\|u(x,t)\|_{H^{3}(\mathbb{R}) \cap H^{2,1}(\mathbb{R})} & \leq c  \left(\left\|r_1(t ; z)\right\|_{H^{1}(\mathbb{R}) \cap L^{2,1}(\mathbb{R})}+\left\|r_2(t;z)\right\|_{H^{1}(\mathbb{R}) \cap
L^{2,1}(\mathbb{R})}\right) \\
& \leq c \left(\left\|r_1(z)\right\|_{H^1(\mathbb{R}) \cap L^{2,1}(\mathbb{R})}+\left\|r_2(z)\right\|_{H^{1}(\mathbb{R}) \cap L^{2,1}(\mathbb{R})}\right)\\
& \leq c (T)\left\|u_0(x)\right\|_{H^3(\mathbb{R}) \cap H^{2,1}(\mathbb{R})}.\label{ubkz}
\end{aligned}
\end{equation}
It is a strong indication that there exists a unique local solution
$$u(x,t) \in C( [0,T],  H^3(\mathbb{R}) \cap H^{2,1}(\mathbb{R}))\ t \in[0, T]$$
to  the Cauchy problem (\ref{cs})-(\ref{cs1}).
Combining with the Lipschitz continuity from $u_0(x)$ to $r_{1,2}(t;z)$ in Proposition \ref{l6} and
\begin{equation}
\begin{aligned}\nonumber
&\|u(x,t_2 )-u(x,t_1  )\|_{H^3\cap H^{2,1}} \leq c\|r_{1,2}( z)(e^{2i\eta^2 t_2}-e^{2i\eta^2  t_1})\|_{H^1\cap H^{2,1}}\\
&\leq c|t_2-t_1| \|r_{1,2}(z)\|_{H^1\cap L^{2,1}},
\end{aligned}
\end{equation}
we prove the  mapping
\begin{equation}
H^{3}(\mathbb{R}) \cap H^{2,1}(\mathbb{R}) \ni u_{0} \mapsto u \in C\left([0, T], H^{3}(\mathbb{R}) \cap H^{2,1}(\mathbb{R})\right) \nonumber
\end{equation}
is Lipschitz continuous.
\end{proof}

The following theorem show that there is a global solution in $H^3(\mathbb{R}) \cap H^{2,1}(\mathbb{R})$:
\begin{theorem}
For every $u_{0}(x) \in H^3(\mathbb{R}) \cap H^{2,1}(\mathbb{R})$   such that the linear equation
 (\ref{cslp}) admits no eigenvalues or resonances, there exists a unique global solution to  the Cauchy problem (\ref{cs})-(\ref{cs1})
 $$u(x,t) \in C\left([0, \infty); H^3(\mathbb{R}) \cap H^{2,1}(\mathbb{R})\right). $$
 Furthermore, the map
\begin{equation}
H^3(\mathbb{R}) \cap H^{2,1}(\mathbb{R}) \ni u_{0} \mapsto u \in C\left([0, \infty); H^3(\mathbb{R}) \cap H^{2,1}(\mathbb{R})\right)
\end{equation}
is Lipschitz continuous.
\label{thm:2}
\end{theorem}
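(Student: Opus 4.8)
The plan is to upgrade the local theory of Theorem \ref{thm:1} to a global one by exploiting that the existence time $T$ in that theorem is arbitrary and that the whole construction rests on the inverse scattering transform. Since the solution is reconstructed from the time-evolved data $r_{1,2}(t;z)=r_{1,2}(z)e^{-2i\eta^2t}$, the only mechanism by which the local solution could fail to extend is a finite-time blow-up of the $H^3(\mathbb{R})\cap H^{2,1}(\mathbb{R})$ norm, which by the reconstruction estimate is controlled by the $\mathcal{W}(\mathbb{R})$ norm of $r_{1,2}(t;\cdot)$. I would therefore first establish that this scattering-data norm remains finite on every compact time interval, and then patch the resulting family of local solutions into a single global one by uniqueness.

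For the first step I would invoke the preceding proposition together with the estimates (\ref{wege0})--(\ref{wege4}). The identity $|e^{-2i\eta^2t}|=1$ shows at once that $\|r_{1,2}(t;\cdot)\|_{L^{2,1}}$ and $\|z^{-2}r_{1,2}(t;\cdot)\|_{L^{2}}$ are conserved in $t$, while differentiating in $z$ produces the phase derivative $\propto(1-\tfrac{\beta^2}{4}z^{-2})$, so that $\|\partial_z r_{1,2}(t;\cdot)\|_{L^2}$ grows at most linearly in $t$, exactly as recorded in (\ref{wege4}). Hence $r_{1,2}(t;\cdot)\in\mathcal{W}(\mathbb{R})$ for all $t$, and feeding these bounds into the a priori estimate (\ref{ubkz}) gives $\|u(\cdot,t)\|_{H^3\cap H^{2,1}}\le c(T)\|u_0\|_{H^3\cap H^{2,1}}$ with $c(T)<\infty$ for every finite $T$. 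In particular the solution norm cannot diverge at any finite time.

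The global solution itself I would obtain by the standard continuation argument. For any $T_1<T_2$, Theorem \ref{thm:1} supplies solutions $u^{(1)}\in C([0,T_1],H^3\cap H^{2,1})$ and $u^{(2)}\in C([0,T_2],H^3\cap H^{2,1})$; uniqueness forces $u^{(1)}=u^{(2)}$ on $[0,T_1]$, so defining $u(\cdot,t):=u^{(T)}(\cdot,t)$ for $t\le T$ is unambiguous and yields $u\in C([0,\infty),H^3(\mathbb{R})\cap H^{2,1}(\mathbb{R}))$. The Lipschitz continuity of the map $u_0\mapsto u$ then follows from Theorem \ref{thm:1} on each interval $[0,T]$: composing the Lipschitz map $u_0\mapsto r_{1,2}$ of Proposition \ref{l6}, the time conservation of the $\mathcal{W}$-norms above, and the Lipschitz reconstruction $r_{1,2}\mapsto u$ of Proposition \ref{l8} and its negative-half-line counterpart, one controls $\|u-\tilde u\|_{H^3\cap H^{2,1}}$ on $[0,T]$ by $\|u_0-\tilde u_0\|_{H^3\cap H^{2,1}}$.

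I expect the genuine obstacle to be the uniform-in-time control of the scattering data near the spectral singularity $k=0$, that is $z=0$. The phase derivative carries the factor $z^{-2}$, so the linear-in-$T$ term in (\ref{wege4}) is finite only because Proposition \ref{l13} guarantees $z^{-2}k^{-1}b(k)\in L^2(-\delta,\delta)$ and $z^{-2}r_{1,2}\in L^2(\mathbb{R})$; without this control the $H^1$-norm of $r_{1,2}(t;\cdot)$, and hence the reconstructed $H^3$-norm, could blow up in finite time. Verifying that the weighted space $\mathcal{W}(\mathbb{R})$ is preserved under the time flow, so that the reconstruction machinery of Sections \ref{sec:section4}--\ref{sec:section5} applies verbatim at each $t$, is the crux that legitimizes the passage from local to global solutions.
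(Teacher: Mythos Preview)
Your proposal is correct and follows essentially the same approach as the paper: both arguments rest on the fact that the time evolution $r_{1,2}(t;z)=r_{1,2}(z)e^{-2i\eta^2 t}$ preserves the $\mathcal{W}(\mathbb{R})$ norm with at most linear-in-$T$ growth (the preceding proposition and estimate (\ref{ubkz})), so that the reconstruction estimates of Sections \ref{sec:section4}--\ref{sec:section5} yield a uniform a priori bound on $\|u(\cdot,t)\|_{H^3\cap H^{2,1}}$ over any finite interval, and Lipschitz continuity follows by composing the Lipschitz maps $u_0\mapsto r_{1,2}\mapsto u$.

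The only difference is cosmetic: the paper carries out the continuation via a maximal-time $T_{\mathrm{max}}$ contradiction argument (extending past a closed endpoint by restarting, and closing an open endpoint via the a priori bound), whereas you observe directly that Theorem \ref{thm:1} already furnishes a solution on $[0,T]$ for \emph{every} $T>0$ and then patch these by uniqueness. Your route is slightly more economical here, since the $T$ in Theorem \ref{thm:1} is arbitrary from the outset; the paper's $T_{\mathrm{max}}$ argument is the standard PDE template but is somewhat redundant given that fact. Both are valid and lead to the same conclusion.
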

\begin{proof}
Based on Theorem \ref{thm:1},
as $c(T)$ depends on $T$ and $\|r_{1,2}(0,z)\|_{H^1\cap{L^{2,1}}}$ grows at most in a polynomial order with respect to $\|u(x,0)\|_{H^3\cap H^{2,1}}$, we hence reach the conclusion that the local solution exists in   $C\left([0, T], H^{3}(\mathbb{R}) \cap H^{2,1}(\mathbb{R})\right)$  for an arbitrary fixed $T$.   Then   the global existence of the solution can be  asserted if
 $T=\infty$.  Suppose the maximal time in which  the local solution exists  is $T_\text{max}$.

 If $T_\text{max}=\infty$, then the local solution is global.
  Otherwise, the local solution exists  in the  finite   interval  $[0, T_\text{max}]$ or  $[0, T_\text{max})$.

If  the local solution exists in the closed interval $[0, T_\text{max}]$,   we can  use $u(x,T_\text{max}) \in H^3(\mathbb{R})\cap H^{2,1}(\mathbb{R})$ as the new initial data,
 according to  the  results derived in the previous sections,  there exists $T_1 > 0$ such that the following local solution
        $$
        u_1(x,t) \in C([T_\text{max}, T_\text{max}+T_1], H^3(\mathbb{R})\cap H^{2,1}(\mathbb{R}))
        $$
        exists in the time interval $[T_\text{max}, T_\text{max}+T_1]$.
        Therefore if  we   reconstruct the   function
        \begin{equation}
        \tilde{u}(x,t) = \begin{cases}
        u(x,t), &t\in[0,T_\text{max}] \\
        u_1(x,t), & t \in [T_\text{max},T_\text{max}+T_1],
        \end{cases}
        \end{equation}
        which  then  is the solution to the Cauchy problem (\ref{cs})-(\ref{cs1}) in the time interval $[0, T_\text{max}+T_1]$.
         This contradicts with the definition of $T_\text{max}$.

        If  the local solution exists in the open interval $[0,T_\text{max})$, then by priori  estimation in (\ref{ubkz}),     we have the following estimation \begin{equation}
            \|u(x,t)\|_{H^3(\mathbb{R})\cap H^{2,1}(\mathbb{R})} \le c(T_\text{max})\|u(x,0)\|_{H^3(\mathbb{R})\cap H^{2,1}(\mathbb{R})}, \quad{t\in [0,T_\text{max})},
            \label{eq:1325}
            \end{equation}
           Due to the continuity of $u(x,t)$ with respect to the time $t$, the limit of $u(x,t)$ when $t$ converges to $T_\text{max}$ exists.
            Taking the limit by $t\to{T}_\text{max}$ in (\ref{eq:1325}), we have
            \begin{equation}
            \|u_{T_\text{max}}\|_{H^2(\mathbb{R} )\cap H^{1,1}(\mathbb{R} )} \le c(T_\text{max})\|u(x,0)\|_{H^2(\mathbb{R} )\cap H^{1,1}(\mathbb{R} )}, \quad{t\in [0,T_\text{max})},
            \end{equation}
            where $u_{T_\text{max}} = \lim_{t\to{T}_\text{max}} u(x,t)$. By defining the new function
        \begin{equation}
        \tilde{u}(x,t) = \begin{cases}
        u(x,t), \quad t\in[0,T_\text{max}), \\
        u_{T_\text{max}}, \quad t = T_\text{max},
        \end{cases}
        \end{equation}
        which  also  is   the solution to the Cauchy problem in (\ref{cs})-(\ref{cs1}) in the time interval $[0,T_\text{max}]$.
        This  contradicts with the premise that $[0, T_\text{max})$ is the maximal open interval.

 Based on the discussion above,  we conclude that  the global   solution exists  in   $u(x,t) \in C\left([0, \infty), H^3(\mathbb{R}) \cap H^{2,1}(\mathbb{R})\right)$, which finally yields the proof of Theorem \ref{thm:2}.
\end{proof}

\begin{remark}
In our previuos work \cite{cqy}, we ever obtained   the long-time asymptotic behavior  of  the  FL
 equation with generic initial data in a Sobolev space $ H^{3,3}(\mathbb{R})$. In present work, we obtain the existence  of global solutions to the Cauchy problem  (\ref{cs})-(\ref{cs1}) of the   FL  equation on the line for the  initial data    $u_0(x)\in H^3(\mathbb{R})\cap H^{2,1}(\mathbb{R})$. Due to  $H^{3,3}(\mathbb{R})  \hookrightarrow H^3 (\mathbb{R}) \cap H^{2,1}(\mathbb{R}) $,   our present work    ensures   the strictness of our previuos work on  long-time asymptotic behavior  of  the  FL equation \cite{cqy}.  Based on the results obtained this paper together with Backlund transformation,  we  will  futher  prove the global existence for the FL  equation in the case when the initial datum includes a finite number of solitons in our future work.
\end{remark}

\noindent\textbf{Acknowledgements}

This work is supported by  the National Natural Science
Foundation of China (Grant No. 11671095, 51879045).\vspace{2mm}

\noindent\textbf{Data Availability Statements}

The data that supports the findings of this study are available within the article.\vspace{2mm}

\noindent{\bf Conflict of Interest}

The authors have no conflicts to disclose.

\end{document}